\theoremstyle{plain}
\newtheorem{theorem}{Theorem}[section]
\newtheorem{corollary}[theorem]{Corollary}
\newtheorem{lemma}[theorem]{Lemma}
\newtheorem{proposition}[theorem]{Proposition}
\theoremstyle{definition}
\newtheorem{definition}[theorem]{Definition}
\newtheorem{example}[theorem]{Example}
\newtheorem{notation}[theorem]{Notation}
\newtheorem{remark}[theorem]{Remark}
\newtheorem{question}[theorem]{Question}
\numberwithin{equation}{section}
\numberwithin{table}{section}
\newcommand{\ba}{\mathbf{a}}
\newcommand{\bb}{\mathbf{b}}
\newcommand{\bc}{\mathbf{c}}
\newcommand{\bd}{\mathbf{d}}
\newcommand{\bj}{\mathbf{j}}
\newcommand{\bk}{\mathbf{k}}
\newcommand{\bmu}{{\boldsymbol{\mu}}}
\newcommand{\bs}{\mathbf{s}}
\newcommand{\bw}{\mathbf{w}}
\newcommand{\bx}{\mathbf{x}}
\newcommand{\by}{\mathbf{y}}
\newcommand{\bz}{\mathbf{z}}
\newcommand{\bJ}{\mathbf{1}}
\newcommand{\bZ}{\mathbf{0}}
\newcommand{\C}{\mathbb{C}}
\newcommand{\cB}{\mathcal{B}}
\newcommand{\cP}{\mathcal{P}}
\newcommand{\eps}{\varepsilon}
\newcommand{\Id}{\mathrm{Id}}
\newcommand{\diag}{\mathop{\mathrm{diag}}}
\newcommand{\inc}[2]{{#1}^{#2, \uparrow}}
\newcommand{\intd}{\,\rd}
\newcommand{\N}{\mathbb{N}}
\newcommand{\PF}{\mathrm{PF}}
\newcommand{\pd}{\mathrm{pd}}
\newcommand{\psd}{\mathrm{psd}}
\newcommand{\Q}{\mathbb{Q}}
\newcommand{\R}{\mathbb{R}}
\newcommand{\rd}{\mathrm{d}}
\newcommand{\sF}{\mathscr{F}}
\newcommand{\sFTN}{\sF^\TN}
\newcommand{\sFTP}{\sF^\TP}
\newcommand{\TN}{\mathrm{TN}}
\newcommand{\TP}{\mathrm{TP}}
\newcommand{\Z}{\mathbb{Z}}
\renewcommand{\Pr}{\mathbb{P}}
\begin{document}
\title[TP kernels, P\'olya frequency functions, and their transforms]%
{Totally positive kernels, P\'olya frequency functions,
and their transforms}

\author{Alexander Belton}
\address[A.~Belton]{Department of Mathematics and Statistics,
Lancaster University, Lancaster, UK}
\email{\tt a.belton@lancaster.ac.uk}

\author{Dominique Guillot}
\address[D.~Guillot]{University of Delaware, Newark, DE, USA}
\email{\tt dguillot@udel.edu}

\author{Apoorva Khare}
\address[A.~Khare]{Department of Mathematics, Indian Institute of
Science; and Analysis and Probability Research Group; Bangalore, India}
\email{\tt khare@iisc.ac.in}

\author{Mihai Putinar}
\address[M.~Putinar]{University of California at Santa Barbara, CA,
USA and Newcastle University, Newcastle upon Tyne, UK} 
\email{\tt mputinar@math.ucsb.edu, mihai.putinar@ncl.ac.uk}

\date{27th November 2021}

\keywords{totally non-negative kernel, totally positive kernel,
totally non-negative matrix, totally positive matrix, entrywise
transformation, completion problem, P\'olya frequency function,
P\'olya frequency sequence}

\subjclass[2010]{15B48 (primary); %
15A15, 
15A83, 
30C40, 
39B62, 
42A82, 
44A10, 
47B34 (secondary)
}

\begin{abstract}
The composition operators preserving total non-negativity and total
positivity for various classes of kernels are classified, following
three themes. Letting a function act by post composition on kernels
with arbitrary domains, it is shown that such a composition operator
maps the set of totally non-negative kernels to itself if and only
if the function is constant or linear, or just linear if it preserves
total positivity. Symmetric kernels are also discussed, with a similar
outcome. These classification results are a byproduct of two
matrix-completion results and the second theme: an extension of
A.M.~Whitney's density theorem from finite domains to subsets of the
real line. This extension is derived via a discrete convolution with
modulated Gaussian kernels. The third theme consists of analyzing,
with tools from harmonic analysis, the preservers of several families
of totally non-negative and totally positive kernels with additional
structure: continuous Hankel kernels on an interval, P\'olya frequency
functions, and P\'olya frequency sequences. The rigid structure of
post-composition transforms of totally positive kernels acting on
infinite sets is obtained by combining several specialized situations
settled in our present and earlier works.
\end{abstract}

\maketitle

\settocdepth{section}
\tableofcontents

\section{Introduction and main results}

\subsection{Total positivity}

Let $X$ and $Y$ be totally ordered sets. A kernel
$K : X \times Y \to \R$ is said to be \emph{totally positive} if the
matrix $( K( x_i, y_j ) )_{i, j = 1}^n$ is totally positive (that is,
all of its minors are positive) for any choice of $x_1 < \cdots < x_n$
and $y_1 < \cdots < y_n$, where $n$ is an arbitrary positive integer.
Similarly, the kernel $K$ is said to be \emph{totally non-negative} if
$( K(x_i, y_j) )_{i, j = 1}^n$ is totally non-negative (that is, all of
its minors are non-negative)\footnote{The monographs
\cite{Karlin, Pinkus} use the terms ``strict total positivity'' and
``total positivity'' instead of ``total positivity'' and ``total
non-negativity'' respectively.}. For almost a century, these classes
of kernels and matrices surfaced in the most unexpected circumstances,
and this trend continues in full force today. The foundational work
\cite{GK}, the survey \cite{Ando}, the early monograph \cite{GK1}, and
the more recent publications \cite{Karlin,TP,Pinkus,FJ} offer ample
references to the fascinating history of total positivity, as well as
accounts of its many surprising applications. Total positivity
continues to make impacts in areas such as representation theory
\cite{L-1,L-2,Ri}, network analysis \cite{Postnikov}, cluster algebras
\cite{BFZ,FZ-1,FZ-2}, Gabor analysis \cite{GRS}, statistics
\cite{Efron,KP}, and combinatorics \cite{Br1,Br2}. A surprising link
between positive Grassmannians, seen as the geometric impersonation of
total positivity, and integrable systems \cite{KW-1,KW-2} is also
currently developing at a fast pace.

A natural way to construct new kernels from existing ones is to
compose them with a given function $F$. More precisely, every suitable
function $F$ induces a composition operator $C_F$ mapping the kernel
$K$ to $C_F( K ) := F \circ K$. The aim of the present work is to
determine for which functions $F$ the operator $C_F$ leaves invariant
the set of totally positive kernels defined on $X \times Y$, and to
answer the analogous question for total non-negativity.  When $X$ and
$Y$ are finite, this is equivalent to determining when the
\emph{entrywise calculus} $( a_{i j} ) \mapsto ( F( a_{i j} ) )$
induced by $F$ preserves the total positivity or total non-negativity
of matrices.

The study of when entrywise transformations preserve notions of
positivity has a long history. One of the first rigidity theorems for
such transforms was proved by Schoenberg, who showed in the 1940s that
entrywise transforms preserving positive semidefiniteness of matrices
of all sizes must be given by convergent power series with
non-negative coefficients \cite{Schoenberg42}. That any such function
preserves positive semidefiniteness when applied to matrices of
arbitrary dimensions follows immediately from the Schur product
theorem \cite{Schur1911}. Reformulated in the language of kernels,
Schoenberg's theorem shows that the composition operator $C_F$ leaves
invariant the set of positive-semidefinite kernels if and only if $F$
admits a power series representation with non-negative
coefficients. Schoenberg's discovery was part of a larger project of
classifying the invariant distances on homogeneous spaces which are
isometrically equivalent to a Hilbert-space distance; see Bochner's
very informative article \cite{Bochner-pd} for more details. This
circle of ideas was further extended by the next generation of
analysts, to operations which preserve Fourier coefficients of
measures \cite{HKKR}. The analogous result to Schoenberg's theorem for
matrices of a fixed size is more subtle. Roger Horn's doctoral
dissertation contains the fundamental observation, attributed by Horn
to L\"owner, that the size of the positive matrices preserved by a
smooth transform imposes non-negativity constraints on roughly the
same number of its derivatives \cite{horn}. This observation left a
significant mark on probability theory \cite{Horn-toeplitz}.
Determining the exact set of functions that preserve positivity when
applied entrywise to positive-semidefinite matrices of a fixed
dimension remains an open problem and the subject of active research
\cite{BGKP-fixeddim, BGKP-hankel, KT}.

More detail about the evolution of matrix positivity transforms and
their applications in areas such as data science and probability
theory are contained in our recent surveys \cite{BGKP-survey1,
BGKP-survey2}. The investigation of entrywise transforms preserving
total positivity have recently revealed novel connections to type-$A$
representation theory and to combinatorics. We refer the reader to the
recent works \cite{BGKP-fixeddim,GKR-critG} and the recent paper
\cite{KT} by Khare and Tao for more details.

\subsection{Main results}

Recall that a kernel is said to be totally non-negative of order
$p$, denoted $\TN_p$, if all of its minors of size $p \times p$ and
smaller are non-negative. Similarly one defines $\TP_p$ kernels; see
Definition~\ref{Dtntp} for the precise details.

An initial step in classifying total-positivity preservers over an
arbitrary domain $X \times Y$ is to consider separately the cases
where at least one of $X$ and $Y$ is finite and when both are
infinite. The first of these cases leads to the following theorem.

\begin{theorem}\label{T1}
Let $X$ and $Y$ be totally ordered sets, each of size at least $4$,
and let $F: [ 0, \infty ) \to \R$. The operator
$C_F : K \mapsto F \circ K$ maps the set of totally non-negative
kernels of any fixed order at least $4$ to itself if and only if $F$
is constant, so that $F( x ) = c$, or linear, so that $F( x ) = c x$,
with $c \geq 0$. The same holds if totally non-negative kernels are
replaced by totally positive kernels of any fixed order at least $4$,
now with the requirement that $c > 0$.
\end{theorem}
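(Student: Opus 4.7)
The sufficiency direction is immediate. For $F(x) = c$ with $c \geq 0$, $C_F(K) = c \bJ$, whose only non-vanishing minors are of size one and equal to $c$; hence $C_F(K)$ is $\TN$ of all orders. For $F(x) = cx$ with $c \geq 0$, $C_F(K) = c K$ inherits $\TN_p$ from $K$, and likewise $\TP_p$ when $c > 0$. A constant kernel has all $2 \times 2$ minors equal to zero, so it fails to be $\TP_p$ for $p \geq 2$; this explains the disappearance of the constant solutions in the TP variant.

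For necessity, fix an order $p \geq 4$ and a $\TN_p$ (respectively $\TP_p$) preserver $F : [0, \infty) \to \R$ on $X \times Y$ with $|X|, |Y| \geq 4$. The plan is in two stages.

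\emph{Stage 1: Reduction to $4 \times 4$ matrices.} Given any $4 \times 4$ TN (resp.\ TP) matrix $M$ with rows indexed by a $4$-element subset $\{x_1 < \cdots < x_4\} \subset X$ and columns by $\{y_1 < \cdots < y_4\} \subset Y$, the plan is to invoke the matrix-completion result announced in the abstract to extend $M$ to a $\TN_p$ (resp.\ $\TP_p$) kernel $K$ on all of $X \times Y$. Since $C_F$ preserves $\TN_p$, applying it to $K$ and restricting back to the chosen $4 \times 4$ grid shows that $F(M)$ has all minors of size $\leq p$ non-negative; because $M$ is $4 \times 4$ and $p \geq 4$, this means $F(M)$ is TN. Hence $F$ entrywise preserves TN (resp.\ TP) on $4 \times 4$ matrices.

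\emph{Stage 2: Rigidity on $4 \times 4$ matrices.} The task is to show that such a preserver must be either constant or of the form $F(x) = cx$ with $c \geq 0$. The plan is to test $F$ against several families of explicit TN matrices. The constant matrix $a \bJ$ immediately forces $F \geq 0$ on $[0, \infty)$. A rank-one positive matrix $u v^T$ is TN, and the non-negativity of the $2 \times 2$ minors of $(F(u_i v_j))$ yields a sub-multiplicative inequality: whenever $a, b, c, d \geq 0$ satisfy $ad = bc$, one has $F(a) F(d) \geq F(b) F(c)$. Suitable staircase TN matrices force $F$ to be non-decreasing. Finally, carefully chosen rank-two TN matrices $u v^T + w z^T$ together with their $3 \times 3$ and $4 \times 4$ determinantal constraints under $C_F$ should yield an additivity relation of the form $F(x + y) = F(x) + F(y)$ on $(0, \infty)$. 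Combining monotonicity, sub-multiplicativity, and additivity then forces $F(x) = c x$ on $(0, \infty)$; a final argument analyzes $F(0)$ and yields either $F \equiv c$ or $F(x) = cx$ globally.

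The \textbf{main obstacle} is Stage 2, specifically the extraction of an additivity relation from the $3 \times 3$ and $4 \times 4$ determinantal conditions. On $2 \times 2$ TN matrices the preserver class is much larger---it contains all multiplicative functions such as $F(x) = x^\alpha$ with $0 < \alpha \leq 1$, which already satisfy the sub-multiplicative inequality above---so the rigidity is genuinely a phenomenon of the higher-minor conditions. Isolating a determinantal identity whose positivity after applying $F$ forces linearity, and treating the boundary value $F(0)$ carefully, is the delicate step, and this is also where the dimension threshold $n = 4$ enters essentially.
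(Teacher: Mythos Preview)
Your proposal has two substantive gaps, one in each stage.

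\textbf{Stage 1 (TP case).} The matrix-completion results in this paper are only for $2 \times 2$ matrices (Theorems~\ref{Ttpcompletion} and~\ref{L2symtp}); there is no result here that embeds an arbitrary $4 \times 4$ TP matrix into a $\TP_p$ kernel on $X \times Y$. For $\TN$ the reduction is easy (pad by zeros, as in Remark~\ref{Rtnp}), but for $\TP$ the paper proceeds quite differently: it first uses the $2 \times 2$ completion to show $F(x) = c x^\alpha$ (hence $F$ is continuous), and only then invokes the Whitney-type approximation Theorem~\ref{Twhitney2} to pass from $\TP_p$ to $\TN_p$ and reduce to the $\TN$ classification. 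Your Stage~1 bypasses this and assumes a $4 \times 4$ completion that is not available.

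\textbf{Stage 2.} Your plan to extract an \emph{additivity} relation $F(x+y) = F(x) + F(y)$ from rank-two test matrices is speculative and is not what the paper does. The paper instead derives \emph{multiplicativity}: from the singular $2 \times 2$ matrices $A(x,y)$ and $B(x,y)$ in (\ref{E2matrices}) one gets $F(xy)F(1) = F(x)F(y)$, and together with monotonicity and mid-convexity this forces $F(x) = c x^\alpha$ on $(0,\infty)$. The power $\alpha$ is then pinned down by explicit higher-dimensional test matrices: the $3 \times 3$ matrix $C$ in (\ref{E3matrix}) rules out $\alpha < 1$, and the $4 \times 4$ family $N(\eps,x)$ in (\ref{E4matrix}) rules out $\alpha > 1$. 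No additivity is ever used, and it is not clear that your rank-two determinantal constraints would yield it; the power functions $x^\alpha$ with $\alpha > 0$ satisfy all the sub-multiplicative and monotonicity conditions you list, so additivity would have to come from a genuinely new identity you have not specified.
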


We develop the proof of Theorem~\ref{T1} over several sections. In
fact, we prove more; we provide a full characterization of entrywise
transforms that preserve total non-negativity on $m \times n$ matrices
or symmetric $n \times n$ matrices, for any fixed values of $m$ and
$n$, finite or infinite. We also prove the analogous classifications
for preservers of total positivity on matrices of each size. See
Tables~\ref{Table-tn} and~\ref{Table-tp} for further details,
including variants involving test sets of symmetric matrices.

The proof strategy is broadly as follows. For preservers of total
non-negativity, note that totally non-negative matrices of smaller
size can be embedded into larger ones; this allows us to use, at each
stage, properties of preservers for lower dimensions. Thus, we show
the class of preservers to be increasingly restrictive as the
dimension grows, and as soon as we reach $4 \times 4$ matrices (or
$5 \times 5$ matrices if our test matrices are taken to be symmetric),
we obtain the main result.

For total positivity, the problem is more subtle: as zero entries are
not allowed, one can no longer use the previous technique. Instead,
the key observation is that totally positive matrices are dense in
totally non-negative matrices; this is an approximation theorem due to
A.M.~Whitney, which reduces the problem for continuous functions and
finite sets $X$ and $Y$ to the previous case. The next step then is to
prove the continuity of all total-positivity preservers; we achieve
this by solving two totally positive matrix-completion problems.
Finally, to go from finite $X$ and $Y$ to the case where one of $X$
and $Y$ is infinite, we extend Whitney's approximation theorem to
totally positive kernels on arbitrary subsets of $\R$, as follows.

\begin{theorem}\label{Twhitney2}
Given non-empty subsets $X$ and $Y$ of $\R$, and a positive integer~$p$,
any bounded $\TN_p$ kernel $K : X \times Y \to \R$ can be
approximated locally uniformly at points of continuity in the interior
of $X \times Y$ by a sequence of $\TP_p$ kernels on $X \times Y$.
Furthermore, if $X = Y$ and $K$ is symmetric, then the kernels in the
approximating sequence may be taken to be symmetric.
\end{theorem}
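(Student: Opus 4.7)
The plan is to approximate $K$ by discrete Gaussian convolutions, exploiting that the kernel $(s,t)\mapsto e^{-(s-t)^2/\sigma^2}$ is $\TP_\infty$ on $\R\times \R$ (being a P\'olya frequency function of all orders), together with the matrix-product/Cauchy--Binet principle that a $\TN_p$ kernel sandwiched between two $\TP_p$ kernels produces a $\TP_p$ kernel. Since $X$ and $Y$ carry no prescribed measure, the convolution must be performed discretely. Concretely, I would fix nested finite grids $A_n \subset X$ and $B_n \subset Y$ of mesh $\delta_n \to 0$ whose unions are dense in the respective interiors, choose Gaussian bandwidths $\sigma_n \to 0$ with $\delta_n/\sigma_n \to 0$, and set, with $g_n(t) := e^{-t^2/\sigma_n^2}$,
\[
K_n(x,y) := \sum_{a \in A_n}\sum_{b \in B_n} g_n(x-a)\, K(a,b)\, g_n(y-b)\, w^{(n)}_{a,b},
\]
where $w^{(n)}_{a,b}$ are Riemann-type weights. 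Boundedness of $K$ and Gaussian decay make the sums absolutely convergent on all of $X \times Y$.

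Total positivity of $K_n$ follows from the factorisation $K_n = G_n\, M_n\, H_n$ with $G_n(x,a) := g_n(x-a)$, $H_n(b,y) := g_n(y-b)\,w^{(n)}_{a,b}$, and $M_n := (K(a,b))_{a \in A_n,\,b \in B_n}$, which is $\TN_p$ by assumption; the Gaussian factors are $\TP_p$ on the relevant ordered subsets, since restriction of a $\TP_\infty$ kernel preserves $\TP_\infty$. An absolutely convergent Cauchy--Binet expansion then exhibits each $p \times p$ minor of $K_n$ as a sum of products of strictly positive Gaussian minors with non-negative minors of $M_n$. For the approximation, at any interior continuity point $(x_0,y_0)$ of $K$ the quantity $K_n(x_0,y_0)$ is essentially a Gaussian mollification of $K$: the dominant mass comes from grid points within $O(\sigma_n)$ of $(x_0,y_0)$, where $K(a,b)$ is close to $K(x_0,y_0)$ by continuity, while the tails are uniformly suppressed by $\sup|K|$ times Gaussian decay, giving locally uniform convergence $K_n \to K$. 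The symmetric case $X=Y$ is dealt with by choosing $A_n = B_n$ and symmetric weights, whereby $K_n(x,y) = K_n(y,x)$ by construction.

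The main obstacle is to guarantee \emph{strict} positivity of every $p \times p$ minor of $K_n$: minors of $M_n$ may vanish identically (for instance if $K$ has rank less than $p$ on the grid), in which case the Cauchy--Binet expansion above collapses to zero. The cleanest resolution I anticipate is to perturb $K_n$ by a small $\TP_\infty$ background $\eps_n e^{-(x-y)^2/\tau_n^2}$ with $\eps_n, \tau_n \to 0$, chosen so that the perturbed kernel remains $\TP_p$ and the approximation is undisturbed; every minor is then strictly positive because it gains a strictly positive summand from the background. Balancing the several scales $\delta_n \ll \sigma_n \to 0$ together with $(\eps_n,\tau_n)$ so that total positivity and the locally uniform approximation hold simultaneously is the delicate quantitative step; once this is set up, the same construction respects symmetry and handles the last assertion of the theorem.
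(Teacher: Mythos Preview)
Your overall architecture---extend by zero to $\R\times\R$, form a discrete Gaussian convolution on a fine grid, invoke Cauchy--Binet, then restrict---is exactly the paper's approach. The difficulty you correctly isolate, that the grid matrix $M_n$ may have rank below $p$ so that Cauchy--Binet yields only $\TN_p$ and not $\TP_p$, is also the paper's central technical issue. Where your argument breaks down is the proposed fix.

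You suggest adding a background $\eps_n\,e^{-(x-y)^2/\tau_n^2}$ to $K_n$ and assert that ``every minor is then strictly positive because it gains a strictly positive summand from the background.'' But the determinant of a sum is not the sum of determinants, and in fact $\TN_p + \TP_p \not\subseteq \TP_p$ (indeed not even $\subseteq \TN_p$). A $2\times 2$ counterexample: $A=\begin{pmatrix}0&M\\0&0\end{pmatrix}$ is $\TN_2$ for any $M>0$, $B=\begin{pmatrix}2&1\\1&2\end{pmatrix}$ is $\TP_2$, yet $\det(A+B)=4-(M+1)<0$ once $M>3$. So simply adding a $\TP$ kernel to your $\TN_p$ kernel $K_n$ can destroy even total non-negativity; no choice of $\eps_n,\tau_n$ repairs this, because the obstruction is structural, not quantitative. (A secondary slip: your weights $w^{(n)}_{a,b}$ depend on both grid indices, so they cannot be absorbed into $H_n(b,y)$ alone; they must be product weights $w_a w_b$ or else folded into $M_n$.)

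The paper resolves the rank deficiency quite differently. After one convolution $T_{\kappa,\bz,\bw}(K)$, it adds a single point mass $e^{-\kappa}\delta_{(z_1,w_1)}$ at the top-left grid node; expanding determinants along the first row shows this perturbation keeps $\TN_p$ while raising the grid-matrix rank by exactly one. Convolving again then gives $\TP_{r+1}$. Iterating $p-r$ times produces a genuinely $\TP_p$ kernel, and the extra delta terms vanish in the limit because they carry the factor $e^{-\kappa}$ with $\kappa\to\infty$. An alternative salvage of your outline would be to replace $M_n$ by a nearby $\TP_p$ matrix $\widetilde M_n$ using Whitney's finite-dimensional density theorem and then convolve; this makes every Cauchy--Binet summand strictly positive. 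Either way, the additive-background idea as stated does not work.
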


The proof of Theorem~\ref{Twhitney2} is developed in
Section~\ref{SWhitneyExt}, using discretized Gaussian convolution.
The Gaussian function is found throughout mathematics, and this paper
is no exception. It finds itself a crucial ingredient for several of
the arguments below. As well as the discrete convolution, it allows
regular P\'olya frequency functions to be approximated by totally
positive ones, and is employed in various places as a totally positive
kernel which is particularly straightforward to manipulate.

The only remaining case is the classification of total
positivity-preservers for kernels over $X \times Y$, with both $X$ and
$Y$ infinite. In this situation, the test sets used to obtain the
previous results are no longer sufficient, and new tools and test
classes of kernels with additional structure are called for.

When considering other forms of structured kernels, the Hankel
and Toeplitz classes stand out. The study of Hankel kernels with
countable domains leads naturally to moment-preserving maps, and
these form the main body of our previous investigation
\cite{BGKP-hankel}. In the present article, we provide the
classification of preservers for both Hankel and Toeplitz kernels with
domains which are a continuum. A Hankel kernel has the form
\[
X \times X \to \R; \ ( x, y ) \mapsto f( x + y ),
\]
whereas a Toeplitz kernel has the form
\[
X \times X \to \R; \ ( x, y ) \mapsto f( x - y ),
\]
where $X \subseteq \R$.
  
The main results are summarized by the next five theorems; for more
details, see Theorems~\ref{Thankel2} and~\ref{TPhankel} (the Hankel
case), Theorems~\ref{TPolya} and~\ref{TtoeplitzTP} (P\'olya frequency
functions), Theorems~\ref{Ttoeplitz} and~\ref{TtoeplitzTP} (measurable
Toeplitz kernels), Theorems~\ref{Tpfseq} and~\ref{Tpfseq2} (P\'olya
frequency sequences), and Theorems~\ref{T1sidedPFF}
and~\ref{T1sidedPFseq} (one-sided P\'olya frequency functions and
sequences).

The class of $\TN$ or $\TP$ preservers for Hankel kernels consists
essentially of absolutely monotonic functions. This is outlined in the
following result, and our proof relies on prior work of Bernstein,
Hamburger, Mercer, and Widder.

\begin{theorem}
Let $X \subseteq \R$ be an open interval and let
$F : [ 0, \infty ) \to \R$. The composition map $C_F$ preserves the
set of continuous $\TN$ Hankel kernels on $X \times X$ if and only if
$F( x ) = \sum_{k = 0}^\infty c_k x^k$ on $( 0, \infty )$, with
$c_k \geq 0$ for all $k$ and $F( 0 ) \geq 0$.

A similar statement holds for preservers of $\TP$ Hankel kernels on
$X \times X$.
\end{theorem}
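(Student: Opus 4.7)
The plan is to exploit the Hankel structure to reduce the question to a classical absolute-monotonicity problem solvable via integral representations. Writing $K(x,y) = f(x+y)$, the map $C_F$ sends Hankel kernels to Hankel kernels, via $f \mapsto F \circ f$ on $X + X$. So the question becomes: for which $F : [0,\infty) \to \R$ is $F \circ f$ again a continuous $\TN$ (resp.\ $\TP$) Hankel-generating function whenever $f$ is?

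For sufficiency, suppose $F(x) = \sum_{k \geq 0} c_k x^k$ on $(0,\infty)$ with $c_k \geq 0$ and $F(0) \geq 0$. The key classical input is Bernstein--Widder: a continuous nonnegative function $f$ on an open interval such that $f(x+y)$ is $\TN$ is real-analytic and admits a Laplace-transform representation $f(t) = \int_0^\infty e^{-ts}\, d\mu(s)$ for some positive measure $\mu$. In particular $f$ either vanishes identically or is strictly positive. In the positive case, $f^k$ corresponds to the $k$-fold convolution $\mu^{*k}$, which is again a positive measure; hence each $f^k$ yields a $\TN$ Hankel kernel, and since nonnegative linear combinations of $\TN$ Hankel kernels are $\TN$ Hankel, the locally uniformly convergent series $\sum_k c_k f(x+y)^k = F(f(x+y))$ is $\TN$ Hankel. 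The case $f \equiv 0$ reduces to the constant kernel $F(0) \geq 0$, which is trivially $\TN$.

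For necessity, first observe by testing against constant kernels that $F$ must map $[0,\infty)$ into $[0,\infty)$. The core of the argument is to feed $C_F$ the two-parameter family of rank-one exponential Hankel kernels $(x,y) \mapsto c\, e^{-a(x+y)}$, which are $\TN$ (and indeed $\TP$) of all orders, together with positive linear combinations and limits of such. Imposing $\TN$ on the resulting transformed kernel, via determinantal Hankel-minor inequalities, and using the moment-problem machinery of Hamburger and Mercer to identify which sequences $(F(c_i))$ arise as moment sequences, should force $F$ to be \emph{absolutely monotonic} on $(0,\infty)$: all derivatives $F^{(k)}$ exist and are nonnegative. Bernstein's theorem for absolutely monotonic functions then yields a power-series representation $F(x) = \sum_k c_k x^k$ with $c_k \geq 0$, convergent on $(0, \infty)$ by the fact that $F$ is defined on the entire half-line. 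The value $F(0) \geq 0$ is a separate nonnegative parameter (as $F$ may jump at $0$), pinned down by testing against $f \equiv 0$.

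The $\TP$ variant is handled analogously, with Theorem~\ref{Twhitney2} providing the bridge: any $\TP$ preserver is in particular a $\TN$ preserver on the closure, so it has the form above, and strict total positivity of the image under $C_F$ (applied to, e.g., exponential kernels) rules out the degenerate cases $c = 0$. The main obstacle is the necessity step, specifically translating the combinatorial Hankel-minor positivity of $F \circ f$ into the analytic statement that $F$ is absolutely monotonic; this is where the Hamburger moment problem together with Widder's characterization of Laplace transforms and a Mercer-type density of exponential sums in the class of $\TN$ Hankel generators do the heavy lifting.
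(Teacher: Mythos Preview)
Your sufficiency sketch has the right shape, but the integral representation should be bilateral, $f(t) = \int_\R e^{tu}\,d\sigma(u)$ (Proposition~\ref{Phankel}(2)), not the completely-monotone form $\int_0^\infty e^{-ts}\,d\mu(s)$; for instance $(x,y)\mapsto e^{x+y}$ is $\TN$ Hankel but not generated by a completely monotone function. The paper's sufficiency argument is simpler anyway: it uses only that $\TN$ continuous Hankel kernels form a closed convex cone closed under Schur products and pointwise limits (Proposition~\ref{Phankel}(1)), together with Lemma~\ref{Lzeros} to handle the vanishing case.

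The necessity direction has a genuine gap: you write that determinantal inequalities ``should force $F$ to be absolutely monotonic,'' but give no mechanism. Note also that the rank-one kernels $c\,e^{-a(x+y)}$ are \emph{not} $\TP$ (every $2\times 2$ minor vanishes), so they cannot serve as $\TP$ test objects. The paper's route is concrete: it invokes a prior result (Theorem~\ref{Tbgkp}, from \cite{BGKP-hankel}) stating that preserving positive semidefiniteness on the three specific families $\bigl(\begin{smallmatrix}a&b\\b&b\end{smallmatrix}\bigr)$, $\bigl(\begin{smallmatrix}c^2&cd\\cd&d^2\end{smallmatrix}\bigr)$, and $(p+qu_0^{i+j})_{i,j=0}^n$ already forces absolute monotonicity, and then realizes each of these matrices as a submatrix of an explicit $\TN$ continuous Hankel kernel on $X\times X$ after a linear reparametrization. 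For the $\TP$ variant, your claim that ``any $\TP$ preserver is a $\TN$ preserver on the closure'' presupposes continuity of $F$, which is not automatic; the paper obtains it by embedding every symmetric $\TP$ $2\times 2$ matrix into a $\TP$ continuous Hankel kernel (Theorem~\ref{L2symtp}) and invoking Theorem~\ref{TsymmetricTP}, and only then applies the Hankel-specific density of $\TP$ in $\TN$ (Proposition~\ref{Phankel}(3)) rather than Theorem~\ref{Twhitney2}, since the approximating kernels must themselves be Hankel.
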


In contrast, $\TN$ Toeplitz kernels possess a far more restricted
class of preservers. Recall that a P\'olya frequency function
$\Lambda$ is an integrable function on~$\R$, non-zero at two or more
points, such that the Toeplitz kernel
$T_\Lambda : \R \times \R \to \R; \ ( x, y ) \mapsto \Lambda( x - y )$
is $\TN$.

\begin{theorem}
Let $F : [ 0, \infty ) \to [ 0, \infty )$. The composition map $C_F$ 
preserves the set of P\'olya frequency functions if and only if
$F( x ) = c x$ with $c > 0$.

A similar statement holds for preservers of the class of $\TP$ kernels
of the form $T_\Lambda$, where $\Lambda$ is a P\'olya frequency function.
\end{theorem}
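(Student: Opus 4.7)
The converse direction is immediate: for $F(x)=cx$ with $c>0$ and any PF function $\Lambda$, the image $c\Lambda$ is non-negative, integrable, non-zero at two or more points, and its Toeplitz kernel $T_{c\Lambda}=cT_\Lambda$ is $\TN$ since positive scaling preserves $\TN$, so $c\Lambda$ is again PF. For the forward direction, assume $C_F$ maps the class of PF functions into itself; I would proceed in three steps.

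First I would show $F(0)=0$ and extract some regularity. Applying $C_F$ to the Laplace density $\Lambda_0(x)=\tfrac12 e^{-|x|}$ (PF as the convolution of the two one-sided exponentials $e^{-x}\mathbf{1}_{x\geq 0}$ and $e^{x}\mathbf{1}_{x\leq 0}$), the image $F\circ\Lambda_0$ must be integrable on $\R$. The substitution $u=\tfrac12 e^{-|x|}$ recasts this as $4\int_0^{1/2} F(u)/u\,du$, whose finiteness forces $F(u)\to 0$ as $u\to 0^+$ in a sufficiently strong sense; combined with $F\geq 0$, this gives $F(0)=0$ and excludes the constant-preserver possibility. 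For continuity on $(0,\infty)$, I would use Gaussians $\Lambda_a(x)=e^{-ax^2}$, which are PF; the image $F\circ\Lambda_a$ must again be PF, hence continuous on the interior of its support, and since $\Lambda_a$ maps $\R$ continuously onto $(0,1]$, this forces $F$ to be continuous on $(0,1]$, and on $(0,\infty)$ after dilation.

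The main step is to prove $F$ is linear, by reducing to the finite-dimensional classification of Theorem~\ref{T1}. For each $a>0$, the Gaussian Toeplitz kernel $T_{\Lambda_a}(x,y)=e^{-a(x-y)^2}$ is $\TP$ on $\R\times\R$; hence for any $x_1<\cdots<x_4$ and $y_1<\cdots<y_4$, the submatrix $(\Lambda_a(x_i-y_j))$ is a $\TP$ $4\times 4$ matrix whose entrywise image under $F$ must be $\TN$. Varying $a$ and the two grids, and augmenting with PF functions built from one-sided exponentials $e^{-\lambda x}\mathbf{1}_{x\geq 0}$ and their convolutions (which supply complementary $\TN$ patterns), I would assemble a family of test matrices rich enough to invoke Theorem~\ref{T1} at dimension~$4$, forcing $F(x)=cx$ on $(0,\infty)$, with $c>0$ by the integrability argument. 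The $\TP$-variant of the theorem is treated analogously, restricting test functions to PF functions whose Toeplitz kernel is already $\TP$, such as the Gaussian family.

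The chief obstacle lies in this final step: PF functions form a tightly Toeplitz-structured, Laguerre--P\'olya constrained subclass of $\TN$ kernels, so the finite submatrices arising from $T_\Lambda$ do not a priori exhaust the cone of $4\times 4$ $\TN$ matrices. Making the reduction to Theorem~\ref{T1} rigorous will likely combine the Whitney-type approximation of Theorem~\ref{Twhitney2} with explicit PF constructions that realise targeted matrix patterns, together with a limiting argument transferring rigidity from generic $\TP$ matrices to their Toeplitz-derived counterparts.
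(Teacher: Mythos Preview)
Your outline has the right shape but a genuine gap at the decisive step, and the paper avoids that gap by a substantively different argument.

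A minor point first: your integrability argument for $F(0)=0$ using $\Lambda_0(x)=\tfrac12 e^{-|x|}$ does not work as written, because $\Lambda_0>0$ everywhere, so $F\circ\Lambda_0$ never takes the value $F(0)$ and integrability says nothing about it. The paper instead applies $F$ to the one-sided $\lambda(x)=e^{-x}\mathbf{1}_{x\geq 0}$, for which $F\circ\lambda\equiv F(0)$ on $(-\infty,0)$; integrability then forces $F(0)=0$ immediately.

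The main gap is precisely the one you flag in your last paragraph, and it is not repaired by Whitney approximation or by ``augmenting with one-sided exponentials'': the $4\times4$ test matrices that drive Theorem~\ref{T1} (in particular the family $N(\eps,x)$) are not Toeplitz and cannot be realised as submatrices of $T_\Lambda$ for any P\'olya frequency function $\Lambda$. The paper does not attempt this reduction at all. Instead it uses two PF-specific structural facts. First, having shown $F(0)=0$ and $F(t_0)>0$ for some $t_0>0$, the composite $F\circ(t_0\lambda)$ is a PF function with a jump discontinuity at the origin; Schoenberg's classification of discontinuous PF functions then forces it to be an affine rescaling of $\lambda$ itself, and unwinding this yields $F(t)=c_0 t^{b_0}$ on $(0,\infty)$ for some $c_0,b_0>0$. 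Second, applying $F$ to $\phi(x)=x\lambda(x)$ and invoking Schoenberg's Laplace-transform structure theorem (the reciprocal of $\cB\{F\circ\phi\}$ must be entire with real zeros) shows $(s+b_0)^{b_0+1}$ is entire, so $b_0\in\N$. To exclude $b_0\geq2$ the paper supplies a new ingredient, Lemma~\ref{Lintpowers}: an explicit even PF function $M(x)=2e^{-|x|}-e^{-2|x|}$ whose bilateral Laplace transform is computed and shown to have the property that $M^n$ is not PF for any integer $n\geq2$. This construction, not a matrix-dimension argument, is what pins down $b_0=1$.

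For the $\TP$ variant the paper again does not go through Theorem~\ref{T1}: it first shows (Proposition~\ref{Ptoeplitz}) that every regular PF function is a pointwise limit of $\TP$ PF functions, obtained by convolving with Gaussians, and then uses continuity of $F$ together with the $\TN$ classification just established.
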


If the integrability condition is removed, then the class of
preservers of Toeplitz kernels is enlarged slightly. In the following
theorem, measurability is required to hold in the sense of Lebesgue.

\begin{theorem}
Let $F : [ 0, \infty ) \to [ 0, \infty )$ be non-zero. The composition
map $C_F$ preserves $\TN$ measurable Toeplitz kernels on
$\R \times \R$ if and only if $F( x ) = c$ or $F( x ) = c x$ or
$F( x ) = c \bJ_{x > 0}$, for some $c > 0$.

The only preservers of $\TP$ Toeplitz kernels on $\R \times \R$,
whether measurable or not, are the dilations $F( x ) = c x$ with
$c > 0$.
\end{theorem}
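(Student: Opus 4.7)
The plan is to handle the $\TN$ statement first and then derive the $\TP$ one by a tighter run of the same argument. The $\TN$ sufficiency will be a direct verification; the $\TN$ necessity will proceed by a case analysis on $F(0)$ and on $L_0 := \lim_{x \to 0^+} F(x)$ that reduces the regular subcase to the preceding P\'olya frequency function preserver theorem (Theorem~\ref{TPolya}).

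For sufficiency of the $\TN$ case, constants give constant kernels, dilations rescale the kernel, and the indicator $F(x) = c\mathbf{1}_{x > 0}$ works because the essential support of any measurable $\TN$ Toeplitz function $\Lambda$ is an interval. This interval-support fact is an immediate consequence of the $\TN_2$ Toeplitz inequality
\[
\Lambda(t_1)\,\Lambda(t_2) \;\geq\; \Lambda(t_1 - h)\,\Lambda(t_2 + h) \qquad (t_1 \leq t_2,\ h \geq 0),
\]
i.e., the log-concavity of $\Lambda$. Given this, $c\mathbf{1}_I(x - y)$ is easily checked to be $\TN$ by noting that whenever both off-diagonal entries of a $2 \times 2$ Toeplitz minor equal $c$, the two diagonal entries must equal $c$ as well.

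For the necessity direction of the $\TN$ statement, I would first extract two regularity features of an arbitrary non-zero preserver $F$. Testing $F$ against the rank-one $\TN$ Toeplitz kernel $e^{\alpha(x - y)}$ translates the $\TN_2$ condition into concavity of the map $v \mapsto \log F(e^v)$ on $\R$; combined with the interval-support property applied to $F(e^{-t^2})$, this forces $F$ to be strictly positive and continuous on $(0, \infty)$ and guarantees existence of one-sided limits $L_0 := \lim_{x \to 0^+} F(x)$ and $L_\infty := \lim_{x \to \infty} F(x)$ in $(0, \infty]$. Next, testing $F$ against the Heaviside-type kernel $a\mathbf{1}_{x > y}$ and comparing two appropriate $2 \times 2$ Toeplitz minors forces $F(a) \in \{0, F(0)\}$ for every $a > 0$ whenever $F(0) > 0$; the residual possibility $F(a_0) = 0$ with $F(0) > 0$ is excluded by testing on $a_0 \mathbf{1}_{(0, 1)}(x - y)$, whose $F$-image would then be supported on two disjoint half-lines in conflict with the interval-support property. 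Hence $F(0) > 0$ forces $F$ to be the constant preserver. With $F(0) = 0$ I would split on $L_0$: if $L_0 = 0$, then $F$ is continuous on $[0, \infty)$, and $F \circ \Lambda$ is bounded, log-concave, and tends to $0$ at $\pm \infty$ for every P\'olya frequency function $\Lambda$; log-concavity forces exponential decay, hence integrability, so $F \circ \Lambda$ is itself P\'olya and Theorem~\ref{TPolya} yields $F(x) = cx$. If $L_0 > 0$, then $F(e^{-\alpha t^2})$ is a continuous log-concave function on $\R$ with common positive finite limit $L_0$ at $\pm \infty$; a concave function on $\R$ with finite equal asymptotes must be constant, so $F(e^{-\alpha t^2}) \equiv L_0$, and rescaling $\Lambda$ propagates this to $F \equiv L_0$ on $(0, \infty)$, giving $F(x) = L_0 \mathbf{1}_{x > 0}$.

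The $\TP$ case follows a tighter run of the same outline. Any $\TP$ preserver is automatically strictly positive and continuous on $(0, \infty)$ by the log-concavity argument, and applied to a $\TP$ P\'olya frequency function $\Lambda$, the composition $F \circ \Lambda$ is \emph{strictly} log-concave and bounded. Strict log-concavity precludes any positive horizontal asymptote, so $F \circ \Lambda$ decays to $0$ at $\pm \infty$, is integrable, and is therefore itself a $\TP$ P\'olya frequency function; Theorem~\ref{TPolya} then forces $F(x) = cx$. The main obstacle throughout is the $L_0 > 0$ subcase of the $\TN$ argument, where the rigidity of bounded concave functions on $\R$ (necessarily constant) is the key tool ruling out every preserver beyond the three listed.
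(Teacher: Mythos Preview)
Your approach differs substantially from the paper's (which uses Proposition~\ref{PGauss} to get that $F$ is non-decreasing and continuous, and then exploits the discontinuity of the P\'olya frequency functions $t\lambda_d$ together with Schoenberg's classification of discontinuous P\'olya frequency functions). Your log-concavity idea is attractive, but the argument as written has real gaps.

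\textbf{The test kernel $a_0\,\mathbf{1}_{(0,1)}(x-y)$ is not admissible.} A $\TN$ function that is non-zero at two or more points can never have compact support: by Schoenberg \cite[Corollary~1]{Schoenberg51}, after multiplying by a suitable exponential it becomes a P\'olya frequency function, and P\'olya frequency functions are never compactly supported. Hence $\mathbf{1}_{(0,1)}$ is not $\TN$, and you cannot test $F$ against it. Your mechanism for excluding the possibility $F(0)>0$, $F(a_0)=0$ therefore fails.

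\textbf{Strict positivity of $F$ on $(0,\infty)$ is not established.} Multiplicative mid-concavity (from the rank-one kernels $e^{\alpha(x-y)}$) gives only that $\{x>0:F(x)>0\}$ is a multiplicative interval. Testing on the Gaussians $c\,e^{-t^2}$ and demanding that $F(c\,e^{-t^2})$ have interval support rules out $F$ vanishing on a right half-line $(b,\infty)$, but does \emph{not} rule out $F\equiv 0$ on some $(0,a]$ with $F>0$ on $(a,\infty)$: for each $c$, the support $\{t:c\,e^{-t^2}>a\}$ is already a symmetric interval (or empty). With this case unexcluded, your reduction to Theorem~\ref{TPolya} in the $L_0=0$ subcase collapses, since for any P\'olya frequency function $\Lambda$ with $\sup\Lambda\le a$ you would get $F\circ\Lambda\equiv 0$, which is not a P\'olya frequency function. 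The paper avoids this by first obtaining that $F$ is \emph{non-decreasing} (via the $3\times 3$ Gaussian minors in Proposition~\ref{PGauss}) and then running the case analysis with $\lambda_d$; your log-concavity alone does not deliver monotonicity.

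\textbf{The $\TP$ reduction is also incomplete.} You conclude that $F\circ\Lambda$ is a $\TP$ P\'olya frequency function for every $\TP$ P\'olya frequency function $\Lambda$, and then invoke Theorem~\ref{TPolya}. But that theorem requires $F\circ\Lambda$ to be a P\'olya frequency function for \emph{every} P\'olya frequency function, including non-$\TP$ ones such as $\lambda$, $x\lambda(x)$, and the function $M$ of Lemma~\ref{Lintpowers}. You need an additional density step (the paper uses Proposition~\ref{Ptoeplitz}) to pass from the $\TP$ subclass to all regular P\'olya frequency functions before you can invoke the classification.

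Finally, your sufficiency check for $c\,\mathbf{1}_{x>0}$ only handles $2\times2$ minors; you still need to verify $\TN$ in all sizes (the paper does this by observing that the image is a Heaviside-type indicator, which is $\TN$ because $\lambda_0$ and $\lambda_1$ are).
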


The discrete analogue of a P\'olya frequency function or a Toeplitz
kernel is a P\'olya frequency sequence, that is, a real sequence
$\ba = ( a_n )_{n \in \Z}$ such that the Toeplitz kernel
$T_\ba : \Z \times \Z \to \R; \ ( i, j ) \mapsto a_{i - j}$ is $\TN$.
These sequences have been widely studied in function theory,
approximation theory, and combinatorics. It turns out their preservers
display the same rigidity.

\begin{theorem}\label{Tpfsq}
Let $F : [ 0, \infty ) \to [ 0, \infty )$. The composition map $C_F$
preserves P\'olya frequency sequences if and only if $F( x ) = c$ or
$F( x ) = c x$, with $c \geq 0$.

A similar result holds for the preservers of $\TP$ P\'olya frequency
sequences.
\end{theorem}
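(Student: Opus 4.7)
The plan is to prove the characterization by splitting on whether $F(0) = 0$. Sufficiency is immediate: the constant sequence $(c)_{n \in \Z}$ has rank-one non-negative Toeplitz matrix (hence $\TN$), and scaling by $c \geq 0$ preserves non-negativity of every minor, so both $F(x) = c$ and $F(x) = c x$ are preservers.

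For necessity, if $F(0) > 0$ I would apply $C_F$ to the P\'olya frequency test family $a^{(t)}$ defined by $a^{(t)}_0 = 1$, $a^{(t)}_1 = t$, $a^{(t)}_n = 0$ otherwise, whose polynomial generating function $1 + t z$ places it in the $\TN$ class for every $t \geq 0$. The Toeplitz matrix of $F \circ a^{(t)}$ equals $F(0)$ except on the main diagonal (value $F(1)$) and the first subdiagonal (value $F(t)$). Non-negativity of the three $2 \times 2$ minors on consecutive rows $(0, 1)$ with column pairs $(-1, 0)$, $(0, 1)$, and $(1, 2)$ yields
\[
F(t)^2 \geq F(0) F(1), \qquad F(1)^2 \geq F(0) F(t), \qquad F(0) \geq F(1).
\]
Taking $t = 1$ in the middle inequality and combining with the right one forces $F(0) = F(1)$, after which the outer two sandwich $F(t)$ to equal $F(0)$ for all $t \geq 0$.

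If $F(0) = 0$, the previous test gives no constraint, so I would exploit two deeper features: P\'olya frequency sequences are closed under convolution, and by the Aissen--Edrei--Schoenberg--Whitney (AESW) classification a polynomial PF sequence is precisely one whose generating function factors over the non-positive reals. Convolving the $n$ two-term PF sequences $(1, \alpha_i, 0, \ldots)$ produces the PF sequence with generating function $\prod_{i=1}^n (1 + \alpha_i z)$; after applying $F$, the polynomial $\sum_{k = 0}^n F(e_k(\alpha_1, \ldots, \alpha_n)) z^k$ must likewise have only real non-positive roots. The case $n = 2$ gives the discriminant inequality $F(\alpha_1 + \alpha_2)^2 \geq 4 F(1) F(\alpha_1 \alpha_2)$ for all $\alpha_1, \alpha_2 \geq 0$, while higher $n$ yields a whole Newton-type system of root-location constraints. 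Combining these with additional PF test sequences---the geometric one with generating function $1/(1 - x z)$ and the exponential one $e^{x z}$---should extract continuity of $F$ and enough multiplicative behaviour to conclude that $F(x) = c x$ for some $c \geq 0$. The $\TP$ variant proceeds in parallel with strict inequalities, the constant solution being excluded because rank-one Toeplitz matrices fail to be $\TP$.

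The main obstacle is the last step: parlaying the AESW root-location constraints into pointwise linearity of $F$. A tempting alternative is to reduce to the already-known rigidity for P\'olya frequency functions (Theorem~\ref{TPolya}) by discretely convolving a PF sequence with a Gaussian kernel, as in Section~\ref{SWhitneyExt}; the subtlety is that $F$ is applied entrywise to the sequence before smoothing, so the Gaussian approximation must be deployed with care in order to transfer the rigidity from the function setting to the sequence setting.
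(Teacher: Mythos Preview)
Your treatment of the case $F(0) > 0$ is correct and pleasantly elementary; the paper does not split on the value of $F(0)$ and instead proceeds uniformly, so this part of your argument is a genuine shortcut.

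The case $F(0) = 0$ contains a real gap, and your primary route through AESW cannot close it. By results of Mal\'o~\cite{Malo} and Wagner~\cite{Wag92} (recalled in the paper's concluding section), every integer power $x \mapsto x^n$ preserves the class of P\'olya frequency sequences with finitely many non-zero terms: Hadamard powers of polynomials with only non-positive real roots again have only such roots. Consequently, no collection of constraints obtained by applying $F$ to sequences arising from $\prod_i (1 + \alpha_i z)$ can distinguish $F(x) = x$ from $F(x) = x^2$; your discriminant inequality $F(\alpha_1+\alpha_2)^2 \geq 4 F(1) F(\alpha_1\alpha_2)$ is satisfied by every power. The geometric sequence $(q^k)_{k \geq 0}$ is likewise fixed by all powers. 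To rule out $x^n$ for $n \geq 2$ one needs a P\'olya frequency sequence whose $n$th Hadamard power genuinely fails to be $\TN$, and producing one is precisely the content of Lemma~\ref{Lintpowers} together with its discretization.

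Your alternative suggestion is much closer to what the paper actually does, but you have the direction of transfer reversed. The paper does not smooth sequences into functions; it \emph{discretizes} functions into sequences. First, restrictions of the Gaussian kernel $q G_1$ to arithmetic progressions force $F$ to be non-decreasing and continuous on $(0,\infty)$ (Steps~1--2 of Theorem~\ref{Tpfseq}). Then, if $F$ is not among the four candidates of Theorem~\ref{Ttoeplitz}(3), Remark~\ref{Rtoeplitz} supplies a P\'olya frequency \emph{function} $\Lambda$---one of $c G_1$, $t \lambda_d$, $x \lambda(x)$, or the $M$ of Lemma~\ref{Lintpowers}---for which $F \circ T_\Lambda$ has a negative minor at rational arguments $\bz \in \inc{\Q}{n}$. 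Sampling $\Lambda$ on a sufficiently fine lattice produces the P\'olya frequency sequence $(\Lambda(k/N))_{k \in \Z}$, whose image under $F$ contains that same bad minor, a contradiction (Step~3). Two further polynomial test sequences (generating functions $1$ and $(1+z)^2$) rule out $c\,\bJ_{x>0}$ by forcing continuity of $F$ at the origin (Corollary~\ref{Cpfseq}). The $\TP$ variant (Theorem~\ref{Tpfseq2}, Corollary~\ref{Ctppfseq}) follows the same discretization, combined with the approximation of regular P\'olya frequency functions by $\TP$ ones from Proposition~\ref{Ptoeplitz}.
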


In fact, we prove a more general version of Theorem~\ref{Tpfsq} with
the same rigidity, Theorem~\ref{Tpfseq2}, where the common domain of
the kernels is a pair of subsets that each contain arbitrarily long
arithmetic progressions with equal increments.

As a final variation, we characterize the preservers of one-sided
analogues of P\'olya frequency functions and sequences, and of
Toeplitz kernels, where a kernel is said to be one-sided if the
associated function is: that is, it vanishes on an infinite
semi-axis. The preservers of such kernels, when compared to the
classifications obtained in the three previous theorems, turn out to
be similarly restricted.

\begin{theorem}
Let $F : [ 0, \infty ) \to [ 0, \infty )$. The composition map $C_F$
preserves the following classes,
\begin{enumerate}
\item one-sided P\'olya frequency functions,
\item one-sided $\TN$ measurable Toeplitz kernels on $\R \times \R$,
\item one-sided P\'olya frequency sequences,
\end{enumerate}
if and only if the function $F$ has the following form in each case,
where $c > 0$:
\begin{enumerate}
\item $F( x ) = c x$;
\item $F( x ) = c x$, $F( x ) = c \bJ_{x > 0}$, or $F( x ) = 0$;
\item $F( x ) = c x$, or $F( x ) = 0$.
\end{enumerate}
\end{theorem}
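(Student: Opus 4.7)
The statement combines three classifications, which I would establish separately as Theorems~\ref{T1sidedPFF} and~\ref{T1sidedPFseq} together with the one-sided counterpart of Theorem~\ref{Ttoeplitz}; here I present a unified plan. The easy direction is verified by inspection: $F(x)=cx$ preserves each class, since scaling preserves $\TN$ minors and leaves the support of the underlying function or sequence unchanged; in case~(2) the indicator $F(x)=c\bJ_{x>0}$ sends a one-sided measurable Toeplitz kernel to $c$ times the indicator of its support, which is again one-sided and $\TN$; and $F \equiv 0$ is a trivial preserver in cases where the zero kernel lies in the class.

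For the hard direction, observe first that whenever $K$ is a one-sided kernel vanishing on an infinite semi-axis $S$, the composition $F \circ K$ equals the constant $F(0)$ on $S$; one-sidedness of $F \circ K$ therefore forces $F(0) = 0$ in every case (note that $c\bJ_{x>0}$ already satisfies this). With $F(0)=0$ in hand, the task is to classify such $F$ preserving the given class. A reduction to the full classifications (Theorems~\ref{TPolya}, \ref{Ttoeplitz}, \ref{Tpfseq}) is not immediate, since a one-sided preserver need not preserve two-sided kernels; so I would instead adapt the two-sided proofs to the one-sided setting. For case~(1), the one-sided exponentials $e^{-\alpha x}\bJ_{x>0}$ and their convolutions furnish a test family rich enough to drive the argument of Theorem~\ref{TPolya}, via Schoenberg's Laplace-transform representation of PF functions, to the conclusion $F(x)=cx$. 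For case~(3), one-sided geometric sequences $(\alpha^n \bJ_{n \geq 0})$ and their convolutions play the analogous role, yielding $F(x)=cx$ or $F \equiv 0$ through the Aissen--Schoenberg--Whitney framework used in Theorem~\ref{Tpfseq}. Case~(2) is handled similarly with one-sided Gaussian-like test kernels, this time retaining the indicator $c\bJ_{x>0}$ as a genuine preserver since no integrability is required.

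The principal obstacle is establishing sufficient regularity of $F$ on $(0,\infty)$ from a strictly smaller family of test kernels than in the two-sided case. In the full classifications one can freely tune symmetric test kernels to realise prescribed finite sets of values; in the one-sided setting one must instead realise such values through finite convolutions of one-sided exponentials or shifted geometric sequences, which is possible but must be done with care to remain within the one-sided class. Once continuity on $(0,\infty)$ is secured, the passage to linearity follows the corresponding two-sided arguments essentially verbatim, with the built-in integrability or summability conditions of each class doing the extra work needed to eliminate the constant preserver and, in cases~(1) and~(3), the indicator preserver that survives in the absence of such constraints.
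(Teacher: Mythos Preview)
Your outline correctly identifies the broad shape of the argument and the need to work entirely within one-sided test kernels, but there are two genuine gaps.

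First, for regularity of $F$ on $(0,\infty)$ you invoke ``one-sided Gaussian-like test kernels''. This will not work: the Gaussian is even, and truncating it destroys total non-negativity. The paper's route is different and rests on the one-sided exponentials $t\lambda_d$ alone. From $\det(F\circ t e^{\eps}T_{\lambda_{e^{-2\eps}}})[(\eps,2\eps);(0,\eps)]\geq 0$ one obtains that $F$ is non-decreasing, and from a different $2\times 2$ minor of $F\circ t T_{\lambda_d}$ one obtains that $F$ is multiplicatively \emph{mid-concave} (not mid-convex, as in the two-sided Gaussian argument) on the set where it is positive; this, together with monotonicity, yields continuity. The switch from mid-convexity to mid-concavity is forced by one-sidedness and is the substitute for the Gaussian step.

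Second, and more seriously, you propose to ``adapt the two-sided proofs'' to exclude integer powers $\geq 2$, but the key witness there is the even function $M$ of Lemma~\ref{Lintpowers}, which is \emph{not} one-sided. A new example is required, and constructing one is the technical heart of the section: Lemma~\ref{L1sided} builds a one-sided P\'olya frequency function $N(x)=c_1e^{-a_1x}+c_2e^{-a_2x}+c_3e^{-a_3x}$ on $[0,\infty)$ (a Hirschman--Widder density) with carefully chosen $a_i$ linearly independent over~$\Q$ and $c_i$ satisfying two linear constraints, and shows via an explicit analysis of the bilateral Laplace transform $\cB\{N^n\}=p_n/q_n$ that $p_n$ is non-constant and coprime to $q_n$ for all $n\geq 2$, so $N^n$ cannot be a P\'olya frequency function. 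Without such a one-sided witness, your argument stalls at the step ``$F(x)=cx^{b_0}$ with $b_0\in\N$'', which is where $t\lambda$ and $\phi(x)=x\lambda(x)$ take you. The discretization of $N$ then also feeds into case~(3), and a separate $3\times 3$ determinant with the sequence $(1,2,1,0,\ldots)$ is used to rule out $c\,\bJ_{x>0}$ for sequences.
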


These results on the preservers of Toeplitz kernels rely on work
of Schoenberg and his collaborators on P\'olya frequency functions
and sequences; a comprehensive exposition of this is found in Karlin's
treatise~\cite{Karlin}.

As expected, these classes of invariant kernels and their preservers
touch harmonic analysis, in particular the Fourier--Laplace
transform. We elaborate a few details alongside our classification
proofs in the main body of this article.

As part of our analysis, we provide an example of an even P\'olya frequency
function $M$ with the property that $M^n$ is not a P\'olya frequency
function for every integer $n \geq 2$, and also a one-sided version of
such a function.

As a final consequence, we come full circle to classify $\TP$
preservers, by refining the class of test matrices used to prove
Theorem~\ref{T1} and its ramifications.

\begin{theorem}\label{Ctp}
Let $X$ and $Y$ be infinite, totally ordered sets that admit a $\TP$
kernel on $X \times Y$. A function
$F : ( 0, \infty ) \to ( 0, \infty )$ is such that $C_F$ preserves the
set of $\TP$ kernels on $X \times Y$ if and only
$F( x ) = c x$ with $c > 0$.

A similar result holds for the non-constant preservers of $\TN$
kernels on $X \times Y$, and for the preservers of symmetric
$\TP$ or $\TN$ kernels on $X \times X$.
\end{theorem}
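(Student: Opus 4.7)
The forward direction is immediate: if $F(x) = cx$ with $c > 0$, then every $p \times p$ minor of $C_F(K)$ equals $c^p$ times the corresponding minor of $K$, so both $\TP$ and $\TN$ are preserved by $C_F$, and $F$ is non-constant.

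For the converse, the plan is to reduce the problem on the infinite domain $X \times Y$ to the finite-domain classification in Theorem~\ref{T1}. Fix an integer $n \geq 4$ (or $n \geq 5$ in the symmetric case) and totally ordered sub-grids $X_0 \subseteq X$, $Y_0 \subseteq Y$ of cardinality~$n$. The core step is to exhibit a family $\mathcal{F}$ of $n \times n$ $\TP$ matrices on $X_0 \times Y_0$ that is simultaneously (a) rich enough that any entrywise transform preserving $\TP$ on $\mathcal{F}$ must be linear, by the argument of Theorem~\ref{T1}, and (b) contained in the set of restrictions of $\TP$ kernels on the whole of $X \times Y$, so that the hypothesis on $C_F$ applies to each member of $\mathcal{F}$.

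The realization step (b) is built from three ingredients: the given $\TP$ kernel $K_\star$ on $X \times Y$, which the hypothesis guarantees, and its restrictions to finite sub-grids; positive rank-one multiplicative modifications $\alpha(x) \beta(y) K_\star(x,y)$, which remain $\TP$ kernels on $X \times Y$ and absorb diagonal conjugations of any realized restriction; and structured families of $\TP$ kernels furnished by the specialized classifications established earlier in the paper (Hankel, Toeplitz, P\'olya frequency functions and sequences, and their one-sided variants), which supply additional $\TP$ kernels whenever the geometry of $X \times Y$ supports them. The Whitney-type approximation Theorem~\ref{Twhitney2} further lets finite-grid restrictions of $\TP$ kernels densely approximate any prescribed $\TN$ matrix on interior sub-grids, which helps fill in the gaps in the realized locus.

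The main obstacle I anticipate is calibrating (a) and (b) simultaneously: the family $\mathcal{F}$ must be small enough that each of its members can be explicitly realized as a restriction of some $\TP$ kernel on $X \times Y$, yet large enough to power the continuity and perturbation arguments behind Theorem~\ref{T1}. This will require a careful audit of precisely which $\TP$ test matrices are used by the proof of Theorem~\ref{T1}, and an explicit matching of each such test matrix to a structured $\TP$ kernel on $X \times Y$. Once $F$ is forced to be linear on $(0, \infty)$, one has $F(x) = cx$ with $c > 0$ because $F$ maps into $(0, \infty)$. The non-constant $\TN$ variant and the symmetric $\TP$ and $\TN$ variants follow by parallel reductions, the latter using the symmetric $n \geq 5$ version of Theorem~\ref{T1} together with symmetric test kernels built from symmetrizations such as $K_\star(x,y) + K_\star(y,x)$ or from symmetric structured families on $X \times X$.
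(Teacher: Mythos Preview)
Your plan has a genuine gap at the Whitney-approximation step. The test matrices that drive Theorem~\ref{T1} for $d \geq 3$ (the matrix $C$ in (\ref{E3matrix}) and the family $N(\eps,x)$ in (\ref{E4matrix})) are $\TN$ with zero entries, so they cannot occur as submatrices of any $\TP$ kernel. You propose to approximate them via Theorem~\ref{Twhitney2}, but that theorem only produces $\TP_p$ kernels on $X \times Y$, not $\TP$ kernels. Since your hypothesis is that $C_F$ preserves $\TP$ (all orders), you cannot apply it to a kernel that is merely $\TP_p$. This is precisely the obstacle that distinguishes the case where both $X$ and $Y$ are infinite from the case handled in Theorem~\ref{Ttpkernel}, where $\min\{|X|,|Y|\}$ is finite and $\TP$ coincides with $\TP_p$ for $p = \min\{|X|,|Y|\}$.

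The paper's route is quite different and does not go through Theorem~\ref{T1}. For the non-symmetric $\TP$ case (Theorem~\ref{Tmain1}), one first uses Lemma~\ref{Ltptosets} to view $X$, $Y$ as subsets of $\R$, and then constructs explicit order-preserving bijections $\varphi_X : X \to X'$, $\varphi_Y : Y \to Y'$ so that $X'$ and $Y'$ each contain arbitrarily long arithmetic progressions with matching increments, making $(X',Y')$ an \emph{admissible pair} in the sense of Definition~\ref{Dadmissible}. This re-embedding is the crucial technical step you are missing: it allows one to transport genuinely $\TP$ Toeplitz kernels (built from $\TP$ P\'olya frequency functions) onto $X \times Y$ via the bijections, and then Theorem~\ref{Tpfseq2} directly forces $F$ to be a dilation. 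No finite $\TN$ test matrices need be realized.

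For the symmetric $\TP$ case (Theorem~\ref{Ttpsymm}), the argument is again structural rather than matricial: Proposition~\ref{Ppowers} gives $F(x)=cx^\alpha$, then $\TP$ continuous Hankel kernels on arithmetic progressions in $X$ (via Proposition~\ref{Phankel}) force $\alpha$ to be a positive integer, and finally the symmetric P\'olya frequency function $M$ of Lemma~\ref{Lintpowers} rules out $\alpha \geq 2$. Your proposed symmetrization $K_\star(x,y)+K_\star(y,x)$ is not useful here: sums of $\TP$ kernels are generally only $\TN$, not $\TP$.
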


The proof for preservers of $\TP$ kernels on $X \times Y$ uses
Theorem~\ref{Tpfseq2}, together with the observation that any $\TP$
kernel on $X \times Y$ may be used to realize $X$ and $Y$ as subsets
of $\R$. To classify the preservers of symmetric $\TP$ kernels on
$X \times X$, we exploit and unify in a coherent proof most of the
concepts arising in this paper: Vandermonde and Hankel kernels,
P\'olya frequency functions and sequences, order-preserving
embeddings, discretization, and Whitney-type density theorems.

\subsection{Contents}

This work has three distinct themes: preservers of $\TN$ kernels and
$\TP$ kernels, approximation of $\TN$ kernels by $\TP$ kernels, and
preservers of structured kernels possessing various forms of positivity.
In Sections~\ref{Snon-neg} and~\ref{Spos}, we provide
characterizations for endomorphisms of $\TN$ and $\TP$ kernels, under
various restrictions: symmetric or not, matricial (having finite
domains) or not, and so on. See Section~\ref{Stables} for a tabulated
compilation of these. Next, in Section~\ref{Svasudeva}, we strengthen
a result of Vasudeva to show that $\TN$ preservation for symmetric
$2 \times 2$ matrices with positive entries is equivalent to
preservation of positive semidefiniteness on a much smaller set, and
show that such functions must be continuous. Section~\ref{SWhitneyExt}
contains the results on discrete Gaussian convolution required to
extend Whitney's approximation theorem and so establish some of the
preceding characterizations. Section~\ref{Shankel} is devoted to the
classification of composition operators which preserve $\TN$ Hankel
kernels defined on a continuum, and also the $\TP$
case. Section~\ref{Stoeplitz} examines those transforms which leave
invariant the class of P\'olya frequency functions and measurable
Toeplitz kernels (for both the $\TN$ and $\TP$ cases), and
Section~\ref{Stoeplitz2} considers the analogous P\'olya frequency
sequences. The preservers of one-sided P\'olya frequency functions and
sequences are characterized in Section~\ref{Stoeplitz3}. We conclude
in Section~\ref{Sqns} by completing the classification problem for
$\TP$ preservers in both the general setting and for the case of
symmetric kernels. With the practitioner in mind, our final section
collects some information about minimal test families that assure the
rigidity of preservers for the larger class of kernels to which they
belong. At the end of this last section, we record some of the ad hoc
notation used in this paper.

The determination of post-composition transforms for totally positive
kernels that is obtained in the following pages may seem rather
discouraging at first sight: there are only trivial ones. However,
many of the technical ingredients appearing in the proofs may be of
independent interest, such as the Whitney-type approximation for
kernels with infinite support, the $\TP$ completion of Hankel
matrices, the structure of Loewner monotone maps, $\TN$ transforms of
the Gaussian kernel, a family of even P\'olya frequency functions
whose higher integer powers cease to be P\'olya frequency functions,
and an order-preserving embedding of the supports of $\TP_2$-kernels
into the real line.

\subsection*{Acknowledgments}

We thank Percy Deift for raising the question of classifying total
positivity preservers and pointing out the relevance of such a result
for current studies in mathematical physics. We also thank Alan Sokal
for his comments on a preliminary version of the paper. A.B.~is
grateful for the hospitality of the Indian Institute of Science,
Bangalore, where part of this work was carried out. D.G.~was partially
supported by a University of Delaware Research Foundation grant, by a
Simons Foundation collaboration grant for mathematicians, and by a
University of Delaware strategic initiative grant. A.K.~was partially
supported by Ramanujan Fellowship grant SB/S2/RJN-121/2017, MATRICS
grant MTR/2017/000295, and SwarnaJayanti Fellowship grants
SB/SJF/2019-20/14 and DST/SJF/MS/2019/3 from SERB and DST (Govt.~of
India), by grant F.510/25/CAS-II/2018(SAP-I) from UGC (Govt.~of
India), and by a Young Investigator Award from the Infosys Foundation.
M.P.~was partially supported by a Simons Foundation collaboration grant
for mathematicians.

\section{Preliminaries and overview}

Throughout this paper, the abbreviation ``$\TN$'' stands either for
the class of totally non-negative matrices, or the
total-non-negativity property for a matrix or a kernel, and similarly
for ``$\TP$''. A kernel is a map $K : X \times Y \to \R$, where $X$
and $Y$ are sets; a kernel is \emph{symmetric} if $X = Y$ and $K( x, y
) = K( y, x )$ for all $x$, $y \in X$.

\begin{notation}
If $X$ is a totally ordered set and
$n \in \N := \{ 1, 2, 3, \ldots \}$, then
\[
\inc{X}{n} := %
\{ \bx = ( x_1, \ldots, x_n ) \in X^n : x_1 < \cdots < x_n \}
\]
and $[n] := \{ 1, \ldots, n \}$. If $K : X \times Y \to \R$,
where $X$ and $Y$ are totally ordered sets, together with
$\bx \in \inc{X}{m}$ and $\by \in \inc{Y}{n}$, then $K[ \bx; \by ]$ is
defined to be the $m \times n$ matrix such that
\[
K[ \bx; \by ]_{i j} = K( x_i, y_j ) %
\qquad ( i = 1, \ldots, m; j = 1, \ldots, n ).
\]
\end{notation}

\begin{definition}\label{Dtntp}
Let $p \in \N$. We say that $K$ is
\begin{itemize}
\item[(i)] $\TN_p$ if $\det K[ \bx; \by ] \geq 0$ for all
$n \in [p]$, $\bx \in \inc{X}{n}$, and $\by \in \inc{Y}{n}$;
\item[(ii)]
$\TP_p$ if $\det K[ \bx; \by ] > 0$ for all $n \in [p]$,
$\bx \in \inc{X}{n}$, and $\by \in \inc{Y}{n}$.
\end{itemize}
If this holds, we say that the kernel is $\TN$ or $\TP$ of
\emph{order}~$p$, denoted by $\TN_p$ and $\TP_p$ respectively.
The kernel $K$ is said to be $\TN$ if it is $\TN_p$
for all $p \in \N$, and similarly for $\TP$, in which case the
order is infinite.
\end{definition}

\begin{example}\label{EgenVDM}
Given $n \in \N$, positive constants $u_1 < \cdots < u_n$ and
real constants $\alpha_1 < \cdots < \alpha_n$, the generalized
Vandermonde matrix $V = ( u_i^{\alpha_j} )_{i, j = 1}^n$ is totally
positive \cite[Chapter~XIII, \S8, Example~1]{Gantmacher_Vol2}.
Reversing the order of both the rows and columns of~$V$ preserves
$\TP$ (and $\TN$), so the same is true if both sets of inequalities
are reversed.

In particular, the kernels
\[
K : I \times \R \to \R; \ ( x, y ) \mapsto x^y %
\quad \text{and} \quad %
K' : I' \times \R \to \R; \ ( x, y ) \mapsto e^{x y}
\]
are $\TP$ for any sets $I \subseteq ( 0, \infty )$ and
$I' \subseteq \R$.
\end{example}

\begin{example}\label{EgenVDM2}
As a generalization of Example~\ref{EgenVDM}, let $X$ and $Y$ be
totally ordered sets, let $g_X : X \to ( 0, \infty )$ and
$g_Y : Y \to ( 0, \infty )$ be arbitrary, and let
$h_X : X \to ( 0, \infty )$ and $h_Y : Y \to \R$ be increasing.
Then the kernel
\begin{equation}\label{Eexamples}
K : X \times Y \to ( 0, \infty ); \ ( x, y ) \mapsto %
g_X( x ) h_X( x )^{h_Y( y )} g_Y( y )
\end{equation}
is $\TP$. To see this, note that
if $n \in \N$, $\bx \in \inc{X}{n}$ and $\by \in \inc{Y}{n}$, then
\[
K[ \bx; \by ] = \diag( g_X[ \bx ] ) %
( h_X( x_i )^{h_Y( y_j )} )_{i, j = 1}^n \diag( g_Y[ \by ] ),
\]
where $\diag( g_X[ \bx ] )$ is the matrix with
$( g_X( x_1 ), \ldots, g_X( x_n ) )$ on the leading diagonal and zeros
elsewhere, and similarly for $\diag( g_Y[ \by ] )$.
Example~\ref{EgenVDM} now gives that $K[ \bx; \by ]$ has positive
determinant.

In general, for any kernel on $X \times Y$, the properties of being
$\TN$, $\TN_p$, $\TP$, or $\TP_p$ are each preserved after multiplying
by functions $g_X : X \to ( 0, \infty )$ or
$g_Y : Y \to ( 0, \infty )$.
\end{example}

\subsection{Overview of classification results for $\TN$ and $\TP$
kernels}\label{Stables}

Our primary focus in this paper is to classify the functions which,
under composition, preserve classes of totally positive or totally
non-negative kernels on $X \times Y$, where $X$ and $Y$ are totally
ordered sets. In Section~\ref{Snon-neg} and~\ref{Spos}, we consider
sixteen different classes of kernels, according to the following
binary possibilities:
\begin{enumerate}
\item totally non-negative or totally positive;
\item matricial, so that $X$ and $Y$ are finite, or non-matricial,
so that at least one of $X$ and $Y$ is infinite;  
\item order $p$ with $p \geq \min\{ |X|, |Y| \}$ or
$p < \min\{ |X|, |Y| \}$;
\item symmetric, requiring that $X = Y$, or not.
\end{enumerate}

\begin{remark}
If at least one of $X$ and $Y$ is finite, then the preservers of $\TP$
kernels on $X \times Y$ are precisely the preservers of $\TP_p$
kernels on $X \times Y$, for any $p \geq \min \{ |X|, |Y| \}$. The
same observation holds if $\TP$ is replaced by $\TN$, and whether or
not symmetry is imposed. Thus, the first alternative in (3) above may
be replaced by $p = \min \{ |X|, |Y| \}$ and we do this henceforth.
\end{remark}

We now present tabulations of our classification results from the next
two sections.

{\renewcommand{\arraystretch}{1.3}
\begin{table}[h]
\begin{tabular}{|c|c|c|c|c|}
\hline
Characterization of & matricial & non-matricial & symmetric &
symmetric\\
endomorphisms & & & matricial & non-matricial\\
\hline
$p = \min \{ |X|, |Y| \}$ &
Theorem~\ref{Tfixeddim} & Corollary~\ref{Ckernel} &
Theorem~\ref{Tsymmetric} & Theorem~\ref{Tsymmetric} \\
\hline
$p < \min \{ |X|, |Y| \}$ &
Theorem~\ref{Tfixeddim}, & Corollary~\ref{Ckernel}, &
Theorem~\ref{Tsymm1} & Theorem~\ref{Tsymm1} \\
& Remark~\ref{Rtnp} & Remark~\ref{Rtnp} & & \\
\hline
\end{tabular}\vspace*{2mm}
\caption{Total non-negativity preservers}\label{Table-tn}
\end{table}

\begin{table}[h]\label{TTPendo}
\begin{tabular}{|c|c|c|c|c|}
\hline
Characterization of & matricial & non-matricial & symmetric &
symmetric\\
endomorphisms & & & matricial & non-matricial\\
\hline
$p = \min \{ |X|, |Y| \}$ &
Theorem~\ref{Ttp} & Theorem~\ref{Ttpkernel} &
Theorem~\ref{TsymmetricTP} & Corollary~\ref{CtpkernelSym} \\
\hline
$p < \min \{ |X|, |Y| \}$ &
Theorem~\ref{Ttpkernel} & Theorem~\ref{Ttpkernel} &
Theorem~\ref{Tsymm2} & Theorem~\ref{Tsymm2} \\
\hline
\end{tabular}\vspace*{2mm}
\caption{Total-positivity preservers}\label{Table-tp}
\end{table}}

For the most part, the conclusions in the matricial and non-matricial
situations are similar or even the same. However, and especially for
$\TP$ preservers, the proofs are harder when at least one of the index
sets is infinite. In addition to the results for the matricial cases,
and the ideas behind their proofs, we require other, more involved
techniques to extend these results to kernels. A particular issue is
the lack of a tractable test set of $\TP$ kernels.

The preservers of symmetric $\TN_p$ or $\TP_p$ kernels differ
depending on whether $p = \min \{ |X|, |Y| \}$ or
$p < \min \{ |X|, |Y| \}$, and in the latter case these preservers
coincide with the preservers of all $p \times p$ matrices which are
$\TN$ or $\TP$. The proofs rely on the careful analysis of preservers
of totally non-negative kernels in each fixed dimension: we show that
our test sets of $p \times p$ matrices which are $\TN$ occur
already as minors of $( p + 1 ) \times ( p + 1)$ symmetric $\TN$
matrices.

\section{Total non-negativity preservers}\label{Snon-neg}

We now begin to formulate and prove our characterization results for
$\TN$ preservers. In this section, we are interested in understanding
the following family of functions.

\begin{definition}
Given two totally ordered sets $X$ and $Y$, let
\begin{equation}
\sFTN_{X, Y} := \{ F : [ 0, \infty ) \to \R \mid %
\text{if $K : X \times Y \to \R$ is totally non-negative, so is } %
F \circ K \}.
\end{equation}
\end{definition}

We observe first that $\sFTN_{X, Y}$ depends on only rather
coarse features of $X$ and~$Y$. A totally ordered set has an
\emph{ascending chain} if it contains an infinite sequence of
elements $x_1 < x_2 < \cdots$, and similarly for a
\emph{descending chain}. It is well known that an infinite totally
ordered set must contain an ascending chain or a descending chain (or
both).%
\footnote{Here is a proof for completeness: let $X$ be infinite and
totally ordered, let $x_1 \in X$ and suppose $\{ x \in X : x > x_1 \}$
is infinite. (The proof is similar if $\{ x \in X : x < x_1 \}$ is
infinite.) If there is no maximum element in $X$, then starting from
$x_1$ one can inductively produce an ascending chain, as desired.
Else, set $y_1 := \max X$. Inductively, given $y_1$, \ldots, $y_k$,
either the infinite set $[ x_1, y_k )$ has a maximum element
$y_{k+1}$, or one can find an ascending chain $x_1 < x_2 < \cdots$ in
$[ x_1, y_k )$ as before. Thus, if there is no ascending chain
starting from $x_1$, there is a descending chain
$y_1 > y_2 > \cdots $.} We say that two totally ordered sets
\emph{have chains of the same type} if both contain an ascending chain
or both contain a descending chain.

Recall that $[n] := \{ 1, \ldots, n \}$ whenever $n \in \N$.

\begin{proposition}\label{Pdim}
Let $X$ and $Y$ be totally ordered sets. Then 
\begin{enumerate}
\item $\sFTN_{X, Y} = \sFTN_{[n], [n]}$ if at least one of $X$ and $Y$
is finite, and $n = \min\{ | X |, | Y | \}$,
\item $\sFTN_{X, Y} = \sFTN_{\N, \N}$ if $X$ and $Y$ have chains of
the same type,
\item $\sFTN_{X, Y} = \sFTN_{\N, -\N}$ if $X$ and $Y$ are infinite and
do not have chains of the same type.
\end{enumerate}
\end{proposition}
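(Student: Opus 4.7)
The proof hinges on two elementary order-theoretic observations about totally non-negative kernels. First, restriction of a $\TN$ kernel on $X \times Y$ to any product subset remains $\TN$, since its finite minors form a subset of those of the original. Second, and more importantly, if $\iota : X_0 \hookrightarrow X$ and $\jmath : Y_0 \hookrightarrow Y$ are order-preserving injections and $M : X_0 \times Y_0 \to [ 0, \infty )$ is $\TN$, then its extension by zero $K : X \times Y \to [ 0, \infty )$ --- defined by $K( \iota( x_0 ), \jmath( y_0 ) ) := M( x_0, y_0 )$ and zero otherwise --- is again $\TN$. Indeed, any minor $K[ \bx; \by ]$ involving some row index $x_i \notin \iota( X_0 )$ (respectively, column index $y_j \notin \jmath( Y_0 )$) has a zero row (respectively, column), hence vanishing determinant; the remaining minors coincide with minors of $M$, which are non-negative by assumption.

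With these two facts in hand, each of (1), (2), and (3) follows by producing appropriate order-preserving injections and running the same argument in both directions. For (1), injections $\iota : [n] \hookrightarrow X$ and $\jmath : [n] \hookrightarrow Y$ exist since $n = \min \{ | X |, | Y | \}$. Given $F \in \sFTN_{X, Y}$ and a $\TN$ matrix $M$ on $[n] \times [n]$, extend $M$ by zero to a $\TN$ kernel $K$ on $X \times Y$; then $F \circ K$ is $\TN$, and its restriction to the embedded copy of $[n] \times [n]$ recovers $F \circ M$, proving $F \in \sFTN_{[n], [n]}$. For the reverse, suppose $F \in \sFTN_{[n], [n]}$ and let $K$ be $\TN$ on $X \times Y$; each minor $K[ \bx; \by ]$ is a $\TN$ matrix of some size $p \leq n$, and the same zero-extension argument shows $\sFTN_{[n], [n]} \subseteq \sFTN_{[p], [p]}$, so $F \circ K[ \bx; \by ]$ has non-negative determinant, and hence $F \circ K$ is $\TN$.

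Parts (2) and (3) proceed in exactly the same way, with $[n]$ replaced by a different limiting index set. In (2), use $\N$ in place of $[n]$ if both $X$ and $Y$ contain ascending chains, or $-\N$ if both contain descending chains; in the latter case one additionally invokes the identity $\sFTN_{-\N, -\N} = \sFTN_{\N, \N}$, which holds because simultaneously reversing both orderings of an $n \times n$ matrix multiplies its determinant by $( -1 )^{\lfloor n/2 \rfloor} \cdot ( -1 )^{\lfloor n/2 \rfloor} = 1$, so $\TN$ is preserved under joint reversal. In (3), after possibly swapping $X$ and $Y$ (legitimate since transposition of kernels preserves $\TN$, whence $\sFTN_{X, Y} = \sFTN_{Y, X}$), we may assume $X$ contains an ascending chain and $Y$ a descending one, giving injections $\N \hookrightarrow X$ and $-\N \hookrightarrow Y$; the same two-directional argument then yields $\sFTN_{X, Y} = \sFTN_{\N, -\N}$. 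The existence of an ascending or descending chain in any infinite totally ordered set (as recalled in the footnote preceding the statement) ensures the dichotomy used in (2) and (3) is exhaustive.

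There is no substantial obstacle beyond careful bookkeeping: the one genuine piece of content is the verification that extension by zero along order-preserving embeddings preserves $\TN$, which is immediate once one observes that minors reaching outside the embedded region contain a zero row or column.
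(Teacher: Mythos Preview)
Your proposal is correct and follows essentially the same approach as the paper: both rely on the observation that extension by zero along order-preserving embeddings preserves $\TN$, together with the ascending/descending chain dichotomy and the invariance of $\TN$ under joint order reversal. The only minor variation is in part~(3), where you reduce via transposition ($\sFTN_{X,Y} = \sFTN_{Y,X}$) rather than joint reversal; both are valid and equally short.
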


\begin{proof}
The key observation is that, for any $X_0 \subseteq X$ and
$Y_0 \subseteq Y$, a totally non-negative kernel
$K: X_0 \times Y_0 \to \R$ trivially extends to a totally non-negative
kernel $\tilde{K}: X \times Y \to \R$ by ``padding by zeros'', that
is, by setting $K( x, y ) = 0$ whenever
$( x, y ) \in ( X \times Y ) \setminus ( X_0 \times Y_0 )$.
Conversely, it is immediate that any totally non-negative kernel on
$X \times Y$ restricts to a totally non-negative kernel on
$X_0 \times Y_0$.

If $X$ and $Y$ are both infinite, then each contains a copy of $\N$ or
$-\N$. If they both contain copies of $\N$, then padding by zeros
gives (2); similarly, if they both contain copies of $-\N$, noting
that reversing the order of both rows and columns preserves $\TN$, as
observed in Example~\ref{EgenVDM}. If $X$ and $Y$ do not contain
chains of the same type, then (3) holds, reversing rows and columns to
swap the roles of $\N$ and $-\N$ if required.
\end{proof}

Proposition~\ref{Pdim} shows that characterising $\sFTN_{X, Y}$ is
equivalent to determining which functions $F: \R \to \R$ preserve
total non-negativity when applied entrywise to totally non-negative
matrices of a fixed dimension (if $X$ or $Y$ is finite), or to totally
non-negative matrices of all dimensions (if $X$ and $Y$ are
infinite). The main result in this section answers this
question.

Given a domain $I \subseteq \R$, a function $F : I \to \R$,
and a matrix $A = ( a_{i j} ) \in I^{m \times n}$, we denote by
$F[ A ] := ( F( a_{i j} ) )$ the matrix obtained by applying $F$ to
the entries of $A$. We denote the Hadamard powers of~$A$ by
$A^{\circ \alpha} := ( a_{i j}^\alpha )$. The convention
$0^0 := 1$ is adopted throughout.

\begin{theorem}\label{Tfixeddim}
Let $F: [ 0, \infty ) \to \R$ be a function and let
$d := \min\{ m, n \}$, where~$m$ and~$n$ are positive
integers. The following are equivalent.
\begin{enumerate}
\item $F$ preserves total non-negativity entrywise on $m \times n$
matrices.
\item $F$ preserves  total non-negativity entrywise on $d \times d$
matrices.
\item $F$ is either a non-negative constant or
\begin{enumerate}
\item[(a)] $(d = 1)$ $F( x ) \geq 0$;
\item[(b)] $(d = 2)$ $F( x ) = c {\bf 1}_{x > 0}$ or
$c x^\alpha$ for some $c > 0$ and some $\alpha > 0$;
\item[(c)] $(d = 3)$ $F( x ) = c x^\alpha$ for some $c > 0$ and some
$\alpha \geq 1$;
\item[(d)] $(d \geq 4)$ $F( x ) = c x$ for some $c > 0$.
\end{enumerate}
\end{enumerate}
\end{theorem}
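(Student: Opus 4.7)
The plan is to establish the equivalence (1)$\Leftrightarrow$(2) by a short embedding argument, and then to deduce (2)$\Leftrightarrow$(3) by case analysis in $d = \min\{m, n\}$. The reduction rests on two observations: padding any $d \times d$ $\TN$ matrix $B$ with zero rows and columns yields an $m \times n$ $\TN$ extension $\tilde B$ such that $F[B]$ sits as a submatrix of $F[\tilde B]$; conversely, every $k \times k$ minor of $F[A]$ for an $m \times n$ $\TN$ matrix $A$ is $\det F[B']$ for some $k \times k$ $\TN$ submatrix $B'$ of $A$, and preservation at size $d$ automatically propagates to every $k \leq d$ by the same padding trick.

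\textbf{Sufficiency (3)$\Rightarrow$(2).} The case $d = 1$ and the non-negative constant case are immediate. Linearity $F(x) = cx$ scales every $k \times k$ minor by $c^k$, hence preserves $\TN$ at every size, settling $d \geq 4$. For $d = 2$ and $F(x) = c x^\alpha$, the $\TN_2$ condition $a d \geq b c$ with $a, b, c, d \geq 0$ gives $(ad)^\alpha \geq (bc)^\alpha$, so $\det F[A] \geq 0$. For the jump $F(x) = c \bJ_{x > 0}$ at $d = 2$, a direct case check over the possible zero patterns of $2 \times 2$ $\TN$ matrices suffices. For $d = 3$ and $\alpha \geq 1$, the $\alpha$-th Hadamard power of a $3 \times 3$ $\TN$ matrix remains $\TN$, a classical Hadamard-power preservation fact in the spirit of FitzGerald--Horn.

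\textbf{Necessity (2)$\Rightarrow$(3).} The case $d = 1$ is trivial. For $d = 2$, applying $F$ to the rank-one $\TN$ test matrices $\bigl(\begin{smallmatrix} x y & x \\ y & 1 \end{smallmatrix}\bigr)$ and $\bigl(\begin{smallmatrix} x & 1 \\ x y & y \end{smallmatrix}\bigr)$ yields both $F(xy) F(1) \geq F(x) F(y)$ and its reverse, hence the multiplicative identity $F(xy) F(1) = F(x) F(y)$ on $(0, \infty)$ when $F(1) > 0$. Combined with the monotonicity and logarithmic convexity extracted from the symmetric $\TN_2$ family (Vasudeva's setting, sharpened in Section~\ref{Svasudeva}) and the ensuing continuity of $F$ on $(0, \infty)$, the only candidates are the non-negative constants, $F(x) = c x^\alpha$ with $c > 0$ and $\alpha > 0$, and $F(x) = c \bJ_{x > 0}$ with $c > 0$. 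For $d = 3$, I exhibit a $3 \times 3$ $\TN$ matrix whose $\alpha$-th Hadamard power has a strictly negative determinant for small $\alpha \in (0, 1)$, and a second $3 \times 3$ $\TN$ matrix with mixed zero and positive entries whose image under the jump has a negative $3 \times 3$ minor, forcing $\alpha \geq 1$. For $d \geq 4$, I produce a $4 \times 4$ $\TN$ matrix whose $\alpha$-th Hadamard power fails to be $\TN$ for every $\alpha > 1$, leaving only $F(x) = c x$ among the non-constants.

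\textbf{Main obstacle.} The principal difficulty is producing the sharp $4 \times 4$ obstruction: a $\TN$ matrix whose $\alpha$-th Hadamard power exhibits a strictly negative $4 \times 4$ minor for \emph{every} $\alpha > 1$. This rigidity is genuinely stronger than the Loewner/FitzGerald--Horn threshold for positive semidefinite matrices, where all exponents $\alpha \geq n - 2$ are admissible, and it is this sharpening that pins $F$ down to be exactly linear as soon as $d = 4$. A secondary obstacle is the $d = 2$ step, where one must deduce continuity of $F$ on $(0, \infty)$ from the multiplicative and log-convex constraints alone, without any \emph{a priori} regularity hypothesis; this is the main technical content imported from the Vasudeva-type arguments in Section~\ref{Svasudeva}.
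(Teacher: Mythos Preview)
Your plan is correct and mirrors the paper's proof closely: padding for (1)$\Leftrightarrow$(2), rank-one $2\times2$ test matrices plus monotonicity and mid-convexity to obtain continuity and the $d=2$ classification, a single explicit tridiagonal $3\times3$ matrix whose determinant after the $\alpha$th Hadamard power is $c^3(1-2^{1-\alpha})$ (which simultaneously kills $\alpha<1$ and the jump $c\bJ_{x>0}$), and a two-parameter $4\times4$ family taken from Fallat--Johnson--Sokal to force $\alpha=1$ when $d\geq4$. One caveat on attribution: the $d=3$ sufficiency direction (that $x\mapsto x^\alpha$ preserves $\TN$ on $3\times3$ matrices for all $\alpha\geq1$) is a genuinely $\TN$-specific fact due to Johnson--Walch and Fallat--Johnson--Sokal, not a corollary of the FitzGerald--Horn positive-semidefinite threshold, so cite accordingly.
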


\begin{proof}
That $(1) \iff (2)$ is obvious, since the minors of a $m \times n$
matrix have dimension at most $d$. We will now prove that
$(2) \iff (3)$ for each value of $d$.

The result is obvious when $d = 1$, since in this case a matrix is
$\TN$ if and only if its entry is non-negative.

Suppose $F[-]$ preserves $\TN$ on $2 \times 2$ matrices and note that
$F( x ) \geq 0$ for all~$x \geq 0$. Next, consider the following
totally non-negative matrices:
\begin{equation}\label{E2matrices}
A( x, y ) := \begin{pmatrix}
x & x y \\ 1 & y
\end{pmatrix} \qquad \text{and} \qquad
B( x, y ) := \begin{pmatrix}
x y & x \\ y & 1
\end{pmatrix} \qquad (x, y \geq 0).
\end{equation}
Considering the determinants of $F[ A( x, y ) ]$ and
$F[ B( x, y ) ]$ gives that
\begin{equation}\label{Emult}
F( x y ) F( 1 ) = F( x ) F( y ) %
\qquad \text{for all } x, y \geq 0.
\end{equation}
If $F( 1 ) = 0$, then $F( x ) F( y ) = 0$, so $F( x ) = 0$ for
all $x \geq 0$. We will therefore assume that $F( 1 ) > 0$.
If $F( x ) = 0$ for any $x > 0$, then Equation~(\ref{Emult}) implies
that $F \equiv 0$, so we assume that $F( x ) > 0$ for all
$x > 0$. Applying $F$ to the $\TN$ matrix
\begin{equation}\label{Esymrank1}
\begin{pmatrix}
x & \sqrt{x y} \\
\sqrt{x y} & y
\end{pmatrix} \qquad (x, y \geq 0),
\end{equation}
we conclude that $F( \sqrt{x y} )^2 \leq F( x ) F( y )$. As a
result, the function $G( x ) = \log F( e^x )$ is mid-point
convex on~$\R$. Also, applying $F$ to the $\TN$ matrix
\[
\begin{pmatrix}
y & x \\ x & y
\end{pmatrix} \qquad (y \geq x \geq 0)
\]
implies that $F$, so $G$, is non-decreasing. By
\cite[Theorem~71.C]{roberts-varberg}, we conclude that $G$ is
continuous on $\R$, and so $F$ is continuous on $( 0, \infty )$.
Moreover, since $F( 1 ) \neq 0$, Equation~(\ref{Emult}) implies
\[
\frac{F( x y )}{F( 1 )} = \frac{F( x )}{F( 1 )} \frac{F( y )}{F( 1 )},
\]
that is, the function $F / F( 1 )$ is multiplicative. From these
facts, there exists $\alpha \geq 0$ such that
$F( x ) = F( 1 ) x^\alpha$ for all $x > 0$. Finally, setting
$y = 0$ in Equation~(\ref{Emult}), we see that
\[
F( 0 ) F( 1 ) = F( x ) F( 0 ) \qquad \text{for all } x \geq 0.
\]
Thus either $F( 0 ) = 0$ or $F \equiv F( 1 )$; in either case,
the function $F$ has the required form. The converse is immediate,
and this proves the result in the case $d = 2$. 

Next, suppose $F$ preserves $\TN$ on $3 \times 3$ matrices and is
non-constant. Since the matrix $A \oplus \bZ_{1 \times 1}$ is totally
non-negative if the $2 \times 2$ matrix $A$ is, we conclude by
part~(b) that $F( x ) = c x^\alpha$ for some $c > 0$ and
$\alpha \geq 0$.  The matrix
\begin{equation}\label{E3matrix}
C := \begin{pmatrix}
1 & 1 / \sqrt{2} & 0 \\
1 / \sqrt{2} & 1 & 1 / \sqrt{2} \\
0 & 1 / \sqrt{2} & 1
\end{pmatrix}
\end{equation}
is totally non-negative, and
$\det F[ C ] = c^3 ( 1 - 2^{1 - \alpha} )$. It follows
that $F$ does not preserve $\TN$ on $3 \times 3$ matrices
when $\alpha < 1$. For higher powers, we use the following
result \cite[Theorem 4.2]{JW}; see \cite[Theorem 5.2]{FJS} for a
shorter proof.
\begin{equation}\label{E3x3}
\alpha \geq 1 \qquad \implies \qquad %
x^\alpha \text{ preserves $\TN$ and $\TP$ on $3 \times 3$ matrices}.
\end{equation}
This concludes the proof of the case $d = 3$.

Finally, suppose $F$ is non-constant and preserves $\TN$ on
$4 \times 4$ matrices. Similarly to the above, considering matrices
of the form $A \oplus \bZ_{1 \times 1}$ gives, by part (c), that
$F( x ) = c x^\alpha$ for some $c > 0$ and some $\alpha \geq 1$. We
now appeal to \cite[Example~5.8]{FJS}, which examines Hadamard powers
of the family of matrices
$N( \eps, x ) := \bJ_{4 \times 4} + x M( \eps )$, where
\begin{equation}\label{E4matrix}
\bJ_{4 \times 4} := \begin{pmatrix}
1 & 1 & 1 & 1 \\ 1 & 1 & 1 & 1 \\ 1 & 1 & 1 & 1 \\ 1 & 1 & 1 & 1
\end{pmatrix} \qquad \text{and} \qquad %
M( \eps ) := \begin{pmatrix}
0 & 0 & 0 & 0 \\ 0 & 1 & 2 & 3 \\
0 & 2 & 4 + \eps & 6 + \frac{5}{2} \eps \\
0 & 3 & 8 & 14 + \eps
\end{pmatrix}.
\end{equation}
As shown therein, the matrix $N( \eps, x )$ is $\TN$ for all
$\eps \in ( 0, 1 )$ and $x > 0$. Moreover, for small $x$ and any
$\alpha > 1$, the determinant of the Hadamard power
\[
\det N( \eps, x )^{\circ \alpha} = \eps^2 \alpha^3 x^3 + %
\frac{1}{4} ( 8 - 70 \eps - 59 \eps^2 - 4 \eps^3 ) %
(\alpha^3 - \alpha^4 ) x^4 + O(x^5).
\]
Thus $\det F[ N( \eps, x ) ] < 0$ for sufficiently small
$\eps = \eps( \alpha ) > 0$ and $x > 0$. We conclude that
$F( x ) = c x$ if $d = 4$. More generally, if $F$ preserves $\TN$ on
$d \times d$ matrices, where $d \geq 4$, then $F$ also preserves
$\TN$ on $4 \times 4$ matrices, and so $F( x ) = c x$ for some
$c > 0$, as desired. The converse is immediate.
\end{proof}

Proposition~\ref{Pdim} and Theorem~\ref{Tfixeddim} immediately combine
to yield the following exact description of the set $\sFTN_{X, Y}$.

\begin{corollary}\label{Ckernel}
Let $X$ and $Y$ be totally ordered sets. Then
\begin{enumerate}
\item
$\sFTN_{X, Y} = %
\{ F : \R \to \R \mid F( x ) \geq 0 \text{ for all } x \in \R\}$ 
if $\min\{ | X |, | Y | \} = 1$.
\item
$\sFTN_{X, Y} = \{ c, \ c {\bf 1}_{x > 0}, \ c x^\alpha : c \geq 0, \ \alpha > 0 \}$
if $\min\{ | X |, | Y | \} = 2$. 
\item
$\sFTN_{X, Y} = \{ c, \ c x^\alpha : c \geq 0, \ \alpha \geq
1 \}$ if $\min\{ | X |, | Y | \} = 3$.
\item
$\sFTN_{X, Y} = \{ c, \ c x : c \geq 0 \}$
if $\min\{ | X |, | Y | \} \geq 4$ or if $X$ and $Y$ are infinite.
\end{enumerate}
\end{corollary}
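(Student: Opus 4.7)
The plan is to obtain Corollary~\ref{Ckernel} as a direct combination of Proposition~\ref{Pdim} and Theorem~\ref{Tfixeddim}, splitting into cases according to $n := \min\{|X|,|Y|\}$.

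For the cases where at least one of $X$ and $Y$ is finite, Proposition~\ref{Pdim}(1) gives $\sFTN_{X,Y} = \sFTN_{[n],[n]}$, and then Theorem~\ref{Tfixeddim} with $d = n$ immediately yields parts (1), (2), (3), and the finite subcase of (4). So the only real content left is the case where both $X$ and $Y$ are infinite, which lies in part (4).

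In that case, Proposition~\ref{Pdim}(2)--(3) reduces matters to computing $\sFTN_{\N,\N}$ (if $X$ and $Y$ have chains of the same type) or $\sFTN_{\N,-\N}$ (otherwise). For either of these, I would argue as follows. Given $F \in \sFTN_{\N,\N}$ and any $n \in \N$, any $\TN$ matrix $A \in \R^{n \times n}$ extends to a $\TN$ kernel on $\N \times \N$ by padding with zeros outside $[n] \times [n]$; applying $F$ entrywise to the extended kernel gives a $\TN$ kernel on $\N \times \N$ whose restriction to $[n] \times [n]$ is $F[A]$, which must therefore be $\TN$. Hence $F \in \sFTN_{[n],[n]}$ for every $n$, and in particular for $n = 4$. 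Theorem~\ref{Tfixeddim}(d) then forces $F(x) = c$ or $F(x) = cx$ with $c \geq 0$. An identical argument, after reversing the order of the rows and columns (as noted in Example~\ref{EgenVDM}, this preserves $\TN$), disposes of $\sFTN_{\N,-\N}$. The converse containment is immediate, since constant non-negative functions and non-negative scalar multiples of the identity trivially preserve total non-negativity of any kernel.

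There is no real obstacle here: the work has already been done in Proposition~\ref{Pdim} (which reduces $\sFTN_{X,Y}$ to a matricial quantity) and in Theorem~\ref{Tfixeddim} (which computes the matricial preservers). The only small point to verify is the zero-padding reduction from $\sFTN_{\N,\N}$ (or $\sFTN_{\N,-\N}$) to $\sFTN_{[4],[4]}$, but this is exactly the key observation already used in the proof of Proposition~\ref{Pdim}.
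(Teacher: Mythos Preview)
Your proposal is correct and follows exactly the approach the paper takes: the paper states that Corollary~\ref{Ckernel} follows immediately by combining Proposition~\ref{Pdim} with Theorem~\ref{Tfixeddim}. You have simply made explicit the one step the paper leaves implicit, namely that $\sFTN_{\N,\N}$ and $\sFTN_{\N,-\N}$ embed into $\sFTN_{[4],[4]}$ via the same padding-by-zeros argument used in the proof of Proposition~\ref{Pdim}.
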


\begin{remark}\label{Rtnp}
Given a positive integer $p \leq \min\{ | X |, | Y | \}$,
Corollary~\ref{Ckernel} immediately classifies the collection of all
functions mapping the set of $\TN_p$ kernels on $X \times Y$ to
itself. This is because any $\TN_p$ kernel on $[p] \times Y$ or
$X \times [p]$ extends by ``padding by zeros'', as in the proof of
Proposition~\ref{Pdim}, to a $\TN$ kernel on $X \times Y$.
\end{remark}

\subsection{Preservers of symmetric $\TN$ matrices and kernels}

Theorem~\ref{Tfixeddim} and Corollary \ref{Ckernel} have a natural
analogue for totally non-negative matrices and kernels which are
symmetric. Note that any such matrix has non-negative principal minors
and is therefore positive semidefinite.

\begin{theorem}\label{Tsymmetric}
Let $F: [ 0, \infty ) \to \R$ and let $d$ be a positive integer. The
following are equivalent.
\begin{enumerate}
\item $F$ preserves total non-negativity entrywise on symmetric
$d \times d$ matrices.
\item $F$ is either a non-negative constant or
\begin{enumerate}
\item[(a)] $(d = 1)$ $F( x ) \geq 0$;
\item[(b)] $(d = 2)$ $F$ is non-negative, non-decreasing,
and multiplicatively mid-convex, that is,
$F( \sqrt{x y} )^2 \leq F( x ) F( y )$ for all
$x$, $y \in [ 0,\infty )$, so continuous on $( 0, \infty )$;
\item[(c)] $(d = 3)$ $F( x ) = c x^\alpha$ for some $c > 0$ and some
$\alpha \geq 1$;
\item[(d)] $(d = 4)$ $F( x ) = c x^\alpha$ for some $c > 0$ and some
$\alpha \in \{1\} \cup [ 2, \infty )$;
\item[(e)] $(d \geq 5)$ $F( x ) = c x$ for some $c > 0$.
\end{enumerate}
\end{enumerate}
The same characterizations hold for the preservers of symmetric
$\TN$ kernels on $X \times X$, where $X$ is a totally ordered set of size
$d$, which may now be infinite.
\end{theorem}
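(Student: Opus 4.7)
The proof plan proceeds case by case in $d$, beginning from small dimensions and using only symmetric test matrices.

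For $d = 1$, the result is immediate since a $1 \times 1$ matrix is TN iff its entry is non-negative. For $d = 2$, the argument mimics the opening of the proof of Theorem~\ref{Tfixeddim} but restricted to symmetric matrices: applying $F$ to the rank-one matrix $\bigl( \begin{smallmatrix} x & \sqrt{xy} \\ \sqrt{xy} & y \end{smallmatrix} \bigr)$ gives $F( \sqrt{xy} )^2 \leq F(x) F(y)$; applying $F$ to $\bigl( \begin{smallmatrix} y & x \\ x & y \end{smallmatrix} \bigr)$ with $y \geq x$ gives that $F$ is non-decreasing; and the $d = 1$ case yields $F \geq 0$. Continuity of $F$ on $(0, \infty)$ then follows from~\cite[Theorem~71.C]{roberts-varberg} applied to $G(x) = \log F( e^x )$, exactly as in Theorem~\ref{Tfixeddim}. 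Since the asymmetric matrices $A(x, y)$ and $B(x, y)$ of (\ref{E2matrices}) are unavailable, no full multiplicativity is forced, matching the weaker stated conclusion. The converse is straightforward: a symmetric $2 \times 2$ TN matrix $\bigl( \begin{smallmatrix} a & b \\ b & c \end{smallmatrix} \bigr)$ satisfies $b \leq \sqrt{ac}$, so monotonicity and multiplicative mid-convexity give $F(b)^2 \leq F(\sqrt{ac})^2 \leq F(a) F(c)$.

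For $d \geq 3$, the plan is to prove a reduction: every preserver $F$ of symmetric TN on $d \times d$ matrices is also a preserver of (not necessarily symmetric) TN on $(d - 1) \times (d - 1)$ matrices, after which I invoke Theorem~\ref{Tfixeddim}. The engine is a matrix-completion lemma: any $(d - 1) \times (d - 1)$ TN matrix $A$ with positive entries arises as some minor $M[\bx; \by]$ of a symmetric $d \times d$ TN matrix $M$. For $d = 3$ this is explicit: given $A = \bigl( \begin{smallmatrix} a & b \\ c & d \end{smallmatrix} \bigr)$, I would place $A$ at rows $\bx = \{1, 2\}$ and columns $\by = \{2, 3\}$, forcing
\[
M = \begin{pmatrix} p & a & b \\ a & c & d \\ b & d & q \end{pmatrix},
\]
and choose $p, q$ large enough that all $2 \times 2$ minors are non-negative and $\det M \geq 0$, its dominant term being $pcq$. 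Zero entries in $A$ are handled by continuity of $F$ from the $d = 2$ step, or by direct ad hoc completions as above. For $d \geq 4$ the same displaced-minor approach enforces symmetry identifications of the form $A_{i, j} = A_{j + 1, i - 1}$ that a generic $A$ will not satisfy, so one needs a more elaborate embedding — either a bordered block construction with additional free parameters, or an extension tailored to the specific test matrices used in Theorem~\ref{Tfixeddim}, such as $N(\eps, x)$ from~(\ref{E4matrix}) when handling the case $d = 5$ here.

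With the reduction in place, Theorem~\ref{Tfixeddim} fixes the preliminary form of $F$, which is then sharpened by a handful of bespoke symmetric test matrices. For $d = 3$, Theorem~\ref{Tfixeddim}(2) leaves $F \in \{ c,\, c \mathbf{1}_{x > 0},\, c x^\alpha\, (\alpha > 0) \}$; the step-function alternative is excluded by the symmetric TN matrix $\bigl( \begin{smallmatrix} 2 & 1 & 0 \\ 1 & 1 & 1 \\ 0 & 1 & 2 \end{smallmatrix} \bigr)$, whose image under $\mathbf{1}_{x > 0}$ has determinant $-1$, and $\alpha < 1$ is excluded by the symmetric matrix $C$ from~(\ref{E3matrix}), since $\det F[C] = c^3( 1 - 2^{1 - \alpha} )$; sufficiency of $\alpha \geq 1$ comes from~(\ref{E3x3}). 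For $d = 4$, Theorem~\ref{Tfixeddim}(3) leaves $F \in \{ c,\, c x^\alpha\, (\alpha \geq 1) \}$, and exclusion of $\alpha \in (1, 2)$ uses the Horn--FitzGerald critical exponent $n - 2$ for Hadamard powers preserving positive semidefiniteness of $n \times n$ PSD matrices with non-negative entries (taking $n = 4$ and using that symmetric TN implies PSD). For $d \geq 5$, the reduction gives $F \in \{ c,\, c x \}$ directly via Theorem~\ref{Tfixeddim}(4). Sufficiency of $\alpha \geq 2$ for $d = 4$ is verified using known Hadamard-power results for symmetric TN matrices, with non-principal minors checked alongside principal ones. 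The kernel statement then follows by restriction: for $X$ finite of size $d$, a symmetric TN kernel on $X \times X$ is just a symmetric $d \times d$ TN matrix, while for infinite $X$ every finite-size restriction is a symmetric TN matrix, so the $d \geq 5$ conclusion applies.

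The principal obstacle is the matrix-completion lemma for $d \geq 4$: symmetry of the extension $M$ forces non-trivial identifications amongst entries of a displaced $(d - 1) \times (d - 1)$ minor, so the naive embedding of a generic asymmetric $A$ fails and a more delicate construction is required. A secondary technical point is producing a symmetric $4 \times 4$ TN witness for the failure of $\alpha \in (1, 2)$, since $N(\eps, x)$ from~(\ref{E4matrix}) is not symmetric; such a witness is expected to come from a Cauchy-type, Vandermonde-type, or moment-matrix construction in the critical-exponent literature on Hadamard powers of PSD matrices.
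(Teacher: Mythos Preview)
Your plan diverges from the paper's chiefly in the reduction step for $d \geq 3$. Rather than proving that a symmetric $d \times d$ preserver must preserve \emph{asymmetric} $(d-1) \times (d-1)$ TN (which, as you rightly flag, needs a completion lemma that becomes awkward for $d \geq 4$), the paper keeps the induction entirely within symmetric matrices: if $A$ is symmetric TN of size $(d-1) \times (d-1)$ then $A \oplus \bZ_{1 \times 1}$ is symmetric TN of size $d \times d$, so a symmetric $d \times d$ preserver is automatically a symmetric $(d-1) \times (d-1)$ preserver. This sidesteps the general completion problem you identify as the principal obstacle.

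The price is that one must supply bespoke \emph{symmetric} test matrices at each new dimension, and here the paper's choices differ from yours. For $d=3$, the multiplicativity of $F$ (lost by restricting to symmetric $2 \times 2$ tests) is recovered via the rank-one symmetric matrices $A'(x,y) = (x,1,y)^T(x,1,y)$ and $B'(x,y)$ of~(\ref{Etp2}), which contain the asymmetric $A(x,y)$, $B(x,y)$ of~(\ref{E2matrices}) as $2\times2$ submatrices; this is a special instance of your completion idea, but only these two families are needed rather than a general lemma. For $d=4$, $\alpha \in (1,2)$ is ruled out using the Hankel moment matrix $(1 + x^{i+j})_{i,j=0}^3$ of $\delta_1 + \delta_x$ together with Jain's theorem~\cite{Jain}, and sufficiency of $\alpha \in \{1\} \cup [2,\infty)$ is \cite[Proposition~5.6]{FJS}. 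For $d=5$, the paper does not embed $N(\eps,x)$ at all but instead uses the symmetric family $T(x)$ of~(\ref{E5matrix}) from~\cite[Example~5.10]{FJS}, whose upper-right $4 \times 4$ minor becomes negative under $x \mapsto x^\alpha$ for every $\alpha > 1$ and small $x$. Thus the gaps you acknowledge are filled not by the general completion you envisage, but by three specific symmetric constructions already available in the literature.
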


\begin{proof}
The result is trivial when $d = 1$. When $d = 2$, a symmetric matrix
is $\TN$ if and only if it is positive semidefinite, so part~(b)
follows immediately from \cite[Theorem~2.5]{GKR-lowrank}.

Now, suppose $F$ preserves $\TN$ entrywise on symmetric
$3 \times 3$ matrices and is non-constant. Considering
matrices of the form $A \oplus \bZ_{1 \times 1}$, it follows from
part~(b) that $F$ is non-decreasing and continuous on $( 0, \infty )$.
Applying $F$ entrywise to the matrix $x \, \Id_3$ for $x > 0$,
where $\Id_3$ is the $3 \times 3$ identity matrix, it follows easily
that $F( 0 ) = 0$. Next, let
$L := \lim_{\eps \to 0^+} F( \eps )$, which exists since $F$
is non-decreasing, and let $C$ be the $\TN$ matrix in
Equation~(\ref{E3matrix}). Then
$0 \leq \lim_{\eps \to 0^+} \det F[ \eps C ] = -L^3$,
whence $L = 0$. Thus $F$ is continuous on $[ 0, \infty )$. Next,
consider the symmetric totally non-negative matrices
\begin{equation}\label{Etp2}
A'( x, y ) := \begin{pmatrix}
x^2 & x & x y \\
x & 1 & y \\
x y & y & y^2
\end{pmatrix} \quad \text{and} \quad %
B'(x,y) := \begin{pmatrix}
x^2 y & x y & x \\
x y & y & 1 \\
x & 1 & 1 / y
\end{pmatrix} \qquad ( x \geq 0, y > 0 ).
\end{equation}
Note that $A'( x, y )$ contains the matrix $A( x, y )$ from
Equation~(\ref{E2matrices}) as a submatrix, and the same is true for
$B'( x, y )$ and $B( x, y )$. As in the proof of
Theorem~\ref{Tfixeddim}(a), it follows that
\[
F( x y ) F( 1 ) = F( x ) F( y ) \qquad %
\text{for all } x, y \geq 0.
\]
Proceeding as there, and noting that the matrix $C$ from
Equation~(\ref{E3matrix}) is symmetric, we obtain $c > 0$ and
$\alpha \geq 1$ such that $F( x ) = c x^\alpha$. Moreover, each
function of this form preserves $\TN$ entrywise, by~(\ref{E3x3}).
This concludes the proof of part~(c).

To prove~(d), we suppose the non-constant function $F$ preserves $\TN$
on symmetric $4 \times 4$ matrices, and use part~(c) with the usual
embedding to obtain $c > 0$ and $\alpha \geq 1$ such that
$F( x ) = c x^\alpha$. To rule out $\alpha \in ( 1 , 2 )$, let
$x \in ( 0, 1 )$ and note that the infinite matrix
$( 1 + x^{i + j} )_{i,j \geq 0}$ is the moment matrix of the two-point
measure $\delta_1 + \delta_x$.  Its leading principal $4 \times 4$
submatrix $D$ is $\TN$, by classical results in the theory of moments
\cite{GK,STmoment}, but if $\alpha \in ( 1, 2 )$ then
$D^{\circ \alpha}$ is not positive semidefinite, hence not $\TN$, by
\cite[Theorem~1.1]{Jain}.  The converse follows from
\cite[Proposition~5.6]{FJS}. This proves~(d).

Finally, suppose $F$ is non-constant and preserves $\TN$ on
$5 \times 5$ symmetric matrices, and apply part (d) to obtain $c > 0$
and $\alpha \in \{ 1 \} \cup [ 2, \infty )$ such that
$F( x ) = c x^\alpha$. To rule out the case $\alpha \geq 2$, we appeal
to \cite[Example~5.10]{FJS}, which studies the symmetric, totally
non-negative matrices
\begin{equation}\label{E5matrix}
T( x ) := \bJ_{5 \times 5} + x \begin{pmatrix}
2 & 3 & 6 & 14 & 36 \\
    3 & 6 & 14 & 36 & 98\\
        6 & 14 & 36 & 98 & 276\\
	    14 & 36 & 98 & 284 & 842\\
	         36 & 98 & 276 & 842 & 2604
\end{pmatrix} \qquad ( x > 0 ).
\end{equation}
It is shown there that, for every $\alpha > 1$, there exists
$\eps = \eps( \alpha ) > 0$ such that the upper right
$4 \times 4$ submatrix of $T( x )^{\circ \alpha}$ has negative
determinant whenever $x \in ( 0, \eps )$. It now follows that
$F( x ) = c x$ if $d = 5$. The general case, where $d \geq 5$,
follows by the usual embedding trick, and the converse is once again
immediate.

The final assertion is immediate, via padding by zeros.
\end{proof}

We conclude this section with a characterization of symmetric $\TN_p$
preservers which is parallel to Remark~\ref{Rtnp}.

\begin{theorem}\label{Tsymm1}
Let $F: [ 0, \infty ) \to \R$ and let $d$ and $p$ be positive
integers, with $p < d$. The following are equivalent.
\begin{enumerate}
\item $F$ preserves $\TN_p$ entrywise on symmetric $d \times d$ matrices.
\item $F$ preserves $\TN_p$ entrywise on $d \times d$ matrices.
\item $F$ is either a non-negative constant or
\begin{enumerate}
\item[(a)] $(p = 1)$ $F( x ) \geq 0$;
\item[(b)] $(p = 2)$ $F( x ) = c x^\alpha$ for some $c > 0$ and some
$\alpha \geq 0$;
\item[(c)] $(p = 3)$ $F( x ) = c x^\alpha$ for some $c > 0$ and some
$\alpha \geq 1$;
\item[(d)] $(p \geq 4)$ $F( x ) = c x$ for some $c > 0$.
\end{enumerate}
\end{enumerate}
The same functions characterize the preservers of symmetric $\TN_p$
kernels on $X \times X$, where $X$ is a totally ordered set of size at
least $p + 1$ (and possibly infinite).
\end{theorem}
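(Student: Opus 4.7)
The plan is to establish the chain $(3) \Rightarrow (2) \Rightarrow (1) \Rightarrow (3)$. The equivalence $(2) \Leftrightarrow (3)$ is an immediate consequence of Theorem~\ref{Tfixeddim} applied at size $p$, together with Remark~\ref{Rtnp}: since $d > p$, padding a $p \times p$ matrix with zero rows and columns identifies $\TN_p$ preservers on $d \times d$ matrices with $\TN_p$ preservers on $p \times p$ matrices, and in dimension $p$ the classes $\TN_p$ and $\TN$ coincide. The implication $(2) \Rightarrow (1)$ is immediate, since every symmetric $\TN_p$ matrix is $\TN_p$.

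The core task is therefore $(1) \Rightarrow (3)$. The strategy is to rerun the proof of Theorem~\ref{Tfixeddim} at size~$p$, but replace each non-symmetric test $\TN$ matrix by a symmetric $\TN_p$ matrix of size at most $(p+1) \times (p+1)$, then extend to size $d \times d$ by block-diagonal padding with a zero block of size $d - p - 1$. Block-diagonal padding by zeros preserves $\TN$, hence $\TN_p$, and preserves symmetry. Under entrywise application of~$F$, the top-left $(p+1) \times (p+1)$ block of the image is $F$ applied to the $(p+1) \times (p+1)$ symmetric core; since the ambient matrix must be $\TN_p$, every minor of size at most~$p$ of the core is non-negative, and in particular the $p \times p$ minor that corresponds to~$F$ applied to the original non-symmetric test matrix.

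Case by case: the statement for $p=1$ is just non-negativity of entries. For $p=2$, the symmetric rank-one matrices $A'(x, y)$ and $B'(x, y)$ of~\eqref{Etp2} exhibit the non-symmetric $2 \times 2$ test matrices $A(x, y)$ and $B(x, y)$ of~\eqref{E2matrices} as submatrices of symmetric $3 \times 3$ $\TN$ matrices, yielding the multiplicativity relation $F(xy) F(1) = F(x) F(y)$; the remaining ingredients of Theorem~\ref{Tfixeddim}(b) (monotonicity from $\begin{pmatrix} y & x \\ x & y \end{pmatrix}$ and multiplicative mid-convexity from $\begin{pmatrix} x & \sqrt{xy} \\ \sqrt{xy} & y \end{pmatrix}$) are already symmetric and can be padded directly. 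For $p=3$, the test matrix $C$ of~\eqref{E3matrix} is already symmetric and $\TN$, so combining it with the $p=2$ embedding forces $F(x) = cx^\alpha$ with $\alpha \geq 1$. For $p \geq 4$, we replace the non-symmetric test matrix $N(\eps, x)$ of~\eqref{E4matrix} by the symmetric $5 \times 5$ $\TN$ matrix $T(x)$ of~\eqref{E5matrix} used in Theorem~\ref{Tsymmetric}(e); since $T(x)$ is $\TN$, hence $\TN_p$, the $4 \times 4$ minors of $F[T(x)] = c T(x)^{\circ \alpha}$ must be non-negative, but an explicit $4 \times 4$ submatrix of $T(x)^{\circ \alpha}$ has negative determinant for $\alpha > 1$ and small $x > 0$, forcing $\alpha = 1$.

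The kernel version follows from the matricial one: a symmetric $\TN_p$ matrix of size $(p+1) \times (p+1)$ extends by padding with zeros outside a chosen $(p+1)$-element subset to a symmetric $\TN_p$ kernel on $X \times X$ whenever $|X| \geq p+1$, and restricting a symmetric $\TN_p$ kernel to any such subset returns a symmetric $\TN_p$ matrix. The main obstacle in executing this program is the bookkeeping in the $p=2$ case, where the derivation of continuity of $F$ on $(0, \infty)$ and the subsequent classification of multiplicative, non-decreasing, mid-convex functions must be verified using only the information carried by the $2 \times 2$ minors of $F$ applied to symmetric ambient matrices, rather than from full $\TN$-preservation as in Theorem~\ref{Tfixeddim}.
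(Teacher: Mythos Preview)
Your proposal is correct and follows essentially the same route as the paper: reduce $(2)\Leftrightarrow(3)$ to Theorem~\ref{Tfixeddim} via padding, and for $(1)\Rightarrow(3)$ embed the test matrices of Theorem~\ref{Tfixeddim} inside symmetric $\TN_p$ matrices using $A'(x,y)$, $B'(x,y)$ from~\eqref{Etp2} for $p=2$, the symmetric matrix~$C$ from~\eqref{E3matrix} for $p=3$, and the $5\times 5$ matrix $T(x)$ from~\eqref{E5matrix} for $p\geq 4$, then pad by zeros to size $d\times d$ and to the kernel setting. The only cosmetic slip is that the zero block has size $d$ minus the size of the core, not always $d-p-1$ (for $p\geq 5$ the core $T(x)$ is $5\times 5$, not $(p+1)\times(p+1)$); this does not affect the argument.
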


In a sense, this result not immediately following from
Theorem~\ref{Tsymmetric} is a manifestation of the fact that the
definition of $\TN_p$ for a symmetric kernel differs from asking for
every \emph{principal} $r \times r$ minor being non-negative for
$1 \leq r \leq p$.

\begin{proof}
That $(2) \implies (1)$ is immediate, while the equivalence
$(2) \iff (3)$ follows from Theorem~\ref{Tfixeddim}, since $(2)$ is
equivalent to preserving $\TN$ for $p \times p$ matrices. To see that
$(1) \implies (3)$, it suffices to note that test matrices used to prove
Theorems~\ref{Tfixeddim} and~\ref{Tsymmetric} occur as
submatrices of $d \times d$ symmetric matrices which are $\TN_p$.

This is immediate for $p = 1$, while for $p = 2$ the matrices
in~(\ref{E2matrices}) and~(\ref{Esymrank1}) embed as required,
using (\ref{Etp2}) for the former and padding with zeros as necessary.
Now working as in the proof of Theorem~\ref{Tfixeddim}(b) gives
that $F( x ) = c x^\alpha$ with $c > 0$ and~$\alpha \geq 0$.

Next, suppose $p = 3$. Then the $p=2$ case, together with the
matrix~(\ref{E3matrix}), implies as in the proof of
Theorem~\ref{Tfixeddim}(c) that $\alpha \geq 1$. Finally, if $p = 4$,
then the matrices~(\ref{E5matrix}) imply as in the proof of
Theorem~\ref{Tsymmetric}(e) that $\alpha = 1$.

This concludes the proof for matrices, and the extension to kernels
follows once again via padding by zeros.
\end{proof}

\section{Total-positivity preservers. I. Semi-finite domains}\label{Spos}

We now turn to the more challenging problem of determining the
functions which leave invariant the set of totally positive kernels,
\[
\sFTP_{X, Y} := \{ F : ( 0, \infty ) \to \R \mid \text{if } %
K : X \times Y \to \R \text{ is totally positive, so is } %
F \circ K \}.
\]

There are two technical challenges one encounters once the underlying
inequalities are strict. First, the embedding technique used to prove
Theorem~\ref{Tfixeddim}, which realises totally non-negative
$d \times d$ matrices as submatrices of totally non-negative
$( d + 1 ) \times ( d + 1 )$ matrices, is lost. Second, the crucial
property of multiplicative mid-point convexity is no longer available,
since the matrices in (\ref{E2matrices}) and (\ref{Esymrank1}) are not
always totally positive. Following the approach of the previous
section, we begin by indicating how these challenges can be addressed
in the finite-dimensional case.

\begin{theorem}\label{Ttp}
Let $F: ( 0, \infty ) \to \R$ be a function and let
$d := \min\{ m, n \}$, where $m$ and~$n$ are positive
integers. The following are equivalent.
\begin{enumerate}
\item $F$ preserves total positivity entrywise on $m \times n$
matrices.
\item $F$ preserves total positivity entrywise on $d \times d$
matrices.
\item The function $F$ satisfies
\begin{enumerate}
\item[(a)] $(d = 1)$ $F( x ) > 0$;
\item[(b)] $(d = 2)$ $F( x ) = c x^\alpha$ for some $c > 0$ and some
$\alpha > 0$;
\item[(c)] $(d = 3)$ $F( x ) = c x^\alpha$ for some $c > 0$ and some
$\alpha \geq 1$.
\item[(d)] $(d \geq 4)$ $F(x) = c x$ for some $c > 0$.
\end{enumerate}
\end{enumerate}
\end{theorem}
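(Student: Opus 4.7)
The equivalence $(1)\iff(2)$ is immediate since all minors of an $m\times n$ matrix have size at most $d$. The case $d=1$ is trivial, because a $1\times 1$ matrix is $\TP$ exactly when its entry is positive. Throughout the sequel I focus on $d\geq 2$, where the overall strategy is to reduce matters to Theorem~\ref{Tfixeddim} by proving that any $\TP$ preserver $F$ is continuous on $(0,\infty)$ and then applying a Whitney-type density argument.

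I expect the continuity step to be the main obstacle. The embedding $A\oplus \bZ_{1\times 1}$ exploited in the $\TN$ case introduces zero entries and is therefore unavailable, and the rank-one test matrices of~(\ref{E2matrices}) and~(\ref{Esymrank1}) fail to be $\TP$. My plan is to replace them with $\TP$ completions of $2\times 2$ partial matrices in which $F(x)$ is one of the entries: positivity of the resulting minors of $F[\cdot]$ translates into two-sided comparisons between values of $F$ at nearby points. Combining two such comparisons (this being the source of the two matrix-completion problems advertised in the introduction), I expect to obtain both monotonicity of $F$ on $(0,\infty)$ and a multiplicative mid-point convexity inequality $F(\sqrt{xy})^2\leq F(x)F(y)$. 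Together these imply continuity of $F$ on $(0,\infty)$ via the argument already used in the proof of Theorem~\ref{Tfixeddim} for $d=2$.

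Granted continuity, the reduction to the $\TN$ classification proceeds as follows. For any $\TN$ matrix $M$ with strictly positive entries, Whitney's density theorem produces $\TP$ matrices $M_k$ converging entrywise to $M$; every minor of $F[M_k]$ is strictly positive, and continuity of $F$ gives $F[M_k]\to F[M]$, so every minor of $F[M]$ is non-negative. Hence $F$ preserves $\TN$ entrywise on the set of $d\times d$ matrices with positive entries, and revisiting the proof of Theorem~\ref{Tfixeddim}, whose decisive test matrices either have positive entries or can be slightly perturbed to do so without altering the sign of the critical minor, forces $F$ to be either a positive constant or a power $F(x)=c x^\alpha$ with $c>0$ and $\alpha$ restricted according to $d$ as stated there. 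The constant case is ruled out at once, since then all $2\times 2$ minors of $F[A]$ vanish for any $A\in\TP_2$, contradicting $\TP$.

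It remains to check that every surviving power $F(x)=c x^\alpha$ does preserve $\TP$ on $d\times d$ matrices. For this, observe that the determinant of the Hadamard $\alpha$-power $A^{\circ \alpha}$ is a real-analytic function of the entries of $A$ on the open connected $\TP$ cone in $(0,\infty)^{d\times d}$; by the previous paragraph it is non-negative on this cone, and it is strictly positive at a generalized Vandermonde matrix (Example~\ref{EgenVDM}), hence strictly positive throughout. Applying the same argument to every square submatrix upgrades $\TN$ preservation to $\TP$ preservation, yielding the claimed exponent ranges in $(3)$ for $d=2$, $d=3$, and $d\geq 4$, and completing the classification.
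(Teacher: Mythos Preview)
Your forward direction is workable, though it takes a more roundabout path than the paper. The paper, after using the embedding Lemma~\ref{L2x2tp} to reduce to the $2\times 2$ case, does not pass through mid-convexity. Instead, from monotonicity alone it gets Borel measurability and hence a point $a$ of continuity; then, using the $\TP$ perturbations
\[
\begin{pmatrix} ax & axy \\ a-\eps & ay \end{pmatrix}
\quad\text{and}\quad
\begin{pmatrix} axy & ax \\ ay & a+\eps \end{pmatrix}
\]
and letting $\eps\to 0^+$, it obtains the \emph{exact} multiplicative identity $F(ax)F(ay) = F(a)F(axy)$. Since $G(x) := F(ax)/F(a)$ is measurable and multiplicative, the Sierpi\'nski--Banach theorem yields $G(x)=x^\alpha$ directly, without ever invoking the $\TN$ classification. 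For $d\geq 3$ the paper then extends $F = cx^\alpha$ continuously to $[0,\infty)$ and applies Whitney once, citing Theorem~\ref{Tfixeddim}. Your route (mid-convexity of $F^+$, then continuity, then Whitney, then re-running the test matrices of Theorem~\ref{Tfixeddim}) also works, but note two small points: the mid-convexity inequality you obtain from a perturbed rank-one matrix holds only for the one-sided limit $F^+$, so you need the extra step (as in the proof of Theorem~\ref{TsymmetricTP}) that continuity of $F^+$ forces $F$ itself to have no jumps; and for $d=3$ the test matrix $C$ of~(\ref{E3matrix}) has zero entries, so you must either perturb it or, more cleanly, extend your continuous $F$ to $[0,\infty)$ before applying Whitney and Theorem~\ref{Tfixeddim}.

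The real gap is in your converse $(3)\Rightarrow(2)$ for $d=3$. The implication ``real-analytic, non-negative on a connected open set, positive at one point, hence positive throughout'' is false: take $f(A)=(\det A)^2$, which is real-analytic and non-negative on all of $\R^{3\times 3}$, positive at every Vandermonde matrix, yet vanishes on the singular locus. Non-negativity of a real-analytic function only forces its zero set to have empty interior (measure zero), not to be empty. So your argument does not establish that $x\mapsto x^\alpha$ with $\alpha\geq 1$ sends $3\times 3$ $\TP$ matrices to $\TP$ matrices. The paper does not attempt to prove this step either; it simply invokes~(\ref{E3x3}), citing \cite[Theorem~4.2]{JW} and the shorter proof in \cite[Theorem~5.2]{FJS}, whose arguments are genuinely different (they use explicit determinantal identities rather than analyticity). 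You should either cite those results or supply a direct argument; the real-analyticity shortcut does not close the loop.
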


In order to prove Theorem~\ref{Ttp}, we formulate two auxiliary
results. We say that an $m \times n$ matrix $( a_{i j} )$
occurs as a \emph{submatrix} of a kernel $K$ on $X \times Y$ if there
exist $x_1 < \cdots < x_m$ and $y_1 < \cdots < y_n$ such that $a_{i j}
= K( x_i, y_j )$ for all $i \in [m]$ and $j \in [n]$.

\begin{lemma}\label{L2x2tp}
Fix integers $m \geq 2$ and $n \geq 2$. Every totally positive
$2 \times 2$ matrix occurs as the leading principal
submatrix of a positive multiple of a $m \times n$
generalized Vandermonde matrix, which is necessarily totally positive.
\end{lemma}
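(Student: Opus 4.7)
Given a totally positive matrix $A = (a_{ij})_{i,j=1}^2$, the plan is to exhibit positive reals $u_1 < u_2$, reals $\alpha_1 < \alpha_2$, and $\lambda > 0$ satisfying $a_{ij} = \lambda u_i^{\alpha_j}$ for $i,j \in \{1,2\}$, and then extend the two sequences to lengths $m$ and $n$ respectively. The four constraints involve five unknowns, so I would remove the slack by prescribing $\alpha_2 - \alpha_1 = 1$, say.

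Taking the ratio of the two equations in each row yields $u_1^{\alpha_2-\alpha_1} = a_{12}/a_{11}$ and $u_2^{\alpha_2-\alpha_1} = a_{22}/a_{21}$; as both quotients are positive, this determines $u_1$ and $u_2$ uniquely as positive numbers. The requirement $u_1 < u_2$ is then equivalent, given $\alpha_2 > \alpha_1$, to $a_{12}/a_{11} < a_{22}/a_{21}$, which is precisely the hypothesis $\det A = a_{11}a_{22} - a_{12}a_{21} > 0$ rearranged. With $u_1, u_2$ in hand, dividing $\lambda u_2^{\alpha_1} = a_{21}$ by $\lambda u_1^{\alpha_1} = a_{11}$ fixes $\alpha_1 = \log(a_{21}/a_{11}) / \log(u_2/u_1)$, and then $\lambda := a_{11} u_1^{-\alpha_1} > 0$. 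I would then pad the sequences freely: choose any real numbers $\alpha_2 < \alpha_3 < \cdots < \alpha_n$ and any positive numbers $u_2 < u_3 < \cdots < u_m$.

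Finally, I would verify that the resulting $m \times n$ generalized Vandermonde matrix $V = (u_i^{\alpha_j})$ is totally positive. Example~\ref{EgenVDM} supplies this when $m = n$; the rectangular case follows by embedding $V$ as the upper-left submatrix of an $N \times N$ generalized Vandermonde matrix with $N := \max\{m, n\}$ and further extended parameter sequences, so that every $k \times k$ minor of $V$ is also a $k \times k$ minor of the larger totally positive matrix. Multiplying by the positive scalar $\lambda$ preserves positivity of all minors, and by construction the leading $2 \times 2$ submatrix of $\lambda V$ equals $A$. There is no substantive obstacle: the lemma reduces to matching four equations against five parameters, with the determinant inequality $\det A > 0$ being exactly the condition that makes the required ordering $u_1 < u_2$ consistent with the prescribed entries.
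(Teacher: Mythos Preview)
Your argument is correct. The normalization $\alpha_2 - \alpha_1 = 1$ is a clean choice: it forces $u_1 = a_{12}/a_{11}$ and $u_2 = a_{22}/a_{21}$ directly, and the ordering $u_1 < u_2$ falls out of $\det A > 0$ with no further work. The back-substitution for $\alpha_1$ and $\lambda$ checks out, and the extension and total-positivity verification are routine. One cosmetic point: Example~\ref{EgenVDM} already records that the kernel $(x,y) \mapsto x^y$ on $(0,\infty) \times \R$ is $\TP$, which covers rectangular generalized Vandermonde matrices immediately, so the embedding into a square matrix is not strictly needed.

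The paper takes a somewhat different path. Rather than proving Lemma~\ref{L2x2tp} in isolation, it establishes the stronger Theorem~\ref{Ttpcompletion}, which allows the $2\times 2$ block to sit at an arbitrary prescribed position inside a $\TP$ kernel on any admissible pair $X \times Y$. There the authors normalize by fixing $\alpha_1 = 1$ (rather than $\alpha_2 - \alpha_1$), and this forces a case analysis according to how many entries of $A$ coincide; they must also allow the alternative ordering $u_1 > u_2$, $\alpha_1 > \alpha_2$ and then flip signs. Your normalization sidesteps all of that case-splitting and always lands in the regime $u_1 < u_2$, $\alpha_1 < \alpha_2$ in one stroke, which is tidier for the lemma as stated. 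What the paper's longer argument buys is the general completion result, which is later reused (for example in Proposition~\ref{Ppowers}) to embed a given $2 \times 2$ $\TP$ matrix inside a $\TP$ kernel on arbitrary infinite ordered sets.
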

 
In fact, any $\TP$ $2 \times 2$ matrix can be embedded at any
specified location within a generalized Vandermonde matrix.%
\footnote{We thank Prakhar Gupta and Pranjal Warade for this
observation.}
A stronger version of this result is given by
Theorem~\ref{Ttpcompletion} below.

Lemma~\ref{L2x2tp} is an example of a \emph{totally positive
completion problem} \cite{FJSm}. Embedding results are known for
arbitrary totally positive matrices, using, for example, the
exterior-bordering technique discussed in \cite[Chapter 9]{FJ} or the
parametrizations available in \cite{BFZ,FZ-2}. Lemma~\ref{L2x2tp} has
the advantage of providing an explicit embedding into the well-known
class of Vandermonde kernels, and is crucial to our final
characterization results, found in the penultimate section of this
paper.

The second result we require is a density theorem derived by
A.M.~Whitney in~1952, using generalized Vandermonde matrices and the
Cauchy--Binet identity. The symmetric variant has the same proof as
the version without this requirement.

\begin{theorem}[Whitney, {\cite[Theorem 1]{Whi}}]\label{Twhitney}
Given positive integers $m$, $n$, and $p$, the set of $\TP_p$
$m \times n$ matrices is dense in the set of $\TN_p$ $m \times n$
matrices. The same is true if both sets of matrices are taken to be
symmetric.
\end{theorem}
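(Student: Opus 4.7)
My plan is to approximate a given $\TN_p$ matrix $A$ of size $m \times n$ by an explicit sequence $A_\varepsilon \to A$ of $\TP_p$ matrices, using the two tools named in the excerpt: the family of $\TP$ generalized Vandermonde matrices provided by Example~\ref{EgenVDM}, and the Cauchy--Binet identity for minors of products.

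I would take $\TP$ square matrices $P_\varepsilon$ (on $[m]^2$) and $Q_\varepsilon$ (on $[n]^2$) that concentrate on the diagonal as $\varepsilon \to 0^+$, for instance the discrete Gaussian matrices $P_\varepsilon(i,j) := \exp(-(i-j)^2/\varepsilon)$, which are $\TP$ for every $\varepsilon > 0$ by a classical fact and satisfy $P_\varepsilon \to I$ entrywise. Set
\begin{equation*}
A_\varepsilon := P_\varepsilon\, A\, Q_\varepsilon + \varepsilon^N V,
\end{equation*}
where $V$ is a fixed $m \times n$ $\TP$ generalized Vandermonde matrix and $N$ is a large positive integer to be chosen. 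Clearly $A_\varepsilon \to A$ as $\varepsilon \to 0^+$. By Cauchy--Binet, each $k \times k$ minor of the multiplicative part factors as
\begin{equation*}
\det(P_\varepsilon A Q_\varepsilon)[I,J] = \sum_{K, L} \det P_\varepsilon[I, K]\, \det A[K, L]\, \det Q_\varepsilon[L, J],
\end{equation*}
each $P_\varepsilon$- and $Q_\varepsilon$-factor being strictly positive and each $\det A[K, L]$ non-negative. Hence this sum is non-negative, and strictly positive whenever $A$ has at least one non-vanishing $k \times k$ minor. In the residual case where all $k \times k$ minors of $A$ vanish for some $k \leq p$ (that is, $A$ is rank-deficient of rank less than $k$), the additive term $\varepsilon^N V$ enters: expanding $\det A_\varepsilon[I, J]$ by multilinearity in rows yields a polynomial in $\varepsilon$ whose leading coefficient $\varepsilon^{Nk} \det V[I, J]$ is strictly positive, while the lower-order coefficients come from mixed determinants combining rows of $P_\varepsilon A Q_\varepsilon$ with rows of $V$.

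The principal obstacle is quantitative control of these intermediate mixed determinants: their signs are not manifestly positive, so one must choose $N$ large enough that the positive $\varepsilon^{Nk}$ leading term dominates every lower-order contribution, uniformly across all $(I, J)$ with $|I| = |J| = k \leq p$. This is the heart of the proof, and it exploits the fact that for rank-deficient $A$ the minor polynomial actually begins at a well-controlled power of $\varepsilon$ (dictated by the rank), allowing $N$ to be tuned against the smallest strictly positive minor of $A$ and against $\|V\|$. For the symmetric variant, the same argument goes through verbatim upon taking $Q_\varepsilon = P_\varepsilon$ and $V$ symmetric---both automatic if one uses the symmetric Gaussian for $P_\varepsilon$ and the symmetric $\TP$ matrix $V(i, j) := e^{\beta i j}$ (a generalized Vandermonde, hence $\TP$, and manifestly symmetric)---so that $A_\varepsilon$ is symmetric whenever $A$ is.
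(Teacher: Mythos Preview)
Your scheme has a genuine gap, and it shows up already for $2\times 2$ matrices.

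Expand a $k\times k$ minor of $A_\varepsilon = B_\varepsilon + \varepsilon^N V$ (with $B_\varepsilon := P_\varepsilon A Q_\varepsilon$) by multilinearity in the rows: the result is a polynomial in $\varepsilon^N$ whose term of degree $s$ collects all mixed determinants with $s$ rows from $V$ and $k-s$ rows from $B_\varepsilon$. Since $\mathrm{rank}\,B_\varepsilon = \mathrm{rank}\,A =: r$, the terms with $s < k - r$ vanish, and the expansion starts at degree $k-r$. As $\varepsilon \to 0^+$, it is this \emph{lowest}-degree coefficient that governs the sign, not the top-degree coefficient $\det V[I,J]$ that you call ``leading''. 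And that lowest-degree coefficient---a sum of mixed determinants combining $r$ rows of $B_\varepsilon$ with $k-r$ rows of $V$---has no reason to be positive; increasing $N$ only makes the genuinely positive top term $\varepsilon^{Nk}\det V[I,J]$ smaller relative to it.

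Here is a concrete failure. Take $A=\begin{pmatrix}1&1\\1&1\end{pmatrix}$, so $r=1$; then $B_\varepsilon = c_\varepsilon^2 A$ with $c_\varepsilon = 1+e^{-1/\varepsilon}$. Choose the generalized Vandermonde matrix $V=(u_i^{\alpha_j})$ with $u_1=0.1$, $u_2=0.2$, $\alpha_1=1$, $\alpha_2=2$, i.e.\ $V=\begin{pmatrix}0.1&0.01\\0.2&0.04\end{pmatrix}$, which is $\TP$. A direct computation gives
\[
\det A_\varepsilon = c_\varepsilon^2\,\varepsilon^N\bigl(V_{11}+V_{22}-V_{12}-V_{21}\bigr) + \varepsilon^{2N}\det V
= -0.07\,c_\varepsilon^2\,\varepsilon^N + 0.002\,\varepsilon^{2N},
\]
which is negative for all small $\varepsilon>0$, regardless of $N$. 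So $A_\varepsilon$ is not even $\TP_2$.

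The remedy---and this is what Whitney does and what the paper's Section~\ref{SWhitneyExt} carries out in detail---is to raise the rank \emph{one step at a time}. After the Gaussian product makes $B$ simultaneously $\TN_p$ and $\TP_r$, one adds a small \emph{rank-one} perturbation concentrated at a single entry (say $e^{-\kappa}\delta_{(z_1,w_1)}$ in the paper's notation). Expanding any $(r+1)\times(r+1)$ minor along that entry produces the original (zero) minor plus the perturbation parameter times a single $r\times r$ minor of $B$, which is strictly positive because $B$ is $\TP_r$; hence the result is $\TN_p$ with rank exactly $r+1$, and a further Gaussian product makes it $\TP_{r+1}$. Iterating $p-r$ times yields a $\TP_p$ approximant. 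The symmetric variant goes through with the same construction, as you correctly observe.
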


With these two observations to hand, we can now classify
total-positivity preservers.

\begin{proof}[Proof of Theorem~\ref{Ttp}]
That $(3) \implies (2)$ and $(2) \implies (1)$ are immediate, with the
former using~(\ref{E3x3}) when $d = 3$. We now prove that
$(1) \implies (3)$. The case $d = 1$ is immediate, so we assume that
$d \geq 2$. By Lemma~\ref{L2x2tp}, the map $F[-]$ preserves $\TP$ on
$2 \times 2$ matrices. Considering the action of $F[-]$ on the
matrices
\[
\begin{pmatrix}
y & x \\
x & x 
\end{pmatrix} \qquad ( y > x > 0)
\]
gives that $F$ takes positive values and is increasing on
$( 0, \infty )$. Thus $F$ is Borel measurable and continuous
outside a countable set. Let $a > 0$ be a point of continuity
and consider the totally positive matrices
\[
A( x, y, \eps ) := \begin{pmatrix} a x & a x y \\
a - \eps & a y \end{pmatrix} \text{ and } %
B( x, y, \eps ) := \begin{pmatrix} a x y & a x \\
a y & a + \eps
\end{pmatrix} %
\quad ( x , y > 0, \ 0 < \eps < a ).
\]
Then
\begin{align*}
0 & \leq \lim_{\eps \to 0^+} \det F[ A( x, y, \eps ) ] = %
F( a x ) F( a y ) - F( a x y ) F( a ) \\
\text{and} \quad 0 & \leq %
\lim_{\eps \to 0^+} \det F[ B( x, y , \eps ) ] = %
F( a ) F( a x y ) - F( a x ) F( a y ).
\end{align*}
Hence, letting $G( x ) := F( a x ) / F( a )$, we have that
\[
G( x y ) = G( x ) G( y ) \qquad \text{for all } x, y > 0.
\]
Since $G$ is measurable, classical results of Sierpi\'nsky
\cite{Sierpinsky} and Banach \cite{Banach} on the Cauchy functional
equation imply there exists $\alpha \in \R$ such that
$G( x ) = x^\alpha$ for all $x > 0$. Thus if
$c := F( a  ) a^{-\alpha} > 0$, then
\[
F( x ) = F( a ) ( x / a )^\alpha = c x^\alpha
\qquad \text{for all } x > 0.
\]
As $F$ is increasing, it holds that $\alpha > 0$. Hence
$F( x ) = c x^\alpha$ for some $c > 0$ and $\alpha > 0$. The result
follows immediately if $d = 2$.

Finally, suppose $d \geq 3$. Since $F( x ) = c x^\alpha$ for some
$c > 0$ and $\alpha > 0$, it admits a continuous extension $\tilde{F}$
to $[ 0, \infty )$. By Theorem~\ref{Twhitney}, we conclude that
$\tilde{F}$ preserves $\TN$ entrywise on $m \times n$ matrices.
Theorem~\ref{Tfixeddim} gives the form of $\tilde{F}$, and restricting
to~$( 0, \infty )$ shows that $F$ is as claimed. This proves that
$(1) \implies (3)$, which completes the proof.
\end{proof}

The proof of Theorem~\ref{Ttp} relies on Lemma~\ref{L2x2tp}; we will
prove a stronger result presently. For now, we determine the set
$\sFTP_{X, Y}$ when at most one of $X$ and $Y$ is infinite.

\begin{theorem}\label{Ttpkernel}
Let $X$ and $Y$ be non-empty totally ordered sets. Then
\begin{enumerate}
\item[(a)]
$\sFTP_{X, Y} = %
\{ F: ( 0, \infty ) \to ( 0, \infty ) \}$ if
$\min\{ | X |, | Y | \} = 1$.
\item[(b)]
$\sFTP_{X, Y} = \{ c x^\alpha : c > 0, \ \alpha > 0 \}$ if
$\min\{ | X |, | Y | \} = 2$.
\item[(c)]
$\sFTP_{X, Y} = \{ c x^\alpha : c > 0, \ \alpha \geq 1\}$ if
$\min\{ | X |, | Y | \} = 3$.
\item[(d)]
$\sFTP_{X, Y} = \{ c x : c > 0 \}$ if
$4 \leq \min\{ | X |, | Y | \} < \infty$.
\end{enumerate}
Furthermore, if $p \in \N$ and both $X$ and $Y$ are of size at least $p$
(and possibly infinite), then the functions preserving $\TP_p$ kernels on
$X \times Y$ are as above, with $\min\{ |X|, |Y| \}$ replaced by $p$.
\end{theorem}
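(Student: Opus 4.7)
The plan is to reduce each case to the matricial classification Theorem~\ref{Ttp}. Part (a) is immediate, since $\TP$ kernels in the rank-one setting are just strictly positive functions on a totally ordered set, and any positive function on $(0,\infty)$ preserves them. For the remaining cases with both $X$ and $Y$ finite, a kernel on $X \times Y$ is a matrix, and Theorem~\ref{Ttp} with $d = \min\{|X|,|Y|\}$ yields the classification directly. The substantive cases are those where exactly one of $X, Y$ is infinite, together with the final $\TP_p$ variant where both may be infinite.

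For the semi-infinite cases, I would first produce a supply of test $\TP$ kernels on $X \times Y$. The generalized Vandermonde construction of Example~\ref{EgenVDM2} provides $\TP$ kernels whenever suitable strictly increasing functions $h_X : X \to (0,\infty)$ and $h_Y : Y \to \R$ exist; for the sets appearing in cases (b)--(d) this can always be arranged, if necessary by restricting to a countable order-isomorphic suborder containing the finitely many points of interest. By Lemma~\ref{L2x2tp}, every $\TP$ $2 \times 2$ matrix arises as the leading $2 \times 2$ submatrix of a generalized Vandermonde matrix, hence as a restriction of a generalized Vandermonde $\TP$ kernel on $X \times Y$. Consequently $F$ preserves $\TP$ on $2 \times 2$ matrices, and Theorem~\ref{Ttp}(b) forces $F(x) = c x^\alpha$ with $c > 0$ and $\alpha > 0$.

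To sharpen the exponent $\alpha$ in cases (c), (d), and in the $\TP_p$ variant with $p \geq 3$, I would extend $F$ continuously to $\tilde F$ on $[0, \infty)$ with $\tilde F(0) = 0$. The specific $\TN$ test matrices from the proof of Theorem~\ref{Tfixeddim} --- namely $C$ from (\ref{E3matrix}) for $d = 3$ and the family $N(\eps,x)$ from (\ref{E4matrix}) for $d \geq 4$ --- are limits of $\TP$ matrices of the same size, by Whitney's density Theorem~\ref{Twhitney}. Using Lemma~\ref{L2x2tp} together with the higher-dimensional $\TP$ matrix-completion result Theorem~\ref{Ttpcompletion}, each such $\TP$ approximant can be lifted to a restriction of a $\TP$ kernel on $X \times Y$. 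Composing with $F$ and passing to the limit via continuity of $\tilde F$ shows that $\tilde F$ preserves total non-negativity on the required test matrices, and the determinant computations in Theorem~\ref{Tfixeddim}(c), (d) then force $\alpha \geq 1$ when $\min\{|X|,|Y|\} = 3$ and $\alpha = 1$ when $\min\{|X|,|Y|\} \geq 4$. The $\TP_p$ statement with both $X$ and $Y$ possibly infinite proceeds identically after restricting throughout to subsets of size $p$, since total positivity of order $p$ is verified on $p \times p$ minors and generalized Vandermonde kernels are $\TP_p$ on any such restriction.

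The principal obstacle throughout is the embedding/completion step: ensuring that the matrix test families used in the proof of Theorem~\ref{Tfixeddim} can be realized as restrictions of $\TP$ kernels on $X \times Y$. For the $2 \times 2$ case this is exactly Lemma~\ref{L2x2tp}; in higher dimensions one relies on the stronger $\TP$ matrix-completion Theorem~\ref{Ttpcompletion}, without which the transition from $\TP$ preservers of matrices of a given size to the kernel setting would fail.
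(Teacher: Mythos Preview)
Your reduction to the $2 \times 2$ case via Lemma~\ref{L2x2tp} and Theorem~\ref{Ttpcompletion}, yielding $F(x) = cx^\alpha$, matches the paper. The gap is in the step where you sharpen $\alpha$ for $p \geq 3$: you invoke Theorem~\ref{Ttpcompletion} as a ``higher-dimensional $\TP$ matrix-completion result,'' but that theorem embeds only $2 \times 2$ $\TP$ matrices into $\TP$ kernels on $X \times Y$; its hypothesis is that $A$ is $2 \times 2$, and no analogous statement is proved for larger matrices. The paper explicitly flags this obstruction immediately after stating Theorem~\ref{Ttpkernel}: padding by zeros fails for $\TP$, and ``in the absence of a suitable extension result'' a different device is required.

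What the paper does instead is bypass completion altogether via the kernel-level Whitney approximation, Theorem~\ref{Twhitney2}, developed in Section~\ref{SWhitneyExt}. Having identified $X$ and $Y$ with subsets of $\R$ by Lemma~\ref{Ltptosets}, one takes an \emph{arbitrary} $\TN$ matrix $A$ of size $d = \min\{p,4\}$, fixes $\bx \in \inc{X}{d}$, $\by \in \inc{Y}{d}$, and builds a piecewise-constant $\TN_p$ kernel on $\R \times \R$ that equals $a_{ij}$ on small neighbourhoods of $(x_i,y_j)$ and vanishes elsewhere. Theorem~\ref{Twhitney2} then produces $\TP_p$ kernels on $\R \times \R$ converging to $A$ at those points; restricted to $X \times Y$ these are $\TP$ (respectively $\TP_p$) kernels, so applying $F$ and passing to the limit with the continuous extension $\widetilde{F}$ shows $\widetilde{F}[A]$ is $\TN$. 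Since $A$ was arbitrary, Theorem~\ref{Tfixeddim} forces the claimed constraints on $\alpha$. Your route would require an independent proof that every $\TP$ $d \times d$ matrix, $d \in \{3,4\}$, embeds as a submatrix of a $\TP$ kernel on arbitrary $X \times Y$; this is not supplied by anything in the paper and is precisely the difficulty that Theorem~\ref{Twhitney2} was introduced to circumvent.
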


When $X$ and $Y$ are both finite, Theorem~\ref{Ttpkernel} follows
directly from Theorem~\ref{Ttp}. The case where one of $X$ or $Y$ is
infinite is significantly more complicated. First, observe that for
$X_0 \subseteq X$ and $Y_0 \subseteq Y$, a $\TP$ kernel on
$X_0 \times Y_0$ cannot be extended to a $\TP$ kernel on $X \times Y$
simply by ``padding by zeros'', that is, by defining $K( x, y ) = 0$
on the complement of $X_0 \times Y_0$. In the absence of a suitable
extension result, we instead generalize Whitney's approximation
theorem (Theorem~\ref{Twhitney}) to arbitrary domains; see
Theorem~\ref{Twhitney2} above. The proof is obtained in
Section~\ref{SWhitneyExt} with the help of a form of discretized
Gaussian convolution.

Moreover, contrary to the $\TN$ case of Proposition~\ref{Pdim}, the
set $\sFTP_{X, Y}$ does not only depend on whether $X$ and $Y$ are
finite or not. For example, suppose $X$ is of cardinality strictly
larger than the continuum, and $| Y | \geq 2$.  Choose distinct $y_1$
and $y_2$ in $Y$; since $| X | > | \R^2 |$, by the pigeonhole
principle it follows that $K|_{X \times \{ y_1, y_2 \}}$ contains a
$2 \times 2$ submatrix with equal columns, which is therefore
singular. This shows that $K$ cannot be $\TP$, and the ``test set'' in
$\sFTP_{X, Y}$ is, in fact, empty. Thus, the existence of a $\TP$
kernel on $X \times Y$ already imposes constraints on the sets $X$
and~$Y$.

The following result characterizes such sets, which must be
order-isomorphic to subsets of the real line.

\begin{lemma}\label{Ltptosets}
Suppose $X$ and $Y$ are non-empty totally ordered sets. The following
are equivalent.
\begin{enumerate}
\item There exists a totally positive kernel $K : X \times Y \to \R$.

\item There exists a $\TP_2$ kernel $K : X \times Y \to \R$.

\item Either $X$ or $Y$ is a singleton, or there exist order-preserving
injections from $X$ and $Y$ into $( 0, \infty )$.
\end{enumerate}
The same equivalence holds if $X = Y$ and the kernels in $(1)$ and
$(2)$ are taken to be symmetric.
\end{lemma}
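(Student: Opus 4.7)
The plan is to prove the chain $(1) \Rightarrow (2) \Rightarrow (3) \Rightarrow (1)$, with the symmetric variant handled in parallel. The implication $(1) \Rightarrow (2)$ is immediate from the definitions, since $\TP$ means $\TP_p$ for all $p$.

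For $(3) \Rightarrow (1)$: if either $X$ or $Y$ is a singleton, any positive constant kernel is trivially $\TP$, since every minor is just a positive $1 \times 1$ entry. Otherwise, given order-preserving injections $\phi : X \to (0, \infty)$ and $\psi : Y \to (0, \infty)$, one can pull back a generalized Vandermonde kernel: the map
\[
K : X \times Y \to (0, \infty); \ (x, y) \mapsto e^{\phi(x) \psi(y)}
\]
is $\TP$ by Example~\ref{EgenVDM}, because evaluated at $x_1 < \cdots < x_n$ and $y_1 < \cdots < y_n$ it produces the matrix $(u_i^{\alpha_j})$ with $u_i := e^{\phi(x_i)}$ strictly increasing in $(0,\infty)$ and $\alpha_j := \psi(y_j)$ strictly increasing in $\R$. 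For the symmetric case ($X = Y$ with a single order-preserving injection $\phi$), the kernel $K(x,y) := e^{\phi(x)\phi(y)}$ is manifestly symmetric and $\TP$ by the same example.

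For the crucial implication $(2) \Rightarrow (3)$: assume $K : X \times Y \to \R$ is $\TP_2$ and neither $X$ nor $Y$ is a singleton. Fix any $x_1 < x_2$ in $X$ and define
\[
\phi_Y : Y \to (0, \infty); \ y \mapsto \frac{K(x_2, y)}{K(x_1, y)},
\]
which is well-defined and positive because all entries of $K$ are strictly positive. The $\TP_2$ condition applied to $K[(x_1, x_2); (y, y')]$ for $y < y'$ gives
\[
K(x_1, y) K(x_2, y') - K(x_1, y') K(x_2, y) > 0,
\]
which is equivalent to $\phi_Y(y') > \phi_Y(y)$. Hence $\phi_Y$ is a strictly increasing map, so an order-preserving injection into $(0, \infty)$; by symmetry, fixing $y_1 < y_2$ in $Y$ and considering $\phi_X(x) := K(x, y_2)/K(x, y_1)$ produces an order-preserving injection $\phi_X : X \to (0, \infty)$. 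In the symmetric case, either construction yields the required single embedding of $X = Y$ into $(0, \infty)$.

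The argument is short and the main idea is the observation that $\TP_2$ forces strict monotonicity of the column (or row) ratio function; there is no real obstacle apart from carefully dispatching the singleton boundary cases and noting that only $\TP_2$ is used, which automatically gives the sharper equivalence between $\TP$ and $\TP_2$ kernels at the level of existence.
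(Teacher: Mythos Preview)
Your proof is correct and follows essentially the same approach as the paper: the paper also proves $(3)\Rightarrow(1)$ via the Vandermonde-type kernel $e^{xy}$ pulled back along the embeddings, and proves $(2)\Rightarrow(3)$ by exactly the ratio-function argument (fixing two points in one factor and observing that $\TP_2$ forces the ratio $K(\cdot,y_2)/K(\cdot,y_1)$ to be strictly increasing). The only cosmetic difference is that the paper fixes $y_1<y_2$ and embeds $X$ first, whereas you fix $x_1<x_2$ and embed $Y$ first.
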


\begin{proof}
If $(3)$ holds and $X$ or $Y$ is a singleton, then the constant kernel
$K \equiv 1$ shows that $(1)$ holds. Otherwise, identify $X$ and $Y$
with subsets of $\R$ via order-preserving injections, and note that
the restriction of $K'$ from Example~\ref{EgenVDM} is totally
positive. Hence $(3) \implies (1)$. Clearly $(1) \implies (2)$, so it
remains to show that $(2) \implies (3)$.

Suppose $(2)$ holds, and neither $X$ nor $Y$ is a singleton. Fix
$y_1 < y_2$ in $Y$; the $\TP_2$ property of $K$ implies that the ratio
function
\[
\varphi : X \to ( 0, \infty ); \ x \mapsto K( x, y_2 ) / K( x, y_1 )
\]
is strictly increasing, so is an order-preserving injection. The same
working applies with the roles of $X$ and $Y$ exchanged, and so (3)
holds.

Finally, note that the same proof goes through verbatim if $X = Y$ and
all kernels under consideration are required to be symmetric.
\end{proof}

Lemma~\ref{Ltptosets} is useful not only in proving
Theorem~\ref{Ttpkernel}, but also for proving a stronger form of
Lemma~\ref{L2x2tp} that was promised above. A $\TP$ $2 \times 2$
matrix, which is necessarily proportional to one of generalized
Vandermonde form, can be embedded in any position, not just in a $\TP$
matrix, but in a Vandermonde kernel on an essentially arbitrary
domain.

\begin{theorem}\label{Ttpcompletion}
Let $A$ be a real $2 \times 2$ matrix. The following are equivalent.
\begin{enumerate}
\item Given $\{ i_1 < i_2 \} \subseteq [m]$ and
$\{ j_1 < j_2 \} \subseteq [n]$, where $m$, $n \geq 2$, there exists
an $m \times n$ matrix $\widetilde{A}$, which is a positive
multiple of generalized Vandermonde matrix, such
that $\widetilde{A}_{i_p, j_q} = a_{p q}$ for $p$, $q = 1$, $2$.

\item Given totally ordered sets $X$ and $Y$, such that $X \times Y$
admits a $\TP$ kernel, and pairs $\{ x_1 < x_2 \} \subseteq X$ and
$\{ y_1 < y_2 \} \subseteq Y$, there exists a $\TP$ kernel $K$ on
$X \times Y$ such that $K[ ( x_1, x_2 ); ( y_1, y_2 ) ] = A$.

\item The matrix $A$ is $\TP$.
\end{enumerate}
\end{theorem}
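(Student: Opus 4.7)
The implications $(1) \Rightarrow (3)$ and $(2) \Rightarrow (3)$ are both immediate, as any generalized Vandermonde matrix is $\TP$ by Example~\ref{EgenVDM}, and any $2 \times 2$ submatrix of a $\TP$ matrix or kernel is itself $\TP$. The real content is the two converses.

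For $(3) \Rightarrow (1)$, the plan is to pass to logarithms to linearize the problem. Seeking an entrywise expression $\widetilde{A}_{ij} = c\, u_i^{\alpha_j}$ with $0 < u_1 < \cdots < u_m$ and $\alpha_1 < \cdots < \alpha_n$ amounts, after setting $v_i := \log u_i$ and $\gamma := \log c$, to solving the affine system $\gamma + v_{i_p} \alpha_{j_q} = \log a_{pq}$ for $p,q = 1,2$. Subtracting pairs of equations yields the compatibility relation
\[
(v_{i_2} - v_{i_1})(\alpha_{j_2} - \alpha_{j_1}) = \log R, \qquad
R := \frac{a_{11} a_{22}}{a_{12} a_{21}},
\]
and the $\TP$ hypothesis is used here precisely to ensure $R > 1$. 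I would then fix any $\delta > 0$, set $\alpha_{j_2} - \alpha_{j_1} := \delta$, use the subtracted equations to read off $v_{i_1}, v_{i_2}$ (with $v_{i_2} > v_{i_1}$ automatic since $R > 1$), and determine $\alpha_{j_1}$ and $\gamma$ from two of the original equations. To complete the construction, I would fill in the remaining entries of the $v$- and $\alpha$-sequences arbitrarily, respecting strict monotonicity and the prescribed index positions in $[m]$ and $[n]$. Setting $u_i := e^{v_i}$ and $c := e^{\gamma}$ yields a positive multiple of a generalized Vandermonde matrix, which is $\TP$ by Example~\ref{EgenVDM}.

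For $(3) \Rightarrow (2)$, the task is to realize the same construction in the more flexible setting of an arbitrary domain. By Lemma~\ref{Ltptosets}, the existence of a $\TP$ kernel on $X \times Y$ gives order-preserving injections of $X$ and $Y$ into $( 0, \infty )$. I would then take a kernel of the form
\[
K( x, y ) = c\, \phi( x )^{\psi( y )},
\]
which is $\TP$ for any strictly increasing $\phi : X \to ( 0, \infty )$ and $\psi : Y \to \R$ by Example~\ref{EgenVDM2}. The computation of the previous paragraph prescribes values $\phi( x_1 ) < \phi( x_2 )$ in $( 0, \infty )$, $\psi( y_1 ) < \psi( y_2 )$ in $\R$, and a positive scalar $c$; it then remains to extend $\phi$ and $\psi$ to strictly increasing functions on all of $X$ and $Y$ matching these data, which can be arranged by composing the order-preserving injections into $\R$ with continuous strictly increasing real functions that interpolate the two prescribed values.

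The main obstacle is the rigidity of the four-equation system in $(3) \Rightarrow (1)$: after linearization one has only a single free parameter $\delta$, and it must be used to arrange the strict monotonicity of both the $v$- and $\alpha$-sequences simultaneously. This is exactly what the condition $\det A > 0$ (equivalently, $R > 1$) buys, and without the $\TP$ hypothesis the construction collapses. Once this key computation is in place, the kernel case $(3) \Rightarrow (2)$ is a routine extension argument that reduces to interpolating two monotone data points by a strictly increasing function on an order-embedded domain.
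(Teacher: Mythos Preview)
Your proof is correct and follows essentially the same route as the paper: linearize via logarithms to write $A$ as a scaled generalized Vandermonde submatrix (using $\det A > 0$, i.e.\ $R > 1$, to force the required monotonicity), then transfer to $X \times Y$ via the order embeddings from Lemma~\ref{Ltptosets} and Example~\ref{EgenVDM2}. Your unified parameterization with $\delta = \alpha_{j_2} - \alpha_{j_1}$ free is in fact tidier than the paper's version, which works through several special cases (three equal entries, two equal in a row or column) before carrying out the analogous computation in the generic case with $\alpha_1 = 1$ fixed.
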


This immediately implies Lemma~\ref{L2x2tp}, and so completes the
proof of Theorem~\ref{Ttp}.

\begin{proof}
Clearly, $(1)$ and $(2)$ each imply $(3)$. We will show that
$(3) \implies (2)$; the construction used for this also shows that
$(3) \implies (1)$. Furthermore, as $X$ and $Y$ both embed inside
$\R$, by Lemma~\ref{Ltptosets}, henceforth we will consider $X$ and
$Y$ to be subsets of~$\R$.

We first show that an arbitrary $\TP$ $2 \times 2$ matrix
$A$ has the form $\lambda^{-1} ( u_i^{\alpha_j} )_{i, j = 1}^2$, where
the terms $\lambda$, $u_1$, and $u_2$ are positive, $\alpha_1$ and
$\alpha_2$ are real, and either $u_1 < u_2$ and $\alpha_1 < \alpha_2$,
or $u_1 > u_2$ and $\alpha_1 > \alpha_2$. The proof goes through
various cases.

Suppose first that three entries of $A$ are equal. Rescaling the
matrix $A$, there are four cases to consider:
\[
A_1 = \begin{pmatrix} x & 1 \\ 1 & 1 \end{pmatrix}, \quad %
A_2 = \begin{pmatrix} 1 & y \\ 1 & 1 \end{pmatrix}, \quad %
A_3 = \begin{pmatrix} 1 & 1 \\ y & 1 \end{pmatrix}, \quad %
\text{and} \quad %
A_4 = \begin{pmatrix} 1 & 1 \\ 1 & x \end{pmatrix}, 
\]
where $x > 1$ and $0 < y < 1$. In the first case, the matrix
$A_1$ equals $( u_i^{\alpha_j} )$ where $u_1 = x$, $u_2 = 1$,
$\alpha_1 = 1$, and $\alpha_2 = 0$. A similar construction can easily
be obtained for $A_2, A_3$, and $A_4$.

Next, suppose two entries in a row or column of $A$ are equal. 
There are again four cases:
\[
A_5 = \begin{pmatrix} 1 & 1 \\ x & y \end{pmatrix}, \quad %
A_6 = \begin{pmatrix} y & x \\ 1 & 1 \end{pmatrix}, \quad %
A_7 = \begin{pmatrix} y & 1 \\ x & 1 \end{pmatrix}, \quad %
\text{and} \quad %
A_8 = \begin{pmatrix} 1 & x \\ 1 & y \end{pmatrix},
\]
where $y > x > 0$ and $x$, $y \neq 1$. For $A_5$, we can take
$u_1 = 1$, $u_2 = x$, $\alpha_1 = 1$, and
$\alpha_2 = \log y / \log x$. If $u_1 < u_2$, then
$\alpha_1 < \alpha_2$; similarly, when $u_1 > u_2$, we have that
$\alpha_1 > \alpha_2$. Thus $A_5$ can be written as desired. The other
cases are similar.

The remaining case is when
\[
A := \begin{pmatrix} v & w \\ x & y \end{pmatrix} \qquad 
( v, w, x, y > 0, \  v y - w x > 0 ),
\]
with $\{ v, y \} \cap \{ w, x \} = \emptyset$. We claim there exist
$\lambda$, $u_1$, $u_2 > 0$, $\alpha_1 = 1$, and $\alpha_2$ such that
\[
\lambda \begin{pmatrix} v & w \\ x & y \end{pmatrix} =
\begin{pmatrix} u_1 & u_1^{\alpha_2} \\ u_2 & u_2^{\alpha_2}
\end{pmatrix},
\]
and either $u_1 < u_2$ and $\alpha_1 < \alpha_2$, or
$u_1 > u_2$ and $\alpha_1 > \alpha_2$. Applying the logarithm
entrywise to both matrices and computing the determinants gives that
\[
( L + V ) ( L + Y ) = ( L + W )( L + X ),
\]
where $L = \log \lambda$, $V = \log v$, $W = \log w$, $X = \log x$,
and $Y = \log y$. This yields a linear equation in~$L$, whence
\[
\lambda = \exp \left(
\frac{\log w \log x - \log v \log y}{\log( v y / w x )}
\right).
\]
Clearly, $u_1 = \lambda v$ and $u_2 = \lambda x$. Solving for
$\alpha_2$ explicitly, we obtain
\[
\alpha_2 = \frac{\log( w / y )}{\log( v / x )}.
\]
There are now two cases: if $u_1 < u_2$, then $v < x$, so
$w / y < v / x < 1$ and $\alpha_2 > 1 = \alpha_1$. If, instead,
$u_1 > u_2$, then $v / x > 1$ and $\alpha_2 < 1 = \alpha_1$.

Thus, for some $\lambda > 0$, the matrix
$\lambda A$ is of the form $( \exp( \alpha_i \beta_j ) )_{i,j=1}^2$
with either $\alpha_1 < \alpha_2$ and $\beta_1 < \beta_2$, or
$\alpha_1 > \alpha_2$ and $\beta_1 > \beta_2$. Furthermore, the latter
case reduces to the former, since
\[
\lambda A = ( \exp( \alpha'_i \beta'_j ) )_{i,j = 1}^2, %
\quad \text{with } %
\alpha'_i = -\alpha_i \text{ and } \beta'_j = -\beta_j.
\]
Thus $A$ occurs as a submatrix of the scaled Vandermonde kernel
\[
\R \times \R \to \R; \ ( x, y ) \mapsto \lambda^{-1} \exp( x y ).
\]

To pass to a kernel on $X \times Y$, where $X$ and $Y$ are real sets,
fix $x_1$, $x_2 \in X$ and $y_1$, $y_2 \in Y$, where $x_1 < x_2$ and
$y_1 < y_2$, and let $\varphi_X : X \to \R$ and $\varphi_Y : Y \to \R$
be the unique linear maps such that $\varphi_X( x_i ) := \alpha_i$ and
$\varphi_Y( y_j ) := \beta_j$ ($i$, $j = 1$, $2$). Then $A$ occurs as
the submatrix $K[ (x_1, x_2 ); ( y_1, y_2 )]$ of the kernel
\[
K : X \times Y \to \R; \ ( x, y ) \mapsto %
\lambda^{-1} \exp( \varphi_X( x ) \varphi_Y( y ) ).
\qedhere
\]
\end{proof}

Using these results, we can now classify the preservers of $\TP$
kernels on possibly infinite domains.

\begin{proof}[Proof of Theorem~\ref{Ttpkernel}]
We consider the two settings in a uniform manner: suppose $p \in \N$
and either (i)~$p = \min \{ |X|, |Y| \}$ and $F$ preserves $\TP$
kernels on $X \times Y$, or (ii)~$X$ and $Y$ both have size at least~$p$
and $F$ preserves $\TP_p$ kernels on $X \times Y$.

If $p = 1$ then the result is immediate, so suppose $p \geq 2$. By
Lemma~\ref{Ltptosets}, $X$ and $Y$ can be identified with subsets of
$( 0, \infty )$. Furthermore, by Lemma~\ref{L2x2tp} and using
suitable order-preserving maps, every $\TP$ $2 \times 2$ matrix can be
realized as a submatrix of a $\TP$ kernel on $X \times Y$.
Hence, by Theorem~\ref{Ttp}(3b), the function $F$ has the form
$F( x ) = c x^\alpha$ for some $c > 0$ and $\alpha > 0$. Conversely,
every such $F$ is easily seen to satisfy~(i) and~(ii) above, which
completes the case $p = 2$.

Otherwise, note first that $F$ extends continuously to
$[ 0, \infty )$. Let $d = \min\{ p, 4 \}$ and suppose
$A = ( a_{ij} )_{i, j = 1}^d$ is $\TN$. Fix $\bx \in \inc{X}{d}$ and  
$\by \in \inc{Y}{d}$, and let $\eps > 0$ be such that
$\min\{ x_{i + 1} - x_i, \ y_{i + 1} - y_i : i \in [d - 1] \} > %
2 \eps$, 
where $X$ and $Y$ are identified with subsets of $\R$.
Define
\[
K : \R \times \R \to \R; \ ( x, y ) \mapsto %
\begin{cases}
 a_{i j} & \textrm{if } | x - x_i | < \eps \text{ and } %
| y - y_j | < \eps \quad (i, j \in [d]), \\
 0 & \textrm{otherwise}.
\end{cases}
\]
Then $K$ is $\TN_p$ and, by Theorem~\ref{Twhitney2}, there exists a
sequence of $\TP_p$ kernels $( K_l )_{l \geq 1}$ converging to $K$
at $( x_i, y_j )$ for all $i$, $j \in [d]$. Hence $F \circ K_l$ is
$\TP_p$ for all $l \geq 1$ and therefore $F[ A ]$ is $\TN$. Since $A$
was arbitrary, it follows that $F$ preserves $\TN$ entrywise on
$d \times d$ matrices. By Theorem~\ref{Tfixeddim}, we see that
$F$ has the form claimed. The converse follows from
Theorem~\ref{Ttp}(3c) and (3d).
\end{proof}

The classification problems for preservers of $\TP$ kernels on $X
\times Y$ is still to be resolved in the case when $X$ and $Y$ are
both infinite, and the same is true when $X = Y$ and the kernels are
required to be symmetric. As a first step in this direction, we show
next that any such preserver must be a power function.

\begin{proposition}\label{Ppowers}
Suppose $X$ and $Y$ are totally ordered sets, each of size
at least $2$ and possibly infinite. If there exists a $\TP$ kernel on
$X \times Y$, and
$F : ( 0, \infty ) \to ( 0, \infty )$ preserves all such kernels,
or all $\TP_2$ kernels on $X \times Y$, then
$F( x ) = c x^\alpha$ for some $c > 0$ and $\alpha > 0$. The same
holds if $X = Y$ and the kernels are taken to be symmetric.
\end{proposition}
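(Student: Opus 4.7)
The plan is to reduce each of the four sub-cases of the proposition to the classification of $\TP$-preservers on $2 \times 2$ matrices supplied by Theorem~\ref{Ttp}(3b), which identifies the admissible functions as $\{ c x^\alpha : c > 0, \alpha > 0 \}$. What must therefore be shown is that $F[-]$ preserves total positivity when applied entrywise to every $\TP$ $2 \times 2$ matrix.

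First, by Lemma~\ref{Ltptosets}, the existence of a $\TP$ (equivalently, a $\TP_2$) kernel on $X \times Y$ lets us identify $X$ and $Y$ with subsets of $( 0, \infty )$ via order-preserving injections; the same applies to the symmetric setting where $X = Y$. For the non-symmetric case, Theorem~\ref{Ttpcompletion} then provides, for any $\{ x_1 < x_2 \} \subseteq X$, $\{ y_1 < y_2 \} \subseteq Y$, and any $\TP$ $2 \times 2$ matrix $A$, a $\TP$ kernel $K$ on $X \times Y$ with $K[ ( x_1, x_2 ); ( y_1, y_2 ) ] = A$. Applying the preservation hypothesis to $K$ immediately shows $F[ A ]$ to be $\TP$, so $F$ preserves $\TP$ entrywise on all $2 \times 2$ matrices, and Theorem~\ref{Ttp}(3b) concludes.

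For the symmetric case on $X \times X$, the Vandermonde-type construction of Theorem~\ref{Ttpcompletion} must be adapted so that the resulting kernel is symmetric while a prescribed $2 \times 2$ submatrix equals an arbitrary (a priori non-symmetric) $\TP$ $2 \times 2$ matrix $A$. Assuming that $X$ contains at least three points, fix $x_1 < x_2 < x_3$ in $X$ and seek a kernel of the form
\[
K( x, y ) = g( x ) g( y ) \exp( \varphi( x ) \varphi( y ) ),
\]
where $g : X \to ( 0, \infty )$ is positive and $\varphi : X \to \R$ is strictly increasing. Such a kernel is manifestly symmetric and, by Example~\ref{EgenVDM2}, $\TP$ on $X \times X$. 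Requiring $K[ ( x_1, x_2 ); ( x_2, x_3 ) ] = A$ yields, upon taking logarithms, a linear system in $\log g( x_i )$ and $\phi_i := \varphi( x_i )$; a direct elimination reduces its solvability to the single scalar equation
\[
( \phi_2 - \phi_1 )( \phi_3 - \phi_2 ) = \log \frac{ a_{11} a_{22} }{ a_{12} a_{21} }.
\]
The right-hand side is positive because $A$ is $\TP$, so strictly increasing $\phi_i$ satisfying this equation are easily chosen, and the $\log g( x_i )$ are then determined. Extending $g$ positively and $\varphi$ monotonically over the remainder of $X$ completes the construction. Applying the preservation hypothesis to $K$ now shows $F[ A ]$ to be $\TP$, and Theorem~\ref{Ttp}(3b) again concludes.

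The main obstacle is precisely this symmetric construction: arranging that every non-symmetric $\TP$ $2 \times 2$ matrix appears as an off-diagonal submatrix of some symmetric $\TP$ kernel on $X \times X$. Pure generalized Vandermonde kernels are not symmetric whenever the row and column exponents are chosen independently, so the ansatz must be widened by a diagonal conjugation $g( x ) g( y )$; the additional freedom in $g$ is exactly what renders the associated linear system solvable for every $\TP$ input $A$, and this is the engine driving the symmetric case.
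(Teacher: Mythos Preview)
Your non-symmetric argument is essentially the paper's: embed an arbitrary $\TP$ $2\times 2$ matrix via Theorem~\ref{Ttpcompletion} and invoke Theorem~\ref{Ttp}(3b).

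For the symmetric case you take a different route. The paper exploits the factorisation $A=\lambda^{-1}(\exp(\alpha_i\beta_j))_{i,j}$ established in Theorem~\ref{Ttpcompletion} and builds the \emph{pure} symmetric Vandermonde kernel $K(x,y)=\lambda^{-1}\exp(\alpha(x)\alpha(y))$, choosing a single order-preserving map $\alpha:X\to\R$ that hits all of $\alpha_1,\alpha_2,\beta_1,\beta_2$; the matrix $A$ then occurs as $K[(x_1,x_2);(y_1,y_2)]$ for suitable (in general four distinct) points of $X$. Your ansatz $K(x,y)=g(x)g(y)\exp(\varphi(x)\varphi(y))$ introduces the diagonal conjugation $g$, which supplies three extra scalar freedoms and lets you place $A$ at $K[(x_1,x_2);(x_2,x_3)]$ using only three points; the reduction to the single constraint $(\phi_2-\phi_1)(\phi_3-\phi_2)=\log(a_{11}a_{22}/a_{12}a_{21})$ is correct, and its solvability follows from $\det A>0$. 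The paper's route is shorter, since no system has to be set up or solved, while yours is more economical in its demand on $|X|$. Your explicit assumption $|X|\ge 3$ is not a defect relative to the paper: the paper's construction tacitly needs up to four points, and in fact the symmetric assertion fails when $|X|=2$, since by Theorem~\ref{TsymmetricTP}(2b) functions such as $e^x$ preserve symmetric $\TP$ $2\times 2$ matrices without being power functions.
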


\begin{proof}
By Theorem~\ref{Ttpcompletion}, any $\TP$ $2 \times 2$ matrix $A$ has
the form $\lambda^{-1} ( \exp( \alpha_i \beta_j ) )_{i, j = 1}^2$,
where $\lambda > 0$, $\alpha_1 < \alpha_2$, and $\beta_1 < \beta_2$.
Fix $x_1 < x_2$ in $X$ and $y_1 < y_2$ in $Y$, considered as subsets
of $\R$, and let $\alpha : X \to \R$ and $\beta : Y \to \R$ be
order-preserving bijections such that $\alpha( x_i ) = \alpha_i$ and
$\beta( y_j ) = \beta_j$ for $i$, $j = 1$, $2$. Then the kernel
\[
K : X \times Y \to \R; \ ( x, y ) \mapsto %
\lambda^{-1} \exp( \alpha( x ) \beta( y ) )
\]
is totally positive and contains $A$ as a submatrix; since $F \circ K$
is totally positive and $A$ is arbitrary, it follows from
Theorem~\ref{Ttp} that $F$ has the form claimed.

If $X = Y$ then we may arrange that $\alpha = \beta$, in which case
$K$ is symmetric. Thus, $F$ has the same form as before.
\end{proof}

The full resolution of this classification question when $X$ and $Y$
are infinite is provided in Section~\ref{Sqns}. For now, we apply
Proposition~\ref{Ppowers} to classify the $\TP$ preservers on
Vandermonde matrices.

\begin{corollary}
The functions which preserve the $\TP$ property of the scaled
Vandermonde kernels
\[
\R \times \R \to \R; \ ( x, y ) \mapsto \mu \exp( x y ) %
\qquad ( \mu  > 0 )
\]
are precisely the power functions $F( x ) = c x^\alpha$, where $c > 0$
and $\alpha > 0$. The same holds if ``$\TP$'' is replaced by
``$\TP_2$''.
\end{corollary}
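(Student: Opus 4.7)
The plan is to reduce the statement to the classification of $\TP$-preservers on $2 \times 2$ matrices provided by Theorem~\ref{Ttp}(3b). First I would record that each kernel $K_\mu(x,y) := \mu e^{xy}$, $\mu > 0$, is indeed $\TP$ on $\R \times \R$: substituting $u_i := e^{x_i}$ realises the finite matrix $(K_\mu(x_i, y_j))_{i,j=1}^n$ as $\mu$ times a generalized Vandermonde matrix of the form $(u_i^{y_j})$, which is $\TP$ by Example~\ref{EgenVDM}. Consequently, if $F : ( 0, \infty ) \to ( 0, \infty )$ preserves the $\TP_2$ property of every such $K_\mu$, then for all $\mu > 0$ and all $x_1 < x_2$, $y_1 < y_2$ in $\R$, the $2\times 2$ submatrix $F[(\mu e^{x_i y_j})_{i,j=1,2}]$ has positive determinant.

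The crucial step is the observation that as $\mu$, $x_1 < x_2$, and $y_1 < y_2$ vary, the matrices $(\mu e^{x_i y_j})_{i,j=1,2}$ sweep out the whole set of $\TP$ $2 \times 2$ matrices. This is precisely what is established in the course of the proof of Theorem~\ref{Ttpcompletion}: any $\TP$ $2 \times 2$ matrix $A$ can, after scaling by a suitable $\lambda > 0$, be written as $(\exp(\alpha_i \beta_j))_{i,j=1,2}$ for some $\alpha_1 < \alpha_2$ and $\beta_1 < \beta_2$. Hence $A$ coincides with the $2\times 2$ submatrix of $K_{1/\lambda}$ at the points $x_i := \alpha_i$ and $y_j := \beta_j$. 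It follows that $F[A]$ is $\TP$ for every $\TP$ $2 \times 2$ matrix $A$, and Theorem~\ref{Ttp}(3b) then forces $F(x) = c x^\alpha$ for some $c > 0$ and $\alpha > 0$.

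For the converse, I would simply compute $(F \circ K_\mu)(x, y) = c\mu^\alpha e^{\alpha x y}$ and invoke Example~\ref{EgenVDM} once more, this time with $u_i := e^{\alpha x_i}$ still forming an increasing sequence in $( 0, \infty )$, to see that $F \circ K_\mu$ is $\TP$ (and hence $\TP_2$) on $\R \times \R$. There is no genuine obstacle here; all the technical work is already contained in Theorems~\ref{Ttpcompletion} and~\ref{Ttp}. The one substantive point is recognising that the specific family of scaled Vandermonde kernels $\{K_\mu : \mu > 0\}$ is by itself a rich enough test family to realise every $\TP$ $2 \times 2$ matrix as a $2 \times 2$ submatrix, which forces the preserver into the power-function form.
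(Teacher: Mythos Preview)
Your proposal is correct and follows essentially the same route as the paper. The corollary is stated without its own proof because it is an immediate specialisation of Proposition~\ref{Ppowers} (and its proof) to $X = Y = \R$ with the identity embeddings; your write-up simply unwinds that argument explicitly, using Theorem~\ref{Ttpcompletion} to realise every $\TP$ $2\times 2$ matrix as a submatrix of some $K_\mu$ and then invoking Theorem~\ref{Ttp}(3b).
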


\subsection{Preservers of symmetric $\TP$ matrices}

As in the totally non-negative case, Theorems~\ref{Ttp}
and~\ref{Ttpkernel} have analogues for symmetric matrices and
kernels. The following result should be compared with
Theorem~\ref{Tsymmetric}.

\begin{theorem}\label{TsymmetricTP}
Let $F: ( 0, \infty ) \to \R$ and let $d$ be a positive integer.
The following are equivalent.
\begin{enumerate}
\item $F$ preserves total positivity entrywise on
symmetric $d \times d$ matrices.
\item The function $F$ satisfies
\begin{enumerate}
\item[(a)] $(d = 1)$ $F( x ) > 0$;
\item[(b)] $(d = 2)$ $F$ is positive, increasing,
and multiplicatively mid-convex, that is,
$F( \sqrt{x y} )^2 \leq F( x ) F( y )$ for all $x$,
$y \in ( 0, \infty )$, so continuous;
\item[(c)] $(d = 3)$ $F( x ) = c x^\alpha$ for some $c > 0$ and some
$\alpha \geq 1$;
\item[(d)] $(d = 4)$ $F( x ) = c x^\alpha$ for some $c > 0$ and some
$\alpha \in \{1\} \cup [ 2, \infty )$.
\item[(e)] $(d \geq 5)$ $F( x ) = c x$ for some $c > 0$.
\end{enumerate}
\end{enumerate} 
\end{theorem}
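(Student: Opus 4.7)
The plan is to reduce Theorem~\ref{TsymmetricTP} to its totally non-negative counterpart, Theorem~\ref{Tsymmetric}, via the symmetric version of Whitney's density theorem (Theorem~\ref{Twhitney}) combined with a continuity argument. This parallels the proof of Theorem~\ref{Tsymmetric}, but extra care is needed because totally positive matrices cannot be zero-padded into larger totally positive matrices.

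The case $d = 1$ is immediate. For $d = 2$, I would apply $F$ entrywise to the symmetric $\TP$ matrices $\begin{pmatrix} y & x \\ x & y \end{pmatrix}$ with $y > x > 0$ to deduce that $F$ is positive and strictly increasing on $(0,\infty)$, and then to $\begin{pmatrix} x & b \\ b & y \end{pmatrix}$ with $0 < b^2 < xy$ to obtain $F(x) F(y) \geq F(b)^2$. Combining this with the complementary family $\begin{pmatrix} x' & \sqrt{x y} \\ \sqrt{x y} & y' \end{pmatrix}$ for $x' \searrow x$, $y' \searrow y$ and $x' y' > x y$, together with the monotonicity of $F$, yields the multiplicative mid-convexity $F(\sqrt{x y})^2 \leq F(x) F(y)$ for every $x, y > 0$. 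In logarithmic coordinates $G(t) := \log F(e^t)$, this becomes mid-point convexity of an increasing function on $\R$, which forces convexity and hence continuity, so $F$ itself is continuous on $(0, \infty)$. The converse direction is a direct verification on the above matrices.

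For $d \geq 3$, the key initial observation is that every symmetric $\TP$ $2 \times 2$ matrix occurs as a principal submatrix of some symmetric $\TP$ $d \times d$ matrix, which one can realise explicitly through a (scaled) symmetric generalized Vandermonde kernel of the form $(\lambda^{-1} e^{\beta_i \beta_j})_{i, j = 1}^d$ in the spirit of Theorem~\ref{Ttpcompletion}. Thus the $d = 2$ conclusion applies, and $F$ is positive, strictly increasing, and continuous on $(0, \infty)$. Since $F$ is increasing and positive, $F(0^+) := \lim_{x \to 0^+} F(x)$ exists in $[0, \infty)$, providing a continuous extension $\tilde{F}$ of $F$ to $[0, \infty)$. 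The symmetric version of Whitney's theorem approximates each symmetric $\TN$ $d \times d$ matrix by symmetric $\TP$ $d \times d$ matrices; continuity of $\tilde{F}$ together with the $\TP$-preserver property of $F$ then implies that $\tilde{F}$ preserves symmetric $\TN$ $d \times d$ matrices. Since $F$ is strictly increasing, $\tilde{F}$ is non-constant, so Theorem~\ref{Tsymmetric}(c)--(e) yields the stated form in each subcase. The converses in (c)--(e) follow from (\ref{E3x3}) for $d = 3$ and from the converse portion of Theorem~\ref{Tsymmetric}(d) combined with \cite{FJS} for $d = 4$; together with the elementary fact that every submatrix, not necessarily principal, of a symmetric $\TP$ matrix is itself $\TP$, strict positivity of all minors of $F[A]$ reduces to previously established fixed-dimension $\TP$ preserver statements.

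The main obstacle is the $d = 2$ case. Only matrices strictly satisfying $b^2 < x y$ are available as test inputs, so the rank-one limit matrix $\begin{pmatrix} x & \sqrt{xy} \\ \sqrt{xy} & y \end{pmatrix}$ that delivers multiplicative mid-convexity in the $\TN$ proof of Theorem~\ref{Tsymmetric}(b) is not directly available, and one must combine approach-from-below and approach-from-above one-sided limits with the monotonicity of $F$ to pin the mid-convexity inequality at $\sqrt{x y}$ itself rather than at its one-sided approximants. A secondary task is to make the symmetric $\TP$ completion of arbitrary symmetric $\TP$ $2 \times 2$ test matrices into symmetric $\TP$ $d \times d$ matrices fully explicit so that the $d = 2$ conclusion transfers cleanly for each $d \geq 3$.
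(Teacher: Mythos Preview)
Your proposal follows the paper's route closely: embed symmetric $2\times 2$ $\TP$ matrices into larger symmetric $\TP$ matrices (the paper does this via Theorem~\ref{L2symtp}), deduce continuity of $F$, apply the symmetric Whitney density theorem, and reduce to Theorem~\ref{Tsymmetric}. The $d\ge 3$ steps and the converses are essentially as in the paper.

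The one genuine gap is in your $d=2$ continuity argument. From your ``complementary family'' $\begin{pmatrix} x' & \sqrt{xy} \\ \sqrt{xy} & y' \end{pmatrix}$ with $x'\searrow x$, $y'\searrow y$ you only obtain $F(\sqrt{xy})^2 \le F^+(x)F^+(y)$, not $F(\sqrt{xy})^2 \le F(x)F(y)$; monotonicity gives $F(x)\le F^+(x)$, which points the wrong way, so it cannot close this gap. You therefore cannot establish mid-convexity of $F$ itself \emph{before} continuity, and the argument as written is circular. The paper handles this by working with $F^+$ throughout: applying $F$ to the single family
\[
M(x,y,\eps) := \begin{pmatrix} x+\eps & \sqrt{xy}+\eps \\ \sqrt{xy}+\eps & y+\eps \end{pmatrix} \qquad (x \neq y,\ \eps>0),
\]
which is symmetric $\TP$ with determinant $\eps(\sqrt{x}-\sqrt{y})^2>0$, gives $F(\sqrt{xy}+\eps)^2 < F(x+\eps)F(y+\eps)$, and letting $\eps\to 0^+$ yields $F^+(\sqrt{xy})^2 \le F^+(x)F^+(y)$ directly. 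Now $F^+$ is increasing and multiplicatively mid-convex, hence continuous by \cite[Theorem~71.C]{roberts-varberg}; an increasing $F$ whose right-limit function is continuous has no jumps, so $F=F^+$ is continuous, and only then does mid-convexity of $F$ follow. Your final paragraph correctly flags that one-sided limits are essential, but the specific ``two-family plus monotonicity'' mechanism you describe does not pin the inequality at $\sqrt{xy}$; passing through $F^+$ (or an equivalent device) is unavoidable.
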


We now outline our proof strategy. Akin to Theorem~\ref{Ttp}, the idea
is to derive the continuity of $F$ from the $2 \times 2$ case, without
the use of multiplicative mid-convexity, and then use the density of
symmetric $\TP$ matrices in symmetric $\TN$ matrices. For the first
step, we require the solution of a symmetric totally positive
completion problem. The following result is analogous to
Theorem~\ref{Ttpcompletion}.

\begin{theorem}\label{L2symtp}
Let the real $2 \times 2$ matrix $A$ be symmetric. The following are
equivalent.
\begin{enumerate}
\item For any $d \geq 2$ and any pair $\{ k_1 < k_2 \} \subseteq [d]$,
there exists a $\TP$ Hankel $d \times d$ matrix $\widetilde{A}$
such that $\widetilde{A}_{k_p, k_q} = A_{pq}$ for $p$, $q = 1$, $2$.

\item Given a totally ordered set $X$, such that
$X \times X$ admits a $\TP$ kernel, and a pair
$\{ x_1 < x_2 \} \subseteq X$, there exists a $\TP$ continuous Hankel
kernel $K$ on $X \times X$ such that
$K[ ( x_1, x_2 ); ( x_1, x_2 ) ] = A$.

\item The matrix $A$ is $\TP$.
\end{enumerate}
\end{theorem}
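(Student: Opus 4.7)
The plan is to prove $(1) \Leftrightarrow (2) \Leftrightarrow (3)$ by noting that the forward implications $(1) \Rightarrow (3)$ and $(2) \Rightarrow (3)$ are essentially free (any two-by-two submatrix of a $\TP$ matrix or kernel is itself $\TP$), so the real content lies in the two reverse implications starting from a $\TP$ symmetric matrix $A = \begin{pmatrix} a & b \\ b & c \end{pmatrix}$, which automatically satisfies $a, b, c > 0$ and $a c > b^2$. I would establish both reverse implications by a single explicit ansatz that produces a Hankel kernel on all of $\R$ and then specialize: for $(3) \Rightarrow (2)$ use Lemma~\ref{Ltptosets} to identify $X$ with a subset of $\R$ and restrict; for $(3) \Rightarrow (1)$ evaluate the same kernel at integer arguments $1, 2, \ldots, d$.

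The ansatz I would use is the ``Gaussian-exponential'' form
\[
f(t) = \mu \exp(\alpha t + \beta t^2),
\qquad K(x, y) := f(x + y),
\]
chosen so that it has precisely three real parameters $(\log \mu, \alpha, \beta)$, matching the three independent entries of a symmetric $2 \times 2$ matrix. The key algebraic observation is the factorization
\[
K(x, y) = \bigl[ \sqrt{\mu} \, e^{\alpha x + \beta x^2} \bigr] \cdot e^{2 \beta x y} \cdot \bigl[ \sqrt{\mu} \, e^{\alpha y + \beta y^2} \bigr].
\]
When $\beta > 0$, the middle factor $e^{2 \beta x y}$ is $\TP$ on $\R \times \R$ by Example~\ref{EgenVDM}, and multiplication by positive functions of $x$ and $y$ preserves $\TP$ by Example~\ref{EgenVDM2}. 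Consequently $K$ is a continuous (in fact smooth) symmetric $\TP$ Hankel kernel on $\R \times \R$.

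It then remains to solve the three interpolation conditions $f(2 x_1) = a$, $f(x_1 + x_2) = b$, $f(2 x_2) = c$. Taking logarithms yields a $3 \times 3$ linear system in the unknowns $(\log \mu, \alpha, \beta)$ with Vandermonde-like structure in the quantities $1, x_1 + x_2, (x_1 + x_2)^2 / 2$ and their companions; eliminating $\log \mu$ and $\alpha$ produces
\[
2 \beta (x_2 - x_1)^2 = \log\!\left( \frac{a c}{b^2} \right),
\]
so $\beta > 0$ holds exactly because $A$ is $\TP$ (i.e.\ $a c > b^2$), and the remaining linear equations then determine $\alpha$ and $\log \mu$ uniquely. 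This handles (3) $\Rightarrow$ (2) immediately; for (3) $\Rightarrow$ (1) the identical calculation with $x_i := k_i$ yields a $\TP$ Hankel matrix of any size $d \geq k_2$ with the three prescribed entries.

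The main obstacle is choosing the right ansatz. Simpler Hankel candidates such as a two-term exponential $f(t) = c_1 e^{\lambda_1 t} + c_2 e^{\lambda_2 t}$ have the correct parameter count but, by Cauchy--Binet, correspond to a two-point moment measure and are therefore only $\TP_2$, failing already for $3 \times 3$ minors. Introducing a quadratic exponent $e^{\beta t^2}$ is equivalent, via Laplace transform, to convolving with a full Gaussian on $\R$, an absolutely continuous measure of infinite support, which is precisely what is needed to secure $\TP$ of all orders while still leaving enough free parameters to interpolate.
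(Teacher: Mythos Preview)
Your proposal is correct and follows essentially the same approach as the paper: both construct the Hankel kernel $K(x,y) = \mu\exp(\alpha(x+y) + \beta(x+y)^2)$, verify its total positivity via the factorization into $e^{2\beta xy}$ times positive diagonal factors (invoking Examples~\ref{EgenVDM} and~\ref{EgenVDM2}), and solve the three-equation linear system for $(\log\mu,\alpha,\beta)$, with the $\TP$ condition $ac>b^2$ ensuring $\beta>0$. The only cosmetic difference is that the paper normalizes to $x_1=0$, $x_2=1$ and then transports via a linear change of variable, whereas you solve directly at the given $x_1<x_2$; your additional remark about why a two-term exponential ansatz would fail is a nice piece of motivation not present in the paper.
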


\begin{proof}
Clearly $(1)$ and $(2)$ each imply $(3)$, and $(1)$ is a special case
of $(2)$. We will show that $(3)$ implies $(2)$.

The general form of a $\TP$ symmetric $2 \times 2$ matrix is
\[
A = \begin{pmatrix} a & b \\ b & c \end{pmatrix} \qquad %
( a, b, c > 0, \ a c > b^2 ).
\]
If $\alpha = \frac12 \log( a c / b^2 )$ and
$\beta = \frac12 \log( b^4 / a^3 c )$, then the continuous Hankel
kernel
\[
K : \R \times \R \to \R; \ ( x, y ) \mapsto %
a \exp( \alpha( x + y )^2 + \beta( x + y ) ),
\]
which is $\TP$ by Examples~\ref{EgenVDM} and~\ref{EgenVDM2}, contains
$A$ as the submatrix $K[ ( 0, 1 ); ( 0, 1 ) ]$. Working as in the
final paragraph of the proof of Theorem~\ref{Ttpcompletion} gives the
general result.
\end{proof}

With Theorem~\ref{L2symtp} at hand, we can classify the preservers of
total positivity on the set of symmetric matrices.

\begin{proof}[Proof of Theorem~\ref{TsymmetricTP}]
The result is clear when $d = 1$, so we assume $d \geq 2$
henceforth. First, suppose $(1)$ holds. By Theorem~\ref{L2symtp},
$F[ - ]$ must preserve total positivity for symmetric
$2 \times 2$ matrices. As shown in the proof of Theorem~\ref{Ttp},
it follows that $F$ is positive and increasing on
$( 0, \infty )$. In particular, $F$ has countably many
discontinuities, and each of these is a jump. Let
$F^+( x ) := \lim_{y \to x^+} F( y )$ for all $x > 0$. Then
$F^+$ is increasing, coincides with $F$ at every point where $F$ is
right continuous, and has the same jump as $F$ at every point
where $F$ is not right continuous. Applying $F[ - ]$ to the
totally positive matrices
\[
M( x, y, \eps ) := \begin{pmatrix}
x + \eps & \sqrt{x y} + \eps \\
\sqrt{x y} + \eps & y + \eps
\end{pmatrix} \qquad ( x, y, \eps > 0, \ x \neq y ),
\]
it follows that
\[
F( \sqrt{x y} + \eps)^2 < F( x + \eps ) F( y + \eps ).
\]
Letting $\eps \to 0^+$, we conclude that
\[
F^+( \sqrt{x y} )^2 \leq F^+( x ) F^+( y )
\qquad \text{for all } x, y > 0;
\]
this inequality holds trivially when $x = y$. Thus $F^+$ is
multiplicatively mid-convex on~$( 0, \infty )$. As in the proof
of Theorem~\ref{Tfixeddim}, it follows by
\cite[Theorem~71.C]{roberts-varberg} that $F^+$ is continuous. We
conclude that $F$ has no jumps and is therefore also continuous.

For $d = 2$, this completes the proof that $(1) \implies (2)$.
If, instead, $d \geq 3$, note that $F$ extends to a continuous
function $\tilde{F}$ on $[ 0, \infty )$. As observed in
\cite[Theorem~2.6]{FJS}, the set of symmetric totally positive
$r \times r$ matrices is dense in the set of symmetric totally
non-negative  $r \times r$ matrices. By continuity, it follows that
$\tilde{F}$ preserves total non-negativity entrywise, and (2)
now follows immediately from Theorem~\ref{Tsymmetric}.

Conversely, suppose (2) holds for $d = 2$, and consider the
totally positive matrix
\[
A = \begin{pmatrix}
a & b \\ b & c
\end{pmatrix} \qquad ( a, b, c > 0, \ a c - b^2 > 0 ).
\]
Since $F$ is increasing, we have $F( \sqrt{b^2} ) < F( \sqrt{a c} )$.
Using the multiplicative convexity of $F$, we conclude that
\[
F(b)^2 = F( \sqrt{b^2} )^2 < F( \sqrt{a c} )^2 \leq F( a ) F( c ).
\] 
Thus $F[ A ]$ is totally positive and (1) holds. The implications for
$d = 3$ and $d = 4$ follow from
\cite[Theorem~5.2 and Proposition~5.6]{FJS}, respectively, and
the case of $d = 5$ is clear.
\end{proof}

\subsection{Preservers of symmetric $\TP$ kernels}

The following result is the immediate reformulation of
Theorem~\ref{TsymmetricTP} to the setting of matricial kernels.
 
\begin{corollary}\label{CtpkernelSym}
Let $X$ be a totally ordered set of size $d \in \N$, and let
$F: ( 0, \infty) \to \R$. Then $F \circ K$ is totally positive for any
symmetric totally positive kernel $K: X \times X \to \R$ if and only
if
\begin{enumerate}
\item[(a)] $(d = 1)$ $F( x ) > 0$;
\item[(b)] $(d = 2)$ $F$ is positive, increasing,
and multiplicatively mid-convex, that is,
$F( \sqrt{x y} )^2 \leq F( x ) F( y )$ for all
$x$, $y \in ( 0,\infty )$, so continuous;
\item[(c)] $(d = 3)$ $F( x ) = c x^\alpha$ for some $c > 0$ and some
$\alpha \geq 1$;
\item[(d)] $(d = 4)$ $F( x ) = c x^\alpha$ for some $c > 0$ and some
$\alpha \in \{1\} \cup [ 2, \infty )$;
\item[(e)] $(d \geq 5)$ $F( x ) = c x$ for some $c > 0$. 
\end{enumerate}
\end{corollary}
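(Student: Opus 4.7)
The plan is to reduce the corollary to Theorem~\ref{TsymmetricTP} by identifying kernels on $X \times X$ with matrices, using the fact that $X$ is a finite totally ordered set of size $d$.

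First, I would fix the unique order-preserving bijection $\varphi : X \to [d]$; this exists and is unique because any two totally ordered sets of the same finite cardinality are order-isomorphic via a unique map. This bijection induces a correspondence between kernels $K : X \times X \to \R$ and $d \times d$ real matrices $M = M(K)$ via $M_{ij} := K( \varphi^{-1}(i), \varphi^{-1}(j) )$, and this correspondence is manifestly bijective.

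Next I would check three compatibility properties of this identification. (a) Symmetry: $K(x,y) = K(y,x)$ for all $x, y \in X$ iff $M(K)$ is a symmetric matrix, which is immediate from the definition. (b) Total positivity: because $\varphi$ is order-preserving, for any $\bx \in \inc{X}{n}$ and $\by \in \inc{X}{n}$ the submatrix $K[\bx; \by]$ coincides with the submatrix of $M(K)$ picked out by $\varphi(\bx)$ and $\varphi(\by)$; hence $K$ is $\TP$ iff $M(K)$ is $\TP$. (c) Entrywise composition: $M(F \circ K) = F[M(K)]$, since the correspondence just relabels indices.

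Combining these, the statement ``$F \circ K$ is totally positive for every symmetric totally positive kernel $K : X \times X \to \R$'' is equivalent to ``$F[M]$ is totally positive for every symmetric totally positive $d \times d$ matrix $M$''. The corollary now follows from Theorem~\ref{TsymmetricTP} applied for each value of $d$, with the five cases (a)--(e) transferring verbatim. There is no real obstacle here: the only thing to be careful about is the order-preservation in step (b), which ensures that minors of $K$ indexed by increasing tuples in $X$ match minors of $M(K)$ indexed by increasing tuples in $[d]$, so that the $\TP$ test sets on the two sides are in bijection.
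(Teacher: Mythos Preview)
Your proposal is correct and matches the paper's treatment: the paper presents this corollary as ``the immediate reformulation of Theorem~\ref{TsymmetricTP} to the setting of matricial kernels'' with no further proof, and your identification of kernels on a finite totally ordered set of size $d$ with $d \times d$ matrices via the unique order-preserving bijection is exactly what makes that reformulation work.
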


Next, we formulate and prove a parallel result, in the spirit of the
final assertion in Theorem~\ref{Ttpkernel}.

\begin{theorem}\label{Tsymm2}
Let $F: ( 0, \infty ) \to \R$ and let $p$ and $d$ be positive
integers, with $p < d$. The following are equivalent.
\begin{enumerate}
\item $F$ preserves $\TP_p$ entrywise on symmetric $d \times d$ matrices.
\item $F$ preserves $\TP_p$ entrywise on $d \times d$ matrices.
\item $F$ preserves $\TP$ entrywise on $p \times p$ matrices.
\item The function $F$ satisfies
\begin{enumerate}
\item[(a)] $(p = 1)$ $F( x ) > 0$;
\item[(b)] $(p = 2)$ $F( x ) = c x^\alpha$ for some $c > 0$ and some
$\alpha > 0$;
\item[(c)] $(p = 3)$ $F( x ) = c x^\alpha$ for some $c > 0$ and some
$\alpha \geq 1$;
\item[(d)] $(p \geq 4)$ $F( x ) = c x$ for some $c > 0$.
\end{enumerate}
\end{enumerate}
The same functions characterize the preservers of symmetric $\TP_p$
kernels on $X \times X$, where $X$ is a totally ordered set of size at
least $p + 1$ (and possibly infinite).
\end{theorem}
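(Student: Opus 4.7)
The plan is to parallel the structure of Theorem~\ref{Tsymm1} in the $\TP_p$ setting: the ``padding by zeros'' step is replaced by the symmetric Hankel completion in Theorem~\ref{L2symtp}, and the appeal to Theorem~\ref{Tfixeddim} is replaced by Theorem~\ref{Ttp}. The equivalence (3)$\iff$(4) is Theorem~\ref{Ttp}; the implication (2)$\implies$(1) is immediate, and (4)$\implies$(2) is a direct verification on minors. For $p = 2$ one reduces $(x_1 x_2)^\alpha > (y_1 y_2)^\alpha$ to $x_1 x_2 > y_1 y_2$ via positivity and $\alpha > 0$; for $p = 3$ and $p \geq 4$ one invokes~(\ref{E3x3}) and the linearity of $F(x) = cx$ respectively, applied to each $p \times p$ submatrix of a $\TP_p$ matrix.

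The substantive direction is (1)$\implies$(4). For $p = 1$ this is immediate. For $p = 2$, I would use Theorem~\ref{L2symtp} to embed an arbitrary $\TP$ $2 \times 2$ matrix as a $2 \times 2$ submatrix of some symmetric $\TP$ Hankel $d \times d$ matrix (which is, in particular, symmetric and $\TP_p$). Applying $F$ entrywise and restricting to this submatrix, the hypothesis (1) forces $F[-]$ to preserve $\TP$ on $2 \times 2$ matrices, and Theorem~\ref{Ttp}(3b) then yields $F(x) = cx^\alpha$ with $c, \alpha > 0$.

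For $p \geq 3$, the $p = 2$ analysis already gives $F(x) = cx^\alpha$, which extends continuously to $\tilde F$ on $[0,\infty)$. By the symmetric variant of Whitney's theorem (Theorem~\ref{Twhitney}), every symmetric $\TN_p$ $d \times d$ matrix is a limit of symmetric $\TP_p$ $d \times d$ matrices, so by continuity $\tilde F$ preserves $\TN_p$ entrywise on symmetric $d \times d$ matrices. Theorem~\ref{Tsymm1} now applies: its part~(c) forces $\alpha \geq 1$ when $p = 3$, and its part~(d) forces $\alpha = 1$ when $p \geq 4$.

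For the kernel reformulation with possibly infinite $X$, the only non-trivial direction is again the analogue of (1)$\implies$(4). By Lemma~\ref{Ltptosets} one can identify $X$ with a subset of $(0,\infty)$; the $p = 2$ step carries over verbatim via Theorem~\ref{L2symtp}(2), which realizes any $\TP$ $2 \times 2$ matrix as a submatrix of a symmetric $\TP$ Hankel kernel on $X \times X$. The $p \geq 3$ step invokes the symmetric clause of Theorem~\ref{Twhitney2} to approximate any symmetric $\TN_p$ test kernel on $X \times X$ by symmetric $\TP_p$ kernels, so that continuity of $\tilde F$ reduces the problem to the symmetric $\TN_p$ kernel statement already established in Theorem~\ref{Tsymm1}. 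The main potential obstacle is precisely this kernel-level approximation: on an infinite domain one cannot fall back on finite-dimensional density, and one genuinely needs the symmetry-preserving approximation built into Theorem~\ref{Twhitney2}. Once that is in hand, the argument proceeds exactly as in the matrix case.
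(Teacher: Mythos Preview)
There is a genuine gap in your $(1)\Rightarrow(4)$ argument at the $p=2$ step. Theorem~\ref{L2symtp} only embeds \emph{symmetric} $2\times 2$ $\TP$ matrices into $\TP$ Hankel $d\times d$ matrices (necessarily as principal submatrices), not arbitrary $\TP$ $2\times 2$ matrices. So from~(1) and Theorem~\ref{L2symtp} you only conclude that $F[-]$ preserves $\TP$ on \emph{symmetric} $2\times 2$ matrices, and by Theorem~\ref{TsymmetricTP}(2b) this yields only that $F$ is positive, increasing, and multiplicatively mid-convex --- not that $F(x)=cx^\alpha$. Theorem~\ref{Ttp}(3b) cannot be invoked, because its hypothesis requires preservation on \emph{all} $2\times 2$ $\TP$ matrices. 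Since your $p\geq 3$ step relies on the $p=2$ conclusion to obtain continuity (and hence the extension $\tilde F$), the whole chain breaks.

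The paper's route differs exactly at this point: it uses Theorem~\ref{L2symtp} only to embed \emph{symmetric} $2\times 2$ $\TP$ matrices, and then repeats the proof of Theorem~\ref{TsymmetricTP} (specifically the argument with the symmetric family $M(x,y,\eps)$) to extract continuity of $F$ on $(0,\infty)$. Once continuity is in hand, the rest of your outline --- continuous extension to $[0,\infty)$, symmetric Whitney (Theorem~\ref{Twhitney}), then Theorem~\ref{Tsymm1} together with the observation that $F$ cannot be constant --- matches the paper's proof and is correct. The kernel assertion is likewise handled in the paper by the same continuity-via-symmetric-$2\times 2$ step, then the approximation machinery of Theorem~\ref{Twhitney2}; your sketch of that part is accurate once the continuity step is repaired.
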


\begin{proof}
By Theorem~\ref{Ttpkernel}, the statements (2), (3) and (4) are
equivalent, and clearly $(2) \implies (1)$. Now suppose~(1) holds; we
show~(4) in several steps. If $p = 1$, then~(4) is immediate, so we
suppose henceforth that $p \geq 2$. Now, by Lemma~\ref{L2symtp}, every
symmetric $2 \times 2$ $\TP$ matrix admits an extension to a symmetric
$d \times d$ $\TP$ matrix. Repeating the proof of
Theorem~\ref{TsymmetricTP}, it follows that $F$ is continuous. Hence
$F$ admits a continuous extension $\widetilde{F}$ to $[ 0, \infty )$,
so, by the symmetric version of Whitney's Theorem~\ref{Twhitney}, it
follows that $\widetilde{F}$ preserves the class of symmetric $\TN_p$
$d \times d$ matrices. The claim now follows from Theorem~\ref{Tsymm1}
and the fact that $F$ cannot be constant.

For the final assertion involving kernels, first note that if $F$ is as
in~(4), then (2) holds, and so $F$ preserves symmetric $\TP_p$ kernels
on $X \times X$. Conversely, suppose $F$ preserves the symmetric
$\TP_p$ kernels on $X \times X$. Then the arguments using in the
proof of Theorem~\ref{Ttpkernel} show that $(1) \implies (4)$, with
Theorem~\ref{TsymmetricTP} giving continuity of $F$ when $p \geq 2$
and Theorem~\ref{Tsymm1} used in place of Theorem~\ref{Tfixeddim}.
\end{proof}

\section{Total-positivity preservers are continuous}\label{Svasudeva}

As the vigilant reader will have noticed, we have shown above two similar
assertions, that an entrywise map $F[-]$ preserves total non-negativity
on the set of $2 \times 2$ symmetric matrices if and only if $F$ is
non-negative, non-decreasing, and multiplicatively mid-convex, with the
corresponding changes if weak inequalities are replaced by strict ones.
The variation lies in reducing the set of test matrices with which to
work, while arriving at very similar conclusions.

Such a result was proved originally by Vasudeva \cite{vasudeva79},
when classifying the entrywise preservers of positive semidefiniteness
for $2 \times 2$ matrices with positive entries. To date, this remains
the only known classification of positivity preservers in a fixed
dimension greater than $1$.

It is natural to seek a common strengthening of the results above, as
well as of Vasudeva's result. We conclude by recording for
completeness such a characterization, which uses a small test set of
totally positive $2 \times 2$ matrices.

\begin{notation}
Let $\cP$ denote the set of symmetric totally non-negative
$2 \times 2$ matrices with positive entries, and let the subsets
\begin{align*}
\cP' & := \left\{ A( a, b ) := \begin{pmatrix}
a & b \\ b & a \end{pmatrix} : %
a > b > 0, \text{ $a$ and $b$ not both irrational } \right\} \\
\text{and} \quad \cP'' & := \left\{
B( a, b, c ) := \begin{pmatrix}
a & b \\ b & c \end{pmatrix} : %
a, b, c > 0, \ a c > b^2, \text{ $a$, $b$ and $c$ rational} \right\}.
\end{align*}
\end{notation}

\begin{theorem}\label{Tvasu}
Let $F : ( 0, \infty ) \to \R$ be a function. The following are
equivalent.
\begin{enumerate}
\item The map $F[-]$ preserves total non-negativity on the set $\cP$.

\item The function $F$ is non-negative, non-decreasing, and
multiplicatively mid-convex on $( 0, \infty )$.

\item The map $F[-]$ preserves positive semidefiniteness on the set
$\cP' \cup \cP''$.
\end{enumerate}
Moreover, every such function is continuous, and is either nowhere
zero or identically zero.
\end{theorem}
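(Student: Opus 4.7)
The plan is to organise the proof around the easy set-level inclusions and the one substantive argument. The equivalence $(1) \Leftrightarrow (2)$ follows by the same reasoning as Theorem~\ref{Tsymmetric}(b) in the case $d = 2$, since restricting to matrices with positive entries does not alter the characterisation, and $(1) \Rightarrow (3)$ is immediate because every matrix in $\cP' \cup \cP''$ lies in $\cP$ and $\TN$ implies positive semidefiniteness for symmetric $2 \times 2$ matrices. The substantive work is therefore to prove $(3) \Rightarrow (2)$.

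First I would extract non-negativity and monotonicity of $F$ from the family $\cP'$. Applying $F[-]$ to $A(a, b)$ with $a > b > 0$ (and at least one of $a$, $b$ rational) yields a $\psd$ image whose diagonal entry is $F(a)$ and whose determinant is $F(a)^2 - F(b)^2$, forcing $F(a) \geq 0$ and $|F(b)| \leq F(a)$. Choosing a rational $b \in (0, a)$ shows $F \geq 0$ on $(0, \infty)$; the bound $|F(b)| \leq F(a)$ then reads $F(b) \leq F(a)$, and inserting a rational strictly between any two positive reals extends this monotonicity to every pair $a > b > 0$.

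To extract multiplicative mid-convexity I would use $\cP''$. For rational triples, positive semidefiniteness of $F[B(a, b, c)]$ yields $F(b)^2 \leq F(a) F(c)$ whenever $ac > b^2$. To pass to arbitrary real $x$, $y > 0$, I would approximate $x$, $y$, and $\sqrt{xy}$ from below by rationals $a_n \uparrow x$, $c_n \uparrow y$, $b_n \uparrow \sqrt{xy}$ arranged so that $b_n^2 < a_n c_n$ at every stage, a routine rational-approximation exercise. Passing to the limit via monotonicity delivers
\[
F^-(x) F^-(y) \geq F^-(\sqrt{xy})^2 \qquad (x, y > 0)
\]
for the left-continuous regularisation $F^-(x) := \lim_{y \uparrow x} F(y)$.

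The dichotomy now falls out: if $F^-(x_0) = 0$ for some $x_0 > 0$, the inequality above forces $F^-(\sqrt{x_0 y}) = 0$ for every $y > 0$, and varying $y$ shows that $F^- \equiv 0$, which in turn forces $F \equiv 0$ by monotonicity; otherwise $F^- > 0$ everywhere. In the latter case, $G(t) := \log F^-(e^t)$ is a non-decreasing mid-convex function on $\R$, hence continuous by \cite[Theorem~71.C]{roberts-varberg}, so $F^-$ is continuous on $(0, \infty)$. A short monotonicity argument, namely that every discontinuity of $F$ is also a discontinuity of $F^-$, then promotes continuity of $F^-$ to continuity of $F$, whence $F = F^-$ and the mid-convexity inequality transfers to $F$ itself, delivering $(2)$. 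The main obstacle I foresee is the simultaneous approximation step: verifying that the strict inequality $ac > b^2$ survives coordinate-wise rational approximation from below, and recognising that the limit naturally identifies $F^-$ rather than $F$, with the final transfer to $F$ requiring the independent continuity argument above.
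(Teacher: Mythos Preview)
Your proof is correct and follows essentially the same route as the paper: extract non-negativity and monotonicity from $\cP'$, establish multiplicative mid-convexity for a one-sided regularisation of $F$ via rational approximation in $\cP''$, invoke \cite[Theorem~71.C]{roberts-varberg} for continuity, and conclude $F$ equals its regularisation. The only differences are cosmetic: the paper uses the right-continuous regularisation $F^+$ and approximates from above, whereas you use $F^-$ and approximate from below, and the paper establishes the zero/nowhere-zero dichotomy directly from explicit test matrices before the mid-convexity step, while you derive it afterwards as a consequence of the mid-convexity inequality for $F^-$.
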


The sets $\cP'$ and $\cP''$ are in bijection with the sets
$( \R \times \Q ) \cup ( \Q \times \R )$ and $\Q$, respectively, whereas
$\cP$ is a three-parameter family. The equivalence of (1) and (2) is
Vasudeva's result.

\begin{proof}
To see that $(2) \implies (1)$, note that if
$A = \begin{pmatrix} a & b \\ b & c \end{pmatrix} \in \cP$,
then $a$, $b$, $c > 0$ and $0 < b \leq \sqrt{a c}$. By (2), the matrix
$F[ A ]$ has non-negative entries and
\[
0 \leq F( b )^2 \leq F( \sqrt{ac} )^2 \leq F( a ) F( c ),
\]
so $F[A]$ is totally non-negative. Clearly, $(1) \implies (3)$.
The main challenge in the proof is to show $(3) \implies (2)$. The
first step is to observe that $F$ is non-negative and non-decreasing
on $( 0, \infty )$. Let $y > x > 0$, choose rational $a$ such
that $x < a < y$, and consider the matrices $F[ A( a, x ) ]$ and
$F[ A( y, a ) ]$, which are both positive semidefinite. From this, it
follows that $F( y )$ is non-negative, and
$F( y )^2 \geq F( a )^2 \geq F( x )^2$.

We now show that $F$ is identically zero if it vanishes anywhere.
Suppose $F( x ) = 0$ for some $x > 0$. Then, as $F$ is non-decreasing
and non-negative, $F \equiv 0$ on $( 0, x ]$. Given $y > x > 0$,
choose rational $b$ and $c$ such that $0 < c < x < y < b$. Considering
$F[ B( 1 + ( b^2 / c), b, c ) ]$ and then $F[ A( b, y ) ]$ shows that
$F( b ) = 0$ and then $F( y ) = 0$. It follows that $F \equiv 0$.

Finally, we claim that $F$ is multiplicatively mid-convex and continuous.
Clearly this holds if $F \equiv 0$, so we assume that $F$ is never zero.
We first show that the function
\[
F^+ : ( 0, \infty) \to [ 0, \infty );
\qquad F^+( x ) := \lim_{y \to x^+} F( y )
\]
is multiplicatively mid-convex and continuous. Note that $F^+$
is well defined because $F$ is monotone. Given $x$, $y > 0$,
we choose rational numbers $a_n \in ( x, x + 1 / n)$ and
$c_n \in (y,y+1/n)$ for each positive integer $n$. Since $a_n c_n > x y$,
we may choose rational $b_n \in ( \sqrt{x y}, \sqrt{a_n c_n})$.
The matrix $B( a_n, b_n, c_n) \in \cP''$ for each~$n$, therefore
\[
0 \leq \lim_{n \to \infty} \det F[ B( a_n, b_n, c_n ) ] =
F^+( x ) F^+(y) - F^+( \sqrt{x y} )^2.
\]
Thus $F^+$ is multiplicatively mid-convex on $(0,\infty)$, and
$F^+$ is non-decreasing since $F$ is. Repeating the argument in
the proof of Theorem~\ref{Tfixeddim}, which requires the
function to take positive values, it follows that $F^+$ is
continuous. Hence $F$ and $F^+$ are equal, and this gives the
result.
\end{proof}

\begin{remark}
The analogous version of Theorem~\ref{Tvasu} holds for any bounded
domain, that is, for matrices with entries in $( 0, \rho )$, with
$\rho > 0$. The proof is a minimal modification of that given above,
except for the argument to show that either $F \equiv 0$ or~$F$
vanishes nowhere. For this, see
\cite[Proposition~3.2(2)]{GKR-lowrank}. Also, it is clear that the set
of rational numbers in the definitions of $\cP'$ and $\cP''$ may be
replaced with any countable dense subset of the domain of~$F$.
\end{remark}

\section{Extensions of Whitney's approximation theorem}%
\label{SWhitneyExt}

The present section is devoted to constructive approximation schemes
derived from discrete convolutions with the Gaussian kernel. The proof
of Theorem~\ref{Twhitney2} is obtained as an application.

\subsection{Discretized Gaussian convolution}

\begin{notation}\label{Ngauss}
For all $\kappa > 0$, let
\[
G_\kappa : \R \times \R \to \R; \ %
( x, y ) \mapsto \exp( -\kappa ( x - y )^2 ).
\]
A key observation, going back at least to P\'olya and Schoenberg, is
the total positivity of this kernel. In our terminology, this means
that $G_\kappa$ is $\TP_p$ for all $p \in \N$, and this follows as a
particular case of Example~\ref{EgenVDM2}.
\end{notation}

\begin{proposition}\label{Pconvolve}
Let the kernel $K : A \times B \to \R$ be $\TN_p$, where $A$,
$B \subseteq \R$ and $p \in \N$. Suppose that $\kappa > 0$ and $n$,
$N \in \N$ are greater than or equal to $p$. If $\bz \in \inc{A}{n}$
and $\bw \in \inc{B}{N}$, then
\[
T_{\kappa, \bz, \bw}( K ) : A \times B \to \R; \ %
( x, y ) \mapsto \sum_{j = 1}^n \sum_{k = 1}^N %
G_\kappa( x, z_j ) K( z_j, w_k ) G_\kappa( w_k, y )
\]
is $\TN_p$ and $\TP_{\min\{ p, r \}}$, where $r$ is the rank of
$K[ \bz; \bw ]$, which is the same as the rank
of~$T_{\kappa, \bz, \bw}( K )[ \bz; \bw ]$.
\end{proposition}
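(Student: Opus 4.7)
First, I would observe that $T_{\kappa, \bz, \bw}(K)$ is built as a ``sandwich'' of three layers, and that for any $q \in [\min\{n, N\}]$, any $\bx \in \inc{A}{q}$, and any $\by \in \inc{B}{q}$, the corresponding submatrix factors as an honest matrix product
\begin{equation*}
T_{\kappa, \bz, \bw}(K)[\bx; \by] = G_\kappa[\bx; \bz] \cdot K[\bz; \bw] \cdot G_\kappa[\bw; \by],
\end{equation*}
where the outer factors are $q \times n$ and $N \times q$ submatrices of the Gaussian kernel. Two applications of the Cauchy--Binet identity then give
\begin{equation*}
\det T_{\kappa, \bz, \bw}(K)[\bx; \by] = \sum_{\substack{S \subseteq [n],\ |S| = q \\ T \subseteq [N],\ |T| = q}} \det G_\kappa[\bx; \bz_S] \cdot \det K[\bz_S; \bw_T] \cdot \det G_\kappa[\bw_T; \by],
\end{equation*}
where $\bz_S$ and $\bw_T$ denote the increasing subtuples indexed by $S$ and~$T$.

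The $\TN_p$ conclusion would then be immediate: for $q \leq p$, every Gaussian minor in the sum is strictly positive (since $G_\kappa$ is $\TP$ of all orders by Example~\ref{EgenVDM2}), and every $\det K[\bz_S; \bw_T]$ is non-negative by the $\TN_p$ hypothesis on $K$, so the sum is non-negative. For the $\TP_{\min\{p, r\}}$ sharpening I need at least one summand to be strictly positive whenever $q \leq \min\{p, r\}$, and it suffices to produce $(S, T)$ with $|S| = |T| = q$ such that $\det K[\bz_S; \bw_T] > 0$. This reduces to the standard fact that the rank of a matrix equals the size of its largest non-vanishing minor; combined with the observation that a non-vanishing minor of a TN matrix is automatically positive, the existence of the required $(S, T)$ for every $q \leq r$ follows.

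The final rank assertion would be handled by the special choice $\bx = \bz$ and $\by = \bw$, which gives the factorisation
\begin{equation*}
T_{\kappa, \bz, \bw}(K)[\bz; \bw] = G_\kappa[\bz; \bz] \cdot K[\bz; \bw] \cdot G_\kappa[\bw; \bw];
\end{equation*}
the outer $n \times n$ and $N \times N$ Gaussian matrices are themselves $\TP$ and hence invertible, so left- and right-multiplication by them preserves rank. I do not anticipate a serious obstacle: the only non-routine ingredient is the rank-versus-minors observation for TN matrices, which is a short consequence of the characterisation of rank in terms of the size of the largest non-vanishing minor.
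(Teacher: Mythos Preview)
Your proposal is correct and follows essentially the same approach as the paper: both arguments expand $\det T_{\kappa,\bz,\bw}(K)[\bx;\by]$ via Cauchy--Binet into a sum of products of Gaussian minors (which are strictly positive) and minors of $K[\bz;\bw]$ (which are non-negative by $\TN_p$), and then invoke the rank--minor characterisation to locate a strictly positive term when $q \le \min\{p,r\}$. Your treatment is in fact slightly more explicit than the paper's, since you spell out the matrix-product factorisation and give the invertibility argument for the rank equality, whereas the paper leaves that last point implicit.
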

\begin{proof}
Let $\bx \in \inc{A}{m}$ and $\by \in \inc{B}{m}$, where
$m \in [p]$. The Cauchy--Binet formula gives that
\begin{equation}\label{EdetCB}
\det T_{\kappa, \bz, \bw}( K )[ \bx; \by ] = %
\sum_{\bj \in \inc{[n]}{m}} \sum_{\bk \in \inc{[N]}{m}} %
\det G_\kappa[ \bx; \bz_\bj ] \det K[ \bz_\bj; \bw_\bk ] %
\det G_\kappa[ \bw_\bk; \by ],
\end{equation}
where
$\bz_\bj := ( z_{j_1}, \ldots, z_{j_m} )$ if
$\bj = ( j_1, \ldots, j_m )$ and similarly for $\bw_\bk$.

Since $K[ \bz; \bw ]$ has rank $r$, it has a non-zero $r \times r$
minor, and so a non-zero minor of all smaller dimensions, but every
strictly larger minor is zero. The result now follows.
\end{proof}

\begin{notation}
Given a vector $\bmu \in \R^m$ and a positive-definite matrix
$V \in \R^{m \times m}$, where~$m$ is a positive integer, the
 multivariate Gaussian probability density
\[
f_{\bmu, V} : \R^m \to [ 0, \infty ); \ \bx \mapsto %
\frac{( \det V )^{1 / 2}}{( 2 \pi )^{m / 2}} %
e^{-\frac{1}{2} ( \bx -\bmu )^T V ( \bx - \bmu )}
\]
has mean $\bmu$ and inverse covariance matrix $V$. Note that
\[
f_{\bmu, V}( \bx ) = c_{\bmu, V} \, g_{\bmu, V}( \bx ) \quad %
\text{for all } \bx \in \R^m,
\]
where
\[
c_{\bmu, V} := %
( 2 \pi )^{-m / 2} ( \det V )^{1 / 2} e^{-\frac{1}{2} \bmu^T V \bmu} %
\qquad \text{and} \qquad %
g_{\bmu, V}( \bx ) := e^{-\frac{1}{2} \bx^T V \bx + \bx^T V \bmu}.
\]
For all $n \in \N$, let the $n \times n$ matrix $Q$ be defined by
setting $Q_1 = 1$ and
\[
Q_{n + 1} = Q_n \oplus 0_{1 \times 1} + 0_{n - 1 \times n - 1} \oplus %
\begin{pmatrix} 1 & -1 \\ -1 & 1 \end{pmatrix},
\]
so that
\[
Q_n = \begin{pmatrix}
 2 & -1 &  0 & \cdots &  \\
-1 &  2 & -1 & &  \\
 0 & -1 &  2 & -1 &  \\
 \vdots & & \ddots & 2 & -1 \\
 & & & -1 & 1 
\end{pmatrix}.
\]
\end{notation}

\begin{lemma}\label{Lgaussian}
Let $\kappa > 0$. If $x_0$, \ldots, $x_m \in \R$, then
\begin{equation}\label{EmvGauss}
\prod_{j = 1}^m G_\kappa( x_{j - 1}, x_j ) = %
e^{-\kappa x_0^2} g_{\bmu, V}( x_1, \ldots, x_m ) = %
( \pi / \kappa )^{m / 2} f_{\bmu, V}( x_1, \ldots, x_m ),
\end{equation}
where $\bmu = x_0 \bJ_{m \times 1}$ and $V = 2 \kappa Q_m$. Moreover,
$\det V = ( 2 \kappa )^m$ and
$e^{-\kappa x_0^2} = e^{-\frac{1}{2} \bmu^T V \bmu}$.
\end{lemma}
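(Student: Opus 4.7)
The proof is essentially a direct computation, reducing to two linear-algebraic facts about the tridiagonal matrix $Q_m$: that $Q_m \bJ_{m \times 1} = \be_1$ (the first standard basis vector) and that $\det Q_m = 1$.

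My plan is to begin by expanding the left-hand side:
\[
\prod_{j=1}^m G_\kappa(x_{j-1}, x_j) = \exp\Bigl(-\kappa \sum_{j=1}^m (x_{j-1}-x_j)^2\Bigr).
\]
Writing $\bx = (x_1, \ldots, x_m)^T$ and collecting terms by monomial, the sum $\sum_{j=1}^m (x_{j-1}-x_j)^2$ expands to $x_0^2 + 2\sum_{j=1}^{m-1} x_j^2 + x_m^2 - 2x_0 x_1 - 2\sum_{j=1}^{m-1} x_j x_{j+1}$, which I would recognize as $x_0^2 - 2 x_0 x_1 + \bx^T Q_m \bx$. The key calculation is $Q_m \bJ = \be_1$ (first row of $Q_m$ sums to $2-1=1$, interior rows to $-1+2-1=0$, last row to $-1+1=0$). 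With $\bmu = x_0 \bJ$ this immediately gives $\bmu^T Q_m \bmu = x_0^2$ and $\bx^T Q_m \bmu = x_0 x_1$, and the expansion above rearranges to $(\bx - \bmu)^T Q_m (\bx - \bmu)$. Multiplying by $-\kappa$ and setting $V = 2\kappa Q_m$ yields
\[
\prod_{j=1}^m G_\kappa(x_{j-1}, x_j) = \exp\Bigl( -\tfrac{1}{2} (\bx-\bmu)^T V (\bx-\bmu) \Bigr),
\]
and splitting the exponent into the three pieces $-\frac{1}{2}\bx^T V \bx$, $+\bx^T V \bmu$, and $-\frac{1}{2}\bmu^T V \bmu$ directly gives $e^{-\frac{1}{2} \bmu^T V \bmu} g_{\bmu, V}(\bx)$, with the prefactor equal to $e^{-\kappa x_0^2}$ by the same $\bmu^T Q_m \bmu = x_0^2$ computation. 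This establishes the first equality and the final assertion simultaneously.

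For the second equality, I need the determinant identity $\det V = (2\kappa)^m$, equivalently $\det Q_m = 1$. The cleanest way is the change of variables $y_j := x_j - x_{j-1}$ with $x_0 = 0$, which is unitriangular (hence of Jacobian~$1$) and transforms $\bx^T Q_m \bx = \sum_{j=1}^m (x_{j-1}-x_j)^2|_{x_0 = 0}$ into $\sum_j y_j^2$; since the two quadratic forms are related by a unimodular congruence, their Gram matrices have the same determinant, so $\det Q_m = \det I = 1$. (Alternatively, one can verify this by induction from the recursive definition of $Q_m$, expanding along the last row.) Then $(\det V)^{1/2} = (2\kappa)^{m/2}$ and the normalization constant of $f_{\bmu, V}$ supplies exactly the prefactor $(2\pi)^{m/2}/(2\kappa)^{m/2} = (\pi/\kappa)^{m/2}$ between $g_{\bmu,V}(\bx)$ and $f_{\bmu,V}(\bx)$, completing the chain of equalities.

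No step is really an obstacle; the only potential pitfall is a sign or indexing slip when identifying the boundary contribution $-2 x_0 x_1$ with $-2\bx^T V \bmu / (2\kappa) = -2 x_0 x_1$ via $Q_m \bJ = \be_1$. Careful bookkeeping of that single cross term is what makes the asymmetric last diagonal entry $1$ of $Q_m$ (rather than $2$) exactly the right choice to make $\bmu = x_0 \bJ$ the mean.
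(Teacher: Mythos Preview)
Your proof is correct. It differs from the paper's argument in organization: the paper proceeds by induction on $m$, establishing the base case $m=1$ and then showing that multiplying an existing product by one more factor $G_\kappa(x_m,x_{m+1})$ updates $(\bmu,V)$ according to the recursive definition of $Q_{m+1}$; the determinant identity is then obtained by a single row operation in the inductive step. You instead expand the full exponent at once and extract the two structural facts $Q_m\bJ=\be_1$ and $\det Q_m=1$ directly. Your route is arguably cleaner in that it isolates exactly why $\bmu=x_0\bJ$ is the right mean (namely $Q_m\bJ=\be_1$ forces $\bx^T V\bmu=2\kappa x_0x_1$, matching the lone cross term), whereas the paper's induction tracks the recursion in the definition of $Q_m$ more faithfully. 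Both arguments are elementary and of comparable length.
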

\begin{proof}
Let $x_0$, \ldots, $x_{m + 1} \in \R$ be arbitrary. The first identity
holds when $m = 1$, because
\[
G_\kappa( x_0, x_1 ) = %
\exp( -\kappa x_0^2 ) \exp( -\kappa x_1^2 + 2 \kappa x_0 x_1 ) = %
\exp( -\kappa x_0^2 ) g_{x_0, 2 \kappa}( x_1 ).
\]
Now suppose
\[
G_\kappa( x_0, x_1 ) \cdots G_\kappa( x_{m - 1}, x_m ) = %
\exp( -\kappa x_0^2 ) g_{\bmu, V}( x_1, \ldots, x_m )
\]
for some $\bmu = ( \mu_1, \ldots, \mu_m ) \in \R^m$ and
$V \in \R^{m \times m}$. Then
\[
\prod_{j = 1}^{m + 1} G_\kappa( x_{j - 1}, x_j ) = %
\exp( -\kappa x_0^2 ) g_{\bmu, V}( x_1, \ldots, x_m ) %
G_\kappa( x_m, x_{m + 1} )
\]
and, letting $\bx := ( x_1, \ldots, x_{m + 1} )^T$ and $z \in \R$,
\begin{align*}
g_{\bmu, V}( x_1, \ldots, x_m ) & G_\kappa( x_m, x_{m + 1} ) \\
& = \exp( -\frac{1}{2} \bx^T ( V \oplus 0 ) \bx + %
\bx^T ( V \oplus 0 ) ( \bmu \oplus z ) - %
\kappa ( x_m - x_{m + 1} )^2 ) \\
 & = \exp( -\frac{1}{2} \bx^T V' \bx + \bx^T V' \bmu' ),
\end{align*}
where
\[
V' := V \oplus 0_{1 \times 1} + 0_{m - 1 \times m - 1} \oplus %
\begin{pmatrix}
2 \kappa & -2 \kappa \\ -2 \kappa & 2 \kappa
\end{pmatrix} \quad \text{and} \quad \bmu' = \bmu \oplus \mu_m.
\]
By induction, this gives the first identity. For the penultimate
claim, note that adding the last row of $V'$ to the penultimate row
gives the matrix
\[
V \oplus 0_{1 \times 1} + 0_{m - 1 \times m - 1} \oplus %
\begin{pmatrix} 0 & 0 \\ -2 \kappa & 2 \kappa \end{pmatrix},
\]
which has determinant equal to $2 \kappa$ times the determinant
of~$V$. The final identity is immediate, and the second
identity in~(\ref{EmvGauss}) now follows.
\end{proof}

\begin{notation}\label{Ndelta}
Given $z \in A \subseteq \R$ and $w \in B \subseteq \R$, let
\[
\delta_{( z, w )} : A \times B \to \R; \ %
( x, y ) \mapsto \begin{cases}
 1 & \text{if } x = z \text{ and } y = w, \\
 0 & \text{otherwise}.
\end{cases}  
\]
\end{notation}
 
\begin{proposition}\label{Pfconestep}
With the notation and hypotheses of Proposition~\ref{Pconvolve} and
Notation~\ref{Ndelta}, if $r < p$, then the kernel
\[
T_{\kappa, \bz, \bw}%
( T_{\kappa, \bz, \bw}( K ) + e^{-\kappa} \delta_{( z_1, w_1 )} )
\]
is $\TN_p$ and $\TP_{r + 1}$.
\end{proposition}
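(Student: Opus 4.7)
The strategy is to apply the same Cauchy--Binet argument that underlies Proposition~\ref{Pconvolve} directly to the kernel $K' := T_{\kappa,\bz,\bw}(K) + e^{-\kappa}\delta_{(z_1,w_1)}$. Since the convolution operator $T_{\kappa,\bz,\bw}$ depends on its input only through the $n \times N$ matrix of values at $\bz \times \bw$, the whole analysis reduces to the matrix
\[
M := K'[\bz;\bw] = K_1[\bz;\bw] + e^{-\kappa} E_{11}, \qquad K_1 := T_{\kappa,\bz,\bw}(K),
\]
where $E_{11}$ has a $1$ in position $(1,1)$ and zeros elsewhere. For $m \in [p]$, $\bx \in \inc{A}{m}$, $\by \in \inc{B}{m}$, the Cauchy--Binet identity~(\ref{EdetCB}) gives
\[
\det T_{\kappa,\bz,\bw}(K')[\bx;\by] = \sum_{\bj \in \inc{[n]}{m}} \sum_{\bk \in \inc{[N]}{m}} \det G_\kappa[\bx;\bz_\bj]\, \det M[\bj;\bk]\, \det G_\kappa[\bw_\bk;\by],
\]
in which the two Gaussian determinants are strictly positive since $G_\kappa$ is $\TP$.

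The first step is to evaluate each minor of $M$ via the rank-one update formula. Writing $\bj' := (j_2, \ldots, j_m)$ and $\bk' := (k_2, \ldots, k_m)$, this gives
\[
\det M[\bj;\bk] = \det K_1[\bz_\bj;\bw_\bk] + e^{-\kappa}\, \mathbf{1}_{\{j_1 = k_1 = 1\}}\, \det K_1[\bz_{\bj'};\bw_{\bk'}],
\]
a sum of two non-negative quantities, since $K_1$ is $\TN_p$ by Proposition~\ref{Pconvolve}. Combined with the positivity of the Gaussian determinants in the Cauchy--Binet expansion, this already yields the $\TN_p$ half of the statement.

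The second, crucial step is to sharpen the conclusion of Proposition~\ref{Pconvolve} for the restricted matrix: every $m \times m$ minor of $K_1[\bz;\bw]$ with $m \leq r$ is \emph{strictly} positive. Since
\[
K_1[\bz;\bw] = G_\kappa[\bz;\bz]\, K[\bz;\bw]\, G_\kappa[\bw;\bw],
\]
applying Cauchy--Binet to this factorisation expresses such a minor as a sum of products $\det G_\kappa[\bz_\bj;\bz_{\bj''}]\, \det K[\bz_{\bj''};\bw_{\bk''}]\, \det G_\kappa[\bw_{\bk''};\bw_\bk]$, in which the Gaussian factors are all positive and at least one $K$-minor is strictly positive (because $K[\bz;\bw]$ has rank $r \geq m$ and $K$ is $\TN_p$, so some $m \times m$ submatrix is non-singular, hence has positive determinant). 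Thus $K_1[\bz;\bw]$ is $\TP_r$, although of course all its $(r+1) \times (r+1)$ minors vanish by the rank constraint.

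The proof now assembles as follows. For $m \leq r$, the first step gives $\det M[\bj;\bk] \geq \det K_1[\bz_\bj;\bw_\bk] > 0$, so the Cauchy--Binet sum is a sum of strictly positive terms. For $m = r+1$, pick $\bj = \bk = (1,2,\ldots,r+1)$, which is possible since $n, N \geq p > r$; then $\det K_1[\bz_\bj;\bw_\bk] = 0$ by rank, whereas $\det K_1[\bz_{\bj'};\bw_{\bk'}] > 0$ is an $r \times r$ minor of the $\TP_r$ matrix $K_1[\bz;\bw]$, so $\det M[\bj;\bk] > 0$; the other summands remain non-negative, giving the $\TP_{r+1}$ conclusion. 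For $r + 1 < m \leq p$ every summand is still non-negative, so $\TN_p$ persists. The main obstacle is recognising that one must extract the stronger $\TP_r$ property of $K_1[\bz;\bw]$ hidden inside Proposition~\ref{Pconvolve}; once this is in hand, the specifically chosen perturbation $e^{-\kappa} E_{11}$ produces, through exactly one non-vanishing cofactor, the desired rank increase from $r$ to $r+1$.
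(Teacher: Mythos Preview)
Your proof is correct and rests on the same ideas as the paper's: the rank-one update formula for the minors of $M = K_1[\bz;\bw] + e^{-\kappa}E_{11}$, the $\TP_r$ property of $K_1[\bz;\bw]$ (which you correctly extract from the Cauchy--Binet argument in Proposition~\ref{Pconvolve}), and the positivity of the Gaussian determinants.

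The one organizational difference is that the paper does not re-run Cauchy--Binet from scratch. Instead, after deriving the same minor formula for $K'[\bz_\bj;\bw_\bk]$, it observes that the leading $(r+1)\times(r+1)$ minor of $M$ is positive while $M$ is a rank-one perturbation of a rank-$r$ matrix; hence $M$ has rank exactly $r+1$. Proposition~\ref{Pconvolve} is then invoked as a black box to conclude that $T_{\kappa,\bz,\bw}(K')$ is $\TN_p$ and $\TP_{\min\{p,r+1\}} = \TP_{r+1}$. Your approach unwinds that black box one more time, checking each size $m$ by hand. The paper's packaging is slightly more modular; yours is slightly more self-contained. Substantively they are the same argument.
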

\begin{proof}
Let
$K' := T_{\kappa, \bz, \bw}( K ) + e^{-\kappa} \delta_{( z_1, w_1 )}$
and fix $m \in [p]$. If $\bj \in \inc{[ n ]}{m}$ and
$\bk \in \inc{[N]}{m}$, then
\[
K'[ \bz_\bj; \bw_\bk ] = %
T_{\kappa, \bz, \bw}( K )[ \bz_\bj; \bw_\bk ] + %
e^{-\kappa} \delta_{( z_1, w_1 )}( z_{j_1}, w_{k_1} ) E_{1 1},
\]
where $E_{1 1}$ is the $m \times m$ matrix with $( 1, 1 )$ entry equal
to $1$ and $0$ elsewhere. Hence
\begin{multline*}
\det K'[ \bz_\bj; \bw_\bk ] \\
 = \det T_{\kappa, \bz, \bw}( K )[ \bz_\bj; \bw_\bk ] + %
e^{-\kappa} \delta_{( z_1, w_1 )}( z_{j_1}, w_{k_1} ) %
\det T_{\kappa, \bz, \bw}( K )[ \bz_\bj \setminus \{z_1 \}; %
\bw_\bk \setminus \{ w_1 \} ],
\end{multline*}
where $\det T_{\kappa, \bz, \bw }( K )[ \emptyset, \emptyset ] = 1$. 
This shows that $K'$ is $\TN_p$ and that $K'[ \bz_\bj; \bw_\bj ]$ has
positive determinant if $\bj = ( 1, \ldots, r + 1 )$, since
$T_{\kappa, \bz, \bw}( K )$ is $\TP_r$. Thus $K'[ \bz; \bw ]$ has 
rank at least $r + 1$, but since $K'[ \bz; \bw ]$ is a rank-one
perturbation of $T_{\kappa, \bz, \bw}( K )[ \bz; \bw ]$, its rank is
exactly $r + 1$. The result now follows from Proposition~\ref{Pconvolve}.
\end{proof}

\begin{corollary}\label{Cfullrank}
With the notation and hypotheses of Proposition~\ref{Pconvolve} and
Notation~\ref{Ndelta}, if $m = \max\{ 0, p - r \} + 1$, then the
kernel $K^{(m)}_{\kappa, \bz, \bw}$ is $\TP_p$, where
\begin{align*}
K^{(1)}_{\kappa, \bz, \bw} & := T_{\kappa, \bz, \bw}( K ) \\
\text{and} \quad K^{(m)}_{\kappa, \bz, \bw} & := %
T^m_{\kappa, \bz, \bw}( K ) + %
e^{-\kappa} \sum_{j = 1}^{m - 1} %
T^j_{\kappa, \bz, \bw}( \delta_{( z_1, w_1 )} ) \qquad ( m \geq 2 ).
\end{align*}
\end{corollary}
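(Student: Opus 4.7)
The plan is to establish this by induction on $m$, iterating the single-step rank increase from Proposition \ref{Pfconestep}. When $r \geq p$ one has $m = 1$, and $K^{(1)} = T_{\kappa, \bz, \bw}(K)$ is $\TP_p$ directly from Proposition \ref{Pconvolve} since $\min\{p, r\} = p$; so the real content is the case $r < p$, for which $m = p - r + 1 \geq 2$.

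The first preparatory step is to record the telescoping identity $K^{(j + 1)} = T_{\kappa, \bz, \bw}(K^{(j)} + e^{-\kappa} \delta_{(z_1, w_1)})$, valid for every $j \geq 1$. I would then prove by induction on $j \in \{1, \ldots, m\}$ the strengthened statement that $K^{(j)}$ is $\TN_p$ on $A \times B$, is $\TP_{r + j - 1}$ (vacuously when $r + j - 1 = 0$), and that $K^{(j)}[\bz; \bw]$ has rank exactly $r + j - 1$. The base case $j = 1$ is Proposition \ref{Pconvolve} applied to $K$.

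For the inductive step, set $L_j := K^{(j)} + e^{-\kappa} \delta_{(z_1, w_1)}$ and repeat the cofactor computation from the proof of Proposition \ref{Pfconestep} to obtain, for $m' \in [p]$, $\bj \in \inc{[n]}{m'}$, $\bk \in \inc{[N]}{m'}$,
\[
\det L_j[\bz_\bj; \bw_\bk] = \det K^{(j)}[\bz_\bj; \bw_\bk] + e^{-\kappa} \delta_{(z_1, w_1)}(z_{j_1}, w_{k_1}) \det K^{(j)}[\bz_\bj \setminus \{z_1\}; \bw_\bk \setminus \{w_1\}].
\]
Both summands are non-negative by the inductive assumption, so every minor of $L_j[\bz; \bw]$ of size at most $p$ is non-negative. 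Choosing $\bj = \bk = (1, \ldots, r + j)$ makes the second summand strictly positive (the minor has size $r + j - 1$ and $K^{(j)}$ is $\TP_{r + j - 1}$, with the empty determinant equal to $1$ in the edge case $r = 0$, $j = 1$), so this particular $(r + j) \times (r + j)$ minor of $L_j[\bz; \bw]$ is positive and $L_j[\bz; \bw]$ has rank at least $r + j$; it has rank exactly $r + j$ because it is a rank-one perturbation of a rank-$(r + j - 1)$ matrix.

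Finally, I would apply the Cauchy--Binet argument from Proposition \ref{Pconvolve} to $L_j$, noting that this argument consumes only the non-negativity of the minors of $L_j$ on the discrete grid $\bz \times \bw$, and not the full $\TN_p$ property of $L_j$ on $A \times B$ (which may fail, owing to an unfavourable cofactor sign at indices outside $\bz \times \bw$). This yields that $K^{(j + 1)} = T_{\kappa, \bz, \bw}(L_j)$ is $\TN_p$ on $A \times B$ and $\TP_{\min\{p, r + j\}} = \TP_{r + j}$; the rank statement for $K^{(j + 1)}[\bz; \bw]$ follows because $G_\kappa[\bz; \bz]$ and $G_\kappa[\bw; \bw]$ are totally positive, hence invertible. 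Taking $j = m$ gives $K^{(m)}$ as $\TP_{r + m - 1} = \TP_p$, completing the induction. The only subtle point beyond Propositions \ref{Pconvolve} and \ref{Pfconestep} is this decoupling of local (on $\bz \times \bw$) and global (on $A \times B$) total non-negativity, so I expect no serious obstacle beyond careful bookkeeping.
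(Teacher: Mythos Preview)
Your proof is correct and follows the approach the paper intends: the corollary is stated without proof, and the natural argument is precisely the induction you give, iterating the single-step rank increase of Proposition~\ref{Pfconestep} via the telescoping identity $K^{(j+1)} = T_{\kappa,\bz,\bw}(K^{(j)} + e^{-\kappa}\delta_{(z_1,w_1)})$. Your explicit observation that the Cauchy--Binet argument of Proposition~\ref{Pconvolve} requires only the non-negativity of the minors of $L_j$ on the discrete grid $\bz \times \bw$ (and not $\TN_p$ on all of $A \times B$, which can indeed fail for $L_j$ when $z_1$ or $w_1$ is not extremal in $A$ or $B$) is a genuine clarification of a point the paper elides in the final line of the proof of Proposition~\ref{Pfconestep}.
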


\subsection{Finite--continuum kernels}

As a pr\'elude to our main result, we establish the following.
Recall that the \emph{set of continuity} for a function is the
set of points in its domain where it is continuous.

\begin{theorem}\label{TAlex1}
Let $d$, $p \in \N$, and suppose $K : [d] \times \R \to \R$ is bounded
and $\TN_p$. Then there exists a sequence of $\TP_p$ kernels
$( K_l )_{l \geq 1}$ converging to $K$ locally uniformly on its
set of continuity.
\end{theorem}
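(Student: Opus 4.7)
My plan is to apply Corollary~\ref{Cfullrank} with appropriately scaled parameters and rescale the output by a positive constant. Take $\bz := ( 1, \ldots, d )$, pick $h_l \searrow 0$, $L_l \to \infty$, and $\kappa_l \to \infty$ with $\kappa_l h_l^2 \to 0$ (e.g.~$\kappa_l := h_l^{-1}$), let $\bw_l$ be the arithmetic progression in $[ -L_l, L_l ]$ with common difference $h_l$, and set $m_l := \max\{ 0, p - \mathrm{rank}\, K[ \bz; \bw_l ] \} + 1$, which lies in $\{ 1, \ldots, p + 1 \}$. Define
\[
K_l := ( \kappa_l h_l^2 / \pi )^{m_l / 2} \, K^{(m_l)}_{\kappa_l, \bz, \bw_l}.
\]
By Corollary~\ref{Cfullrank}, $K^{(m_l)}_{\kappa_l, \bz, \bw_l}$ is $\TP_p$ on $[d] \times \R$, and positive scaling preserves $\TP_p$, so each $K_l$ is $\TP_p$. (When $p > d$, $\TP_p$ on $[d] \times \R$ is vacuous and one may take $K_l = K$.) It then remains to show $K_l \to K$ locally uniformly on the continuity set.

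The heart of the convergence argument is the single-application estimate at a continuity point $(i, y)$ of $K(i, \cdot)$. The outer-sum terms with $j \neq i$ in $T_{\kappa_l, \bz, \bw_l}( K )$ contribute at most $d \|K\|_\infty e^{-\kappa_l} \cdot O( \sqrt{ \pi / ( \kappa_l h_l^2 ) } )$, which vanishes super-polynomially. The $j = i$ term $\sum_k K( i, w_k ) e^{-\kappa_l ( w_k - y )^2}$ is a discretized Gaussian mollification and equals $\sqrt{ \pi / ( \kappa_l h_l^2 ) } \, ( K( i, y ) + o( 1 ) )$ locally uniformly on the continuity set: the tuning $\kappa_l h_l^2 \to 0$ suppresses Poisson-summation aliasing between the grid and the Gaussian, uniform continuity of $K( i, \cdot )$ on continuity compacta yields uniform mollification, and boundedness of $K$ together with $L_l \to \infty$ absorbs the tail $|w| > L_l$ via Gaussian decay. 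Iterating $m_l$ times---using the heat-kernel semigroup property to collapse the composition into a single Gaussian mollifier of cumulative variance $\sim m_l / \kappa_l \to 0$---gives $T^{m_l}_{\kappa_l, \bz, \bw_l}( K )( i, y ) = ( \pi / ( \kappa_l h_l^2 ) )^{m_l / 2} \, ( K( i, y ) + o( 1 ) )$, so rescaling by $( \kappa_l h_l^2 / \pi )^{m_l / 2}$ recovers $K( i, y )$. The perturbation terms in $K^{(m_l)}_{\kappa_l, \bz, \bw_l}$ are each bounded by $e^{-\kappa_l}$ times polynomials in $d$ and $\sqrt{ \pi / ( \kappa_l h_l^2 ) }$; after rescaling they still vanish uniformly, since $e^{-\kappa_l}$ decays faster than any such polynomial grows.

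The main obstacle is making the iterated convergence rigorous, with uniform error bounds on compact subsets of the continuity set. A single application is a standard discretized Gaussian mollification, but composing $m_l$ times introduces nested discrete sums over $\bz$ and $\bw_l$ together with a widening of the effective smoothing kernel. My approach would be to unfold $T^{m_l}_{\kappa_l, \bz, \bw_l}$ into a single $( 2 m_l )$-fold sum of products of Gaussian factors, apply the Gaussian-product identity in Lemma~\ref{Lgaussian} together with the heat-kernel semigroup property to collapse intermediate sums into Riemann approximations of a single effective Gaussian of variance $\sim m_l / ( 2 \kappa_l )$, and then invoke the standard mollifier estimate. Keeping the Riemann-sum and aliasing errors uniform in $y$ over continuity compacta is the technical core of the proof.
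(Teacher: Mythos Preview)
Your proposal is essentially the paper's own approach: it too applies Corollary~\ref{Cfullrank} with $\bz=(1,\ldots,d)$, a fine arithmetic grid $\bw$, and the same normalisation $(\kappa h^2/\pi)^{m/2}$ (the paper's specific choice is $h=2^{-n}$, $L=n$, $\kappa=n$, and the result is Proposition~\ref{Pfcconv}). One small correction: your parenthetical remark that ``when $p>d$, $\TP_p$ on $[d]\times\R$ is vacuous and one may take $K_l=K$'' is wrong---$\TP_p$ then coincides with $\TP_d$, which is not vacuous, so the construction is still needed. Also, your stated conditions on the parameters omit a mild growth bound on $L_l$ relative to $\kappa_l$ (needed so that the perturbation terms, which are of size $e^{-\kappa_l}(L_l\sqrt{\kappa_l})^{m_l}$ after rescaling, actually vanish); the paper's choice $L_n=\kappa_n=n$ handles this automatically.

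On execution, the paper's route for the iterated convergence is somewhat cleaner than the ``heat-kernel semigroup'' collapse you sketch: rather than reducing to a single one-dimensional mollifier, it uses Lemma~\ref{Lgaussian} to rewrite the $m$-fold product of Gaussians as a single $m$-variate Gaussian density $f_{x_0\bJ,\,2\kappa Q_m}$, so the nested sum becomes a Riemann sum for $\int K(j_m,\varphi_n(x_m))\,f_{x_0\bJ,V}(\bx)\,\rd\bx$. Convergence then follows from the concentration $V^{-1}=(2n)^{-1}Q_m^{-1}\to 0$ (via L\'evy's continuity theorem and convergence in probability), which sidesteps the aliasing and semigroup bookkeeping you identify as the technical core.
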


This theorem is an immediate consequence of the next result on
discrete Gaussian convolution. For any $d$, $n \in \N$, let
$\bd := ( 1, \ldots, d )$ and
\[
\bz_n := ( -n, -n + 2^{-n}, \ldots, n ) \in \inc{[ -n, n ]}{N}, %
\qquad \text{where } N = n 2^{n + 1} + 1.
\]

\begin{proposition}\label{Pfcconv}
Let $K : [d] \times \R \to \R$ be bounded and let $K^{(m)}$ be as in
Corollary~\ref{Cfullrank}. Then
\[
2^{-m n} ( n / \pi )^{m / 2} K^{(m)}_{n, \bd, \bz_n}  \to K %
\qquad \text{as } n \to \infty,
\]
locally uniformly on the set of continuity for $K$.
\end{proposition}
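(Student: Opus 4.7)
My plan is to expand $T^m_{n, \bd, \bz_n}(K)(i, y)$ explicitly and exploit the tensor-product structure of the Gaussian kernel $G_n(x, y) = \exp(-n(x-y)^2)$. Iterating the definition of $T$, the $\bd$- and $\bz_n$-directions decouple: one obtains
\[
T^m_{n, \bd, \bz_n}(K)(i, y) = \sum_{j_0 = 1}^d \sum_{k_0 = 1}^N P_z^{(m)}(i, j_0)\, K(j_0, w_{k_0})\, P_w^{(m)}(w_{k_0}, y),
\]
where $P_z^{(m)}(i, j_0)$ is the $(i, j_0)$-entry of $(G_n[\bd;\bd])^m$, while $P_w^{(m)}(w_{k_0}, y) = \sum_{k_1, \ldots, k_{m-1}} \prod_{\ell = 0}^{m-1} G_n(w_{k_\ell}, w_{k_{\ell+1}})$ with $w_{k_m} := y$. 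Since the entries of $\bd = (1, \ldots, d)$ are separated by at least $1$, we have $G_n(j, j') \leq e^{-n}$ for distinct $j, j' \in [d]$, so $G_n[\bd;\bd] = I_d + O(e^{-n})$ and consequently $P_z^{(m)}(i, j_0) = \delta_{ij_0} + O(e^{-n})$ as $n \to \infty$, uniformly in $i, j_0 \in [d]$.

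For $P_w^{(m)}(w_{k_0}, y)$, I would invoke Lemma~\ref{Lgaussian} to rewrite the chain product as a constant times the $m$-variate Gaussian density $f_{w_{k_0} \bJ, 2n Q_m}$ evaluated at $(w_{k_1}, \ldots, w_{k_{m-1}}, y)$. Using the identity $(Q_m^{-1})_{mm} = m$, which can be checked inductively by cofactor expansion, the marginal density of the last coordinate is the one-dimensional Gaussian with mean $w_{k_0}$ and variance $m/(2n)$, and therefore the continuous $(m-1)$-fold integral of the chain product equals $\frac{(\pi/n)^{(m-1)/2}}{\sqrt{m}} \exp(-n(y - w_{k_0})^2/m)$. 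The discrete Riemann sum at step $2^{-n}$ approximates $2^{n(m-1)}$ times this integral, with relative error of order $O(\sqrt{n}\, 2^{-n})^{m-1}$ obtained by iterating the one-dimensional Riemann-sum bound together with the total-variation estimate $\|\partial_t e^{-nt^2}\|_{L^1(\R)} = 2$ on each slice.

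Substituting these asymptotics, one arrives at
\[
T^m_{n, \bd, \bz_n}(K)(i, y) = \frac{(2^n\sqrt{\pi/n})^{m-1}}{\sqrt{m}} \sum_{k_0} K(i, w_{k_0})\, e^{-n(y - w_{k_0})^2/m} + o\bigl(2^{mn}(\pi/n)^{m/2}\bigr),
\]
and a standard Gaussian mollification argument finishes the job: splitting the remaining sum at $|w_{k_0} - y| = \delta$, the far part is exponentially small in $n\delta^2$ by Gaussian decay, while on the near part the continuity of $K(i, \cdot)$ at $y$ allows $K(i, w_{k_0})$ to be replaced by $K(i, y)$ with error $O(\eps)$. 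Uniform continuity of $K(i, \cdot)$ on compact subsets of its continuity set yields the claimed locally uniform convergence. Finally, each correction term $e^{-n} T^j_{n, \bd, \bz_n}(\delta_{(z_1, w_1)})$ from the definition of $K^{(m)}$ admits an analogous decomposition and is bounded by $C e^{-n}(2^n/\sqrt{n})^{j-1}$; after multiplying by $2^{-mn}(n/\pi)^{m/2}$, each such term vanishes exponentially in $n$.

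The main obstacle is the fine control of the $(m-1)$-fold Riemann sum in the $\bz_n$-direction: one must ensure that the discretization errors accumulated across the $m-1$ summations do not swamp the desired convergence rate. Here the design of the grid is crucial, since the step $2^{-n}$ is super-polynomially finer than the Gaussian's natural width $1/\sqrt{n}$, keeping each relative Riemann error at $O(\sqrt{n}\,2^{-n})$. A second subtlety is that $K$ is only assumed bounded, with no measurability imposed, so every estimate must be carried out at the level of the already-discretized sum; fortunately the mollification argument above never needs to integrate against $K$.
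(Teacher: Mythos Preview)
Your approach is correct and genuinely different from the paper's. The paper does not exploit the tensor-product decoupling you identify; instead it expands $T^m_{n,\bd,\bz_n}(K)(j_0,x_0)$ as a full sum $\sum_{j_1,\ldots,j_m\in[d]} I_{j_1,\ldots,j_m}$, where each summand carries the factor $\exp\bigl(-n\sum_k (j_{k-1}-j_k)^2\bigr)$ so that off-diagonal tuples die like $e^{-n}$ (the same content as your $(G_n[\bd;\bd])^m = I_d + O(e^{-n})$), and the $m$-fold $\bz_n$-sum is rewritten as an integral over $[-n,n]^m$ via the floor map $\varphi_n(z)=2^{-n}\lfloor 2^n z\rfloor$. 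The discretization error is then controlled by the explicit identity $f_{\bmu,V}(\varphi_n(\bz)) = f_{\bmu,V}(\bz)\exp(R_n(\bz;\bmu))$, with $R_n$ a quadratic remainder bounded uniformly on $[-n,n]^m$; the final mollification is phrased probabilistically, using concentration of the Gaussian $f_{x_0\bJ,2nQ_m}$ at $x_0\bJ$ via L\'evy's continuity theorem. Your route---marginalizing out the $m-1$ intermediate $\bz_n$-variables through the observation $(Q_m^{-1})_{mm}=m$ to obtain a single effective mollifier of width $\sqrt{m/(2n)}$---is more transparent and avoids the probabilistic language, but the paper's floor-function remainder handles all variables at once without having to iterate one-dimensional estimates, which keeps the error bookkeeping cleaner and carries over verbatim to the continuum--continuum case treated next. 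One small omission in your sketch: you should also account for truncating the grid to $[-n,n]$, which the paper does via the tail probability $\Pr(\|X\|_\infty>n)$; in your framework this is absorbed by Gaussian tail decay for $y$ in a compact set.
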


\begin{proof}
Note first the elementary estimate
\[
\| T_{\kappa, \bz, \bw}( G ) \|_\infty \leq d N \| G \|_\infty
\]
for any kernel $G : A \times B \to \R$, where $\kappa > 0$,
$\bz \in \inc{A}{d}$, $\bw \in \inc{B}{N}$, and $\| \cdot \|_\infty$
is the supremum norm on $A \times B$. Thus, if $j \in [ m ]$, then
\[
\| T_{n, \bd, \bz_n}^j( \delta_{( 1, z_1 )} ) \|_\infty \leq %
d^j N^j \leq d^m ( 4 n )^m 2^{m n} = ( 4 d )^m n^m 2^{m n},
\]
hence
\begin{align*}
\| 2^{-m n} ( n / \pi )^{m / 2} e^{-n} \sum_{j = 1}^{m - 1} %
T_{n, \bd, \bz_n}^j( \delta_{( 1, z_1 )} ) \|_\infty & \leq %
 2^{-m n} ( n / \pi )^{m / 2} e^{-n} m ( 4 d )^m n^m 2^{m n} \\
 & = m ( 16 d^2 / \pi )^{m / 2} n^{3 m / 2} e^{-n} \\
 & \to 0 \qquad \text{as } n \to \infty.
\end{align*}
Next, let $( j_0, x_0 ) \in [d] \times \R$. Lemma~\ref{Lgaussian}
gives that
\[
2^{-m n} ( n / \pi )^{m / 2} T_{n, \bd, \bz_n}^m( K ) = %
\sum_{j_1, \ldots, j_m \in [d]} I_{j_1, \ldots, j_m},
\]
where
\[
I_{j_1, \ldots, j_m}( j_0, x_0 ) := 
\exp( -n \sum_{k = 1}^m ( j_{k - 1} - j_k )^2 ) %
\int_{[ -n, n ]^m} K( j_m, \varphi_n( x_m ) ) %
f_{x_0 \bJ, V}( \varphi_n( \bx ) ) \intd\bx,
\]
with $V = 2 n Q_m$ and
$\varphi_n( z ) := \lfloor 2^n z \rfloor 2^{-n}$ if $z \in \R$ and
$\varphi_n( \bz ) := ( \varphi_n( z_1 ), \ldots, \varphi_n( z_m ) )$
if $\bz \in \R^m$. Now let
$\eps_n( \bz ) := \bz - \varphi_n( \bz ) \in [ 0, 2^{-n} ]^m$ and note
that
\[
( \varphi_n( \bz ) - \bmu )^T V ( \varphi_n( \bz ) - \bmu ) = %
\bz^T V \bz - 2 \eps_n( \bz )^T V ( \bz - \bmu ) + %
\eps_n( \bz )^T V \eps_n( \bz ),
\]
so
\[
f_{\bmu, V}( \varphi_n( \bz ) ) = %
f_{\bmu, V}( \bz ) \exp( R_n( \bz; \bmu ) ),
\]
where
\begin{equation}\label{ERn}
R_n( \bz; \bmu ) := \eps_n( \bz )^T V ( \bz - \bmu ) - %
\frac{1}{2} \eps_n( \bz )^T V \eps_n( \bz ).
\end{equation}
Note that, if $\bz \in [ -n, n ]^m$, then
\[
| R_n( \bz; x_0 \bJ ) | \leq C_n( x_0 ):= %
m 2^{-n} ( n + | x_0 | + 2^{-n - 1} ) %
( 6 m - 5 )^{1 / 2} ( 2 n )^m \to 0
\]
as $n \to \infty$. Hence, if
$( j_1, \ldots, j_m ) \neq ( j_0, \ldots, j_0 )$, then
\[
| I_{j_1, \ldots, j_m} | \leq e^{-n} \| K \|_\infty %
\exp( C_n( x_0 ) ) \to 0 \qquad \text{as } n \to \infty,
\]
locally uniformly in $x_0$ on $\R$. Furthermore, if the random
variable $X$ has the probability density function $f_{x_0 \bJ, V}$, then
$X \to x_0 \bJ$ in distribution as $n \to \infty$, by L\'evy's
continuity theorem \cite[p.~383]{Bi} and the fact that $V^{-1} = ( 2 n
)^{-1} Q_m^{-1} \to 0$ as $n \to \infty$. Thus $X \to x_0 \bJ$ in
probability, and so
\begin{multline*}
| I_{j_0, \ldots, j_0} - %
\int_{\R^m} K( j_0, \varphi_n( x_m ) ) %
f_{x_0 \bJ, V}( \bx ) \intd\bx | \\
 \leq \int_{[ -n, n ]^m} \| K \|_\infty f_{x_0 \bJ, V}( \bx ) ( %
\exp( C_n( x_0 ) ) - 1 ) \intd\bx + %
\| K \|_\infty \Pr( \| X \|_\infty > n ) \to 0
\end{multline*}
as $n \to \infty$, locally uniformly in $x_0$ on $\R$: if
$n > | x_0 | + 1 / 2$ then
\begin{align*}
\Pr( \| X \|_\infty > n ) & \leq %
\Pr( \| X \|_\infty > | x_0 | + 1 / 2 ) \\
 & \leq \Pr( \bigl| \| X \|_\infty - | x_0 | \bigr| > 1 / 2 ) \\
 & \leq \Pr( \| X - x_0 \bJ \|_2 > 1 / 2 ),
\end{align*}
since
\[
\| X - x_0 \bJ \|_2 \geq | X_j - x_0 | \geq %
\bigl| | X_j | - | x_0 | \bigr| \qquad \text{for any } j \in [ m ].
\]
Finally, if $\eps > 0$, then 
\begin{align*}
| & \int_{\R^m} K( j_0, \varphi_n( x_m ) ) %
f_{x_0 \bJ, V}( \bx ) \intd\bx - K( j_0, x_0 ) | \\
 & \leq %
\int_{[ x_0 - \eps, x_0 + \eps ]^m} | K( j_0, \varphi_n( x_m ) - %
K( j_0,  x_0 ) | f_{x_0 \bJ, V}( \bx ) \intd\bx + %
2 \| K \|_\infty \Pr( \| X - x_0 \bJ \|_\infty > \eps ) \\
& \leq \sup\{ | K( j_0, \varphi_n( x ) ) - K( j_0, x_0 ) | : %
| x - x_0 | \leq \eps \} + %
2 \| K \|_\infty \Pr( \| X - x_0 \bJ \|_2 > \eps ).
\end{align*}
This gives the result.
\end{proof}

\subsection{Continuum--continuum kernels}

Finally, we provide the technical heart of the proof of
Theorem~\ref{Twhitney2}, which is the following modification of
Proposition~\ref{Pfcconv}.

\begin{theorem}\label{PAlex2}
Let $K : \R \times \R \to \R$ be bounded and let $K^{(m)}$ be as in
Corollary~\ref{Cfullrank}. Then
\[
4^{-m n} ( n / \pi )^m K^{(m)}_{n, \bz_n, \bz_n} \to K \qquad %
\text{as } n \to \infty,
\]
locally uniformly on the set of continuity for $K$.
\end{theorem}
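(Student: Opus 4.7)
The plan is to mirror, step for step, the proof of Proposition~\ref{Pfcconv}, the only change being that both input and output indices are now discretized on the same grid $\bz_n$. Each application of $T_{n, \bz_n, \bz_n}$ therefore produces a Riemann sum of step $2^{-n}$ on both sides, rather than only one, which accounts for the rescaling factor $4^{-mn}(n/\pi)^m$ in place of the $2^{-mn}(n/\pi)^{m/2}$ of Proposition~\ref{Pfcconv}.

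First I would dispose of the delta correction terms in Corollary~\ref{Cfullrank}. Using the crude sup-norm bound $\| T_{n, \bz_n, \bz_n}(G) \|_\infty \leq N^2 \| G \|_\infty$, where $N = n 2^{n+1} + 1$, one obtains
\[
4^{-mn} ( n / \pi )^m e^{-n} \sum_{j = 1}^{m - 1}
\| T_{n, \bz_n, \bz_n}^j( \delta_{( z_1, z_1 )} ) \|_\infty
\leq C_m n^{2m} e^{-n} \to 0,
\]
so this contribution is absorbed into the error. For the main term, I would expand
\[
T^m_{n, \bz_n, \bz_n}(K)( x_0, y_0 ) =
\sum_{\bj, \bk \in [N]^m}
\Bigl( \prod_{i = 1}^m G_n( x_{i-1}, z_{j_i} ) \Bigr)
K( z_{j_m}, z_{k_m} )
\Bigl( \prod_{i = 1}^m G_n( z_{k_i}, y_{i-1} ) \Bigr),
\]
where $x_i := z_{j_i}$ and $y_i := z_{k_i}$ for $i \geq 1$, and apply Lemma~\ref{Lgaussian} to each of the two Gaussian chains to obtain
\[
T^m_{n, \bz_n, \bz_n}(K)( x_0, y_0 ) =
( \pi / n )^m \sum_{\bj, \bk}
f_{x_0 \bJ, V}( z_\bj ) f_{y_0 \bJ, V}( z_\bk )
K( z_{j_m}, z_{k_m} ),
\]
with $V = 2 n Q_m$. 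After multiplication by $4^{-mn}( n / \pi )^m$, the right-hand side becomes a double Riemann sum of mesh $2^{-n}$ on $[-n, n]^{2m}$ that approximates
\[
\int_{[-n, n]^{2m}} f_{x_0 \bJ, V}( \bx ) f_{y_0 \bJ, V}( \by )
K( \varphi_n( x_m ), \varphi_n( y_m ) ) \intd\bx \intd\by,
\]
with $\varphi_n$ as in the proof of Proposition~\ref{Pfcconv}. The discretization error from replacing $z_{\bj}$ by $\bx$ and $z_{\bk}$ by $\by$ is governed by two independent copies of the remainder $R_n$ from equation~\eqref{ERn}, so the combined error is bounded by the same $C_n( x_0 ) + C_n( y_0 )$ that tends to zero locally uniformly.

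Finally I would conclude by the probabilistic argument of Proposition~\ref{Pfcconv}: let $(X, Y)$ have density $f_{x_0 \bJ, V}( \bx ) f_{y_0 \bJ, V}( \by )$; since $V^{-1} = ( 2 n )^{-1} Q_m^{-1} \to 0$, L\'evy's continuity theorem implies $( X, Y ) \to ( x_0 \bJ, y_0 \bJ )$ in probability. Tails outside $[-n, n]^m$ are absorbed into $\| K \|_\infty \Pr( \| X - x_0 \bJ \|_2 > n - | x_0 | )$, as before. At a point $( x_0, y_0 )$ of (joint) continuity of $K$, the integral over a small neighborhood is approximated by $K( x_0, y_0 )$ and the complement is absorbed into a Gaussian tail.

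The main obstacle is essentially bookkeeping: ensuring that the joint concentration of $( X, Y )$ lands inside a neighborhood where $K$ is continuous, since the hypothesis is continuity at the base point $( x_0, y_0 )$ rather than on an open set. This is handled by splitting the integral into an $\varepsilon$-neighborhood of $( x_0, y_0 )$, where continuity is applied, and its complement, controlled by the vanishing Gaussian tail; local uniformity then follows by noting that $C_n( x_0 )$ and the tail estimate are locally uniform in $x_0, y_0$, exactly as in the finite--continuum case.
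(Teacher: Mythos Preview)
Your proposal is correct and follows essentially the same route as the paper's proof: dispose of the $\delta$-correction via a crude sup-norm bound, rewrite $T^m_{n,\bz_n,\bz_n}(K)$ as a product of two Gaussian chains via Lemma~\ref{Lgaussian}, reinterpret the result as a Riemann sum for a double integral against $f_{x_0\bJ,V}\times f_{y_0\bJ,V}$, control discretization through the remainder $R_n$ of~\eqref{ERn}, and finish with the concentration of $(X,Y)$ at $(x_0\bJ,y_0\bJ)$. Two minor bookkeeping points: the $\delta$-term bound actually carries an extra factor of $(n/\pi)^m$, giving $n^{3m}e^{-n}$ rather than $n^{2m}e^{-n}$ (still harmless), and the discretization error is controlled by $\exp(C_n(x_0)+C_n(y_0))-1$ rather than the sum itself, but this does not affect the argument.
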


\begin{proof}
Working as in the proof of Proposition~\ref{Pfcconv}, note first that
\begin{align*}
\| 4^{-m n} ( n / \pi )^m e^{-n} \sum_{j = 1}^{m - 1} %
T_{n, \bz_n, \bz_n}^j( \delta_{( z_1, w_1 )} ) \|_\infty & \leq %
4^{-m n} ( n / \pi )^m e^{-n} m ( 4 n )^{2 m} 4^{m n} \\
 & = m ( 16 / \pi )^m n^{3 m} e^{-n} \\
 & \to 0 \qquad \text{as } n \to \infty.
\end{align*}
Next, fix $( x, y ) \in \R^2$. Lemma~\ref{Lgaussian} gives that
\begin{align*}
J_n & := 4^{-m n} ( n / \pi )^m T_{n, \bz_n, \bz_n}^m( K )( x, y ) \\
 & \hphantom{:}= \int_{[ -n, n ]^m} \int_{[ -n, n ]^m} %
K( \varphi_n( x_m ), \varphi_n( y_m ) ) %
f_{x \bJ, V}( \varphi_n( \bx ) ) %
f_{y \bJ, V}( \varphi_n( \by ) ) \intd\bx \intd\by,
\end{align*}
where $V = 2 n Q_m$,
$\varphi_n( z ) := \lfloor 2^n z \rfloor 2^{-n}$ if $z \in \R$ and
$\varphi_n( \bz ) := ( \varphi_n( z_1 ), \ldots, \varphi_n( z_m ) )$
if $\bz \in \R^m$. As above, let
$\eps_n( \bz ) := \bz - \varphi_n( \bz ) \in [ 0, 2^{-n} ]^m$ and note
that
\[
f_{\bmu, V}( \varphi_n( \bz ) ) = %
f_{\bmu, V}( \bz ) \exp( R_n( \bz; \bmu ) ),
\]
where $R_n( \bz; \bmu )$ is as in~(\ref{ERn}). Hence
\begin{align*}
\Bigl| J_n & - \int_{[ -n, n ]^m} \int_{[ -n, n ]^m} %
K( \varphi_n( x_m ), \varphi_n( y_m ) ) f_{x \bJ, V}( \bx ) %
f_{y \bJ, V}( \by ) \intd\bx \intd\by \Bigr| \\
& \leq \| K \|_\infty \int_{[ -n, n ]^m} \int_{[ -n, n ]^m} %
| \exp( R_n( \bx; x \bJ ) + R_n( \by; y \bJ ) ) - 1 | %
f_{x \bJ, V}( \bx ) f_{y \bJ, V}( \by ) \intd\bx \intd\by \\
 & \leq \| K \|_\infty %
( \exp( m 2^{-n} ( 2 n + | x | + | y | + 2^{-n} ) %
( 6 m - 5 )^{1 / 2} ( 2 n )^m ) - 1 ) \\
 & \to 0 \qquad \text{as } n \to \infty,
\end{align*}
locally uniformly in $( x, y )$ on $\R^2$.

Next, let $( X, Y )$ have probability density function
$f_{x \bJ, V} \times f_{y \bJ, V}$ and note that
$( X, Y ) \to ( x \bJ, y \bJ )$ in probability. Thus,
\begin{align*}
\int_{\R^m} \int_{\R^m} & %
( 1 - 1_{[ -n, n ]^m}( \bx ) 1_{[ -n, n ]^m}( \by ) )
K( \varphi_n( x_m ), \varphi_n( y_m ) ) %
f_{x \bJ, V}( \bx ) f_{y \bJ, V}( \by ) \intd\bx \intd\by \\
& \leq \| K \|_\infty ( %
\Pr( \| X \|_\infty > n ) + \Pr( \| Y \|_\infty > n ) ) \\
 & \to 0 \qquad \text{as } n \to \infty,
\end{align*}
locally uniformly in $( x, y )$ on $\R^2$. Finally, fix $\eps > 0$ and
note that
\begin{align*}
\int_{\R^m} & \int_{\R^m} %
| K( \varphi_n( x_m ), \varphi_n( y_m ) ) - K( x, y ) | %
f_{x \bJ, V}( \bx ) f_{y \bJ, V}( \by ) \intd\bx \intd\by \\
 & \leq %
\int_{[ x - \eps, x + \eps ]^m}\int_{[ y - \eps, y + \eps ]^m} %
| K( \varphi_n( x_m ), \varphi_n( y_m ) ) - K( x, y ) | %
f_{x \bJ, V}( \bx ) f_{y \bJ, V}( \by ) \intd\bx \intd\by \\
 & \qquad + %
2 \| K \|_\infty ( \Pr( \| X - x \bJ \|_\infty > \eps ) + %
\Pr( \| Y - y \bJ \|_\infty > \eps ) ) \\
& \leq %
\sup\{ | K( \varphi_n( \xi ), \varphi_n( \eta ) ) - K( x, y ) | : %
| \xi - x | \leq \eps, \ | \eta - y | \leq \eps \} \\
 & \qquad + 2 \| K \|_\infty ( \Pr( \| X - x \bJ \|_2 > \eps ) + %
\Pr( \| Y - y \bJ \|_2 > \eps ) ).
\end{align*}
The result follows.
\end{proof}

\begin{remark}[Symmetric kernels]\label{Rsymm}
If $K : \R \times \R \to \R$ is symmetric, then, since $G_\kappa$
is symmetric as well,
\[
T_{\kappa, \bz, \bw}( K )( x, y ) = %
T_{\kappa, \bw, \bz}( K )( y, x ) \quad \text{for all } %
\kappa > 0, \ \bz \in \inc{X}{n}, \ \bw \in \inc{X}{N} %
\text{ and } x, y \in X,
\]
In particular, the map $T_{\kappa, \bz, \bz}$ preserves symmetry.
Thus, the kernels $K^{(m)}_{n, \bz_n, \bz_n}$ used in
Proposition~\ref{PAlex2} are symmetric if $K$ is.
\end{remark}

We conclude as follows.

\begin{proof}[Proof of Theorem~\ref{Twhitney2}]
We first extend $K$ to a $\TN_p$ kernel $\widetilde{K}$ on
$\R \times \R$ via padding by zeros. Proposition~\ref{PAlex2} now
gives a sequence $( \widetilde{K}_l )_{l \geq 1}$ of $\TP_p$ kernels
on $\R \times \R$ converging locally uniformly on the set of
continuity of $\widetilde{K}$, which contains the points in the
interior of $X \times Y$ where $K$ is continuous. The result now
follows by restricting each of these to $X \times Y$. The symmetric
variant is proved in the same way, noting that if $K$ is symmetric,
then so are~$\widetilde{K}$ and the kernels $\widetilde{K}_l$, by
Remark~\ref{Rsymm}.
\end{proof}

\begin{remark}
Propositions~\ref{Phankel} and~\ref{Ptoeplitz} provide further
$\TP$-density results, for $\TN$ Hankel kernels and P\'olya frequency
functions, respectively.
\end{remark}

\section{Totally non-negative and totally positive Hankel kernels}%
\label{Shankel}

Having explored variations on our original theme, we now return to
classification problems for total non-negativity and total positivity,
now in the presence of additional structure. First, we consider Hankel
matrices and kernels; in the following two sections, we examine the
case of Toeplitz kernels.

\subsection{Totally non-negative Hankel matrices}

As noted in \cite{BGKP-hankel}, the collection of $\TN$ Hankel
matrices constitutes a test set that is closed under addition,
multiplication by non-negative scalars, entrywise products, and
pointwise limits. In particular, this test set, in each fixed
dimension, is a closed convex cone. As the functions $1$ and $x$
preserve total non-negativity when applied entrywise, the same holds
for any absolutely monotonic function
$\sum_{k = 0}^\infty c_k x^k$, where the Maclaurin coefficient
$c_k \geq 0$ for all $k$. It is natural to ask if there are any other
preservers. In \cite{BGKP-hankel}, we show that, up to a possible
discontinuity at the origin, there are no others.

\begin{theorem}[\cite{BGKP-hankel}]\label{Thankel}
Given a function $F : [ 0, \infty ) \to \R$, the following are
equivalent.
\begin{enumerate}
\item Applied entrywise, $F[-]$ preserves $\TN$ for Hankel
matrices of all sizes.
\item Applied entrywise, $F[-]$ preserves positivity for $\TN$
Hankel matrices of all sizes.
\item $F( x ) = \sum_{k = 0}^\infty c_k x^k$ on $( 0, \infty )$ with
$c_k \geq 0$ for all $k$, and $0 \leq F( 0 ) \leq c_0$.
 
\end{enumerate}
\end{theorem}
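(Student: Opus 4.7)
The plan is to establish the cycle $(1) \Rightarrow (2) \Rightarrow (3) \Rightarrow (1)$. The implication $(1) \Rightarrow (2)$ is immediate, since a symmetric $\TN$ matrix has non-negative principal minors and so is positive semidefinite. For $(3) \Rightarrow (1)$, I would first show that each integer-power map $x \mapsto x^k$ preserves $\TN$ on Hankel matrices of all sizes. By a classical theorem of Stieltjes, an infinite Hankel matrix $(h_{i + j})_{i, j \geq 0}$ is $\TN$ of all sizes if and only if $h_n = \int_0^\infty x^n \, d\mu( x )$ for some finite positive Borel measure $\mu$ on $[ 0, \infty )$. Then
\[
h_n^k = \int_{[ 0, \infty )^k} ( x_1 \cdots x_k )^n \, d\mu^{\otimes k}( x_1, \ldots, x_k ) = \int_0^\infty y^n \, d\nu( y ),
\]
where $\nu$ is the pushforward of $\mu^{\otimes k}$ under the product map, so $( h_n^k )$ is again Stieltjes and the $k$th entrywise Hadamard power of a $\TN$ Hankel matrix is $\TN$. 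Since the set of $\TN$ Hankel matrices of all sizes is closed under non-negative linear combinations and pointwise limits (sums and limits of Stieltjes sequences are Stieltjes), every absolutely monotonic $F$ satisfying $F( 0 ) = c_0$ preserves $\TN$. The correction $0 \leq F( 0 ) \leq c_0$ allowing a jump at the origin is forced exactly by the rank-one $\TN$ Hankel matrix associated with $c \delta_0$ (i.e., $h_0 = c$, $h_n = 0$ for $n \geq 1$): the $2 \times 2$ minor containing position $( 0, 0 )$ gives $F( 0 )( F( c ) - F( 0 ) ) \geq 0$, which translates precisely to $F( 0 ) \leq F( 0^+ ) = c_0$.

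For the main implication $(2) \Rightarrow (3)$, the plan runs as follows. First, applying $F$ entrywise to $2 \times 2$ $\TN$ Hankel matrices of the form $\bigl(\begin{smallmatrix} a & b \\ b & c \end{smallmatrix}\bigr)$ with $a c \geq b^2$ and invoking Theorem~\ref{Tvasu} shows that $F$ is non-negative, non-decreasing, multiplicatively mid-convex, and continuous on $( 0, \infty )$. Next, reformulate condition $(2)$ using Hamburger's theorem: for every Stieltjes moment sequence $( h_n )$, the sequence $( F( h_n ) )$ must be a Hamburger moment sequence. Specializing to measures of the form $\mu = \sum_{j = 1}^r s_j \delta_{u_j}$ with $s_j, u_j \geq 0$, the sequence $\bigl( F\bigl( \sum_j s_j u_j^n \bigr) \bigr)_n$ must be Hamburger for all choices of parameters, yielding an exceptionally rich family of moment constraints on $F$. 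Using one-point measures ($\mu = \delta_u$) one sees in particular that $(F(u^n))_n$ is a Hamburger moment sequence for every $u > 0$, while two-point measures provide the analytic leverage to extract derivative information. Combining these constraints with prior work of Bernstein, Hamburger, Mercer, and Widder—in particular, bootstrapping from continuity to $C^\infty$ via the moment conditions, and then applying a Mercer-type theorem extended from Hausdorff to Stieltjes sequences—yields that $F$ is absolutely monotonic on $( 0, \infty )$, giving the representation $F( x ) = \sum_{k = 0}^\infty c_k x^k$ with $c_k \geq 0$. The boundary bound $0 \leq F( 0 ) \leq c_0$ is read off from the same two-point minor analysis used in the reverse direction.

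The main obstacle is the passage, in $(2) \Rightarrow (3)$, from the infinite family of Hamburger moment conditions imposed on $( F( h_n ) )$ back to the absolute monotonicity of $F$. This is not a direct invocation of Mercer's classical result (which concerns Hausdorff moment sequences on $[ 0, 1 ]$) but rather requires a careful regularity bootstrap, starting from the mere continuity and multiplicative mid-convexity produced by the $2 \times 2$ analysis, and upgrading it to smoothness and then analyticity by exploiting the rich parametric family of two-point test measures. The delicate point is that condition $(2)$ only guarantees that $( F( h_n ) )$ is a Hamburger (not necessarily Stieltjes) sequence, so the argument must make do with the weaker PSD output while still recovering the stronger absolute-monotonicity conclusion; isolating the right test sequences and quantifying the analytic information they carry is the technical heart of the proof, and is precisely where the synthesis of Bernstein's, Hamburger's, Mercer's, and Widder's tools is indispensable.
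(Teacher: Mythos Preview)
The paper does not actually prove Theorem~\ref{Thankel}; it is quoted from \cite{BGKP-hankel}. What the present paper does contain is the closely related kernel version, Theorem~\ref{Thankel2}, whose proof invokes Theorem~\ref{Tbgkp} (also imported from \cite{BGKP-hankel}). That result is precisely the missing engine in your sketch: it says that preserving positive semidefiniteness for the specific two-parameter family of Hankel matrices $(p+qu_0^{i+j})_{i,j=0}^n$ (for all $p,q\ge 0$, $p+q>0$, one fixed $u_0\in(0,1)$, and all $n$), together with two simple $2\times 2$ families, already forces $F$ to be smooth with all derivatives non-negative on $(0,\infty)$, hence absolutely monotonic. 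So your instinct to use two-point test measures is exactly right, and you have honestly flagged this step as the crux; what you are lacking is the commitment to this particular one-parameter family and the Horn--Loewner type argument that extracts $F^{(k)}\ge 0$ from it. Your invocation of ``Bernstein, Hamburger, Mercer, Widder'' is too diffuse: the actual mechanism is closer to Horn's derivative-positivity argument, iterated in dimension.

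Two smaller points on $(3)\Rightarrow(1)$. First, your Stieltjes argument treats infinite moment sequences, whereas the statement concerns finite Hankel matrices; you need either the truncated Stieltjes moment theorem to extend a finite $\TN$ Hankel matrix to a full moment sequence, or (more in the spirit of this paper) Corollary~\ref{Cfekete}, which reduces $\TN$ for a Hankel matrix to positive semidefiniteness of $A$ and $A^{(1)}$, after which the Schur product theorem handles integer powers directly without any measure theory. Second, your discussion of $F(0)$ argues necessity of $F(0)\le c_0$ (via the minor inequality) in the paragraph devoted to sufficiency; for $(3)\Rightarrow(1)$ you should instead check that when a $\TN$ Hankel matrix has a zero entry it must (by the moment interpretation, or by Lemma~\ref{Lzeros} in the kernel case) be of the form $aE_{00}$, and then verify that $F[aE_{00}]=(F(a)-F(0))E_{00}+F(0)\bJ$ is $\TN$, which holds precisely because $F(0)\le c_0\le F(a)$.
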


Theorem~\ref{Thankel} thus completely resolves the problem of
characterising entrywise $\TN$ preservers on the set of Hankel
matrices of all dimensions. 

For the fixed-dimension context, we provide a brief summary of some
recent progress. The following result provides a necessary condition,
analogous to a result of Horn \cite{horn} for positivity preservers.

\begin{theorem}[\cite{BGKP-hankel}]\label{Tnecessary}
Suppose $F : [ 0, \infty ) \to \R$ is such that $F[-]$ preserves $\TN$
on the set of $d \times d$ Hankel matrices. Then $F$ is
$( d - 3 )$-times continuously differentiable, with $F$, $F'$,
\ldots, $F^{( d - 3 )}$ non-negative on $( 0, \infty )$, and
$F^{( d - 3 )}$ is convex and non-decreasing. If, instead, $F$ is
analytic, then the first $d$ non-zero Maclaurin coefficients of $F$
are positive.
\end{theorem}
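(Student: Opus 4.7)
The overall strategy is a Horn-type argument, exploiting the classical fact that a $d \times d$ Hankel matrix $(h_{i+j})_{0 \leq i, j \leq d - 1}$ is $\TN$ whenever $(h_k)_{k = 0}^{2 d - 2}$ arises as the truncated moment sequence of a non-negative measure on $[ 0, \infty )$. Thus the available test family contains all moment matrices
\[
M[\mu]_{ij} := \int_{[0, \infty)} x^{i + j} \intd\mu( x )
\]
of finitely supported positive measures $\mu = \sum_{r = 1}^k a_r \delta_{x_r}$, with $a_r > 0$ and $0 < x_1 < \cdots < x_k$. The plan is to feed carefully chosen one-parameter families of such matrices through $F[-]$ and read off derivative information from the non-negativity of specific minors.

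First I would handle continuity: apply $F[-]$ to the $2 \times 2$ sub-Hankel matrices sitting inside $M[ \delta_x + t \delta_y ]$ for $x, y > 0$ and $t > 0$, obtaining in the style of the proofs of Theorem~\ref{Tfixeddim}(b) and Theorem~\ref{Tvasu} that $F$ is monotone and satisfies a multiplicative mid-convexity inequality, hence is continuous on $( 0, \infty )$. Next, to extract $r$-th order smoothness for $r \leq d - 3$, I would work with the moment matrix of $\mu = \sum_{j = 0}^{r} a_j \delta_{x + s t_j}$, where $t_0 < \cdots < t_r$ are fixed and $s \to 0^+$, so that $m_l = \sum_j a_j ( x + s t_j )^l$ is an analytic perturbation of $x^l$. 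Applying the Cauchy--Binet formula to an appropriate $(r + 1) \times (r + 1)$ minor of $F[M[\mu]]$ and Taylor-expanding in $s$, the leading coefficient in $s$ is proportional (via a generalized-Vandermonde factor) to the $r$-th divided difference of $F$ on the grid $\{ x + s t_j \}$. Letting $s \to 0^+$ gives $F^{(r)}( x ) \geq 0$; iterating with $r = d - 3$, and then examining the next two orders in $s$ on a $(d - 1) \times (d - 1)$ or $d \times d$ minor, yields respectively the monotonicity and convexity of $F^{(d - 3)}$.

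For the analytic postscript, write $F( x ) = \sum_{k \geq 0} c_k x^k$ and apply $F[-]$ to $M[\mu]$ for $\mu = \sum_{j = 1}^d a_j \delta_{x_j}$ with pairwise distinct $x_j > 0$. Expanding each entry as $F( m_{ij} ) = \sum_k c_k m_{ij}^k$ and using Cauchy--Binet in both directions, one obtains $\det F[ M[\mu] ]$ as a non-negative combination of products $c_{k_1} \cdots c_{k_d}$ weighted by squared generalized Vandermonde determinants; a clever specialization of $a_j$ and $x_j$ isolates the contribution of the leading $d$ non-zero Maclaurin coefficients, forcing their positivity. The main obstacle I anticipate is the sharpness of the count $d - 3$: the Taylor/divided-difference machinery must be calibrated so that a $d \times d$ Hankel test matrix furnishes exactly the combinatorial room to see three more orders ($d - 2$, $d - 1$, $d$) as convexity, monotonicity, and non-negativity of $F^{(d - 3)}$, rather than as genuine higher derivatives.
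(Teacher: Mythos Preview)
This theorem is not proved in the present paper; it is quoted from the companion article \cite{BGKP-hankel} and stated here without argument, so there is no proof in this paper to compare against directly.

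Your overall strategy---a Horn--Loewner argument run on Hankel moment matrices---is indeed the approach of \cite{BGKP-hankel}, but the specific mechanism you describe has a genuine gap. With $\mu = \sum_{j=0}^{r} a_j \delta_{x + s t_j}$, the entries of $M[\mu]$ are the \emph{moments} $m_l = \sum_j a_j (x + s t_j)^l$, and $F$ is evaluated at these moments, not at the atom locations $x + s t_j$. Consequently the divided differences $[x + s t_0, \ldots, x + s t_r] F$ you invoke never appear in any minor of $F[M[\mu]]$: the arguments of $F$ cluster near the distinct points $x^0, x^1, x^2, \ldots$ as $s \to 0^+$, not near a single point $x$, so you cannot read off $F^{(r)}(x)$ this way. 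The Cauchy--Binet step likewise does not apply, since $F[M[\mu]]$ has no useful product structure under this parameterisation.

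The device that actually works (and which \cite{BGKP-hankel} uses, as hinted by Theorem~\ref{Tbgkp} quoted in this paper) is the two-atom family $(p + q u_0^{i+j})_{i,j=0}^{d-1}$ for fixed $u_0 \in (0,1)$. Here every entry shares the common base point $p$, so Taylor-expanding $F(p + q u_0^{i+j})$ in $q$ yields $\sum_k \frac{F^{(k)}(p)}{k!} q^k u_0^{k(i+j)}$, a sum of rank-one Hankel matrices indexed by $k$. Now Cauchy--Binet \emph{does} apply: the $d \times d$ determinant becomes a sum over $d$-subsets $\{k_1 < \cdots < k_d\}$ of $\prod_l F^{(k_l)}(p)$ times a squared generalised Vandermonde, and the leading term in $q$ isolates $F^{(0)}(p) \cdots F^{(d-1)}(p)$. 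Inducting on the minor size gives successive non-negativity of $F, F', \ldots$, with the last two orders appearing only as monotonicity and convexity because the argument uses limits of divided differences rather than genuine derivatives. The analytic postscript uses the same matrices with $p = 0$, where the Taylor coefficients are the Maclaurin coefficients themselves.
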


Theorem~\ref{Tnecessary} implies strong restrictions for the class of
$\TN$ preservers of Hankel matrices. For instance, if one restricts to
power functions $x^\alpha$, the only such preservers in dimension $d$
correspond to $\alpha$ being a non-negative integer or greater than
$d - 2$. The converse, that such functions preserve $\TN$ for
$d \times d$ Hankel matrices, was shown in \cite{FJS}. This is the
same as the set of entrywise powers preserving positivity on
$d \times d$ matrices, as proved by FitzGerald and Horn
\cite{FitzHorn}.

We conclude by noting that there exist power series which preserve
total non-negativity on Hankel matrices of a fixed dimension and do
not have all Maclaurin coefficients non-negative. The question of
which of these coefficients can be negative was settled in \cite{KT}.
Again, the characterization is the same as that for the class of
positivity preservers, and this coincidence is explained by the
following result of Khare and Tao.

Given $k$, $d \in \N$, with $k \leq d$ and a constant
$\rho \in ( 0, \infty ]$, we let $\cP_d^k( [ 0, \rho ) )$ denote the
set of positive-semidefinite $d \times d$ matrices of rank
at most $k$ and with entries in $[ 0, \rho )$.

\begin{theorem}[{\cite[Proposition~9.7]{KT}}]
Suppose $F : [ 0, \rho ) \to \R$ is such that the entrywise map
$F[ - ]$ preserves positivity on $\cP_d^k( [ 0, \rho ) )$, where
$k \leq d$ and $\rho \in ( 0, \infty ]$. Then $F[-]$ preserves total
non-negativity on the set of Hankel matrices in
$\cP_d^k( [ 0, \rho ) )$.
\end{theorem}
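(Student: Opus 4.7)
The plan is to reduce the total non-negativity of $F[A]$ to two positive-semidefiniteness statements that the hypothesis supplies directly, via the classical Stieltjes criterion: a $d \times d$ Hankel matrix $H$ with entries $h_{i+j-2}$ is $\TN$ if and only if both $H$ itself and its shift $H' := (h_{i+j-1})_{i,j=1}^{d-1}$ are positive semidefinite. This equivalence is the finite form of the identification of $\TN$ Hankel matrices with truncated Stieltjes moment matrices of positive measures on $[0,\infty)$; in the rank-$\leq k$ regime, the representing measure can be taken to have at most $k$ atoms.

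First, I would propagate the hypothesis to smaller dimensions: for any $B \in \cP_r^k([0,\rho))$ with $r \leq d$, the block matrix $\widetilde B := B \oplus 0_{(d-r) \times (d-r)}$ lies in $\cP_d^k([0,\rho))$, since its rank is unchanged and all entries remain in $[0,\rho)$. By hypothesis, $F[\widetilde B]$ is positive semidefinite, and its leading principal $r \times r$ submatrix is precisely $F[B]$, which must therefore also be positive semidefinite.

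Next, given a $\TN$ Hankel matrix $A \in \cP_d^k([0,\rho))$, I would observe that the shifted Hankel matrix $A'$ of size $(d-1) \times (d-1)$ lies in $\cP_{d-1}^k([0,\rho))$: if $A$ is the moment matrix of $\mu = \sum_\ell c_\ell \delta_{x_\ell}$ on $[0,\infty)$ with at most $k$ atoms, then $A'$ is the moment matrix of $x\mu$, itself a positive measure on $[0,\infty)$ supported in $\{x_\ell : x_\ell > 0\}$, hence of rank $\leq k$; and its entries are entries of $A$, so lie in $[0,\rho)$. Applying the previous step, $F[A']$ is positive semidefinite, as is $F[A]$. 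Since $A$ is Hankel, so is $F[A]$, and the shift of $F[A]$ manifestly coincides with $F[A']$. The Stieltjes criterion applied to $F[A]$ then delivers the required $\TN$ property.

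The principal technical delicacy is in invoking the correct finite-dimensional Stieltjes criterion for Hankel matrices, particularly ensuring that positive semidefiniteness of $H$ together with that of its shift $H'$ cleanly implies $\TN$ without degenerate edge cases arising from rank deficiency; once this classical fact is in hand, the rest of the argument reduces to bookkeeping of supports, ranks, and ambient dimensions.
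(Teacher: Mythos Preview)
The paper does not prove this statement; it is quoted from \cite{KT} without argument. Your proof is correct and self-contained within the paper's framework: the Stieltjes-type criterion you invoke is exactly Corollary~\ref{Cfekete}, and the padding step to propagate the positivity hypothesis to dimensions $r \leq d$ is the standard $B \mapsto B \oplus 0_{(d-r)\times(d-r)}$ embedding, which preserves rank and keeps all entries in $[0,\rho)$.

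One simplification: you do not need the atomic moment representation to see that $A' \in \cP_{d-1}^k([0,\rho))$. Since $A' = A^{(1)}$ is obtained from $A$ by deleting the first row and last column, it is literally a submatrix of $A$, so its rank is at most $\operatorname{rank}(A) \leq k$ and its entries are entries of $A$, hence in $[0,\rho)$. Positive semidefiniteness of $A'$ follows from the forward direction of Corollary~\ref{Cfekete} applied to the $\TN$ Hankel matrix $A$. This bypasses the representation-theoretic subtleties you flagged (existence of an atomic representing measure with at most $k$ atoms in the full-rank case), which are genuine but unnecessary here.
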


\subsection{Hankel totally non-negative and totally positive kernels
on infinite domains}

We now consider the problem of classifying the preservers of $\TN$ and
$\TP$ Hankel kernels on $X \times X$, where $X \subseteq \R$ is
infinite. A Hankel kernel has the form
\[
X \times X \to \R; ( x, y ) \mapsto f( x + y )
\]
for some function $f : X + X \to \R$, and so is automatically
symmetric. Examples of such kernels abound; for example, given
positive scalars $c_1$, \ldots, $c_n$ and $u_1$, \ldots, $u_n$, the
kernel
\[
\R \times \R \to \R; \ ( x, y ) \mapsto \sum_{i = 1}^n c_i u_i^{x + y}
\]
is Hankel and $\TN$ on $\R \times \R$, as we will see below.

If $X$ is an arbitrary subset of $\R$, then minors drawn from $X
\times X$ may not embed in a larger Hankel matrix drawn from
$X \times X$, since the arguments may be linearly independent over
$\Q$. This issue is avoided by assuming that $X$ is an interval and
any kernel under consideration is a continuous functions of its
arguments.

Recall that the \emph{Schur} or \emph{pointwise} product of kernels
$K$ and $K'$ with common domain $X \times X$ is the kernel
\[
K \cdot K' : X \times X \to \R; \ %
( x, y ) \mapsto K( x, y ) K'( x, y ).
\]
We equip the set of kernels on a given domain $X \times X$ with the
topology of pointwise convergence. The following proposition
summarizes some of the important properties of Hankel kernels. In
particular, under appropriate assumptions, the sets of $\TN$ and $\TP$
kernels form convex cones that are closed under taking Schur products.
See \cite{FJS} for analogous results in the matrix case.

\begin{proposition}\label{Phankel}
Suppose $X \subseteq \R$ is an interval.
\begin{enumerate}
\item The space of $\TN$ continuous Hankel kernels on $X \times X$ is a
closed convex cone, which is also closed under Schur products.

\item Suppose $X$ is an open interval and $K : X \times X \to \R$ 
is a continuous Hankel kernel. The following are equivalent.
\begin{enumerate}
\item $K$ is $\TN$.

\item $K$ is positive semidefinite.

\item $K$ is of the form
\begin{equation}\label{EHankel}
X \times X \to \R; \ ( x, y ) \mapsto %
\int_\R e^{ ( x + y ) u} \intd\sigma( u )
\end{equation}
for some non-decreasing function $\sigma$.
\end{enumerate}
Furthermore, $K$ is $\TP$ if and only if the measure
corresponding to $\sigma$ has infinite support.

\item If $X$ is an open interval, then the set of $\TP$ continuous
Hankel kernels on $X \times X$ is dense in the set of $\TN$
continuous Hankel kernels on $X$.

\item If $X$ is an open interval, then the set of $\TP$ continuous
Hankel kernels on $X \times X$ is a convex cone, which is closed under
Schur products.
\end{enumerate}
\end{proposition}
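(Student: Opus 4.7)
The plan is to establish part~(2) first, as the integral representation it provides drives the other three assertions. For (a)~$\implies$~(b), I would invoke the standard observation (noted in the excerpt before Theorem~\ref{Tsymmetric}) that every symmetric $\TN$ matrix has non-negative principal minors and hence is positive semidefinite, so a $\TN$ Hankel kernel is automatically PSD. The equivalence (b)~$\iff$~(c) is essentially the Bernstein--Hamburger--Widder characterization of continuous exponentially convex functions on an open interval: in one direction, one checks that the successive derivatives of the diagonal $t \mapsto K(t/2, t/2)$ at an interior point form a positive Hamburger moment sequence and invokes Hamburger's representation theorem to produce $\sigma$, whose bilateral Laplace transform then agrees with $K$ on all of $X + X$ by analytic continuation; in the other, each rank-one kernel $(x,y) \mapsto e^{(x+y)u}$ is manifestly PSD, and PSD-ness is preserved under positive integration. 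For (c)~$\implies$~(a), I would apply the Andr\'eief symmetrization identity
\[
\det\Bigl( \int e^{(x_i + y_j) u} \intd\sigma(u) \Bigr)_{i, j = 1}^n = \frac{1}{n!} \int\!\cdots\!\int \det\bigl(e^{x_i u_k}\bigr) \det\bigl(e^{y_j u_k}\bigr) \intd\sigma(u_1) \cdots \intd\sigma(u_n);
\]
both determinants on the right are generalized Vandermonde determinants in the bases $e^{u_k}$ and, by Example~\ref{EgenVDM}, pick up the same sign under any permutation of the $u_k$, so their product is pointwise non-negative. The $\TP$ dichotomy follows from the same identity: if $\mathrm{supp}\,\sigma$ contains at least $n$ points, choosing them in strictly increasing order yields an open neighbourhood on which the integrand is strictly positive, whence the $n \times n$ minor is positive; conversely, if $\mathrm{supp}\,\sigma$ has exactly $N$ points, then $K$ is a sum of $N$ rank-one exponential kernels, has rank at most $N$, and every $(N+1)\times(N+1)$ minor vanishes.

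Parts~(1), (3) and~(4) then follow quickly from~(2). For~(1), sums of two kernels of the form~(c) correspond to adding the measures $\sigma_1 + \sigma_2$, and Schur products correspond to convolutions via the Fubini computation
\[
\int e^{(x+y)u} \intd\sigma_1(u) \cdot \int e^{(x+y)v} \intd\sigma_2(v) = \int e^{(x+y)w} \intd(\sigma_1 * \sigma_2)(w);
\]
closure under non-negative scalars and pointwise limits is clear, the latter because each minor is a polynomial in the entries of the matrix. When $X$ is not open, the cone property transfers by restricting to the open interior, applying the open case, and extending by continuity. For~(3), given a $\TN$ kernel $K$ with representation~(c), I would form $K_\eps(x,y) := K(x,y) + \eps \int e^{(x+y)u} \intd\mu(u)$, where $\mu$ is a fixed reference measure of infinite support whose Laplace transform is finite on $X + X$ (a Gaussian density when $X$ is bounded, uniform measure on a small interval when $X$ is unbounded). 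The perturbed measure $\sigma + \eps\mu$ has infinite support, so $K_\eps$ is $\TP$ by the second clause of~(2), and $K_\eps \to K$ pointwise as $\eps \to 0^+$. Part~(4) follows from~(3) combined with~(1), noting that $\sigma_1 + \sigma_2$ retains infinite support and $\sigma_1 * \sigma_2$ does too, since its support contains $\mathrm{supp}\,\sigma_1 + \mathrm{supp}\,\sigma_2$.

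The principal obstacle will be the classical step (b)~$\implies$~(c): one must either quote or reprove the Bernstein--Widder representation for PSD continuous Hankel kernels on a proper subinterval of~$\R$, and then identify the resulting Laplace transform with~$K$ globally on~$X + X$ via analyticity. A secondary subtlety appears in~(3) when $X$ is unbounded, where the perturbing measure $\mu$ must be chosen with compact support to guarantee integrability but still with infinite support so that $\sigma + \eps\mu$ witnesses the $\TP$ strengthening.
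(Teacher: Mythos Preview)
Your proposal is correct and will work, but it routes through part~(2) more heavily than the paper does, which changes the texture of the argument in parts~(1) and~(4).

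For part~(1), the paper invokes an auxiliary lemma (Lemma~\ref{Lhankel}) showing that for continuous Hankel kernels on an interval, $\TN$ is equivalent to positive semidefiniteness of all principal submatrices drawn from arithmetic progressions. Since the latter condition is manifestly closed under sums, non-negative scalars, pointwise limits, and Schur products (via the Schur product theorem), part~(1) follows in one line and works uniformly for any interval~$X$. Your approach instead identifies sums and Schur products with sums and convolutions of the representing measures~$\sigma$, which is elegant but requires~$X$ to be open; you then patch the non-open case by restricting to the interior and extending by continuity. This works, but the paper's route is cleaner and avoids the case analysis.

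For part~(4), the situation is reversed: your convolution argument is more direct than the paper's. You observe that $K_1 \cdot K_2$ is represented by $\sigma_1 * \sigma_2$, whose support is the closure of $\mathrm{supp}\,\sigma_1 + \mathrm{supp}\,\sigma_2$ and hence infinite. The paper instead argues by contradiction: if the Schur product had a representing measure of finite support~$r$, then on an arithmetic progression of length~$r+1$ the principal submatrices $M'$, $M''$ are $\TP$ Hankel, so (via Corollary~\ref{Cfekete} and the Schur product theorem) $M' \circ M''$ is positive definite of rank~$r+1$, contradicting the rank bound. Your measure-theoretic argument is shorter and more transparent.

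Two small points: your reference to ``(3) combined with~(1)'' in the last paragraph should be ``(2) combined with~(1)'', since density is not what you use there. And for the perturbation in~(3), the paper simply takes $\mu$ to be uniform on $[0,1]$ in all cases; there is no need to distinguish bounded and unbounded~$X$, since $\int_0^1 e^{tu}\intd u$ is finite for every real~$t$.
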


The second part of Proposition~\ref{Phankel} solves a Hamburger-type
inverse problem for exponential moments of non-negative measures on $\R$.
The third provides a extension of Whitney's theorem for Hankel kernels.

The proof of Proposition~\ref{Phankel} uses several preliminary
results. We begin with a well-known 1912 result of Fekete~\cite{Fe}.
Recall that a minor is \emph{contiguous} if it is formed from
consecutive rows and columns.

\begin{proposition}\label{Pfekete}
Suppose $m$, $n \in \N$ and let $A$ be an $m \times n$ matrix such
that all its contiguous minors are positive. Then $A$ is $\TP$.
\end{proposition}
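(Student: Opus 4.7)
I would prove Fekete's proposition by induction on a measure of how far the row and column index sets of a minor are from being contiguous, using the Desnanot--Jacobi (Sylvester) determinantal identity as the reduction step. For a $k \times k$ minor $\det A[I \mid J]$ with $I = (i_1 < \cdots < i_k)$ and $J = (j_1 < \cdots < j_k)$, the natural primary measure is
\[
\sigma(I, J) \;:=\; (i_k - i_1) + (j_k - j_1) - 2(k - 1) \;\geq\; 0,
\]
refined if necessary by a lexicographic tie-breaker tracking the positions of the gaps (since shifting an interior gap leaves $\sigma$ unchanged but moves the gap closer to an endpoint). When $\sigma(I, J) = 0$, both $I$ and $J$ are contiguous and $\det A[I \mid J] > 0$ by hypothesis, which is the base case.

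For the inductive step, assume $\sigma(I, J) \geq 1$ and the claim for all smaller values of the refined measure. Without loss of generality $I$ has a row gap $i_{s+1} - i_s \geq 2$; pick $i^\ast \in (i_s, i_{s+1})$ and an auxiliary column index $j^\ast \notin J$, drawn from a column gap if one exists or adjoined just outside $J$ otherwise (the edge case in which $J$ exhausts the columns of $A$ is handled by applying the same argument to $A^T$). Form the $(k+1) \times (k+1)$ bordered submatrix $B := A[I \cup \{i^\ast\} \mid J \cup \{j^\ast\}]$; a short calculation shows the enlarged index pair has measure at most $\sigma(I, J) - 1$, so $\det B > 0$ by induction. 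Now invoke the Desnanot--Jacobi identity: for $r < r'$ in the row set $R$ of $B$ and $c < c'$ in the column set $C$,
\[
\det B[\hat{r}; \hat{c}] \cdot \det B[\hat{r}'; \hat{c}']
\;=\; \det B \cdot \det B[\hat{r}, \hat{r}'; \hat{c}, \hat{c}']
\;+\; \det B[\hat{r}; \hat{c}'] \cdot \det B[\hat{r}'; \hat{c}],
\]
where hats denote deletion of the listed rows or columns. This identity is already a \emph{sum} of positive products with no cancellation. Taking $r = i^\ast$ and $c = j^\ast$ makes $\det B[\hat{r}; \hat{c}] = \det A[I \mid J]$; a careful choice of $r' \in I$ and $c' \in J$ ensures the remaining $k \times k$ and $(k-1) \times (k-1)$ factors on the right each have strictly smaller refined measure, hence are positive by the inductive hypothesis. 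Solving for $\det A[I \mid J]$ then expresses it as a positive rational combination of positive quantities.

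The principal technical obstacle is precisely this choice of $r'$ and $c'$: one must verify that each of the four factors produced by the Sylvester identity is either a contiguous minor (positive by hypothesis), a $(k-1) \times (k-1)$ minor of strictly smaller measure, or a $k \times k$ minor whose index sets---after swapping $r'$ with $i^\ast$ or $c'$ with $j^\ast$---sit strictly lower in the lexicographic refinement. This is a routine but intricate case analysis, depending on whether the row gap of $I$ occurs at an endpoint (in which case $r' = i_1$ or $r' = i_k$ directly reduces the total range $i_k - i_1$) or strictly in the interior (in which case the gap is shifted toward an endpoint over finitely many iterations, with the lex refinement strictly decreasing at each step). Once the bookkeeping is settled, the induction terminates at a contiguous minor, and Fekete's proposition follows.
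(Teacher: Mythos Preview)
The paper does not supply a proof of this proposition; it is quoted as Fekete's 1912 result and cited to~\cite{Fe} without argument. Your approach via the Desnanot--Jacobi identity is the classical one, so there is nothing in the paper to compare against.

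However, your induction does not close as stated. The obstruction is structural: in the Sylvester identity applied to $B=A[I\cup\{i^\ast\}\mid J\cup\{j^\ast\}]$, the cross factor $\det B[\hat{i^\ast};\hat{c'}]$ has row set $(I\cup\{i^\ast\})\setminus\{i^\ast\}=I$ unchanged, so its row contribution to $\sigma$ is identical to the target's; its column set differs from $J$ only by swapping $c'$ for~$j^\ast$, which, when $J$ is already consecutive and $j^\ast$ is adjoined at an end, at best leaves the column dispersion unchanged. Hence this factor always has $\sigma\ge\sigma(I,J)$. Concretely, take $A$ to be $4\times4$ and the target $\det A[\{1,2,4\}\mid\{1,2,3\}]$, so $k=3$ and $\sigma=1$. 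The only available auxiliary column is $j^\ast=4$, the sign constraint forces $r'\in\{1,2\}$ and $c'\in\{1,2,3\}$, and for \emph{every} such choice one of the $3\times3$ factors on the right is (at best) $\det A[\{1,2,4\}\mid\{2,3,4\}]$: same order, same $\sigma=1$, identical row-gap pattern, consecutive columns. Your lex tie-breaker on gap positions cannot separate it from the target. The $(k-1)\times(k-1)$ central factor $\det A[I\setminus\{r'\}\mid J\setminus\{c'\}]$ has the same defect: removing an interior index raises $\sigma$ by~$1$, and removing an endpoint adjacent to its neighbour leaves $\sigma$ fixed. So the ``routine but intricate case analysis'' you defer cannot be completed with this measure; no single well-founded order on pairs $(I,J)$ can make minors of orders $k-1$, $k$, and $k+1$ simultaneously strictly smaller than a given $k\times k$ target.

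The classical treatments (Fekete, and the expositions in Karlin, Ando, and Pinkus) avoid this by \emph{layering} the induction. The outer induction runs over the order~$k$, so that all minors of order at most $k-1$ are positive before any $k\times k$ minor is considered; this disposes of the central factor for free. The inner induction, for fixed~$k$, is itself staged --- one first closes the column set to consecutive (assuming consecutive rows), then the row set --- and the auxiliary determinants are arranged so that the problematic cross factors fall under an earlier stage rather than the current one. With that reorganisation the Sylvester argument goes through; under your single measure it does not.
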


From Proposition~\ref{Pfekete}, we deduce the following corollary,
which will be used below. Given a matrix $A$, we denote by $A^{(1)}$
the matrix obtained from $A$ by deleting its first row and last
column.

\begin{corollary}\label{Cfekete}
A square Hankel matrix $A$ is $\TP$ if and only if $A$ and $A^{(1)}$
are positive definite. A square Hankel matrix $A$ is $\TN$ if and
only if $A$ and $A^{(1)}$ are positive semidefinite.
\end{corollary}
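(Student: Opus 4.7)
The plan is to derive the $\TP$ claim directly from Fekete's Proposition~\ref{Pfekete}, by using the Hankel structure to show that every contiguous minor of $A$ is a principal minor of either $A$ or $A^{(1)}$; the $\TN$ claim will then follow by a perturbation argument. Write $A = ( h( i + j ) )_{i, j = 1}^{n + 1}$, so that $A^{(1)} = ( h( i + j + 1 ) )_{i, j = 1}^n$, and consider a $k \times k$ contiguous submatrix of $A$ formed from rows $r, r + 1, \ldots, r + k - 1$ and columns $c, c + 1, \ldots, c + k - 1$. Its $( i, j )$ entry equals $h( t + i + j )$, where $t := r + c - 2 \in \{ 0, 1, \ldots, 2 n - 2 k + 2 \}$. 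If $t = 2 u$ is even, this submatrix coincides with the principal submatrix of $A$ on the index set $\{ u + 1, \ldots, u + k \}$; if $t = 2 u + 1$ is odd, it coincides with the principal submatrix of $A^{(1)}$ on $\{ u + 1, \ldots, u + k \}$. The range checks on $u$ and $k$ are routine.

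Assume first that $A$ and $A^{(1)}$ are positive definite. Then every principal submatrix of each is positive definite and hence has positive determinant; by the identification above, every contiguous minor of $A$ is positive, and Proposition~\ref{Pfekete} yields that $A$ is $\TP$. Conversely, if $A$ is $\TP$, then all minors of $A$ are positive, so in particular $A$ is positive definite; moreover every principal $k \times k$ minor of $A^{(1)}$ on an index set $\{ m_1 < \cdots < m_k \}$ equals the minor of $A$ formed from rows $\{ m_1 + 1, \ldots, m_k + 1 \}$ and columns $\{ m_1, \ldots, m_k \}$, hence is positive too, so $A^{(1)}$ is positive definite.

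For the $\TN$ assertion, the forward direction is immediate: since $A$ is $\TN$, every principal minor of $A$ is non-negative (so $A$ is positive semidefinite), and every principal minor of $A^{(1)}$ arises as a minor of $A$ as above and is therefore non-negative, so $A^{(1)}$ is positive semidefinite as well. For the converse, fix any Hankel matrix $B$ of the appropriate size for which both $B$ and $B^{(1)}$ are positive definite; one may take the Hilbert-type matrix $B_{i j} := 1 / ( i + j - 1 )$, which is the moment matrix of Lebesgue measure on $( 0, 1 )$. Then $A_\eps := A + \eps B$ is Hankel, and both $A_\eps$ and $A_\eps^{(1)}$ are positive definite for every $\eps > 0$; by the preceding case, $A_\eps$ is $\TP$. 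Since $\TN$ is closed under entrywise limits, letting $\eps \to 0^+$ shows that $A$ is $\TN$. The main obstacle here is the combinatorial identification in the first paragraph, where the parity of $r + c$ dictates whether a contiguous minor of $A$ is accounted for by $A$ itself or by $A^{(1)}$; once this is in place, the rest is routine from Fekete's lemma, the standard equivalence between positive (semi)definiteness of a symmetric matrix and the (non-)negativity of all its principal minors, and the closedness of $\TN$ under entrywise limits.
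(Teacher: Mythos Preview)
Your proof is correct and follows essentially the same approach as the paper: identify every contiguous minor of the Hankel matrix as a principal minor of $A$ or $A^{(1)}$ (via the parity of the starting row plus column), invoke Fekete for the $\TP$ case, and for the $\TN$ case perturb by a Hankel matrix $B$ with $B$ and $B^{(1)}$ positive definite and pass to the limit. The only cosmetic differences are that you make the parity identification explicit, you choose the Hilbert matrix for $B$ rather than the Vandermonde-type example the paper cites, and you conclude that $A_\eps$ is $\TP$ by appealing to the already-established $\TP$ case rather than re-invoking Fekete directly.
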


\begin{proof}
The forward implication is immediate in both cases. For the converse,
first suppose $A$ and $A^{(1)}$ are positive definite. Note that any
contiguous minor of $A$ is a principal minor of either $A$ or
$A^{(1)}$, and so is positive, hence the claim follows by
Proposition~\ref{Pfekete}.

Finally, suppose $A$ and $A^{(1)}$ are positive semidefinite. By the
above observation, so is every contiguous square submatrix of $A$. Now
let the matrix $B$ be Hankel, $\TP$ and the same size as $A$;
Example~\ref{EgenVDM2} provides the existence of such. Using the
previous observation again, every contiguous square submatrix of $B$
is positive definite. Hence for all $\eps > 0$, every contiguous minor
of $A + \eps B$ is positive. It follows by Proposition~\ref{Pfekete}
that $A + \eps B$ is $\TP$, whence $A$ is $\TN$, as desired.
\end{proof}

The final preliminary result is as follows.

\begin{lemma}\label{Lhankel}
Let $K : X \times X \to ( 0, \infty )$, where $X \subseteq \R$ is an
interval. Each of the following statements implies the next.
\begin{enumerate}
\item $K$ is $\TN$.

\item All principal submatrices drawn from $K$ are $\TN$.

\item All principal submatrices drawn from $K$ with arguments in
arithmetic progression are $\TN$.

\item All principal submatrices drawn from $K$ with arguments in
arithmetic progression are positive semidefinite.
\end{enumerate}
Conversely, $(2) \implies (1)$ for all $K$, $(3) \implies (2)$ if $K$
is continuous, and $(4) \implies (3)$ if $K$ is continuous and Hankel.
\end{lemma}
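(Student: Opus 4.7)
The plan is to dispatch the forward chain $(1) \Rightarrow (2) \Rightarrow (3) \Rightarrow (4)$ by unwinding definitions — a principal submatrix is a submatrix, APs are particular increasing tuples, and a principal submatrix $K[\bz;\bz]$ of a $\TN$ matrix has all its principal minors non-negative (these being further minors of $K[\bz;\bz]$), which is the positive-semidefiniteness criterion in the symmetric setting — and then to concentrate on the three converse implications, which carry the real content.

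For $(2) \Rightarrow (1)$ I plan to use the standard embedding-into-principal trick: given $\bx \in \inc{X}{m}$ and $\by \in \inc{X}{n}$, let $\bz \in \inc{X}{k}$ enumerate the set $\{x_i\} \cup \{y_j\}$ in increasing order. Then $K[\bx;\by]$ is a submatrix of the principal matrix $K[\bz;\bz]$ (keep the rows indexed by $\bx$ and the columns indexed by $\by$), and since the latter is $\TN$ by hypothesis, all its minors are non-negative; in particular $\det K[\bx;\by] \geq 0$.

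For $(3) \Rightarrow (2)$ I plan to approximate arbitrary increasing tuples by sub-selections of APs and pass to the limit using continuity of $K$. Given $\bx = (x_1, \ldots, x_n) \in \inc{X}{n}$, for each small $d > 0$ I take an AP $\bz^{(d)} \subseteq X$ of common difference $d$ covering $[x_1, x_n]$ and select $y_j^{(d)} \in \bz^{(d)}$ within distance $d$ of $x_j$ for each $j \in [n]$; the subtuple $\by^{(d)}$ then satisfies $\by^{(d)} \to \bx$ componentwise as $d \to 0^+$. Since $K[\bz^{(d)};\bz^{(d)}]$ is $\TN$ by $(3)$, so is its submatrix $K[\by^{(d)};\by^{(d)}]$; continuity of $K$ gives $K[\by^{(d)};\by^{(d)}] \to K[\bx;\bx]$ entrywise, and the closure of $\TN$ under pointwise limits (determinants being continuous in the entries) yields the conclusion.

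The heart of the argument is $(4) \Rightarrow (3)$, which uses both continuity and the Hankel structure via Corollary~\ref{Cfekete}. For an AP $\bz = (a, a+d, \ldots, a+(n-1)d) \subseteq X$ the matrix $A := K[\bz;\bz]$ has entries $f(2a + (i+j-2)d)$, so depends only on $i+j$ and is itself a Hankel matrix; Corollary~\ref{Cfekete} then reduces $A$ being $\TN$ to both $A$ and $A^{(1)}$ being positive semidefinite. $A$ is PSD by hypothesis $(4)$. The key observation is that $A^{(1)}$, whose $(i,j)$-entry is $f(2a + d + (i+j-2)d)$, coincides with $K[\bz';\bz']$ for the shifted AP $\bz' := (a + d/2, a + 3d/2, \ldots, a + (2n-3)d/2) \subseteq X$ of length $n-1$ and common difference $d$; applying $(4)$ to $\bz'$ then gives $A^{(1)}$ PSD, and Corollary~\ref{Cfekete} finishes the argument. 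The main subtlety I anticipate is spotting this shift trick — that $A^{(1)}$ itself is a principal Hankel submatrix along another AP inside $X$ — without which Corollary~\ref{Cfekete} cannot be applied and the Hankel hypothesis does not obviously close the gap between PSD and $\TN$.
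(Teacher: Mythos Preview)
Your proposal is correct and essentially follows the paper's proof. The only cosmetic difference is in $(3)\Rightarrow(2)$: the paper first reduces (by continuity) to tuples $\bx$ with rational entries and then embeds $\bx$ exactly inside a single AP with step $1/N$, whereas you approximate $\bx$ by nearest grid-points of a fine AP and pass to the limit; both are the same density-plus-continuity idea. Your treatment of $(4)\Rightarrow(3)$, identifying $A^{(1)}$ with $K[\bz';\bz']$ for the half-shifted AP $\bz'=(a+d/2,\ldots,a+(2n-3)d/2)$ and then invoking Corollary~\ref{Cfekete}, is exactly the paper's argument.
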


\begin{proof}
Clearly $(1) \implies (2) \implies (3) \implies (4)$.

If $(2)$ holds, then, given $\bx$, $\bz \in \inc{X}{n}$ for some
$n \in \N$, the matrix $K[ \bx; \bz ]$ is a submatrix of
$K[ \bx \cup \bz; \bx \cup \bz ]$, where $\bx \cup \bz$ is obtained by
taking the union of $\bx$ and $\bz$ in increasing order. Hence~$(1)$
holds.

Next, suppose~$(3)$ holds and $K$ is continuous.
Let $\bx \in \inc{X}{n}$ for some $n \in \N$; by continuity, we
may assume that each term $x_j \in \bx$ is rational. Choose a
positive integer $N$ such that $N ( x_j - x_1 )$ is an integer for all
$j$, and let
\[
\by := ( x_1, x_1 + N^{-1}, x_1 + 2 N^{-1}, \ldots, x_n ).
\]
By assumption, the matrix $K[ \by; \by ]$ is $\TN$, thus so
is the submatrix $K[ \bx; \bx ]$. This shows that~$(2)$ holds.

Finally, suppose~$(4)$ holds, and let a principal submatrix $A$ be
obtained by evaluating $K$ at an arithmetic progression in $X$, say
$x_1 < \cdots < x_n$. By assumption, $A$ is positive semidefinite;
furthermore, so is the $( n - 1 ) \times ( n - 1 )$ matrix $B$
obtained by evaluating $K$ at the arithmetic progression
\[
\frac{x_1 + x_2}{2} < \frac{x_2 + x_3}{2} < \cdots < %
\frac{x_{n-1} + x_n}{2}.
\]
But $B = A^{(1)}$, so $(3)$ follows by Corollary~\ref{Cfekete}.
\end{proof}

We now have the ingredients we require.

\begin{proof}[Proof of Proposition~\ref{Phankel}]\hfill
Part (1) holds because property (4) of Lemma~\ref{Lhankel} is closed
under addition, dilation, pointwise limits, and Schur products.

For part (2), note first that Lemma~\ref{Lhankel} gives the
equivalence of (a) and (b). That positive semidefiniteness is
necessary and sufficient for $K$ to have the form~(\ref{EHankel}) is a
result of Bernstein~\cite{Bernstein} and
Widder~\cite{widder2} which uses prior works of Hamburger and Mercer;
see also \cite[Theorem~5.5.4]{Akhiezer}.

If the measure $\mu$ corresponding to $\sigma$ has finite support, so
may be written as $\sum_{k = 1}^r c_k \delta_{u_k}$, and $\bx$,
$\by \in \inc{X}{n}$, then the submatrix
\begin{equation}\label{Efinrank}
K[ \bx; \by ] = %
\sum_{k = 1}^r c_k ( e^{( x_i + y_j ) u_k} )_{i, j = 1}^n = %
\sum_{k = 1}^r c_k \bz_k \bw_k^T,
\end{equation}
where $\bz_k := ( e^{x_1 u_k}, \ldots, e^{x_n u_k} )^T$ and
$\bw_k := ( e^{y_1 u_k}, \ldots, e^{y_n u_k} )^T$. Thus,
submatrices of $K$ have rank at most $r$, so $K$ cannot be $\TP$.

Finally, if $\mu$ has infinite support, then the basic composition
formula of P\'{o}lya and Szeg\H{o} \cite[p.17]{Karlin} gives that
\[
\det K[ \bx; \by ] = \int_{\inc{\R}{m}} %
\det( \exp( x_i u_j ) )_{i, j = 1}^n %
\det( \exp( u_j y_k ) )_{j, k = 1}^n %
\intd\sigma( u_1 ) \cdots \intd\sigma( u_m )
\]
for any $\bx$, $\by \in \inc{X}{m}$, and so $K$ is $\TP$. This
observation completes the proof of part (2).

For part (3), note that if $K$ is a $\TN$ continuous Hankel kernel as
in (2), then the continuous Hankel kernel
\[
X \times X \to \R; \ ( x, y ) \mapsto K( x, y ) + %
\eps\int_0^1 e^{( x + y ) u} \intd u
\]
is $\TP$ for all $\eps > 0$, since the measure corresponding to the
representative function $\sigma_\eps$ has infinite support.

For the final part, note first that $\TP$ kernels are closed under
positive rescaling. Furthermore, if the $\TP$ kernels $K'$ and $K''$
have representative functions $\sigma'$ and $\sigma''$, then the
corresponding measures have infinite support, and therefore so does
the measure corresponding to $\sigma' + \sigma''$. It follows
$K' + K''$ is $\TP$.

Finally, to see that $K' \cdot K''$ is $\TP$, note first that it is
$\TN$, so has a representative function $\tau$. We assume the measure
$\nu$ corresponding to $\tau$ has finite support, say of size $r$, and
derive a contradiction. Suppose $\bx \in \inc{X}{r + 1}$ is an
arithmetic progression, and consider the principal submatrices
$M' = K'[ \bx; \bx ]$ and $M'' = K''[ \bx; \bx ]$. Both submatrices
are $\TP$ by assumption, and Hankel by the choice of $\bx$. Hence so
is $M' \circ M''$, by Corollary~\ref{Cfekete} above and the Schur
product theorem, so it must have rank $r+1$. But this contradicts the
fact that $\nu$ has support of size $r$, by~(\ref{Efinrank}) with
$K = K' \cdot K''$.
\end{proof}

Having gained a better understanding of our test set, we proceed to
classify its preservers. As in the case of matrices of all sizes, the
preservers of $\TN$ continuous Hankel kernels are absolutely monotonic
functions.

\begin{theorem}\label{Thankel2}
Suppose $X \subseteq \R$ is an interval containing at least two points
and let $F : [ 0, \infty ) \to \R$. The following are equivalent.
\begin{enumerate}
\item The map $C_F$ preserves $\TN$ for continuous Hankel kernels on
$X \times X$.

\item The map $C_F$ preserves positive semidefiniteness for
$\TN$ continuous Hankel kernels on $X \times X$.

\item $F( x ) = \sum_{k = 0}^\infty c_k x^k$ on $( 0, \infty )$, with
$c_k \geq 0$ for all $k$, and $F( 0 ) \geq 0$.
\end{enumerate}
\end{theorem}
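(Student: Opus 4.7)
I plan to prove the cycle $(3) \Rightarrow (1) \Rightarrow (2) \Rightarrow (3)$. For $(3) \Rightarrow (1)$: if $F(x) = \sum_{k \geq 0} c_k x^k$ with $c_k \geq 0$ converges on all of $(0, \infty)$, it has infinite radius of convergence. Given a TN continuous Hankel kernel $K$ on $X \times X$, either $K \equiv 0$ and $F \circ K \equiv F(0) \geq 0$ is a non-negative constant kernel (hence TN and continuous), or $K > 0$ pointwise by Proposition~\ref{Phankel}(2). In the latter case, $F \circ K = \sum_{k \geq 0} c_k K^{\cdot k}$ converges pointwise and is continuous since $F$ is continuous on $(0, \infty)$; each partial sum is TN continuous Hankel by the closure properties in Proposition~\ref{Phankel}(1), and Lemma~\ref{Lhankel} (PSD at arithmetic progressions) passes to the pointwise limit. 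The implication $(1) \Rightarrow (2)$ is immediate since TN Hankel kernels are PSD.

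The main step is $(2) \Rightarrow (3)$, whose backbone is a realization lemma: every TN Hankel matrix $M = (m_{i+j-2})_{i,j=1}^n$ admitting a finitely atomic Stieltjes representing measure $\tau = \sum_\ell c_\ell \delta_{v_\ell}$ in $(0, \infty)$ arises as $K[\bx; \bx]$ for some TN continuous Hankel kernel $K$ on $X \times X$. Concretely, I fix an arithmetic progression $\bx = (a, a+h, \ldots, a+(n-1)h)$ inside $X$ (possible since $X$ is a non-degenerate interval) and set $K(x, y) := \sum_\ell c_\ell v_\ell^{(x+y-2a)/h}$; this equals $\int e^{(x+y)u} \, d\sigma(u)$ for the compactly supported measure $\sigma := \sum_\ell (c_\ell v_\ell^{-2a/h}) \delta_{(\log v_\ell)/h}$ on $\R$, so $K$ is continuous and TN by Proposition~\ref{Phankel}(2), and direct substitution yields $K[\bx; \bx] = M$. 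This realization covers every TP Hankel matrix of every size, since such matrices admit a minimal Stieltjes representation with support in $(0, \infty)$, as well as every rank-one matrix $(c v^{i+j-2})$ with $c$, $v > 0$. Hypothesis~(2) thus gives that $F[M]$ is PSD for all such $M$; specialising to $n = 2$ and to the rank-one matrices with entries $(p, \sqrt{pr}, r)$ yields $F \geq 0$ on $(0, \infty)$ and the multiplicative mid-convexity $F(\sqrt{pr})^2 \leq F(p) F(r)$ for all $p$, $r > 0$.

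To finish, monotonicity of $F$ follows from realising $\begin{pmatrix} p & q \\ q & p \end{pmatrix}$ with $0 < q < p$, and the dichotomy that either $F \equiv 0$ or $F > 0$ on $(0, \infty)$ comes from the mid-convexity; in the nonzero case, $\log F(e^x)$ is midpoint-convex and monotone, hence continuous by \cite[Theorem~71.C]{roberts-varberg}, so $F$ is continuous on $(0, \infty)$. Extending $F$ to $\widetilde F$ on $[0, \infty)$ via $\widetilde F(0) := \lim_{x \to 0^+} F(x)$, I approximate any TN Hankel matrix $M$ pointwise by the TP Hankel matrices $M + \eta M_0$ (with $M_0$ any fixed TP Hankel matrix and $\eta \to 0^+$); each $F[M + \eta M_0]$ is PSD by the previous step, and continuity of $\widetilde F$ together with closure of PSD under pointwise limits yields $\widetilde F[M]$ PSD for every TN Hankel $M$. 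Theorem~\ref{Thankel} then identifies $\widetilde F(x) = \sum c_k x^k$ with $c_k \geq 0$ (and $\widetilde F(0) \leq c_0$), while the zero kernel gives $F(0) \geq 0$ independently. The principal obstacle is this final continuity-and-approximation step: matrices such as $c E_{11}$, whose representing measure is a point mass at $0$, never arise as evaluations of TN continuous Hankel kernels on $X \times X$, and the absence of these matrices from the test class is precisely what relaxes the matricial bound $F(0) \leq c_0$ in Theorem~\ref{Thankel} to the weaker $F(0) \geq 0$ at the kernel level.
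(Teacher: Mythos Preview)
Your argument is essentially correct, and the route you take for $(2)\Rightarrow(3)$ is genuinely different from the paper's. The paper does not establish a general realization lemma for $\TP$ Hankel matrices; instead, it invokes Theorem~\ref{Tbgkp} (a black-box result from \cite{BGKP-hankel} that requires only a very specific test family: the two kinds of $2\times 2$ matrices $\begin{pmatrix} a & b \\ b & b\end{pmatrix}$, $\begin{pmatrix} c^2 & cd \\ cd & d^2\end{pmatrix}$, and the moment matrices $(p+qu_0^{i+j})_{i,j=0}^n$), and then explicitly writes down continuous Hankel kernels on $X\times X$ whose submatrices realize exactly those test matrices. This bypasses the need to prove continuity of $F$ separately or to run any density argument. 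Your approach is more systematic: you realize every $\TP$ Hankel matrix via its finitely atomic Stieltjes representation, extract monotonicity and multiplicative mid-convexity from the $2\times 2$ case to obtain continuity, extend $F$ continuously to $\widetilde F$, and then use the density of $\TP$ in $\TN$ Hankel matrices to reduce to the full matricial classification Theorem~\ref{Thankel}. Both approaches ultimately rest on prior work from \cite{BGKP-hankel}, but on different theorems (Theorem~\ref{Tbgkp} versus Theorem~\ref{Thankel}); the paper's route is shorter and uses a smaller test set, while yours makes the reduction to the matrix case fully transparent and clarifies precisely why the constraint $F(0)\le c_0$ disappears at the kernel level.

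One small technical point: in $(3)\Rightarrow(1)$ you justify the dichotomy ``$K\equiv 0$ or $K>0$ pointwise'' via Proposition~\ref{Phankel}(2), but that result is stated only for open intervals, whereas Theorem~\ref{Thankel2} allows $X$ to be any interval with at least two points. The paper handles this with Lemma~\ref{Lzeros}, which applies to arbitrary intervals and gives the dichotomy directly for continuous $\TN_2$ Hankel kernels; you should cite that instead. Also, your realization claim that every $\TP$ Hankel matrix admits a finitely atomic representing measure supported in $(0,\infty)$ is correct but not proved in the paper, so you would need to supply a reference to the truncated Stieltjes moment problem or a short argument (e.g., extend the moment sequence by one term to make the shifted Hankel matrix positive definite and take Gaussian quadrature nodes).
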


The proof of this theorem uses the following observation about $\TN_2$
Hankel kernels that vanish at a point. Recall that $\partial X$ denotes
the topological boundary of the set $X$; in particular, if
$X \subseteq \R$ is an interval, then $\partial X$ is the set of
endpoints.

\begin{lemma}\label{Lzeros}
Suppose $X \subseteq \R$ is an interval and the kernel
$K : X \times X \to \R$ is Hankel and $\TN_2$. If $K( x, y ) = 0$ for
some point $( x, y ) \in X \times X$, then $K$ vanishes on
$X \times X \setminus \{ ( x_0, x_0 ) : x_0 \in \partial X \}$. In
particular, if $K$ is also continuous, then $K \equiv 0$.
\end{lemma}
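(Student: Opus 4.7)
The plan is to use the Hankel structure to reduce everything to the scalar function $f : X+X \to [0,\infty)$ with $K(x,y) = f(x+y)$, non-negativity of $f$ coming from $\TN_2 \Rightarrow \TN_1$. Write $J$ for the interior of the interval $X+X$. The lemma reduces to showing that the zero set $Z := \{s \in J : f(s) = 0\}$ equals $J$ whenever it is non-empty: the only points of $X \times X$ whose coordinate sums fall outside $J$ are $(\inf X, \inf X)$ and $(\sup X, \sup X)$, when those extrema are attained, and these exhaust $\{(x_0, x_0) : x_0 \in \partial X\} \cap (X \times X)$.

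Two elementary $\TN_2$ determinantal inequalities drive the argument. The first, applied to the submatrix of $K$ on rows $(x, x+\delta)$ and columns $(y, y+\delta)$ for admissible $\delta > 0$ (that is, with $x+\delta, y+\delta \in X$), reads
\[
f(x+y)\, f(x+y+2\delta) \ \geq \ f(x+y+\delta)^{2}.
\]
The second, applied to rows and columns both equal to $((s_*-\delta)/2, (s_*+\delta)/2)$ for $s_* \in J$ and admissible $\delta>0$, reads
\[
f(s_*-\delta)\, f(s_*+\delta) \ \geq \ f(s_*)^{2}.
\]
From the first, taking $x=y=s/2$ for $s \in Z$ shows that $f$ vanishes on a forward one-sided neighbourhood of $s$ in $J$; reading the same inequality backwards (with $s$ in the role of $x+y+2\delta$) yields the other side, so $f \equiv 0$ on the open interval $(\inf X + s/2, \sup X + s/2) \cap J$ around $s$. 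Hence $Z$ is open in $J$. From the second, if $s_n \in Z$ converges to $s_* \in J$, then $\delta_n := |s_n - s_*|$ is admissible for large $n$ and one factor on the left-hand side vanishes, forcing $f(s_*) = 0$. Hence $Z$ is also closed in $J$.

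Connectedness of $J$ then forces $Z \in \{\emptyset, J\}$, so it remains only to check $Z \neq \emptyset$ whenever $K$ has a zero. If $K(x_0,y_0) = 0$ with $x_0 + y_0 \in J$ this is immediate. Otherwise $x_0+y_0 \in \partial(X+X)\cap(X+X)$, and the unique representation of this corner value is diagonal at an endpoint of $X$; then the one-sided specialisation of the first inequality (only forward propagation from the left corner, only backward from the right) still exhibits zeros of $f$ inside $J$, so again $Z \neq \emptyset$. In every case $f \equiv 0$ on $J$, proving the first conclusion. For the second, continuity of $K$ forces the at most two remaining candidate points $(x_0, x_0)$ with $x_0 \in \partial X \cap X$ to inherit the zero value as limits of points in $J$, so $K \equiv 0$. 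I expect the only delicate bookkeeping to be precisely this corner-seeding step, since the bulk propagation relies on the representation $s = s/2 + s/2$ with $s/2$ strictly inside $X$, a luxury that is unavailable exactly at the two extremes of $X+X$.
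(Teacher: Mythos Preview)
Your argument is correct. Both you and the paper rely on the single $2\times 2$ principal-minor inequality $f(s-\delta)f(s+\delta)\geq f(s)^{2}$ (your two displayed inequalities collapse to this once you specialise the first to $x=y$), so the underlying mechanism is identical; the difference is in how the propagation is packaged. The paper proceeds by an explicit bisection: from $f(2d_0)=0$ and $f(c+d_0)^2\leq f(2c)f(2d_0)$ it deduces $f$ vanishes on $((a+d_0)/2,d_0)$, then iterates with a sequence $d_n=(a+3d_{n-1})/4\to a$ to cover all of $(a,d_0)$, and symmetrically on the right. Your open-closed-connected argument replaces this hand-crafted recursion with a single topological stroke, which is cleaner and avoids the bookkeeping of the sequence $d_n$; on the other hand the paper's version is entirely self-contained and never invokes connectedness. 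Your treatment of the corner-seeding case (when the initial zero sits at $2\inf X$ or $2\sup X$) is also slightly more explicit than the paper's, which simply assumes the starting $d_0$ lies in the interior without comment.
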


\begin{proof}
Suppose $K$ is as in the statement of the lemma, and $X$ has interior
$( a, b )$ where $-\infty \leq a < b \leq \infty$. If $K( x, y ) = 0$,
then, since $K$ is Hankel, $K( d_0, d_0 ) = 0$, where
$d_0 := ( x + y ) / 2$. By the Hankel property of $K$, it suffices
to show $K( d, d ) = 0$ for all $d \in X \setminus \partial X$. Now
let $c \in ( a, d_0 )$; the positivity of
$K[ ( c, d_0 ); ( c, d_0 ) ]$ gives that
\[
0 \leq K( c, d_0 )^2 \leq K( c, c ) K( d_0, d_0 ) = 0,
\]
so
$K( c, d_0 ) = 0 = K( ( c + d_0 ) / 2, ( c + d_0 ) / 2 )$.

If $a = -\infty$, then this shows that $K( d, d ) = 0$ for all
$d \in ( a, d_0 )$. If, instead, $a > -\infty$, then this shows
that $K( d, d ) = 0$ for all $d \in ( ( a + d_0 ) / 2 , d_0 )$.

We proceed inductively, assuming that $d_0 > a$ (otherwise there is
nothing to prove). Let
\[
d_n := ( a + 3 d_{n - 1} ) / 4 %
\in ( ( a + d_{n - 1} ) / 2 , d_{n - 1} ) \qquad ( n \in \N )
\]
and note that $K( d_n, d_n ) = 0$, so the previous working shows that
$K( d, d ) = 0$ for all $d \in ( ( a + d_n ) / 2, d_0 )$. Since
$d_n  \to a$ as $n \to \infty$, we see that $K( d, d ) = 0$ whenever
$d \in ( a, d_0 )$.

A similar argument shows that $K( d, d )$ vanishes if
$d \in ( d_0, b )$. The extended result when $K$ is continuous is
immediate.
\end{proof}

\begin{proof}[Proof of Theorem~\ref{Thankel2}]
That $(1) \implies (2)$ is immediate. Next, we assume~$(3)$ and
show~$(1)$, so suppose the continuous Hankel kernel $K : X \times X
\to \R$ is $\TN$. If $K$ is never zero on $X \times X$, then
$F \circ K$ is again $\TN$, continuous, and Hankel, by
Proposition~\ref{Phankel}(1). Otherwise $K$ vanishes at a point,
so Lemma~\ref{Lzeros} applies and $K \equiv 0$, but then
$F \circ \bZ_{X \times X} = F( 0 ) \bJ_{X \times X}$ is indeed
$\TN$, continuous, and Hankel.

Finally, to show $(2) \implies (3)$, we appeal to the following
result.

\begin{theorem}%
[{\cite[Theorem~4.2 and Remark~4.3]{BGKP-hankel}}]\label{Tbgkp}
Fix $u_0 \in ( 0, 1 )$ and suppose the function
$F : ( 0, \infty ) \to \R$ is such that $F[-]$ preserves
positive semidefiniteness for $2 \times 2$ matrices of the form
\[
\begin{pmatrix} a & b \\ b & b \end{pmatrix} %
\quad \text{and} \quad %
\begin{pmatrix} c^2 &  c d \\ c d & d^2 \end{pmatrix} \qquad %
( a, b, c, d > 0, \ a > b )
\]
as well as for the matrices
$( p + q u_0^{i+j} )_{i, j = 0}^n$ for all $p$, $q \geq 0$ with
$p + q > 0$ and all $n \in \N$. Then $F$ is smooth and
$F^{(k)} \geq 0$ on $( 0, \infty )$ for all $k \geq 0$.
\end{theorem}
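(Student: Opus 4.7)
The plan is to extract information from each of the three test families in turn, building up regularity of $F$ before promoting this to absolute monotonicity.

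First, I would exploit the $2\times 2$ matrices. From $\begin{pmatrix} a & b \\ b & b\end{pmatrix}$ being PSD with $a>b>0$, preservation after $F$ gives $F(b)\geq 0$ and $F(a)F(b)\geq F(b)^2$, so (splitting on whether $F(b)=0$ or $>0$) the function $F$ is non-negative and non-decreasing on $(0,\infty)$. From $\begin{pmatrix} c^2 & cd \\ cd & d^2\end{pmatrix}$, which is the rank-one matrix $\bu\bu^T$ with $\bu=(c,d)^T$, PSD preservation gives $F(c^2)F(d^2)\geq F(cd)^2$, i.e.\ the multiplicative mid-convexity $F(\sqrt{xy})^2\leq F(x)F(y)$ on $(0,\infty)$. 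Exactly as in the $d=2$ case of the proof of Theorem~\ref{Tfixeddim}, \cite[Theorem~71.C]{roberts-varberg} then forces $F$ to be continuous on $(0,\infty)$; moreover, a short separate argument shows that either $F\equiv 0$, in which case the conclusion is vacuous, or $F>0$ everywhere.

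Second, I would exploit the matrices $M_n(p,q):=(p+qu_0^{i+j})_{i,j=0}^n$, which are the truncated Hankel moment matrices of the two-atomic measure $p\delta_1+q\delta_{u_0}$. By hypothesis, $(F(p+qu_0^{i+j}))_{i,j=0}^n$ is PSD for every $n$, so the sequence $\ba_{p,q}:=(F(p+qu_0^k))_{k\geq 0}$ is a Hamburger positive definite sequence, and by Hamburger's theorem there exists a finite non-negative measure $\mu_{p,q}$ on $\R$ such that
\[
F(p+qu_0^k)=\int_\R t^k\,\rd\mu_{p,q}(t)\qquad(k\geq 0).
\]
Monotonicity of $F$ and $u_0\in(0,1)$ force $\ba_{p,q}$ to be bounded and non-increasing with limit $F(p)$ as $k\to\infty$, which in turn pins down that $\mu_{p,q}$ must be supported in $[-1,1]$ with no atom at $-1$ and atom of mass $F(p)$ at $+1$. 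I would then upgrade from Hamburger to Hausdorff: replacing $q$ by $qu_0$ in the hypothesis shows that the shifted sequence $(F(p+qu_0^{k+1}))_{k\geq 0}$ is also a Hamburger moment sequence, and combining these two positivity conditions (together with the monotonicity of $\ba_{p,q}$) rules out mass of $\mu_{p,q}$ on $[-1,0)$, so that $\mu_{p,q}$ is supported on $[0,1]$ and $\ba_{p,q}$ is a completely monotone sequence in the sense of Hausdorff.

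Third, I would convert this Hausdorff structure into smoothness and non-negativity of all derivatives of $F$. Fixing $p=x>0$ and letting $q$ be small, write $h:=qu_0^k$ and use the integral representation to obtain finite-difference identities: the iterated backward differences of $\ba_{x,q}$ are
\[
(-1)^n\nabla^n \ba_{x,q}(k)=\int_0^1 t^k(1-t)^n\,\rd\mu_{x,q}(t)\geq 0,
\]
which give non-negativity of divided differences of $F$ at $x$ along the geometric grid $\{x+qu_0^k:k\geq 0\}$. Letting $q$ vary over $(0,\infty)$ fills out a dense set of scales, and combining this with the continuity of $F$ proved above lets me pass from discrete finite differences to derivatives, yielding successively $F'(x)\geq 0$, then $F''(x)\geq 0$, and so on, together with the existence of each derivative.

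The main obstacle is the third step: passing from the family of complete-monotonicity conditions on the geometric subsequences $(F(x+qu_0^k))_{k\geq 0}$ to honest smoothness and pointwise non-negativity of all derivatives of $F$ at every $x>0$. The delicate point is to control the measures $\mu_{x,q}$ uniformly as $q\to 0^+$ (via a Helly-type selection argument) so that their $n$-th moments yield the correct multiple of $F^{(n)}(x)$ in the limit $h=qu_0^k\to 0$, rather than some defect coming from mass escaping to the endpoint $t=1$; the atom $F(p)\delta_1$ identified above is exactly what absorbs this boundary contribution, which is why the Hausdorff upgrade in step two is essential.
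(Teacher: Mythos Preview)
The paper does not prove this theorem; it is quoted as a black-box result from \cite{BGKP-hankel} and invoked inside the proof of Theorem~\ref{Thankel2}. So there is no ``paper's own proof'' to compare your proposal against.

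On the merits of your proposal: the first two steps are sound. The deduction of non-negativity, monotonicity, multiplicative mid-convexity and hence continuity from the $2\times 2$ test matrices is the standard route (and matches how the paper handles analogous reductions elsewhere). Your Stieltjes upgrade via the substitution $q\mapsto qu_0$ is a clean observation: it shows that both $(a_{i+j})$ and $(a_{i+j+1})$ are PSD Hankel, so $\ba_{p,q}$ is a Stieltjes moment sequence, and boundedness then pins the representing measure to $[0,1]$.

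The third step, however, is where the real content lies, and your sketch does not close it. The nodes $x_k=p+qu_0^k$ are \emph{geometrically} spaced, not arithmetically, so the complete-monotonicity inequalities $(-1)^n\Delta^n a_k\geq 0$ do not translate into non-negativity of ordinary divided differences of $F$. For instance, $a_{k+2}-2a_{k+1}+a_k\geq 0$ reads $F(x_{k+2})+F(x_k)\geq 2F(x_{k+1})$ with $x_{k+1}$ \emph{not} the midpoint of $x_k$ and $x_{k+2}$ (the gap ratio is $u_0\neq 1$), which is strictly weaker than convexity of $F$. Your Helly-selection idea for $\mu_{p,q}$ as $q\to 0^+$ is plausible in spirit, but you have not explained how the moments of any limit measure encode $F^{(n)}(p)$; the geometric spacing makes this identification genuinely non-trivial, and the ``atom at $1$ absorbs the boundary contribution'' remark is more of a hope than an argument. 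This is precisely the step that requires the machinery developed in \cite{BGKP-hankel}, and you should consult that source directly for the mechanism that bridges the discrete moment conditions to smoothness and $F^{(k)}\geq 0$.
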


A function $F$ satisfying the hypotheses of this theorem is therefore
absolutely monotonic on $( 0, \infty )$, and so has a power-series
representation there with non-negative Maclaurin coefficients.

Now suppose~$(2)$ holds. When $K = x \bJ_{X \times X}$, with
$x \geq 0$, then $F \circ K$ being $\TN$ implies $F( x ) \geq 0$. To
apply Theorem~\ref{Tbgkp}, fix $n \in \N$ and choose points $x_0$,
$x_n \in X$ with $x_0 < x_n$. Let $g : \R \to \R$ be the linear
function such that $g( x_0 ) = 0$ and $g( x_n ) = n$, and let
$x_i = g^{-1}( i )$ for $i = 1$, \ldots, $n - 1$. Let $p$, $q \geq 0$
be such that $p + q > 0$. By assumption, the map $C_F$ preserves
positive semidefiniteness on the $\TN$ continuous Hankel kernel
\[
K : X \times X \to \R; \ ( x, y ) \mapsto p + q u_0^{g( x ) + g( y )},
\]
which contains $( p + q u_0^{i + j} )_{i, j = 0}^n$ as the principal
submatrix $K[ ( x_0, \ldots, x_n ); ( x_0, \ldots, x_n ) ]$.
Similarly, given positive $a$, $b$, $c$, and $d$, with $a > b$, the
$\TN$ continuous Hankel kernels
\[
K' : X \times X \to \R; \ ( x, y ) \mapsto %
\frac{( 2 a - b )^2}{4 a - 3 b} %
\left( \frac{b}{2 a - b}\right)^{g( x ) + g( y )} + %
\frac{b ( a - b )}{4 a - 3 b} 2^{g( x ) + g( y )}
\]
and
\[
K'' : X \times X \to \R; \ %
( x, y ) \mapsto c^2 ( d / c )^{g( x ) + g( y )}
\]
have submatrices $K'[ ( x_0, x_1 ); ( x_0, x_1 ) ]$ and
$K''[ ( x_0, x_1 ); ( x_0, x_1 ) ]$ which appear in the statement of
Theorem~\ref{Tbgkp}. Thus $F[-]$ preserves $\TN$ on these matrices, so
the hypotheses of Theorem~\ref{Tbgkp} are satisfied. It follows that
$F$ is as claimed.
\end{proof}

To conclude this part, we classify the preservers of $\TP$ Hankel
kernels.

\begin{theorem}\label{TPhankel}
Suppose $X \subseteq \R$ is an open interval and let
$F : ( 0, \infty ) \to \R$. The following are equivalent. 
\begin{enumerate}
\item The map $C_F$ preserves $\TP$ continuous Hankel kernels on
$X \times X$.

\item The map $C_F$ preserves positive definiteness for $\TP$
continuous Hankel kernels on $X \times X$.

\item $F( x ) = \sum_{k = 0}^\infty c_k x^k$ on $( 0, \infty )$, where
$c_k \geq 0$ for all $k$, and $F$ is non-constant.
\end{enumerate}
\end{theorem}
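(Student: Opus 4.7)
The implications $(3) \Rightarrow (1) \Rightarrow (2)$ are quick. For $(3) \Rightarrow (1)$, suppose $F(x) = \sum c_k x^k$ on $(0, \infty)$ with $c_k \geq 0$ and $F$ non-constant, and let $K$ be any $\TP$ continuous Hankel kernel on $X \times X$. Then $F \circ K = \sum c_k K^{\cdot k}$ converges pointwise, and the limit is $\TN$ continuous Hankel by Proposition~\ref{Phankel}(1). Non-constancy forces $c_j > 0$ for some $j \geq 1$, and then Proposition~\ref{Phankel}(5) gives that $c_j K^{\cdot j}$ is $\TP$ continuous Hankel; its representing measure from Proposition~\ref{Phankel}(2) has infinite support, and since $F \circ K - c_j K^{\cdot j}$ is a further $\TN$ continuous Hankel summand, the representing measure of $F \circ K$ also has infinite support. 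Hence $F \circ K$ is $\TP$ by Proposition~\ref{Phankel}(2). The implication $(1) \Rightarrow (2)$ is immediate.

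For $(2) \Rightarrow (3)$, the first step is to show that $F$ is continuous on $(0, \infty)$. By Theorem~\ref{L2symtp}, every symmetric $\TP$ $2 \times 2$ matrix is a principal submatrix of some $\TP$ continuous Hankel kernel on $X \times X$, so hypothesis~(2) yields $F(t) > 0$ for all $t > 0$ and $F(a) F(b) > F(c)^2$ whenever $a, b, c > 0$ satisfy $c^2 < ab$. Setting $a = b$ shows $F$ is strictly increasing, so $F$ has at most countably many jumps and admits one-sided limits $F^-$ and $F^+$ at every point. Fixing $a, b > 0$, choosing $a' > a$ and $b' > b$, selecting $\delta \in (0, \sqrt{a'b'} - \sqrt{ab})$, and applying the inequality at $c = \sqrt{ab} + \delta$, after letting $\delta \to 0^+$ and then $(a', b') \to (a, b)$ from above we obtain $F^+(a) F^+(b) \geq F^+(\sqrt{ab})^2$. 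Thus $G(x) := \log F^+(e^x)$ is increasing, measurable, and midpoint convex, hence continuous by \cite[Theorem~71.C]{roberts-varberg}. Continuity of $F^+$ forces $F^- = F^+$, and since $F^- \leq F \leq F^+$, the function $F$ itself is continuous.

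With continuity in hand, extend $F$ to $\tilde{F} : [0, \infty) \to [0, \infty)$ by setting $\tilde{F}(0) := \lim_{x \to 0^+} F(x)$, which exists and is non-negative by monotonicity and positivity of $F$. Let $K$ be any $\TN$ continuous Hankel kernel on $X \times X$: if $K \equiv 0$ then $\tilde{F} \circ K$ is a non-negative constant kernel and therefore positive semidefinite, whereas otherwise Lemma~\ref{Lzeros} gives $K > 0$ on all of $X \times X$, and Proposition~\ref{Phankel}(3) supplies a pointwise-convergent sequence of $\TP$ continuous Hankel kernels $K_n \to K$. Each $F \circ K_n$ is positive definite by hypothesis~(2), so the pointwise limit $\tilde{F} \circ K$ is positive semidefinite. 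Thus $\tilde{F}$ preserves positive semidefiniteness of $\TN$ continuous Hankel kernels on $X \times X$, and Theorem~\ref{Thankel2} now delivers $\tilde{F}(x) = \sum c_k x^k$ on $(0, \infty)$ with $c_k \geq 0$, giving the desired form of $F$; non-constancy is inherited from the strict monotonicity established above. The main obstacle is the continuity argument in the second paragraph: because hypothesis~(2) supplies only strict inequalities valid on $\TP$ test matrices, one must combine Theorem~\ref{L2symtp} (to produce enough $2 \times 2$ tests inside $\TP$ continuous Hankel kernels) with a careful two-sided limiting scheme to transform these strict inequalities into the midpoint-convexity relation required by the Roberts--Varberg theorem.
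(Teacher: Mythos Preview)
Your proof is correct and follows essentially the same route as the paper: for $(3)\Rightarrow(1)$ you use Proposition~\ref{Phankel} to see that the infinite-support criterion passes from $c_j K^{\cdot j}$ to $F\circ K$, and for $(2)\Rightarrow(3)$ you use Theorem~\ref{L2symtp} to get continuity, then density (Proposition~\ref{Phankel}(3)) to reduce to Theorem~\ref{Thankel2}. The only cosmetic difference is that the paper outsources the continuity step to Theorem~\ref{TsymmetricTP}, whereas you unpack that argument directly via the Roberts--Varberg midpoint-convexity route; the content is the same.
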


\begin{proof}
Clearly $(1) \implies (2)$. We now assume~$(3)$ and show~$(1)$.
Suppose $F$ is as specified and let $n_0 \in \N$ be such that
$c_{n_0} > 0$. If $K : X \times X \to \R$ is a $\TP$ continuous Hankel
kernel, then so is $K^{n_0}$, by Proposition~\ref{Phankel}(4). Let
$G( x ) := F( x ) - c_{n_0} x^{n_0}$ and note that
$G \circ K$ is $\TN$, by Theorem~\ref{Thankel2}. Now $K^{n_0}$ and
$G \circ K$ have integral representations as in
Proposition~\ref{Phankel}(2), with corresponding measures $\mu$ and
$\nu$, respectively. Furthermore, the measure $\mu$ has infinite
support, and therefore so does $c_{n_0} \mu + \nu$. Thus $F \circ K$ is
$\TP$.

Finally, suppose~$(2)$ holds. By Theorem~\ref{L2symtp}, any $\TP$
symmetric $2 \times 2$ matrix occurs as a submatrix of a continuous
Hankel $\TP$ kernel on $X \times X$. It follows from
Theorem~\ref{TsymmetricTP} that $F$ is continuous on $( 0, \infty )$.
But then, by the density assertion in Proposition~\ref{Phankel}(3),
the map $C_F$ preserves the set of $\TN$ continuous Hankel kernels on
$X \times X$. It now follows from Theorem~\ref{Thankel2} that $F$ is a
power series with non-negative Maclaurin coefficients, and $F$ cannot
be constant as then it cannot preserve positive definiteness. This
shows~$(3)$.
\end{proof}

\section{P\'olya frequency functions and Toeplitz kernels}%
\label{Stoeplitz}

As the analysis in the previous sections shows, only small test sets
of matrices and kernels are required to assure the rigidity of $\TN$
and $\TP$ endomorphisms, that is, to obtain Theorem~\ref{T1}. In this
section, we explore another classical family of distinguished kernels,
those associated to P\'olya frequency functions. Such kernels are of
central importance for time-frequency analysis and the theory of
splines. The landmark contributions of Schoenberg, starting with his
first full article on the subject~\cite{Schoenberg51}, are highly
recommended to the uninitiated reader. See also the monographs of
Karlin~\cite{Karlin} and Hirschman and Widder~\cite{HW}.

\begin{definition}
A \emph{P\'olya frequency function} is a function
$\Lambda : \R \to [ 0, \infty )$ which is Lebesgue integrable,
non-zero at two or more points and such that the Toeplitz kernel
\[
T_\Lambda : \R \times \R \to \R; \ ( x, y ) \mapsto \Lambda( x - y)
\]
is totally non-negative.
\end{definition}

This is a Toeplitz counterpart of the Hankel kernels encountered in
Section~\ref{Shankel}. Even the condition that the kernel $T_\Lambda$
is $\TN_2$ is very restrictive. Indeed, this, measurability and the
non-vanishing condition imply that
\begin{equation}\label{Econvex}
\Lambda( x ) = \exp( -\phi( x ) ) \qquad ( x \in \R ),
\end{equation}
where the function $\phi$ is convex on an open interval, so continuous
there, with possible discontinuities at the boundary and infinite
values outside: see \cite[Definition~3 and Lemma~1]{Schoenberg51}.
It also implies \cite[Lemma~2]{Schoenberg51} that $\Lambda$ either
decays exponentially at infinity, and so is integrable, or is monotone.

Schoenberg proved \cite[Corollary~2]{Schoenberg51} that the
only discontinuous P\'olya frequency functions are affine transforms
$x \mapsto \lambda( a x + b )$, where $a$, $b \in \R$ and $a \neq 0$,
of the map
\[
\lambda : \R \to [ 0, \infty ); \ x \mapsto \begin{cases}
 0 & ( x < 0 ), \\
 e^{-x} & ( x \geq 0 ),
\end{cases}
\]
except possibly at the origin. In fact, one can alter the function
$\lambda$ to obtain
\begin{equation}\label{Elambda}
\lambda_d : \R \to [ 0, \infty ); \ x \mapsto \begin{cases}
 0 & ( x < 0 ),\\
 d & ( x = 0 ),\\
 e^{-x} & ( x > 0 ),
\end{cases}
\end{equation}
where $d \in [ 0, 1 ]$, without affecting the total non-negativity
property. This may be proved directly in the same way as
\cite[Chapter~IV, Lemma~7.1a]{HW}; see also \cite[p.16]{Karlin}. The
requirement (\ref{Econvex}) ensures that this is the only possible
variation on $\lambda$.

This class of kernels was linked by P\'olya and Schur
\cite{Polya-Schur} to earlier studies pursued by Laguerre and devoted
to coefficient operations which preserve polynomials with purely real
roots. More specifically, convolution with such kernels maps
polynomials to polynomials of the same degree and such a map possesses
a series of striking root-location and root-counting properties. It
should be no surprise, then, that the Fourier--Laplace transform of a
P\'olya frequency function is very special; see, for instance,
\cite{Hamburger1920}. The main theorems of Schoenberg
\cite{Schoenberg51} are the culmination of half a century of
discoveries on this theme. To be precise, the bilateral Laplace
transform $\cB\{ \Lambda \}$ of a P\'olya frequency
function~$\Lambda$, given by
\[
\cB\{ \Lambda \}( z ) := %
\int_\R e^{-x z} \Lambda( x ) \intd x,
\]
is an analytic function in the vertical strip
$\{ z \in \C : \alpha < \Re z < \beta \}$, where the bounds $\alpha$
and $\beta$ can have infinite values and are such that
\[
\alpha = \lim_{x \to \infty} \frac{\log \Lambda( x )}{x} < 0 %
\qquad \text{and} \qquad %
\beta = \lim_{x \to -\infty} \frac{\log \Lambda( x )}{x} > 0;
\]
see \cite[Lemma 10]{Schoenberg51}. The characteristic feature of the
bilateral Laplace transform of a P\'olya frequency function is the
structure of its reciprocal.

\begin{theorem}[{\cite[Theorems 1 and 2]{Schoenberg51}}]\label{Tschoenberg}
If $\Lambda$ is a P\'olya frequency function then
the map $z \mapsto 1 / \cB\{ \Lambda \}( z )$ is, up to an exponential
factor, the restriction of an entire function of genus zero or one,
with purely real zeros.
\end{theorem}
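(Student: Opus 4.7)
The plan is to prove Schoenberg's theorem by a three-step approximation argument: first reduce to smooth P\'olya frequency functions (PFFs) via Gaussian mollification; then approximate smooth PFFs by finite convolution products of elementary PFFs whose bilateral Laplace transforms are explicit; and finally pass to the limit, invoking closure properties of the Laguerre--P\'olya class.

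To begin, I would verify that $\cB\{ \Lambda \}( z )$ is analytic and zero-free in the vertical strip $\alpha < \Re z < \beta$ containing the imaginary axis. Analyticity is immediate from the exponential decay of $\Lambda$ at $\pm \infty$, a consequence of the $\TN_2$ property together with integrability (as noted after~(\ref{Econvex})); zero-freeness on the imaginary axis follows from positivity of $\Lambda$ combined with a real translation argument. Next, set $\Lambda_\eps := \Lambda * G_\eps$, where $G_\eps( x ) := ( 2 \pi \eps )^{-1/2} e^{-x^2 / ( 2 \eps )}$. Since Gaussians are PFFs (Example~\ref{EgenVDM}) and convolutions of PFFs are PFFs via a Cauchy--Binet computation analogous to Proposition~\ref{Pconvolve}, the function $\Lambda_\eps$ is a smooth PFF, and its transform satisfies $\cB\{ \Lambda_\eps \}( z ) = \cB\{ \Lambda \}( z ) \cdot e^{\eps z^2 / 2}$.

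Second, I would approximate each $\Lambda_\eps$ by finite convolution products of three kinds of elementary PFFs: one-sided exponentials $\lambda_{a,+}( x ) := a e^{-a x} \bJ_{x > 0}$ for $a > 0$, their reflections $\lambda_{a,-}( x ) := a e^{a x} \bJ_{x < 0}$, and narrow Gaussians $G_{\eps_0}$. The bilateral Laplace transform of such a product factors as
\[
\cB\{ \Pi \}( z ) =
C\, e^{\eps_0 z^2 / 2 - \delta z} \prod_{k = 1}^N \frac{1}{1 + \sigma_k z / a_k},
\qquad \sigma_k \in \{ -1, +1 \},
\]
so the reciprocal is $C^{-1} e^{-\eps_0 z^2 / 2 + \delta z}$ times a polynomial with purely real zeros, manifestly in the Laguerre--P\'olya class. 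The existence of a sequence $\Pi_n \to \Lambda_\eps$ whose transforms $\cB\{ \Pi_n \}$ converge locally uniformly on the strip is the technical heart of the argument; the classical route is to discretize $\Lambda$ on a fine lattice, recognize the sampled sequence as a scaled P\'olya frequency sequence, and invoke the Aissen--Schoenberg--Whitney characterization of PF sequences (whose generating functions are in the Laguerre--P\'olya class) to realize each approximant as a finite elementary convolution product.

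Finally, pass to the limit. The reciprocals $1 / \cB\{ \Pi_n \}( z )$ are entire with only real zeros, and their order and type are controlled uniformly in $n$ by the tail decay of $\Lambda$. By Hurwitz's theorem, the locally uniform limit is either identically zero or has only real zeros; Hadamard factorization then forces it to be of genus $0$ or $1$, modulo the Gaussian-type factor $e^{-\eps z^2 / 2}$, which is absorbed into the ``exponential factor'' of the statement. Unwinding the mollification, $1 / \cB\{ \Lambda \}( z )$ coincides, up to such an exponential factor, with an entire function of the asserted form. The main obstacle is unquestionably the middle step: establishing density of finite convolution products of elementary PFFs inside the class of smooth PFFs, with sufficient control on Laplace transforms over compact subsets of $\{ \alpha < \Re z < \beta \}$. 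This requires simultaneous bookkeeping of the $\TN$ property under discretization and of quantitative decay estimates for $\Lambda$, and is where the discrete theory of P\'olya frequency sequences enters in an essential way.
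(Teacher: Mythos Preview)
The paper does not supply its own proof of this statement; Theorem~\ref{Tschoenberg} is quoted from Schoenberg's 1951 paper~\cite{Schoenberg51} as a black-box input and is used repeatedly thereafter (for example, in the proofs of Theorem~\ref{TPolya} and Lemma~\ref{Lintpowers}). So there is nothing in the present paper to compare your attempt against.

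Evaluated on its own, your outline is a recognizable sketch of the classical route: mollify, discretize, invoke the P\'olya frequency sequence theory, and close up under limits in the Laguerre--P\'olya class. But the proposal is a roadmap rather than a proof, with genuine gaps at exactly the places where the work lies. First, zero-freeness of $\cB\{\Lambda\}$ is needed on the whole strip $\{\alpha<\Re z<\beta\}$, not just on the imaginary axis, for the reciprocal even to be defined there; your ``real translation argument'' does not establish this, and in fact the standard way to see it is essentially to prove the theorem itself. Second, the heart of the matter, as you say, is the approximation step: showing that a smooth PFF is a limit of finite convolutions of one-sided exponentials and Gaussians, with locally uniform convergence of transforms on the strip. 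Appealing to Aissen--Schoenberg--Whitney here is not free, since that characterization of PF sequences is of comparable depth and its proof is intertwined with Schoenberg's; one must also check that discretizing a PFF yields a PF sequence and that the discrete generating function converges to the continuous transform with the requisite uniformity. Third, the uniform control on order and type needed to apply Hurwitz and Hadamard in the limit is asserted but not derived; this requires quantitative decay estimates for $\Lambda$ and for the zero distribution of the approximants.

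In short: your plan is correct in spirit and matches the historical approach, but as written it defers all the difficulty to steps you have only named, not carried out.
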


Examples abound, and in general they are related to Hadamard
factorizations of elementary transcendental functions
\cite{Hamburger1920,Schoenberg51,Karlin}. For instance, $e^{-x^2}$,
$e^{-|x|}$, $1 / \cosh x$ and $e^{-x - e^{-x}}$ are all P\'olya
frequency functions.

\subsection{Preservers of P\'olya frequency functions}

In this subsection, we classify all composition transforms
$\Lambda \mapsto F \circ \Lambda$ which leave invariant the class of
P\'olya frequency functions. The Gaussian kernel stands out, as the
sole generator via affine changes of coordinates of a prominent family
of test functions. We start by investigating this particular
situation.

An immediate inspection of such transforms applied to the $\TP$ kernels
\[
c G_\kappa : \R \times \R \to \R; \ ( x, y ) \mapsto %
c \exp( -\kappa ( x - y )^2 ) \qquad ( c > 0, \ \kappa > 0 )
\]
shows that we may expect a larger class of preservers than found in
the rigid conclusions contained in our general theorems. Indeed, all
maps of the form $c_0 x^\alpha$ for positive $c_0$ and $\alpha$
preserve $\TP$ on these kernels. However, more exotic preservers
exist in this setting. As
\[
K_\alpha : \R \times \R \to \R; \ %
( x, y ) \mapsto \exp( -\alpha | x - y | )
\]
is also a P\'{o}lya frequency function for any $\alpha > 0$, it follows
that
\[
F : ( 0, \infty ) \to \R; \ t \mapsto %
\exp( -\sqrt{-\log \max\{ t, 1 \}} )
\]
is an admissible transformer of $G_\kappa$ for all $\kappa > 0$. Such
an analysis can be refined to consider the $\TN_p$ property, but we do
not pursue this path here. For a recent characterization of P\'{o}lya
frequency functions of order at most~$3$, see \cite{Weinberger83}.

As an initial step, we obtain the following proposition.

\begin{proposition}\label{PGauss}
Given a function $F : ( 0, \infty ) \to \R$, each of the following
statements implies the next.
\begin{enumerate}
\item $F( x ) = c_0 x^\alpha$ for some $c_0 > 0$ and $\alpha > 0$.

\item $F \circ c G_\kappa$ is totally positive on $\R \times \R$ for all
$c > 0$ and $\kappa > 0$.

\item $F \circ c G_1$ is $\TP_3$ on $\R \times \R$ for all $c > 0$.

\item $F$ is positive, increasing, and continuous on $( 0, \infty )$.
\end{enumerate}
If $(3')$ $F \circ c G_1$ is $\TN_3$ on $\R \times \R$ for all
$c > 0$, then $(4')$ $F$ is non-negative, non-decreasing, and
continuous on $( 0, \infty )$.
\end{proposition}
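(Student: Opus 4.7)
The plan is to prove the implication chain $(1) \Rightarrow (2) \Rightarrow (3) \Rightarrow (4)$, together with $(3') \Rightarrow (4')$. For $(1) \Rightarrow (2)$, a direct substitution gives $F \circ c G_\kappa = c_0 c^\alpha G_{\alpha \kappa}$, which is a positive multiple of a totally positive kernel by Notation~\ref{Ngauss}. The implication $(2) \Rightarrow (3)$ is immediate, since $\TP$ entails $\TP_3$.

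For $(3) \Rightarrow (4)$, I would extract positivity, strict monotonicity, and continuity separately. Positivity of $F$ on $(0, \infty)$ follows from the $1 \times 1$ minor conditions, since $c G_1(x, y)$ realizes every value in $(0, c]$ as $c$, $x$, $y$ vary. Strict monotonicity follows from the $2 \times 2$ principal submatrix
\[
c G_1[(0, h); (0, h)] = c \begin{pmatrix} 1 & q \\ q & 1 \end{pmatrix} \qquad (q := e^{-h^2}),
\]
whose $F$-image has positive determinant iff $F(c) > F(cq)$; since for any $0 < s < t$ we may take $c = t$ and $q = s/t \in (0, 1)$, this forces $F$ to be strictly increasing on $(0, \infty)$.

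The main obstacle will be continuity, which I would address via a $3 \times 3$ determinant computation. Applying $F$ to the Toeplitz submatrix
\[
M_{c, q} := c G_1[(0, h, 2h); (0, h, 2h)] = c \begin{pmatrix} 1 & q & q^4 \\ q & 1 & q \\ q^4 & q & 1 \end{pmatrix} \qquad (q := e^{-h^2}),
\]
cofactor expansion yields $\det F[M_{c, q}] = (F(c) - F(c q^4)) (F(c)^2 + F(c) F(c q^4) - 2 F(c q)^2)$, so $\TP_3$ forces the strict inequality $2 F(c q)^2 < F(c) (F(c) + F(c q^4))$ for every $c > 0$ and $q \in (0, 1)$. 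Suppose for contradiction that $F$ has a jump at some $x_0 > 0$, with $\alpha := F(x_0^-) < \beta := F(x_0^+)$. The crux is to choose parameters so that the three arguments $c$, $cq$, $c q^4$ straddle $x_0$ correctly; a concrete choice is $q_n := 1 - 1/n$ and $c_n := x_0/q_n + 1/n^2$. One checks that $c_n \to x_0^+$ and $c_n q_n = x_0 + q_n/n^2 \to x_0^+$, whereas $c_n q_n^4 = x_0 q_n^3 + q_n^4/n^2$ lies below $x_0$ for $n$ large (since $x_0(1 - q_n^3) \sim 3 x_0/n$ dominates $q_n^4/n^2 \sim 1/n^2$) and converges to $x_0$ from below. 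By monotonicity, $F(c_n), F(c_n q_n) \to \beta$ and $F(c_n q_n^4) \to \alpha$, hence
\[
\det F[M_{c_n, q_n}] \longrightarrow (\beta - \alpha)(\beta \alpha - \beta^2) = -\beta (\beta - \alpha)^2 < 0,
\]
contradicting the $\TP_3$ hypothesis.

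For $(3') \Rightarrow (4')$, the same arguments with strict inequalities relaxed yield non-negativity and non-decreasingness. The same limit computation now forces $-\beta(\beta - \alpha)^2 \geq 0$, so either $\beta = 0$ or $\alpha = \beta$; in the former case, monotonicity and non-negativity imply $F \equiv 0$ on $(0, x_0]$, so $\alpha = 0 = \beta$ as well. Either way there is no jump, so $F$ is continuous on $(0, \infty)$.
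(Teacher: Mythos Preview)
Your proof is correct, and the continuity step takes a genuinely different route from the paper.

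The paper's argument for continuity fixes the putative discontinuity point~$p$, picks an auxiliary point $q > p$ at which $F$ is already known to be continuous (invoking the fact that monotone functions have at most countably many discontinuities), and then uses two separate, non-equispaced $3 \times 3$ principal submatrices of $q G_1$: one with nodes $(0, y, x)$ where $x = \sqrt{\log(q/p)}$ and $y \uparrow x$, and one with nodes $(0, x, z)$ where $z \downarrow x$. The limiting determinants are $-F(q)(F^{\pm}(p) - F(p))^2$, which forces left and right continuity separately because $F(q) > 0$.

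Your approach instead uses equispaced nodes $(0, h, 2h)$, which yields a symmetric Toeplitz submatrix with an explicit factored determinant, and a single carefully designed sequence $(c_n, q_n)$ that makes all three arguments $c_n$, $c_n q_n$, $c_n q_n^4$ converge to the same point $x_0$, two from above and one from below. This is somewhat more self-contained: it avoids the auxiliary continuity point and the associated appeal to countability of discontinuities, and it handles both one-sided limits in a single stroke. The price is the need to engineer the sequence so that $c_n q_n$ stays above $x_0$ while $c_n q_n^4$ drops below it, which you verify via the $3/n$ versus $1/n^2$ comparison. Both approaches ultimately exploit the same mechanism~--- a $3 \times 3$ minor degenerating to $-(\text{positive})\cdot(\text{jump})^2$~--- but package it differently.
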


\begin{proof}
That $(1) \implies (2)$ and $(2) \implies (3)$ is immediate. We next
assume~$(3)$ and show~$(4)$. Given $p$, $q > 0$ with $p < q$, let
$x := \sqrt{\log( q / p )}$ and $\by := ( 0, x )$. Then the
$2 \times 2$ matrix $F[ q G_1[ \by; \by ] ]$ has positive determinant
and positive entries. This shows that $F$ must be positive and
increasing on $( 0, \infty )$. In particular, the function $F$ has at
most countably many discontinuities.

Now we set $F^\pm( x ) := \lim_{y \to x^\pm} F( y )$ for all $x > 0$.
To complete the proof, we fix $p > 0$ and show that
$F^+( p ) = F( p ) = F^-( p )$. To see this, choose $q > p$ such that
$F$ is continuous at $q$, and let $x := \sqrt{\log( q / p )}$ as
before. Let $\bz := ( 0, y, x )$ and $\bw := ( 0, x, z )$ for $y$, $z
> 0$ such that $y < x < z$, and consider the positive-definite
matrices
\begin{align*}
A_y := F[ q G_1[ \bz; \bz ] ] & = \begin{pmatrix}
F( q ) & F( q e^{-y^2} ) & F( p ) \\
F( q e^{-y^2} ) & F( q ) & F( q e^{-( x - y )^2} ) \\
F( p ) & F( q e^{-( x - y )^2} ) & F( q )
\end{pmatrix} \\
\text{and} \quad %
B_z := F[ q G_1[ \bw; \bw ] ] & = \begin{pmatrix}
F( q ) & F( p ) & F( q e^{-z^2}) \\
F( p ) & F( q ) & F( q e^{-( x - z )^2}) \\
F( q e^{-z^2} ) & F( q e^{-( x - z )^2} ) & F( q )
\end{pmatrix}.
\end{align*}
Note that
\[
\lim_{y \to x^-} F(q e^{-y^2}) = F^-(p), \qquad
\lim_{z \to x^+} F(q e^{-z^2}) = F^+(p),
\]
and $F$ is continuous at $q$. Hence
\[
\lim_{y \to x^-} \det A_y = -F( q ) ( F^-( p ) - F( p ) )^2,
\qquad
\lim_{z \to x^-} \det B_z = -F( q ) ( F^+( p ) - F( p ) )^2.
\]
Since both limits are non-negative, and $F( q ) > 0$ from the previous
working, it follows that $F^+( p ) = F( p ) = F^-( p )$, as required.
Hence $(3) \implies (4)$.

To show $(3') \implies (4')$, we may repeat the argument above,
assuming without loss of generality that $F$ is non-constant, so that
given $p > 0$, we may choose a continuity point $q > p$ with
$F( q ) > 0$.
\end{proof}

The next result shows that the square or higher integer powers do not
preserve P\'olya frequency functions.

\begin{lemma}\label{Lintpowers}
There exists a P\'olya frequency function $M$ such that
\begin{enumerate}
\item $M$ is even, continuous and vanishes nowhere,
\item $M$ is increasing on $( -\infty, 0 ]$ and decreasing on
$[ 0, \infty )$, and
\item $M^n : x \mapsto M( x )^n$ is not a P\'olya frequency function
for any integer $n \geq 2$.
\end{enumerate}
\end{lemma}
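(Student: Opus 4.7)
The plan is to produce $M$ explicitly as a convolution (possibly infinite) of elementary even P\'olya frequency functions---Gaussians, symmetric exponentials $e^{-a|x|}$, and $\mathrm{sech}$-type kernels. The reason for this framework is that convolution preserves the PFF property and multiplies bilateral Laplace transforms, so $\mathcal{B}\{M\}(s) = \prod_j \mathcal{B}\{\Lambda_j\}(s)$, and the reciprocal factors as a product of Schoenberg-admissible entire functions; by Theorem~\ref{Tschoenberg} this makes $M$ a PFF by construction. Evenness and strict positivity of~$M$ are inherited from the~$\Lambda_j$, continuity (indeed smoothness) follows once any single factor is smooth, and monotonicity on each half-line follows because every PFF is log-concave (so, being even, $M$ is unimodal with maximum at the origin, by the form $M=e^{-\phi}$ with $\phi$ convex recorded in~\eqref{Econvex}).

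The main difficulty is showing that $M^n$ is not a PFF for any $n \geq 2$. The pointwise power does not respect the multiplicative structure of $\mathcal{B}$: one has $\widehat{M^n} = \widehat{M}^{*n}/(2\pi)^{n-1}$, the $(n-1)$-fold self-convolution of $\widehat{M}$, and this operation typically destroys the factorisation of $1/\widehat{M}$ into Schoenberg-admissible pieces. By Theorem~\ref{Tschoenberg}, failure of PFF for $M^n$ can be certified by either (a) exhibiting a non-real zero of $\widehat{M^n}$, equivalently a non-real pole of $1/\mathcal{B}\{M^n\}$, or (b) exhibiting a negative minor of the Toeplitz kernel $(x,y)\mapsto M^n(x-y)$. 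The building blocks must be chosen so that one of these two routes becomes tractable uniformly in~$n$.

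I expect this step to be the crux of the proof. My first attempt would be route~(a): pick $M$ so that $\widehat{M}$ has an explicit closed form (for instance a Gaussian times a $\mathrm{sech}$-type or rational factor), and then analyse the iterated self-convolution via Paley--Wiener, Hermite--Biehler, or residue methods to locate a non-real zero for every $n \geq 2$. If direct analysis of $\widehat{M^n}$ proves intractable, the fallback is route~(b): a finite computation of a small (say $3\times 3$ or $4\times 4$) determinant of $T_{M^n}$ at judiciously chosen test points, with asymptotic analysis in~$n$ used to propagate the negativity from a base case. In either route the insight I intend to exploit is the rigid trichotomy in Schoenberg's classification between Gaussian decay (positive $e^{as^2}$ factor) and exponential decay, combined with the fact that pointwise powers of a PFF generically destroy the precise product structure of $1/\mathcal{B}\{M\}$ demanded by Theorem~\ref{Tschoenberg}. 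The one-sided version promised in the introduction should then follow by the same construction applied to one-sided exponentials~$\lambda_d$ of~\eqref{Elambda} in place of the symmetric blocks.
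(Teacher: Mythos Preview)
Your framework is sound and in fact coincides with the paper's at the level of the candidate function: the paper takes
\[
M(x) = (\alpha+1)e^{-\alpha|x|} - \alpha e^{-(\alpha+1)|x|} \qquad (\alpha>0),
\]
whose bilateral Laplace transform is $\dfrac{2\alpha(\alpha+1)(2\alpha+1)}{(s^2-\alpha^2)(s^2-(\alpha+1)^2)}$, so $M$ is precisely (a scalar multiple of) the convolution of the two symmetric exponentials $e^{-\alpha|x|}$ and $e^{-(\alpha+1)|x|}$. Properties (1) and (2) follow exactly as you outline.

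The gap is in your plan for property~(3). You propose to analyse $\widehat{M^n}$ as an $(n-1)$-fold self-convolution of $\widehat{M}$ and to locate non-real zeros, or alternatively to find negative minors asymptotically in~$n$. Both routes are hard to execute uniformly in $n$, and you have not identified the simplification that makes the problem tractable. The paper's key observation is that because $M$ is a \emph{finite linear combination} of exponentials $e^{-a|x|}$, the pointwise power $M^n$ is as well, by the multinomial theorem. Hence $\mathcal{B}\{M^n\}(s) = p_n(s)/q_n(s)$ is an explicit rational function, with $q_n(s) = \prod_{k=0}^n (s^2 - (n\alpha+k)^2)$ of degree $2n+2$ having simple real roots, and $p_n$ of degree at most $2n$. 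One then checks directly that $p_n$ does not vanish at any root of $q_n$, and that $p_n$ is non-constant for $n\ge 2$ (by comparing $p_n(\pm n\alpha)$ with $p_n(\pm(n\alpha+n))$). Thus $q_n/p_n$ has genuine poles and cannot extend to an entire function, so $M^n$ fails Schoenberg's criterion.

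In short: your convolution picture is correct for building $M$, but to analyse $M^n$ you should abandon the self-convolution viewpoint on the transform side and instead exploit the fact that $M$ is a two-term exponential sum on each half-line. That algebraic structure survives pointwise powers and turns the question into elementary rational-function analysis.
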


\begin{proof}
We claim that the Laplace-type function
$M( x ) := 2 e^{-| x |} - e^{-2| x |}$ has the desired properties.
More generally, we provide a one-parameter family of functions, each
of which is as required. Given a real number $\alpha > 0$,
let
\begin{equation}
M_\alpha : \R \to ( 0, \infty ); \ x \mapsto %
( \alpha + 1 ) \exp( - \alpha | x | ) - %
\alpha \exp( -( \alpha + 1 ) | x | ).
\end{equation}
It is readily verified that $M = M_\alpha$ has properties~(1) and~(2).
Furthermore, a short calculation shows that
\[
\cB\{ M \}( s ) = %
\frac{2 \alpha ( \alpha + 1 ) ( 2 \alpha + 1 )}%
{( s^2 - \alpha^2 ) ( s^2 - ( \alpha + 1 )^2 )}
\]
on a neighborhood of $0$. Hence $1 / \cB\{ M \}( s )$ is a polynomial
function with non-zero real roots and positive at the origin, and
so $M$ is a P\'olya frequency function \cite[Theorem~1]{Schoenberg51}.

We now analyze the Laplace transform of the higher integer powers of
$M$. A second calculation reveals that
\[
\cB\{ M^n \}( s ) = 2 \sum_{k = 0}^n ( -1 )^{k + 1} \binom{n}{k} %
\frac{\alpha^k ( \alpha + 1 )^{n - k} ( n \alpha + k )}%
{s^2 - ( n \alpha + k )^2} = \frac{p_n( s )}{q_n( s )} %
\qquad ( n \in \N ),
\]
where the polynomial
$q_n( s ) := \prod_{k = 0}^n ( s^2 - ( n \alpha + k )^2 )$ has
simple roots and degree $2 n + 2$, and the polynomial $p_n( s )$ has
degree no more than $2 n$.

We claim that, if $n > 1$, then $p_n$ is non-constant and coprime to
$q_n$. This implies that $q_n / p_n$ is not an entire function, whence
$M^n$ is not a P\'olya frequency function. To see this claim, note
that
\[
p_n( \pm (n \alpha + k ) ) = 2 ( -1 )^{k + 1} \binom{n}{k} %
\alpha^k ( \alpha + 1 )^{n - k} ( n \alpha + k ) %
\prod_{j \neq k} ( ( n \alpha + k )^2 - (n \alpha + j)^2 ) \neq 0
\]
for $k = 0$, \ldots, $n$, and so
\[
\frac{p_n( n \alpha )}{p_n( n \alpha + n )} = %
\prod_{j = 1}^{n-1} \frac{( 2 n \alpha + j ) ( \alpha + 1 )}%
{( 2 n ( \alpha + 1 ) - j) \alpha}.
\]
When $n > 1$, each factor in the final product is greater than
$1$. This shows the claim, and concludes the proof.
\end{proof}

We now use this result to obtain the very small class of maps that
preserve all P\'olya frequency functions.

\begin{theorem}\label{TPolya}
Let $F : [ 0, \infty ) \to [ 0, \infty )$. If $F \circ \Lambda$ is a
P\'olya frequency function for every P\'olya frequency function
$\Lambda$, then $F( x ) = c x$ for some $c > 0$.
\end{theorem}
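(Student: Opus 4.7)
The plan is to show $F(x) = cx$ for some $c > 0$ in three stages: regularity, power-form reduction, and final linearity.

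First, I would establish that $F$ is non-negative, non-decreasing, and continuous on $(0, \infty)$, with $F(0) = 0$ and $F(x) > 0$ for every $x > 0$. Applying $F$ to the scaled Gaussian P\'olya frequency functions $\Lambda(x) = c e^{-\kappa x^2}$ for arbitrary $c, \kappa > 0$: each $F \circ \Lambda$ is by hypothesis a PFF, so in particular its Toeplitz kernel is $\TN_3$, and the implication $(3') \Rightarrow (4')$ of Proposition~\ref{PGauss} supplies the continuity and monotonicity. Applying $F$ to the one-sided exponential PFF $\lambda_0$ of equation~(\ref{Elambda}) (with $d = 0$) forces $F(0) = 0$, since $F \circ \lambda_0$ takes the constant value $F(0)$ on the half-line $(-\infty, 0)$ and must be integrable on $\R$ as a PFF. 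The PFF non-degeneracy condition rules out $F \equiv 0$, and combined with monotonicity this yields $F(x) > 0$ for every $x > 0$.

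Second, I would reduce $F$ to a power $F(x) = c x^\alpha$ with $c > 0$ and $\alpha > 0$. The aim is to extract the multiplicative functional equation $F(xy) F(1) = F(x) F(y)$ from $\TN_2$ inequalities for Toeplitz kernels associated to suitable PFFs. Neither a Gaussian nor a one-sided exponential alone realises the pair of $2 \times 2$ matrices $A(x, y)$ and $B(x, y)$ of equation~(\ref{E2matrices}) as Toeplitz submatrices for arbitrary $x, y > 0$, but suitable products or convolutions within the PFF class --- guided by Schoenberg's Hadamard factorisation (Theorem~\ref{Tschoenberg}) --- should supply the flexibility needed. Together with continuity, this multiplicative equation forces $F$ to be a power function, in direct parallel with the proof of Theorem~\ref{Ttp}.

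Third, I would invoke Lemma~\ref{Lintpowers} to establish $\alpha = 1$. Since compositions of PFF-preservers are PFF-preservers, every iterate $F^{\circ k}(x) = c_k x^{\alpha^k}$ preserves PFFs, and therefore $c_k M^{\alpha^k}$ must be a PFF for the PFF $M$ of the lemma and for every $k \in \N$. For $\alpha$ an integer at least $2$ this is contradicted by Lemma~\ref{Lintpowers} directly; for general real $\alpha \neq 1$, the closed set $S := \{\gamma > 0 : M^\gamma \text{ is a PFF}\}$ contains the semigroup $\{\alpha^k : k \in \N\}$ while avoiding $\{2, 3, 4, \ldots\}$, and an analytic-continuation extension of the rational-function obstruction in the proof of Lemma~\ref{Lintpowers} --- showing that $\cB\{M^\gamma\}^{-1}$ fails to be entire for any real $\gamma \neq 1$ --- forces $\alpha = 1$. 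Hence $F(x) = c x$ with $c = F(1) > 0$.

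The main obstacle will be the final stage: Lemma~\ref{Lintpowers} as stated addresses only integer powers of $M$, so eliminating non-integer exponents $\alpha$ requires either a strengthening of the lemma via explicit Laplace-transform analysis of $M^\gamma$ for real $\gamma$, or else invoking auxiliary PFFs outside the family $\{M_\alpha\}$ used in the lemma. A secondary difficulty is the multiplicative-equation step, where one must construct PFFs with Toeplitz kernels rich enough in $2 \times 2$ submatrix content to span the relevant $\TN_2$ cone.
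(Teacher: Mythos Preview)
Your first stage matches the paper's argument. The remaining two stages each contain a genuine gap, and in both cases the paper supplies a concrete device you are missing.

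\textbf{Stage 2 (power form).} Your plan to extract the multiplicative identity $F(xy)F(1)=F(x)F(y)$ by embedding the matrices $A(x,y)$ and $B(x,y)$ of~(\ref{E2matrices}) as Toeplitz submatrices of P\'olya frequency functions is not carried out, and it is not clear it can be: for a Toeplitz kernel the four arguments of any $2\times 2$ submatrix satisfy a linear constraint, so forcing the \emph{values} $x,xy,1,y$ at such points for arbitrary $x,y$ requires a PFF with essentially prescribed level sets, and neither products (the class is not closed under them) nor convolutions give you this freedom. The paper bypasses the functional equation entirely. Since $F(0)=0$ and $F(t_0)>0$ for some $t_0>0$, the composition $F\circ(t_0\lambda)$ is a \emph{discontinuous} P\'olya frequency function. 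Schoenberg's classification (\cite[Corollary~2]{Schoenberg51}, recalled just after~(\ref{Econvex})) says the only discontinuous PFFs are affine transforms of $\lambda$, so $F(t_0 e^{-x})=c_0 e^{-b_0 x}$ for $x>0$, whence $F(t)=c_0(t/t_0)^{b_0}$ on $(0,t_0)$. Repeating with $t_1>t_0$ extends this to all of $(0,\infty)$.

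\textbf{Stage 3 (integrality of the exponent).} Lemma~\ref{Lintpowers} only kills integer powers $\geq 2$; your proposed ``analytic-continuation extension'' of its proof to real $\gamma$ is speculative (the Laplace transform of $M^\gamma$ for non-integer $\gamma$ is no longer rational, so the pole-counting argument breaks down), and the semigroup argument you sketch does not exclude, say, $\alpha$ irrational with all $\alpha^k$ avoiding integers. The paper instead applies $F$ to the PFF $\phi(x)=x\lambda(x)$: then $F\circ\phi$ has bilateral Laplace transform $\Gamma(b_0+1)/(s+b_0)^{b_0+1}$, and by Theorem~\ref{Tschoenberg} its reciprocal $(s+b_0)^{b_0+1}$ must extend to an entire function, which forces $b_0\in\N$. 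Only then does Lemma~\ref{Lintpowers} finish the job. The auxiliary PFF you suspected you might need is precisely this $\phi$.
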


The converse is, of course, immediate.

\begin{proof}
As $c G_1$ is a P\'{o}lya frequency function for all $c > 0$,
Proposition~\ref{PGauss} implies that $F$ is non-decreasing and
continuous on $( 0, \infty )$. Furthermore, since $F \circ \lambda$ is
a P\'olya frequency function, the integrability condition gives that
$F( 0 ) = 0$.

Since $F \circ \lambda$ is non-zero at least at two points, there
exists $t_0 > 0$ with $F( t_0 ) > 0$. Thus $F \circ ( t_0 \lambda )$
has a point of discontinuity, as it has distinct left and right limits
at the origin, and therefore
\[
F( t_0 \lambda ( x ) ) = c_0 e^{-b_0 x} \qquad \text{for all } x > 0,
\]
where $c_0$ and $b_0$ are positive constants. Therefore
\[
F( t ) = c_0 t^{b_0} \qquad \text{for all } t \in [ 0, t_0 );
\]
if $t_1 > t_0$, then, as $F$ is non-decreasing, repeating this working
shows the existence of positive constants $c_1$ and $b_1$ such that
\[
F( t ) = c_1 t^{b_1} \qquad \text{for all } t \in [ 0, t_1 ).
\]
It is readily seen that $b_1 = b_0$ and $c_1 = c_0$, and therefore
$F( t ) = c_0 t^{b_0}$ for all $t \geq 0$.

Next, since $\phi( x ) = x \lambda( x )$ is also a P\'olya frequency
function \cite[pp.~343]{Schoenberg51},  it follows
that $x^{b_0} \lambda( b_0 x )$ is a P\'olya frequency
function. The bilateral Laplace transform of this function is
\[
\int_0^\infty e^{-x s} x^{b_0} e^{-b_0 x} \intd x = %
\int_0^\infty e^{-x ( s + b_0 )} x^{b_0} \intd x = %
\frac{\Gamma( b_0 + 1 )}{( s + b_0 )^{b_0+1}} \qquad %
( s > -b_0 ).
\]
The reciprocal $( s + b_0 )^{b_0 + 1}$ admits an analytic continuation
to an entire function, as required by \cite[Theorem~1]{Schoenberg51},
only for integer values of~$b_0$. Lemma~\ref{Lintpowers} now gives
the result.
\end{proof}

To conclude, we provide a result that will be useful presently, as
well as being notable in its own right: the classification of
preservers of $\TN$ Toeplitz kernels. The following definition
is a slight variation on \cite[Definition~1]{Schoenberg51} that is
more convenient for our purposes.

\begin{definition}
A function $f : \R \to \R$ is \emph{totally non-negative} (or $\TN$)
if it is Lebesgue measurable and the Toeplitz kernel
\[
T_f : \R \times \R \to \R; \ ( x, y ) \mapsto f( x - y )
\]
is $\TN$.
\end{definition}

Thus a P\'olya frequency function is a $\TN$ function which is
integrable and non-zero at two or more points.

\begin{theorem}\label{Ttoeplitz}
Let $F : [ 0, \infty ) \to [ 0, \infty )$ be non-zero. The following
are equivalent.
\begin{enumerate}
\item Given any $\TN$ function $f$ that is non-zero at two or more
points, the composition $F \circ f$ is $\TN$.

\item Given any P\'olya frequency function $\Lambda$, the composition
$F \circ \Lambda$ is $\TN$.

\item The function $F$ is of the form $F( x ) = c$, $F( x ) = c x$,
$F( x ) = c \bJ_{x > 0}$ or $F( x ) = c \bJ_{x = 0}$, for some
$c > 0$.
\end{enumerate}
Similarly, the function $F$ preserves $\TN$ functions if and only if
$F( x ) = c$, $F( x ) = c x$, or $F( x ) = c \bJ_{x > 0}$, for some
$c > 0$.
\end{theorem}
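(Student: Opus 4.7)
The plan is to establish the equivalence $(1) \Leftrightarrow (2) \Leftrightarrow (3)$ and then derive the final ``similarly'' statement. The implication $(1) \Rightarrow (2)$ is trivial, since every P\'olya frequency function is a $\TN$ function non-zero at two or more points. For $(3) \Rightarrow (1)$, each of the four forms is checked directly: the constant and linear cases are immediate, and for $F = c \bJ_{x > 0}$ and $F = c \bJ_{x = 0}$, the composition $F \circ f$ is a scaled indicator of the support (resp.~zero set) of~$f$. Schoenberg's convexity characterization \cite[Lemma~1]{Schoenberg51} identifies the support of such an $f$ with an interval, and combining the monotone-or-integrable alternative \cite[Lemma~2]{Schoenberg51} with Theorem~\ref{Tschoenberg} (no non-trivial compactly supported P\'olya frequency function exists, by Hadamard factorization of its entire Laplace transform) excludes bounded support. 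Hence these compositions are scaled indicators of half-lines or of all (respectively none) of $\R$, each of which has a $\TN$ Toeplitz kernel.

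For the central implication $(2) \Rightarrow (3)$, the first step is to apply $F$ to the Gaussian PFFs $c G_1$; the $\TN_3$ version of Proposition~\ref{PGauss} then gives that $F$ is non-negative, non-decreasing, and continuous on $(0, \infty)$. Next, applying $F$ to the scaled ``left-cleared'' exponentials $t \lambda_0$ (where $\lambda_0(0) = 0$) and evaluating the $2 \times 2$ Toeplitz minor at $x = (0, s)$, $y = (0, s)$ yields
\[
\det \begin{pmatrix} F(0) & F(0) \\ F(t e^{-s}) & F(0) \end{pmatrix} = F(0) \bigl( F(0) - F(t e^{-s}) \bigr) \geq 0 \qquad \text{for all } s, t > 0,
\]
forcing the dichotomy $F(0) = 0$, or else $F(0) \geq \sup_{u > 0} F(u)$.

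In the first branch $F(0) = 0$, inspection of $F \circ G_1$ rules out the possibility that $F$ vanishes on some initial interval $(0, \eps)$, since the resulting $F \circ G_1$ would have bounded support, contradicting the non-existence of compactly supported non-trivial PFFs. Thus $F > 0$ on all of $(0, \infty)$. When $F$ has a jump at the origin, i.e., $F(0^+) > 0$, applying $F$ to $t \lambda_1$ produces a $\TN$ function that jumps upward at $0$ and asymptotes to $F(0^+) > 0$, failing Schoenberg's monotone-or-integrable alternative unless $F$ is constant on $(0, \infty)$; this yields $F(x) = c \bJ_{x > 0}$. When instead $F(0^+) = 0$, each $F \circ (t\lambda_1)$ is a PFF with a discontinuity at the origin, so by Schoenberg's classification of discontinuous PFFs it is an affine transform of $\lambda$, forcing $F(u) = c_t u^{\alpha_t}$ on $(0, t]$; consistency across $t$ yields a single identity $F(u) = c u^\alpha$ on $(0, \infty)$. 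Applying $F$ to the PFF $x\lambda(x)$ and invoking Theorem~\ref{Tschoenberg} forces $\alpha$ to be a positive integer, and Lemma~\ref{Lintpowers} excludes $\alpha \geq 2$, giving $F(x) = cx$.

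In the second branch $F(0) \geq \sup_{u > 0} F(u)$, applying $F$ to $t\lambda_1$ and computing the Toeplitz $2 \times 2$ minor at $x = (0, s)$, $y = (0, s)$ yields $F(t)^2 \geq F(0) F(te^{-s})$; the limit $s \to 0^+$ combined with continuity on $(0, \infty)$ and the branch hypothesis $F(0) \geq F(t)$ forces $F(t) = F(0)$ whenever $F(t) > 0$. Continuity then splits into two cases: either $F \equiv F(0)$ on $(0, \infty)$ (giving $F(x) = c$ constant) or $F \equiv 0$ on $(0, \infty)$ (giving $F(x) = c \bJ_{x = 0}$). For the final ``similarly'' assertion, each of the three listed forms directly preserves all $\TN$ functions, and conversely any such preserver is a PFF-preserver, hence one of the four forms in (3); to eliminate $F = c \bJ_{x = 0}$, test against the single-point indicator $f = \chi_{\{0\}}$, which is $\TN$ (the Toeplitz kernel $\bJ_{x = y}$ admits only order-preserving permutation patterns among its $1$-entries, so all minors lie in $\{0, 1\}$), but $F \circ f = c \bJ_{x \neq 0}$ yields the $2 \times 2$ Toeplitz minor $\begin{pmatrix} 0 & c \\ c & 0 \end{pmatrix}$ with determinant $-c^2 < 0$. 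The main obstacle is the first branch of $(2) \Rightarrow (3)$: extracting the power-function identity for $F$ relies on Schoenberg's discontinuous-PFF classification, and confirming that $F \circ (t\lambda_1)$ is actually a PFF (rather than merely $\TN$) requires a careful integrability-versus-monotonicity case split.
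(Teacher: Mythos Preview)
Your proposal is correct and follows essentially the same route as the paper: both use Proposition~\ref{PGauss} on the Gaussians $cG_1$ to obtain monotonicity and continuity on $(0,\infty)$, then exploit the discontinuous P\'olya frequency functions $t\lambda_d$ together with Schoenberg's classification to extract a power law, and finally invoke $\phi(x)=x\lambda(x)$ (via Theorem~\ref{Tschoenberg}) and Lemma~\ref{Lintpowers} to pin down $\alpha=1$. Your upfront dichotomy $F(0)=0$ versus $F(0)\geq\sup_{u>0}F(u)$, obtained from a single $2\times2$ minor of $F\circ(t\lambda_0)$, is a slightly cleaner reorganization of the paper's case split (which first separates ``$F$ constant on $(0,\infty)$'' from ``$F$ non-constant'' and only then compares $F(0)$ with $F(t)$), but the substance is the same.
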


\begin{proof}
We recall first a result of Schoenberg \cite[Lemma~4]{Schoenberg51},
that if $f$ is a $\TN$ function which is non-zero at two or more
points and not of the form
$f( x ) = \exp( a x + b )$, where $a$, $b \in \R$, then there exists
$\gamma \in \R$ such that $x \mapsto e^{\gamma x} f( x ) $ is a
P\'olya frequency function. (See
also~\cite[Chapter~7, Proposition~1.3]{Karlin}.)

Clearly $(1) \implies (2)$, and that $(3) \implies (1)$ is immediate
for constant or linear maps of the form under consideration, so
suppose $F( x ) = c \bJ_{x = 0}$ or $F ( x ) = c \bJ_{x > 0}$ for some
$c > 0$. By (\ref{Econvex}), the zero set of a $\TN$ function that is
non-zero at two or more points can have one of the following forms:
\[
\emptyset, \qquad ( -\infty, a \rangle, \qquad %
( \infty, a \rangle \cup \langle b, \infty ), \quad %
\text{or} \quad \langle b, \infty ) \qquad ( a, b \in \R, \ a < b ),
\]
where the angle bracket indicates the intervals may be either open or
closed. However, the third possibility is ruled out by the fact that
P\'olya frequency functions cannot have compact support
\cite[Corollary~1]{Schoenberg51}.

Thus $F \circ f$ is a non-negative constant or, up to positive scaling
and translation of the argument, one of the following functions:
\[
\bJ_{x \geq 0}, \qquad \bJ_{x > 0}, \qquad \bJ_{x \leq 0}, \quad %
\text{or} \quad \bJ_{x < 0}.
\]
Since $\lambda_1$ and $\lambda_0$ are $\TN$, so are the first
two of these; the remaining two follow from this, because a kernel
$K$ on $\R \times \R$ is $\TN$ if and only if the ``order-reversed''
kernel $K' : ( x , y ) \mapsto K( -x, -y )$ is $\TN$. Hence
$(3) \implies (1)$.

Next, suppose $F$ satisfies~$(2)$; we wish to show that $(3)$
holds. It follows from Proposition~\ref{PGauss} that $F$ is
non-negative, non-decreasing, and continuous on $( 0, \infty )$. If
$F$ has the form $c \bJ_{x = 0}$ or $c \bJ_{x > 0}$ for some $c > 0$,
then we are done, so we assume otherwise.

If $F$ is constant on $( 0, \infty )$, then the only remaining
possibility is that it is non-zero there and also non-zero at $0$.
Thus, applying $F$ to the P\'olya frequency function $\lambda_d$, where
$d \in [ 0, 1 ]$ is fixed for the remainder of the proof, we see that
\[
0 \leq %
\det( F \circ T_{\lambda_d} )[ ( -1, 1 ); ( 0, 2 ) ] = %
\begin{vmatrix}
F( 0 ) & F( 0 ) \\
F( 1 ) & F( 0 )
\end{vmatrix} = F( 0 ) ( F( 0 ) - F( 1 ) )
\]
whereas
\[
0 \leq %
\det( F \circ T_{\lambda_d} )[ ( 0, 2 ); ( -1, 1 ) ] = %
\begin{vmatrix}
F( 1 ) & F( 0 ) \\
F( 1 ) & F( 1 )
\end{vmatrix} = F( 1 ) ( F( 1 ) - F( 0 ) ).
\]
Hence $F$ is a positive constant and $(3)$ holds.

We may now suppose $F$ is not constant on $( 0, \infty )$. It follows
by continuity that $F$ is positive and not constant on an open
interval $( r, s )$, where $s > r > 0$. Now there are two cases to
consider.

First, suppose $F( 0 ) < F( t_0 )$ for some $t_0 > 0$ and fix
$t > \max\{ s, t_0 \}$. By assumption, there exists $\gamma_t \in \R$
such that
\[
\Lambda_t( x ) := e^{\gamma_t x} F( t \lambda_d( x ) )
\]
is a P\'olya frequency function or of the form $e^{a x + b}$. As
$F( t ) \geq F( t_0 ) > F( 0 )$, so $\Lambda_t$ is discontinuous
at~$0$, and therefore it cannot have the latter form. Moreover,
$F( t \lambda_d( x ) )$ is positive on an open sub-interval of
$( 0, \infty )$. It follows that
$\Lambda_t( x ) = p_t \lambda_{d_t}( q_t x )$ for suitable constants
$p_t$, $q_t > 0$ and $d_t \in [ 0, 1 ]$, so
\[
F( 0 ) = e^{\gamma_t} p_t \lambda_{d_t}( -q_t ) = 0 %
\qquad \text{and} \qquad %
F( t e^{-x} ) = p_t e^{-( \gamma_t + q_t ) x} %
\qquad \text{for all } x > 0.
\]
Since $t$ can be taken to be arbitrarily large, a simple argument
shows that $F( y ) = c y^\alpha$ for all $y > 0$, where $c > 0$ and
$\alpha > 0$ because $F$ is non-constant and non-decreasing on
$( 0, \infty )$. Applying $F$ to $\phi( x ) = x \lambda( x )$ gives a
P\'olya frequency function, since $x^\alpha e^{-\alpha x}$ is positive
and integrable on $( 0, \infty )$.  The proof of Theorem~\ref{TPolya}
now shows that $\alpha \in \N$. Furthermore, if $M$ is as in
Lemma~\ref{Lintpowers}, then $F( M ) = c^\alpha M^\alpha$ is
integrable and positive, so a P\'olya frequency function. Thus
$\alpha = 1$, as required.

The second and final case is when $F( 0 ) \geq F( t )$ for all
$t > 0$.  Choose $t \in ( r, s )$ such that $F( 0 ) > F( t ) > 0$ and
$F$ is positive and not constant on $( r, t )$. As before, note that
\[
\Lambda( x ) := e^{\gamma x} F( t \lambda_d( x ) )
\]
is a P\'olya frequency function for some choice of $\gamma$; it cannot
be of the form $e^{a x + b}$, since $\Lambda$ is discontinuous
at~$0$. This discontinuity, and the positivity of $\Lambda$ on some
sub-interval of $( 0, \infty )$, means that
$\Lambda( x ) = p \lambda_d( q x )$ for some $d \in [ 0, 1 ]$ and
constants $p$, $q > 0$. Then
\[
F( 0 ) = F( t \lambda_d( -q ) ) = e^\gamma \Lambda( -1 ) = %
e^\gamma p \lambda_d( -q ) = 0 < F( t ) < F( 0 ),
\]
a contradiction.

This shows the first set of equivalences. We now turn to the final
assertion, beginning with the ``only if'' part. As $(1) \implies (3)$,
we see that $F$ is from one of four families, and it remains to rule
out the function $F( x ) = c \bJ_{x = 0}$, where $c > 0$. This follows
by applying $F$ to itself, as $F$ is readily seen to be $\TN$, but
$F \circ F = c - F$, which is not even $\TN_2$.

Conversely, to show the ``if'' part, since $(3) \implies (1)$, it
suffices to verify that $F \circ f$ is $\TN$ when
$F( x ) = \bJ_{x > 0}$ and $f( x ) = f_a( x ) = a \bJ_{x = b}$ for
any $a \geq 0$ and $b \in \R$. In this case, either
$F \circ f_a = f_1$, when $a > 0$, or $F \circ f_0 = f_0$. This
completes the proof.
\end{proof}

\begin{remark}\label{Rtoeplitz}
The preceding proof shows that a non-zero function
$F : [ 0, \infty ) \to [ 0, \infty )$ belongs to the classes of
functions in $(3)$ if it preserves $\TN$ for the following restricted
set of test functions:
(i)~the Gaussian functions $c G_1( x )$ for all $c > 0$,
(ii)~the P\'olya frequency functions $t \lambda_d( x )$ for
one $d \in [ 0, 1 ]$ and all $t > 0$,
(iii)~the P\'olya frequency function $\phi( x ) = x \lambda( x )$, and
(iv)~the P\'olya frequency function $M$ from Lemma~\ref{Lintpowers}.
If $F$ also preserves $\TN$ for a positive multiple of the
function $\bJ_{x = 0}$, then $F$ cannot have this form itself.
\end{remark}

\subsection{Totally positive P\'olya frequency functions}

The rigidity of the above class of endomorphisms carries over to
other, related problems. We begin by showing the same rigidity for the
class of $\TP$ P\'olya frequency functions, where we say that a
P\'olya frequency function $\Lambda$ is $\TP$ whenever the associated
Toeplitz kernel $T_\Lambda$ has that property. The precise description
of conditions on the data $x_1 < x_2 < \cdots < x_n$ and
$y_1 < y_2 < \cdots < y_n$ to that ensure that
\[
\det( \Lambda( x_i - y_j ) )_{i, j = 1}^n > 0
\]
are contained in Schoenberg's third article
\cite{SchoenbergWhitney53}. As much as convolution with the Gaussian
kernel was essential in the previous sections, it is also very useful
in this new framework.

For any $\gamma > 0$, let
\[
g_\gamma : \R \to \R; \ x \mapsto %
\frac{1}{2 \sqrt{\pi \gamma}} \exp( -x^2 / 4 \gamma )
\]
be the normalized Gaussian function, so that
$\int_\R g_\gamma (t) \intd t = 1$, and let $\Lambda$ be an arbitrary
P\'olya frequency function.  As the class of P\'olya frequency
functions is closed under convolution \cite[Lemma~5]{Schoenberg51},
the convolution $g_\gamma \ast \Lambda$ is also a P\'olya frequency
function, with bilateral Laplace transform equal to the product of the
transforms of $g_\gamma$ and $\Lambda$. In view of
\cite[Theorem~1]{SchoenbergWhitney53}, the kernel
\[
\R \times \R \to \R; \ ( x, y ) \mapsto %
( g_\gamma \ast \Lambda )( x - y )
\]
is $\TP$.

A P\'olya frequency function $\Lambda$ is bounded and has left
and right limits everywhere, so, for any $x_0 \in \R$,
\[
\lim_{\gamma \to 0^+} ( g_\gamma \ast \Lambda )( x_0 ) = %
\int_\R g_1( t ) \Lambda( x_0 - t \sqrt{\gamma} ) \intd t = %
\frac{1}{2} \left( \lim_{y \to x_0^+} \Lambda( y ) + %
\lim_{y \to x_0^-} \Lambda( y  ) \right).
\]
We say that a P\'olya frequency function is \emph{regular} if it is
equal to the arithmetic mean of its left and right limits at every
point.

Putting together these observations, we obtain the following result.

\begin{proposition}\label{Ptoeplitz}
Let $\Lambda$ be a regular P\'olya frequency function. There exists a
sequence of $\TP$ P\'olya frequency functions
$( \Lambda_n )_{n \geq  1}$ such that
$\lim_{n \to \infty} \Lambda_n( x ) = \Lambda( x )$ for every
$x \in \R$.
\end{proposition}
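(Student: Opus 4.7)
The plan is to take the approximating sequence $\Lambda_n := g_{\gamma_n} \ast \Lambda$ for any positive sequence $\gamma_n \to 0^+$, for instance $\gamma_n = 1/n$. Essentially all the ingredients are assembled in the paragraphs immediately preceding the proposition; the proof amounts to reading them off in the right order.

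First I would verify that each $\Lambda_n$ is a P\'olya frequency function. Since $g_{\gamma_n}$ is itself a P\'olya frequency function (it is a rescaled Gaussian), and since the class of P\'olya frequency functions is closed under convolution by \cite[Lemma~5]{Schoenberg51}, the convolution $\Lambda_n = g_{\gamma_n} \ast \Lambda$ is again a P\'olya frequency function, as already noted in the text.

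Next I would upgrade total non-negativity to total positivity for $\Lambda_n$. The bilateral Laplace transform of $\Lambda_n$ is the product $\cB\{g_{\gamma_n}\} \cdot \cB\{\Lambda\}$, and since $\cB\{g_{\gamma_n}\}(s) = \exp(\gamma_n s^2)$ has no zeros, the reciprocal $1 / \cB\{\Lambda_n\}$ still satisfies the Schoenberg classification. The essential point, however, is the one already cited: by \cite[Theorem~1]{SchoenbergWhitney53}, the convolution of \emph{any} P\'olya frequency function with a Gaussian yields a $\TP$ Toeplitz kernel, so $T_{\Lambda_n}$ is $\TP$ for every $n$.

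Finally, pointwise convergence is precisely the computation displayed just before the proposition: for any $x_0 \in \R$, boundedness of $\Lambda$ together with the existence of left and right limits everywhere allows dominated convergence to give
\[
\lim_{n \to \infty} \Lambda_n(x_0) = \lim_{\gamma \to 0^+} \int_\R g_1(t)\, \Lambda(x_0 - t\sqrt{\gamma})\, \rd t = \tfrac{1}{2}\Bigl(\lim_{y \to x_0^+} \Lambda(y) + \lim_{y \to x_0^-} \Lambda(y)\Bigr),
\]
and by regularity of $\Lambda$ this average coincides with $\Lambda(x_0)$. There is essentially no obstacle here: the proposition is a direct bookkeeping consequence of the three facts (convolution closure, Schoenberg--Whitney $\TP$ promotion via Gaussian smoothing, and the standard Gaussian mollifier limit at jump points) that are already in place. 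The only minor care needed is to confirm the dominated-convergence step using the uniform bound $\|\Lambda\|_\infty < \infty$, which holds for every P\'olya frequency function by \cite[Lemma~2]{Schoenberg51}.
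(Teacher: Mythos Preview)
Your proposal is correct and is essentially identical to the paper's own argument: the paper assembles exactly the same three facts (convolution closure of P\'olya frequency functions, the Schoenberg--Whitney $\TP$ promotion via Gaussian convolution, and the mollifier limit at points of regularity) in the paragraphs immediately preceding the proposition and then simply states that the result follows. Your choice $\Lambda_n = g_{1/n} \ast \Lambda$ is precisely the intended one.
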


As an application, we complement Theorems~\ref{TPolya}
and~\ref{Ttoeplitz} above, by considering $\TP$ kernels. We say
that a kernel $K$ on $\R \times \R$ is \emph{measurable} if $K$ is a
Lebesgue-measurable function.

\begin{theorem}\label{TtoeplitzTP}
Given a function $F : ( 0, \infty ) \to ( 0, \infty )$, the following
are equivalent.
\begin{enumerate}
\item $F \circ K$ is a $\TP$ Toeplitz kernel on $\R \times \R$
whenever $K$ is.

\item $F \circ K$ is a $\TP$ measurable Toeplitz kernel on
$\R \times \R$ whenever $K$ is.

\item $F \circ \Lambda$ is a $\TP$ P\'olya frequency function whenever
$\Lambda$ is.

\item $F \circ \Lambda$ is a $\TP$ P\'olya frequency function
whenever $\Lambda$ is a regular $\TP$ P\'olya frequency function.

\item $F( x ) = c x$, where $c > 0$.
\end{enumerate}
\end{theorem}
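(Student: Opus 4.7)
The implication $(5) \Rightarrow (i)$ for each $i \in \{1, 2, 3, 4\}$ is immediate, since the dilation $F(x) = cx$ preserves total positivity, the Toeplitz structure, measurability, integrability, and regularity simultaneously. The reduction $(3) \Rightarrow (4)$ is a trivial containment of test classes, and $(1) \Rightarrow (2)$ follows once one knows that $F$ is Borel measurable, itself a consequence of the continuity of $F$ derived below. To close the cycle of equivalences, the main task is to prove $(4) \Rightarrow (5)$; the remaining implications $(1), (2) \Rightarrow (5)$ will then follow by a parallel argument, since each of those hypotheses contains the Gaussian test functions required for the first step and accommodates the same approximation scheme as what follows.

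Assume $(4)$. The first step is to apply Proposition~\ref{PGauss}: since each rescaled Gaussian $cG_1$ is a regular $\TP$ P\'{o}lya frequency function, the hypothesis that $F \circ cG_1$ is $\TP$ on $\R \times \R$ forces $F$ to be positive, increasing, and continuous on $(0, \infty)$. With continuity in hand, the plan is to promote $(4)$ to the hypothesis of Theorem~\ref{TPolya}, namely that $F$ preserves \emph{every} P\'{o}lya frequency function; that theorem will then immediately yield $F(x) = cx$ for some $c > 0$. To carry out the promotion, fix a P\'{o}lya frequency function $\Lambda$ and set $\Lambda_\gamma := g_\gamma \ast \Lambda$ for $\gamma > 0$. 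Each $\Lambda_\gamma$ is a regular $\TP$ P\'{o}lya frequency function by \cite{SchoenbergWhitney53}, so $F \circ \Lambda_\gamma$ is a $\TP$ P\'{o}lya frequency function by $(4)$. As $\gamma \to 0^+$, $\Lambda_\gamma \to \Lambda$ at every continuity point of $\Lambda$, which is cofinite by the convexity of $-\log \Lambda$; continuity of $F$ then yields $F \circ \Lambda_\gamma \to F \circ \Lambda$ almost everywhere. Consequently, the Toeplitz kernel $T_{F \circ \Lambda}$ is $\TN$ as a pointwise a.e.~limit of $\TN$ Toeplitz kernels, and $F \circ \Lambda$ is measurable, positive, and nonzero on an interval. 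Provided its Lebesgue integrability is verified, $F \circ \Lambda$ is a P\'{o}lya frequency function, and Theorem~\ref{TPolya} concludes the argument.

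The principal obstacle is establishing this integrability. The approach is to extract a quantitative vanishing of $F$ at the origin from the hypothesis itself: for every $c, \gamma > 0$, the function $x \mapsto F(c g_\gamma(x))$ is a P\'{o}lya frequency function, hence integrable on $\R$. The substitution $u = c g_\gamma(x)$ converts this into an integrability condition on $F(u) / \bigl( u \sqrt{-\log(u / c g_\gamma(0))} \bigr)$ near $u = 0$, which forces $F(0^+) = 0$ with a definite polynomial decay rate. Combined with the exponential decay of $\Lambda$ at infinity, inherited from Schoenberg's structural theorem for P\'{o}lya frequency functions, this yields $F \circ \Lambda \in L^1(\R)$, completing the reduction to Theorem~\ref{TPolya}.
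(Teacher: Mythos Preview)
Your overall architecture is reasonable, and the first step via Proposition~\ref{PGauss} matches the paper. The gap is in the integrability argument, which is the load-bearing part of your reduction to Theorem~\ref{TPolya}.

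The claim that integrability of $x \mapsto F(c g_\gamma(x))$ forces ``a definite polynomial decay rate'' of $F$ at $0$ is false. The substitution you describe yields only
\[
\int_0^\delta \frac{F(u)}{u\sqrt{\log(1/u)}}\,du < \infty,
\]
and this is satisfied, for instance, by any increasing continuous $F$ with $F(u) \sim 1/\log(1/u)$ near $0$ (since $\int_0^\delta du/\bigl(u(\log(1/u))^{3/2}\bigr)<\infty$). For such an $F$ and any P\'olya frequency function $\Lambda$ decaying like $e^{-a|x|}$, one has $F(\Lambda(x)) \sim 1/(a|x|)$ at infinity, which is not integrable. So the Gaussian test alone cannot deliver what you need, and the reduction to Theorem~\ref{TPolya} does not close. (There is also a secondary issue: $F$ is only defined on $(0,\infty)$, so $F\circ\Lambda$ is not even defined where $\Lambda$ vanishes; and a.e.\ pointwise limits do not automatically propagate $\TN$ to all minors.)

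The paper avoids integrability entirely. After extending $F$ continuously to $\widetilde{F}:[0,\infty)\to[0,\infty)$, it approximates each of the \emph{specific} regular test functions of Remark~\ref{Rtoeplitz} (namely $cG_1$, $t\lambda_{1/2}$, $x\lambda(x)$, and $M$) by $\TP$ P\'olya frequency functions via Proposition~\ref{Ptoeplitz}, obtains that $\widetilde{F}\circ\Lambda$ is merely $\TN$ for each of them (no integrability needed), and then invokes Remark~\ref{Rtoeplitz} to conclude $\widetilde{F}$ is constant or linear. Since $F$ is increasing, the constant case is excluded. This is shorter and sidesteps precisely the obstacle you ran into.
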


\begin{proof}
It is clear that $(1)$, $(2)$ and $(3)$ both imply $(4)$ and are
implied by $(5)$. Thus it remains to show $(4) \implies (5)$.

Let $F$ satisfy~$(4)$. Then $F$ is positive, increasing, and
continuous on $( 0, \infty )$, by Proposition~\ref{PGauss}, so $F$
extends to a continuous, increasing function
$\widetilde{F} : [ 0, \infty ) \to [ 0, \infty )$.

Now suppose $\Lambda$ is one of the regular P\'olya frequency
functions listed in Remark~\ref{Rtoeplitz}, and note that this
includes $\lambda_{1 / 2}$. By Proposition~\ref{Ptoeplitz}, there
exists a sequence $( \Lambda_n )_{n \geq 1}$ of $\TP$ P\'olya
frequency functions such that $\Lambda_n \to \Lambda$ pointwise. Hence
$F \circ \Lambda_n$ gives rise to a $\TP$ kernel for each $n \geq 1$,
and so $\widetilde{F} \circ \Lambda$ is $\TN$. By
Remark~\ref{Rtoeplitz}, the restriction $F$ of $\widetilde{F}$ is
constant or linear, and the former is impossible. This completes the
proof.
\end{proof}

\section{P\'olya frequency sequences}\label{Stoeplitz2}

We continue our study of total non-negativity preservers with an
exploration of the class of P\'olya frequency sequences. A
\emph{P\'olya frequency sequence} is a bi-infinite sequence of real
numbers $\ba = ( a_n )_{n \in \Z}$ such that the Toeplitz kernel
\[
T_\ba : \Z \times \Z \to \R; \ ( i, j ) \mapsto a_{i - j}
\]
is $\TN$. Recall in this context the groundbreaking body of work by
Aissen, Edrei, Schoenberg, and Whitney (see \cite{AESW} and the
monograph by Karlin \cite{Karlin}). These sequences are characterized
in terms of negative real-rootedness of the associated generating
polynomial when most terms $a_n$ are zero, or a product expansion when
all negatively indexed terms $a_n$ vanish. More recently, P\'olya
frequency sequences have found numerous applications in combinatorics,
owing to their connections to log concavity. See the works of
Brenti~\cite{Br1,Br2} and subsequent papers.

P\'olya frequency sequences turn out to be as rigid as P\'olya
frequency functions are, as far as their endomorphisms are concerned,
with their preservers being dilations or constants.
In order to demonstrate this fact, we first introduce Toeplitz kernels
on a more general class of domains than $\R \times \R$.

\begin{definition}\label{Dadmissible}
We say that a pair of subsets $X \subseteq \R$ and $Y \subseteq \R$ is
\emph{admissible} if, for each integer $n \geq 2$, the sets contain
$n$-step arithmetic progressions $\bx \in \inc{X}{n}$ and
$\by \in \inc{Y}{n}$ that are \emph{equi-spaced}, so that their terms
have the same increments:
\[
x_{j + 1} - x_j = y_{j + 1} - y_j = x_2 - x_1 %
\qquad \text{for all } j \in [n-1].
\]
We let
\[
X - Y := \{ x - y : x \in X, \ y \in Y \}
\]
and say that a kernel $K : X \times Y \to \R$ is \textit{Toeplitz} if
there exists a function $f : X - Y \to \R$ such that
$K( x, y ) = f( x - y )$ for all $x \in X$ and $y \in Y$.
\end{definition}

The following theorem is a variant on Theorem~\ref{Ttoeplitz} for this
new setting.

\begin{theorem}\label{Tpfseq}
Suppose $X$ and $Y$ are a pair of admissible sets. If
$F : [ 0, \infty ) \to [ 0, \infty )$ is non-zero and preserves
the $\TN$ Toeplitz kernels on $X \times Y$, then either $F( x ) = c$,
$F( x ) = c x$, or $F( x ) = c \bJ_{x > 0}$, for some $c > 0$.
\end{theorem}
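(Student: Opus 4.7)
The strategy is to reduce to Theorem~\ref{Ttoeplitz} by a restriction argument. For any $\TN$ function $g : \R \to [ 0, \infty )$, the Toeplitz kernel $T_g|_{X \times Y}$ is $\TN$, so by hypothesis $F \circ T_g|_{X \times Y}$ is a $\TN$ Toeplitz kernel on $X \times Y$. Admissibility guarantees, for each $n \geq 2$, the existence of equi-spaced arithmetic progressions of length~$n$ in $X$ and~$Y$ with common increment $h > 0$; the corresponding submatrix is a $\TN$ Toeplitz matrix of the form $( F( g( c + ( i - j ) h ) ) )_{i, j = 1}^n$, where $c$ is the relative offset of the two progressions. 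Crucially, since $u \mapsto g( \alpha u + \beta )$ is again $\TN$ for every $\alpha > 0$ and $\beta \in \R$, this construction yields $\TN$ Toeplitz submatrices of the desired form for every common step $h > 0$ and every offset $c \in \R$. This provides enough flexibility to reproduce the steps of the proof of Theorem~\ref{Ttoeplitz} in the present setting.

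Concretely, I would first apply the argument of Proposition~\ref{PGauss}(3')$\implies$(4') to the Gaussians $g = c G_1$ to deduce that $F$ is non-negative, non-decreasing, and continuous on $( 0, \infty )$. I would then follow the proof of Theorem~\ref{Ttoeplitz}(2)$\implies$(3), invoking Remark~\ref{Rtoeplitz}, which isolates a short list of sufficient test functions: translates and dilates of $\lambda_d$, of $\phi( x ) = x \lambda( x )$, and of the function $M$ of Lemma~\ref{Lintpowers}. The same Laplace-transform and integer-power arguments then force $F$ into one of four candidate forms: $c$, $c x$, $c \bJ_{x > 0}$, or $c \bJ_{x = 0}$.

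The main obstacle is ruling out $F( x ) = c \bJ_{x = 0}$, because the argument from the last paragraph of Theorem~\ref{Ttoeplitz}'s proof (applying $F$ to itself) relies on $X \cap Y$ being non-trivial, which is not forced by mere admissibility. Instead, given $2$-term admissible progressions $( x_1, x_2 ) \in \inc{X}{2}$ and $( y_1, y_2 ) \in \inc{Y}{2}$ of common step $h$, I would consider the ``shifted identity'' Toeplitz kernel associated with $f( u ) := \bJ_{u = x_1 - y_1}$ on $X - Y$. Since $x_i - y_j = ( x_1 - y_1 ) + ( i - j ) h$ equals $x_1 - y_1$ exactly when $i = j$, the kernel $T_f|_{X \times Y}$ has a strictly monotone pattern of $1$'s and so is $\TN$. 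Applying the candidate $F$ turns the $2 \times 2$ submatrix on $( x_1, x_2 ; y_1, y_2 )$ into $\left( \begin{smallmatrix} 0 & c \\ c & 0 \end{smallmatrix} \right)$, contradicting $\TN_2$. This completes the proof.
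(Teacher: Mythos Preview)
Your approach is essentially the same as the paper's, and Steps~1, 3, and~4 match it almost exactly (the paper's Step~4 uses precisely your shifted-identity kernel). The one place where your outline is too quick is the continuity step. You write that you would ``apply the argument of Proposition~\ref{PGauss}(3')$\implies$(4')'', but hypothesis~(3') there is that $F \circ cG_1$ is $\TN_3$ on all of $\R \times \R$, whereas what you actually have is only that the \emph{Toeplitz} submatrices $(F(g(c+(i-j)h)))_{i,j=1}^n$ are $\TN$. The $3\times 3$ test matrices $A_y$, $B_z$ in that proof come from non-arithmetic triples $(0,y,x)$ and are not themselves Toeplitz; you can realize them only as non-contiguous principal submatrices of your $n\times n$ Toeplitz matrices, and only when $y/x$ is rational (with the required $n$ growing as $y\to x$). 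The paper handles this explicitly in its Step~2: it fixes $p<q$, takes equi-spaced progressions of length $n$, forms the Toeplitz matrix $\bigl(q(p/q)^{(i-j)^2/(n-2)^2}\bigr)$, extracts the principal $3\times 3$ submatrix at indices $(1,n-2,n-1)$ and $(1,n-1,n)$, and sends $n\to\infty$, using only one-sided limits of the monotone~$F$. Your sketch is correct once this manoeuvre is spelled out, but it is not an immediate citation of Proposition~\ref{PGauss}.
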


The converse to this theorem does not necessarily hold for a given
admissible pair. For example, if $X = Y = \R$, then
Theorem~\ref{Ttoeplitz} shows the converse, at least for the case of
measurable kernels, but for $X = Y = \Z$ we will see below
that $c \bJ_{x > 0}$ is not a preserver of P\'olya frequency sequences.

\begin{proof}
For ease of exposition, we split the proof into several steps.

Step 1: The function $F$ is non-decreasing on $( 0, \infty )$. Fix
equi-spaced arithmetic progressions $\bx \in \inc{X}{2}$ and
$\by \in \inc{Y}{2}$, so that $x_2 - x_1 = y_2 - y_1$. Let $p$ and $q$
be positive real numbers, with $p < q$, and consider the kernel
\[
K : X \times Y \to \R; \ ( x, y ) \mapsto %
q G_1 \left( \sqrt{\log( q / p )} %
\frac{( x - x_1 ) - ( y - y_1 )}{x_2 - x_1} \right).
\]
This is a Toeplitz kernel which is $\TP$ since Gaussian kernels are,
so $F \circ K$ is $\TN$ by assumption. Thus
\[
0 \leq \det ( F \circ K )[ ( x_1, x_2 ); ( y_1, y_2 ) ] = %
\begin{vmatrix} F( q ) & F( p ) \\ F( p ) & F( q ) \end{vmatrix} = %
F( q )^2 - F( p )^2.
\]
As $p$ and $q$ are arbitrary, the claim follows. Furthermore, the
function $F$ has at most countably many discontinuities, so is Borel
measurable, and the left-limit and right-limit functions
\[
F^+ : ( 0, \infty ) \to [ 0, \infty ); \ %
x \mapsto \lim_{y \to x^+} F( y ) %
\quad \text{and} \quad %
F^- : ( 0, \infty ) \to [ 0, \infty ); \ %
x \mapsto \lim_{y \to x^-} F( y )
\]
are well defined.

Step 2: The function $F$ is continuous on $( 0, \infty )$. We will
show that, for any $p > 0$, $F^+( p ) = F( p ) = F^-( p )$. This is
trivial if $F \equiv 0$ on $( 0, \infty )$, so we assume
otherwise. Fix a point $q > p$ where $F$ is continuous and
$F( q ) > 0$, choose an integer $n \geq 3$, and let
$\bx \in \inc{X}{n}$ and $\by \in \inc{Y}{n}$ be equi-spaced
arithmetic progressions. As before, the kernel
\[
K : X \times Y \to \R; \ ( x, y ) \mapsto %
q G_1 \left( \sqrt{\log( q / p )}
\frac{( x - x_1 ) - ( y - y_1 )}{( x_2 - x_1 )( n - 2 )} \right)
\]
is Toeplitz and $\TP$. Furthermore, a straightforward computation
shows that
\[
K( x_i, y_j ) = q ( p / q )^{( i - j )^2 / ( n - 2 )^2} \qquad %
\text{for all } i, j \in [n].
\]
Since $F \circ K$ is $\TN$, we have that
\begin{align*}
0 \leq \lim_{n \to \infty} \det ( F \circ K )%
[ ( x_1, x_{n - 2}, x_{n - 1} ); ( y_1, y_{n - 2}, y_{n - 1} ) ] & %
\to \begin{vmatrix}
 F( q ) & F^-( p ) & F( p ) \\
 F^-( p ) & F( q ) & F( q )\\
 F( p ) & F( q ) & F( q )
\end{vmatrix} \\
 & = -F( q ) ( F( p ) - F^-( p ) )^2,
\end{align*}
and so $F( p ) = F^-( p )$. Similarly,
\[
0 \leq \lim_{n \to \infty} \det ( F \circ K )%
[ ( x_1, x_{n - 1}, x_n ); ( y_1, y_{n - 1}, y_n ) ] = %
-F( q ) ( F( p ) - F^+( p ) )^2.
\]
This establishes the second claim.

Step 3: The function $F$ belongs to one of the four families of
functions in Theorem~\ref{Ttoeplitz}. Suppose not, and note that
Remark~\ref{Rtoeplitz} gives a P\'olya frequency function $\Lambda$
such that $F \circ \Lambda$ is not $\TN$. As $F$ is Borel measurable,
the function $F \circ \Lambda$ is Lebesgue measurable and therefore
$F \circ T_\Lambda$ is not $\TN$; furthermore, $\Lambda$ is
continuous except possibly at the origin, and is either positive
everywhere, or zero on $( -\infty, 0 )$ and positive on
$( 0, \infty )$. It follows that $F \circ \Lambda$ is continuous
except possibly at the origin. By Lemma~\ref{Lhankel}, there exists
$\bz \in \inc{\R}{n}$, where $n \geq 2$, such that the principal
submatrix $( F \circ T_\Lambda )[ \bz; \bz ]$ has at least one minor
which is negative. As $F \circ \Lambda$ is continuous except at
possibly the origin, we may assume that
$\bz = ( z_1, \ldots, z_n ) \in \inc{\Q}{n}$.

Choose $N \in \N$ sufficiently large so that $N z_i$ is an integer for
every $i \in [n]$, and let $m = N ( z_n - z_1 ) + 1$. Let
$\bx \in \inc{X}{m}$ and $\by \in \inc{Y}{m}$ be equi-spaced
arithmetic progressions, and let
\[
K : X \times Y \to \R; \ ( x, y ) \mapsto %
\Lambda\left( \frac{( x - x_1 ) - ( y - y_1 )}{( x_2 - x_1 ) N} %
\right).
\]
This Toeplitz kernel is $\TN$, since $\Lambda$ is, and therefore so is
$F \circ K$. However, the submatrix
\[
( F \circ K )[ ( 1, \ldots, m ); ( 1, \ldots, m ) ] = %
( F( \Lambda( ( i - j ) / N ) ) )_{i, j = 1, \ldots, m}
\]
contains $( F \circ T_\Lambda )[ \bz; \bz ]$, and this is the desired
contradiction.

Step 4: The function $F$ cannot have the form $c \bJ_{x = 0}$, where
$c > 0$. To see this, fix equi-spaced sequences $\bx \in \inc{X}{2}$
and $\by \in \inc{Y}{2}$, and let
\[
K : X \times Y \to \R: \ ( x, y ) \mapsto %
\begin{cases}
 1 & \text{if } x - y = x_1 - y_1, \\
 0 & \text{otherwise}.
\end{cases}
\]
Then $K$ is Toeplitz and $\TN$, since each row of any submatrix of $K$
contains $1$ at most once, and similarly for each column. However,
$c \bJ_{x = 0} \circ K = c ( 1 - K )$ and
\[
\det( 1 - K )[ ( x_1, x_2 ); ( y_1, y_2 ) ] = %
\begin{vmatrix} 0 & 1 \\ 1 & 0 \end{vmatrix} = -1.
\qedhere
\]
\end{proof}

As a consequence, we now classify the preservers of P\'olya
frequency sequences.

\begin{corollary}\label{Cpfseq}
Let $F : [ 0, \infty ) \to [ 0, \infty )$ be non-constant. The
sequence $F \circ \ba$ is a P\'olya frequency sequence for every
P\'olya frequency sequence $\ba$ if and only if $F( x ) = c x$ for
some $c > 0$.
\end{corollary}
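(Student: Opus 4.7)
The approach is to specialise Theorem~\ref{Tpfseq} to the admissible pair $X = Y = \Z$: every set of $n$ consecutive integers is an equi-spaced arithmetic progression of increment~$1$, so admissibility is trivial. By definition, a $\TN$ Toeplitz kernel on $\Z \times \Z$ is exactly the kernel $T_\ba$ of some P\'olya frequency sequence $\ba$, and $F \circ T_\ba = T_{F \circ \ba}$. Hence, if the non-constant function $F : [0, \infty) \to [0, \infty)$ preserves all P\'olya frequency sequences, then in particular it is non-zero and preserves all $\TN$ Toeplitz kernels on $\Z \times \Z$. Theorem~\ref{Tpfseq} then forces $F$ into one of three possible forms: a positive constant, the dilation $c x$, or the Heaviside-type jump $c \bJ_{x > 0}$, each with $c > 0$.

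The non-constant hypothesis excludes the first possibility, and, conversely, the dilation $F(x) = c x$ with $c > 0$ is easily seen to be a preserver, since scaling any $\TN$ matrix by a positive constant preserves the sign of every minor. The substantive remaining task, and the only real obstacle beyond the invocation of Theorem~\ref{Tpfseq}, is to rule out $F(x) = c \bJ_{x > 0}$; this requires producing a single P\'olya frequency sequence $\ba$ whose image $F \circ \ba$ fails to be one.

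For this, I will exhibit a minimal explicit counterexample. Take the finitely supported P\'olya frequency sequence $\ba$ with $a_0 = 1$, $a_1 = 2$, $a_2 = 1$, and $a_n = 0$ otherwise, whose generating polynomial is $(1+z)^2$ and which is $\TN$ by the classical Aissen--Edrei--Schoenberg--Whitney criterion. Applying $F$ produces the sequence $\bb$ with $b_0 = b_1 = b_2 = c$ and $b_n = 0$ otherwise; its generating polynomial $c(1 + z + z^2)$ has non-real roots, so $\bb$ ought not to be a P\'olya frequency sequence. I will confirm this directly by writing down a negative minor of $T_\bb$, namely the $4 \times 4$ submatrix indexed by rows $(0, 2, 3, 4)$ and columns $(0, 1, 2, 3)$:
\[
\det\begin{pmatrix} c & 0 & 0 & 0 \\ c & c & c & 0 \\ 0 & c & c & c \\ 0 & 0 & c & c \end{pmatrix} = -c^4 < 0.
\]
This contradicts $F \circ \ba$ being a P\'olya frequency sequence, eliminates $F(x) = c \bJ_{x > 0}$, and completes the proof. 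The expected difficulty is minimal, localised entirely in spotting the offending minor above; no further harmonic-analytic input is needed once Theorem~\ref{Tpfseq} is available.
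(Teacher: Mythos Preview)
Your proof is correct, and the overall architecture---invoke Theorem~\ref{Tpfseq} with $X = Y = \Z$, discard the constant by hypothesis, then eliminate $c\bJ_{x>0}$---matches the paper's. The only difference lies in how you dispose of the Heaviside candidate. You do it by a single explicit counterexample: apply $F$ to the P\'olya frequency sequence $(1,2,1)$ and exhibit a negative minor of the resulting Toeplitz kernel. The paper instead proves, for \emph{any} preserver $F$, that $F$ must be right-continuous at the origin, by scaling the sequences $(1)$ and $(1,2,1)$ by $t > 0$, taking $3 \times 3$ minors, and letting $t \to 0^+$; this forces $F^+(0) = F(0)$ and so rules out $c\bJ_{x>0}$.

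Your route is more direct and self-contained for the stated corollary, and avoids the limiting argument entirely. The paper's route yields a slightly stronger intermediate fact (continuity at~$0$ for every preserver), which is in keeping with the paper's broader program of isolating structural properties rather than just checking candidate functions. Incidentally, your $4 \times 4$ minor can be sharpened: the $3 \times 3$ submatrix of $T_\bb$ at rows $(0,1,2)$ and columns $(-1,0,1)$ already has determinant $-c^3$, which is the same matrix appearing inside your cofactor expansion.
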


\begin{proof}
One implication is immediate. For the other, an application of
Theorem~\ref{Tpfseq} with $X = Y = \Z$ means we need only show that
$F$ is continuous at the origin to obtain the desired result.

Define sequences $\bb = ( b_m )_{m \in \Z}$ and
$\bc = ( c_m )_{m \in \Z}$ by setting
\[
b_m = \begin{cases}
 1 & \text{if } m = 0, \\ 0 & \text{otherwise}
\end{cases} \qquad \text{and} \qquad %
c_m = \begin{cases}
 1 & \text{if } m = 0 \text{ or } 2, \\
 2 & \text{if } m = 1, \\
 0 & \text{otherwise}.
\end{cases}
\]
These are P\'olya frequency sequences, by \cite[Theorem~6]{AESW},
since the only zeros of their polynomial generating functions $1$ and
$1 + 2 z + z^2$ are negative. Hence
\[
\lim_{t \to 0^+} %
\det( F \circ t T_\bb )[ ( 1, 3, 4 ); ( 1, 2, 3 ) ] = %
\begin{vmatrix}
  F^+( 0 ) & F( 0 ) & F( 0 ) \\
  F( 0 ) & F( 0 ) & F^+( 0 ) \\
  F( 0 ) & F( 0 ) & F( 0 ) \\
\end{vmatrix} = %
-F( 0 ) ( F^+( 0 ) - F( 0 ) )^2
\]
is non-negative, as is
\[
\lim_{t \to 0^+} %
\det( F \circ t T_\bc )[ ( 2, 3, 4 ); ( 1, 2, 3 ) ] = %
-F^+( 0 ) ( F^+( 0 ) - F( 0 ) )^2.
\]
Now either $F( 0 ) = F^+( 0 ) = 0$, in which case we are done, or at
least one of $F( 0 )$ and $F^+( 0 )$ is positive, in which case they
are equal.
\end{proof}

\begin{remark}
The test families of P\'olya frequency functions used to classify the
preservers of $\TN$ functions listed in Remark~\ref{Rtoeplitz} can be
used to obtain test sets of P\'olya frequency sequences. To see this,
suppose $\Lambda$ is a P\'olya frequency function that is continuous
except possibly at the origin, and let
$F : [ 0, \infty ) \to [ 0, \infty )$ be continuous. Then
$F \circ \Lambda$ is $\TN$ if and only if
$F \circ {\Lambda_{(N)}}$ is $\TN$ for every P\'olya frequency
sequence $\Lambda_{(N)} := ( \Lambda( n / N ) )_{n \in \Z}$, where
$N \in \N$. One implication is immediate, and the converse follows
using similar reasoning to Lemma~\ref{Lhankel} and Step~3 in the proof
of Theorem~\ref{Tpfseq}. In particular, for any integer $n \geq 2$,
there exists some $N \in \N$ such that $M^n_{(N)}$ is not a P\'olya
frequency sequence, where $M$ is as in Lemma~\ref{Lintpowers}.
\end{remark}

We now turn to classifying $\TP$ preservers for P\'olya frequency
sequences. The next result is a version of Theorem~\ref{TtoeplitzTP}
in the same setting as that of Theorem~\ref{Tpfseq}.

\begin{theorem}\label{Tpfseq2}
Let $X$ and $Y$ be a pair of admissible sets and let
$F : ( 0, \infty ) \to ( 0, \infty )$. The following are equivalent.
\begin{enumerate}
\item The composition operator $C_F$ preserves total positivity
for all Toeplitz kernels on $X \times Y$.
\item $F( x ) = c x$ for some $c > 0$.
\end{enumerate}
\end{theorem}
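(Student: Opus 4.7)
The backward implication is immediate: if $F(x) = cx$ with $c > 0$, then for any $\TP$ Toeplitz kernel $K$ on $X \times Y$, the composition $F \circ K = cK$ is again $\TP$ and Toeplitz. For the forward direction, the plan proceeds in three stages: establishing continuity of $F$, approximating the test PF functions from Remark~\ref{Rtoeplitz} by their $\TP$ cousins on $X \times Y$, and concluding via the classification of $\TN$ preservers.

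First, I would establish that $F$ is strictly increasing and continuous on $(0, \infty)$, adapting Steps~1 and~2 from the proof of Theorem~\ref{Tpfseq} to the strict-inequality setting. Using admissibility of $(X, Y)$ to select equi-spaced arithmetic progressions $\bx \in \inc{X}{n}$, $\by \in \inc{Y}{n}$ with common step size, the Gaussian Toeplitz kernel $qG_1\bigl(\sqrt{\log(q/p)}\,((x - x_1) - (y - y_1))/(x_2 - x_1)\bigr)$ is $\TP$ on $X \times Y$; applying $F$ and taking the $2 \times 2$ minor at $(x_1, x_2), (y_1, y_2)$ produces $\begin{pmatrix} F(q) & F(p) \\ F(p) & F(q) \end{pmatrix}$, whose positive determinant forces $F(q) > F(p)$ for $q > p$. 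An analogous $3 \times 3$ argument with growing progressions, in the limit, yields $F^+(p) = F(p) = F^-(p)$ for every $p > 0$. I would then extend $F$ to a continuous non-decreasing $\widetilde{F}: [0, \infty) \to [0, \infty)$ by $\widetilde{F}(0) := \lim_{x \to 0^+} F(x)$.

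Next, for each regular PF function $\Lambda$ in the test family of Remark~\ref{Rtoeplitz} (namely $cG_1$, $t\lambda_{1/2}$, $\phi(x) = x\lambda(x)$, and the function $M$ of Lemma~\ref{Lintpowers}), Proposition~\ref{Ptoeplitz} supplies a sequence of $\TP$ PF functions $\Lambda_n \to \Lambda$ pointwise. Given $\bz \in \inc{\Q}{k}$ and an integer $N$ clearing the denominators of the differences $z_i - z_1$, I would use admissibility to choose equi-spaced arithmetic progressions $\bx \in \inc{X}{m}$ and $\by \in \inc{Y}{m}$ of common step $h$ and length $m = N(z_k - z_1) + 1$, and form the $\TP$ Toeplitz kernel $K_n(x, y) := \Lambda_n\bigl((x - y - (x_1 - y_1))/(hN)\bigr)$ on $X \times Y$. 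Its submatrix at the indices $(N(z_i - z_1) + 1)_{i = 1}^k$ in both rows and columns is exactly $(\Lambda_n(z_i - z_j))_{i, j}$, so $(F(\Lambda_n(z_i - z_j)))_{i, j}$ has positive determinant. Passing to the limit with continuity of $\widetilde{F}$ and pointwise convergence $\Lambda_n \to \Lambda$ gives $\det(\widetilde{F}(\Lambda(z_i - z_j))) \geq 0$ for every $\bz \in \inc{\Q}{k}$. A density argument mirroring Step~3 of Theorem~\ref{Tpfseq}, exploiting continuity of $\widetilde{F} \circ \Lambda$ away from the origin together with the fact that off-diagonal entries of principal Toeplitz minors have non-zero arguments, upgrades this to $\TN$ of the Toeplitz kernel $T_{\widetilde{F} \circ \Lambda}$ on $\R \times \R$ for each of the four test functions.

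Finally, Remark~\ref{Rtoeplitz} forces $\widetilde{F}$ to be of the form $c$, $cx$, $c\bJ_{x > 0}$, or $c\bJ_{x = 0}$ for some $c > 0$. Since $F = \widetilde{F}|_{(0, \infty)}$ is strictly increasing and takes values in $(0, \infty)$, only $\widetilde{F}(x) = cx$ is compatible, yielding $F(x) = cx$ as required. The main obstacle is the second stage: it requires threading together Proposition~\ref{Ptoeplitz}'s $\TP$ approximation of regular PF functions on $\R$, the admissibility-based realization of prescribed $\Lambda_n$-values as entries of $\TP$ Toeplitz kernels on $X \times Y$, continuity of $\widetilde{F}$ to transfer $\TP$ to $\TN$ in the limit, and a careful density argument lifting conclusions from rational arithmetic progressions to arbitrary real points while handling the mild jump of $\lambda_{1/2}$ at the origin.
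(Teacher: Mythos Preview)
Your approach is correct and essentially the paper's own, just organized as a direct argument rather than by contradiction: the paper assumes $F$ is not a dilation, invokes Remark~\ref{Rtoeplitz} to locate a single regular test function $\Lambda$ and a rational tuple $\bz$ at which some minor of $(\widetilde{F}\circ T_\Lambda)[\bz;\bz]$ is negative, then uses Proposition~\ref{Ptoeplitz} to approximate $\Lambda$ by a $\TP$ function $\Lambda_k$ that retains the bad minor and realizes $\Lambda_k$ as a $\TP$ Toeplitz kernel on $X\times Y$ via admissibility --- the same ingredients in the same roles. One small slip to fix: you record only that $(F(\Lambda_n(z_i-z_j)))_{i,j}$ has positive \emph{determinant}, but the passage from principal submatrices to $\TN$ via Lemma~\ref{Lhankel}(2)$\Rightarrow$(1) requires every minor of each principal submatrix to be non-negative; this is free, since the matrix is a submatrix of the $\TP$ kernel $F\circ K_n$ and hence is itself $\TP$, so in the limit it is $\TN$.
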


As an immediate consequence, we have the following result.

\begin{corollary}\label{Ctppfseq}
The preservers of total positivity for P\'olya frequency sequences are
precisely the dilations $F( x ) = c x$, where $c > 0$.
\end{corollary}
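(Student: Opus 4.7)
The plan is to derive Corollary~\ref{Ctppfseq} as a direct specialization of Theorem~\ref{Tpfseq2}, taking $X = Y = \Z$. First I would verify that the pair $(\Z, \Z)$ is admissible in the sense of Definition~\ref{Dadmissible}: for every integer $n \geq 2$, the arithmetic progression $(1, 2, \ldots, n)$ lies in $\Z$ and is equi-spaced with common increment $1$, so we may take both $\bx$ and $\by$ to be this progression.

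Next I would invoke the tautological correspondence between Toeplitz kernels on $\Z \times \Z$ and bi-infinite real sequences: every such kernel has the form $K = T_\ba$ for a unique $\ba = (a_n)_{n \in \Z}$, and $K$ is $\TP$ exactly when $\ba$ is a $\TP$ P\'olya frequency sequence by the definitions recalled at the start of Section~\ref{Stoeplitz2}. Composition by $F$ corresponds, under this bijection, to the entrywise action $\ba \mapsto F \circ \ba$.

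The forward implication of the corollary is immediate, since $F(x) = cx$ with $c > 0$ sends $T_\ba$ to $c T_\ba$, which inherits the $\TP$ property. For the reverse implication, suppose $F : (0, \infty) \to (0, \infty)$ sends every $\TP$ P\'olya frequency sequence to a $\TP$ P\'olya frequency sequence. By the correspondence above, $C_F$ preserves the $\TP$ property on every Toeplitz kernel on $\Z \times \Z$. Since $(\Z, \Z)$ is admissible, Theorem~\ref{Tpfseq2} applies and forces $F(x) = cx$ with $c > 0$.

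I do not anticipate any genuine obstacle: the substantive work is entirely contained in Theorem~\ref{Tpfseq2}, and the corollary is its transcription via the dictionary between $\Z \times \Z$ Toeplitz kernels and bi-infinite sequences. The only point warranting a sentence in the write-up is the matching of hypotheses, namely that preservation of the $\TP$ property on $\TP$ P\'olya frequency sequences is literally the hypothesis of Theorem~\ref{Tpfseq2} once $X = Y = \Z$.
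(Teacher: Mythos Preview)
Your proposal is correct and matches the paper's approach exactly: the paper states Corollary~\ref{Ctppfseq} as an immediate consequence of Theorem~\ref{Tpfseq2}, with no further proof, and your specialization $X = Y = \Z$ together with the tautological identification of Toeplitz kernels on $\Z \times \Z$ with bi-infinite sequences is precisely what is intended.
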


\begin{proof}[Proof of Theorem~\ref{Tpfseq2}]
That $(2) \implies (1)$ is immediate. For the converse, suppose $(1)$
holds and note that $F$ must be positive, increasing, and continuous
on $( 0, \infty )$; the argument is essentially that of Step~1 in
the proof of Theorem~\ref{Tpfseq}.

Next, we extend $F$ to a continuous, increasing function
$\widetilde{F} : [ 0, \infty ) \to [ 0, \infty )$ and suppose for
contradiction that $F$ is not a dilation. Then, by
Remark~\ref{Rtoeplitz}, there exists a regular P\'olya frequency
function $\Lambda$ that is continuous except perhaps at the origin and
such that $\widetilde{F} \circ \Lambda$ is not $\TN$.

By Lemma~\ref{Lhankel}, there exists $\bz \in \inc{\R}{n}$, where
$n \geq 2$, such that
$A = ( \widetilde{F} \circ T_\Lambda )[ \bz; \bz ]$ is not $\TN$,
that is, $A$ has at least one negative minor. By continuity, we may
assume that $\bz = ( z_1, \ldots, z_n ) \in \inc{\Q}{n}$.

By Proposition~\ref{Ptoeplitz}, there exists a sequence
$( \Lambda_k )_{k \geq 1}$ of $\TP$ P\'olya frequency functions such
that $\Lambda_k \to \Lambda$ pointwise as $k \to \infty$. Since
$A$ depends on the value of $\Lambda$ only at the finite set of values
$\{ z_i - z_j : i, j \in [n] \}$, there exists $k \in \N$ such that
$A' = ( \widetilde{F} \circ T_{\Lambda_k} )[ \bz; \bz ]$ has a
negative minor.

We now follow the last part of Step~$3$ in the proof of
Theorem~\ref{Tpfseq}. Let $N \in \N$ be sufficiently large so that
$N z_i \in \Z$ for all $i \in [n]$, let $m = N ( z_n - z_1 ) + 1$ and
choose $\bx \in \inc{X}{m}$ and $\by \in \inc{Y}{m}$ to define the
kernel
\[
\Lambda' : X \times Y \to \R; \ ( x, y ) \mapsto %
\Lambda_k\left( \frac{( x - x_1 ) - ( y - y_1 )}{( x_2 - x_1 ) N} %
\right).
\]
Then $\Lambda'$ is $\TP$, since $\Lambda_k$ is and therefore so is
$F \circ \Lambda'$. However, $A'$ occurs as a submatrix of
$F \circ \Lambda'$ and this is the desired contradiction.
\end{proof}

The final theorem in this section classifies the powers that preserve
symmetric Toeplitz matrices.

\begin{theorem}\label{TPFseqpowers}
The only power functions preserving $\TN$ symmetric Toeplitz matrices
are $F( x ) = 1$ and $F( x ) = x$.
\end{theorem}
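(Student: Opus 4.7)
The cases $F(x) = 1$ (corresponding to $\alpha = 0$ with the convention $0^0 = 1$) and $F(x) = x$ (so $\alpha = 1$) are clearly preservers, so the task is to rule out $F(x) = x^\alpha$ for every other $\alpha > 0$. My plan is to split into the sub-unital and super-unital regimes, each handled by a different family of symmetric $\TN$ Toeplitz test matrices.

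For $\alpha \in (0, 1)$, I would use the boundary $3 \times 3$ tridiagonal symmetric Toeplitz test matrix
\[
T = \begin{pmatrix} \sqrt{2} & 1 & 0 \\ 1 & \sqrt{2} & 1 \\ 0 & 1 & \sqrt{2} \end{pmatrix},
\]
which is readily checked to be $\TN$ (the $2 \times 2$ minors are non-negative and $\det T = 0$). A one-line computation yields $\det T^{\circ \alpha} = \sqrt{2}^\alpha(2^\alpha - 2) < 0$ for $\alpha \in (0, 1)$, delivering the required contradiction.

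For $\alpha > 1$, I would use the even P\'olya frequency function $\Lambda(x) = (1+|x|)\,e^{-|x|}$, the self-convolution of $e^{-|x|}$, whose bilateral Laplace transform $\cB\{\Lambda\}(s) = 4/(1-s^2)^2$ has polynomial reciprocal, so $\Lambda$ is PF by Theorem~\ref{Tschoenberg}. Its discretization $\Lambda_{(N)}(n) := \Lambda(n/N)$ is a symmetric PF sequence, so every finite truncation supplies a $\TN$ symmetric Toeplitz test matrix. The key claim is that $\Lambda^\alpha(x) = (1+|x|)^\alpha\,e^{-\alpha|x|}$ is \emph{not} a P\'olya frequency function for $\alpha \neq 0, 1$. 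Granted the claim, a density--continuity argument in the spirit of Lemma~\ref{Lhankel} and the remark following Corollary~\ref{Cpfseq} produces some $N$ for which $F \circ \Lambda_{(N)} = \Lambda^\alpha_{(N)}$ is not a PF sequence, yielding a $\TN$ symmetric Toeplitz matrix whose $\alpha$-th Hadamard power fails to be $\TN$.

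The main obstacle lies in verifying the claim. For $\alpha = 2$ it is a direct computation: one finds $\cB\{\Lambda^2\}(s) = 8(20 - s^2)/(4 - s^2)^3$, whose reciprocal has uncancelled poles at $s = \pm\sqrt{20}$ and so is not entire, contradicting Schoenberg's criterion in Theorem~\ref{Tschoenberg}; the analogous rational-function analysis rules out every integer $\alpha \geq 2$. For non-integer $\alpha$, the bilateral Laplace transform splits into pieces involving $(\alpha \pm s)^{-\alpha-1}\,\Gamma(\alpha+1,\alpha \pm s)$, and the upper incomplete gamma function has a branch point at the origin for non-integer parameter; consequently $\cB\{\Lambda^\alpha\}$ is not meromorphic on $\C$, so its reciprocal cannot be entire, again contradicting Theorem~\ref{Tschoenberg}. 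Rigorously justifying this branch-point argument is the principal technical difficulty; for the matricial statement one could alternatively hope to produce, for each non-integer $\alpha > 1$, an explicit finite symmetric $\TN$ Toeplitz matrix whose $\alpha$-th Hadamard power has a strictly negative minor, but the PF-function route appears more uniform.
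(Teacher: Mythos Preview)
Your treatment of $\alpha\in(0,1)$ is correct and pleasantly short: the matrix $T=\sqrt{2}\,C$, with $C$ as in~(\ref{E3matrix}), is indeed symmetric Toeplitz and $\TN$, and $\det T^{\circ\alpha}=(\sqrt{2})^{\alpha}(2^{\alpha}-2)<0$ for $\alpha\in(0,1)$. The paper does not isolate this range; instead it treats \emph{all} positive non-integer $\alpha$ at once via a different device, and that is where your approach diverges from and falls short of the paper's.

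For non-integer $\alpha>1$ you rely on showing that $\Lambda^\alpha$ is not a P\'olya frequency function, via a branch-point argument for the bilateral Laplace transform. As you yourself flag, this is the principal difficulty, and as written it is not a proof. One must argue that $\cB\{\Lambda^\alpha\}(s)=I(\alpha+s)+I(\alpha-s)$ with $I(a)=e^{a}a^{-\alpha-1}\Gamma(\alpha+1,a)$ has a genuine (uncancelled) branch singularity at $s=\pm\alpha$, and then that its reciprocal therefore fails to extend to an entire function in the Schoenberg sense; neither step is carried out. The paper bypasses this entirely. It observes that the cosine kernel $K(x,y)=\cos(x-y)$ on $(-\pi/4,\pi/4)^2$ is $\TN$ of rank two, so every matrix $A=(\cos((i-j)\theta))_{i,j=1}^{n}$ with $0<\theta<\pi/(2n-2)$ is a symmetric $\TN$ Toeplitz matrix, and then invokes Jain's theorem (writing $A=DX D$ with $X=(1+x_ix_j)$, $x_j=\tan(j\theta)$) to conclude that $A^{\circ\alpha}$ is not positive semidefinite whenever $\alpha\notin\N_0$ and $\alpha<n-2$. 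Taking $n>\alpha+2$ disposes of every non-integer $\alpha>0$ in one stroke, with explicit test matrices and no Laplace-transform analysis.

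For integer $\alpha\geq 2$ your route is the same in spirit as the paper's---discretise an even P\'olya frequency function whose integer powers fail to be PF---but you only verify $\alpha=2$ and assert the general case. The paper uses the function $M$ of Lemma~\ref{Lintpowers}, for which the claim that $M^n$ is not PF for every $n\geq 2$ is already proved in detail; your $\Lambda=(1+|x|)e^{-|x|}$ would require the analogous computation for every $n$, which you have not supplied.
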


As a first step in the proof of this theorem, we obtain the following
result.

\begin{proposition}\label{Pjain}
Let $A := ( \cos( ( i - j ) \theta ) )_{i, j = 1}^n$, where $n \geq 2$
and the angle $\theta$ is such that $0 < \theta < \pi / ( 2 n - 2 )$.
Then $A^{\circ \alpha}$ is positive semidefinite if and only if
$\alpha \in \N_0$ or $\alpha \in [ n - 2, \infty )$.
\end{proposition}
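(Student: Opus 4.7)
The plan is to reduce Proposition~\ref{Pjain} to a theorem of Jain on Hadamard powers of rank-two positive-semidefinite matrices in a standard parametrisation. The matrix $A$ is positive semidefinite of rank two, via the identity
$A_{ij} = \cos(\phi_i - \phi_j) = \cos\phi_i \cos\phi_j + \sin\phi_i \sin\phi_j$
obtained by setting $\phi_i := i\theta - c$ for an arbitrary shift $c$.

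The first step is to exploit the hypothesis $0 < \theta < \pi/(2n-2)$, equivalently $(n-1)\theta < \pi/2$, to choose $c$ so that every $\phi_i$ lies in $(0, \pi/2)$. Such a choice is available because the interval $(n\theta - \pi/2,\ \theta)$ has positive length $\pi/2 - (n-1)\theta$; any $c$ in this interval works. With this choice, $\cos\phi_i > 0$ and the numbers $r_i := \tan\phi_i$ form a strictly increasing sequence of positive reals. The cosine addition formula then gives $A_{ij} = \cos\phi_i \cos\phi_j\,(1 + r_i r_j)$, hence
\[
A^{\circ\alpha} = D\, M\, D,
\qquad
D := \diag((\cos\phi_i)^\alpha),
\qquad
M_{ij} := (1 + r_i r_j)^\alpha.
\]
Since $D$ has strictly positive diagonal entries, this congruence shows that $A^{\circ\alpha}$ is positive semidefinite if and only if $M$ is.

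The next step is to invoke Jain's theorem \cite[Theorem~1.1]{Jain}, which characterises positive semidefiniteness of the Hadamard power of the rank-two matrix $\mathbf{1}\mathbf{1}^T + r r^T$, parametrised by the vector of distinct positive entries $r = (r_1, \ldots, r_n)$: the answer is precisely $\alpha \in \{0\} \cup \N \cup [n-2, \infty)$, which completes the argument.

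The main obstacle is Jain's theorem itself. Its sufficient direction splits into the Schur product theorem, applied to $A$, for $\alpha$ a non-negative integer, and the FitzGerald--Horn theorem on Hadamard powers of entrywise non-negative positive-semidefinite matrices, for $\alpha \geq n-2$. The necessary direction, for $\alpha$ in a gap $(k, k+1)$ with $0 \leq k \leq n-3$, relies on the expansion $M = \sum_{m=0}^{\infty} \binom{\alpha}{m}\, r^{\circ m}(r^{\circ m})^T$ (valid after rescaling $r$ by a small parameter so the series converges), together with a test vector $v$ orthogonal to $\mathbf{1}, r, r^{\circ 2}, \ldots, r^{\circ(k+1)}$ but not to $r^{\circ(k+2)}$. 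The quadratic form $v^T M v$ then has leading term $\binom{\alpha}{k+2}\,(v \cdot r^{\circ(k+2)})^2$, which is negative because $\binom{\alpha}{m}$ undergoes its first sign change precisely at $m = k+2$ when $\alpha \in (k, k+1)$.
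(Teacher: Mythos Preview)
Your proof is correct and follows essentially the same route as the paper: factor $A_{ij} = \cos\phi_i\cos\phi_j(1 + \tan\phi_i\tan\phi_j)$, so that $A^{\circ\alpha}$ is congruent via a positive diagonal to a Hadamard power of $(1 + r_i r_j)$, and then invoke Jain's theorem. Your introduction of the shift $c$ is in fact a small improvement over the paper's version, which sets $\phi_j = j\theta$ directly; that choice needs $n\theta < \pi/2$ to keep $\tan(n\theta)$ finite and positive, whereas the stated hypothesis $\theta < \pi/(2n-2)$ only guarantees $(n-1)\theta < \pi/2$, so your shift is what makes the argument cover the full range of $\theta$.
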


\begin{proof}
We appeal to a result of Jain~\cite[Theorem 1.1]{Jain}, which states
that, for distinct positive real numbers $x_1$, \ldots, $x_n$, the
entrywise $\alpha$th power of $X := ( 1 + x_i x_j )_{i, j = 1}^n$ is
positive semidefinite if and only if $\alpha \in \N_0$ or
$\alpha \in [ n - 2, \infty )$. Now let $x_j := \tan( j \theta )$, and
let $D$ be the diagonal $n \times n$ matrix with $( j, j )$ entry
$\cos( j \theta )$. Then $X^{\circ \alpha}$ is positive semidefinite if
and only if
$D^\alpha X^{\circ \alpha} D^\alpha = ( D X D )^{\circ \alpha}$ is,
but $D X D = A$.
\end{proof}

\begin{proof}[Proof of Theorem~\ref{TPFseqpowers}]
We begin with the observation that the cosine Toeplitz kernel
\[
K : ( -\pi/ 4, \pi / 4 ) \times ( -\pi / 4, \pi / 4 ) \to \R; \ %
( x, y ) \mapsto \cos( x - y )
\]
is $\TN$. Note first that this kernel has rank 2, by the cosine
identity for differences: given $\bx$,
$\by \in \inc{( -\pi / 4, \pi / 4)}{n}$ for some $n \in \N$, we have
that
\[
K[ \bx; \by ] = \bc_x \bc_y^T + \bs_x \bs_y^T,
\]
where
\[
\bc_x := ( \cos x_1, \ldots, \cos x_n )^T %
\quad \text{and} \quad %
\bs_x := ( \sin x_1, \ldots, \sin x_n )^T,
\]
and similarly for $\bc_y$ and $\bs_y$. Thus, every minor of size at
least $3 \times 3$ vanishes. If $( x_1, x_2 )$,
$( y_1, y_2 ) \in \inc{(-\pi/4, \pi/4)}{2}$, then a 
direct computation shows that
\[
\begin{vmatrix}
 \cos( x_1 - y_1 ) & \cos( x_1 - y_2 ) \\
 \cos( x_2 - y_1 ) & \cos( x_2 - y_2 )
\end{vmatrix} = \sin( x_2 - x_1 ) \sin( y_2 - y_1 ) > 0,
\]
and that $\cos( x - y ) \geq 0$ whenever $| x | < \pi / 4$ and
$| y | < \pi/4$ is immediate.

Given this observation, we proceed to eliminate possibilities for
$\alpha$. The test matrix
\[
\begin{pmatrix} 2 & 1 \\ 1 & 2 \end{pmatrix}
\]
shows that $\alpha$ cannot be negative. Next, suppose $\alpha$ is
positive and not an integer, and let $n$ be an integer greater than
$\alpha + 2$. If $A$ is the $n \times n$ matrix of
Proposition~\ref{Pjain}, with $\theta = \pi / 2 n$, then $A$ is $\TN$,
since it occurs as a submatrix of $K$, but $A^{\circ \alpha}$ is not
positive semidefinite, so not $\TN$.

The final case is if $F( x ) = x^k$ for some integer $k \geq 2$. Let
$M$ be the even function of Lemma~\ref{Lintpowers}, and let
$\bz \in \inc{\R}{n}$ be such that
$B := T_{M^k}[ \bz;  \bz ]$ has a negative minor. By
continuity, we may assume that
$\bz = ( z_1, \ldots, z_n ) \in \inc{\Q}{n}$. We may also translate
each coordinates by $-z_1$, since this leaves
$B$ unchanged, thus $z_1 = 0 < z_n$. Choose $N \in \N$ such that
$z_i N \in \Z$ for every $i \in [n]$, and consider the symmetric
Toeplitz matrix $C := ( M( ( i - j ) / N ))_{i, j = 0}^{z_n N}$. This
is $\TN$, since it occurs as a submatrix of $M$, and therefore so is
$F[ C ] = C^{\circ k}$, but this matrix contains $B$ as a principal
submatrix, and so has a negative minor. This contradiction completes
the proof.
\end{proof}

We conclude with two observations. First, the above classifications of
preservers of P\'olya frequency functions and sequences, including
Lemma~\ref{Lintpowers}, sit in marked contrast
to~\cite[Theorem~3]{Weinberger83}. There, Weinberger shows that the
set $\PF_3$ of functions defined analogously to P\'olya frequency
functions, but with the $\TN$ condition replaced by $\TN_3$, is closed
under taking any real power greater than or equal to~$1$.

Second, a result in the parallel paradigm of the holomorphic
functional calculus, not the Schur--Hadamard calculus considered here,
can be found in~\cite[pp.~451--452]{Karlin}. There, it is proved that
a smooth function preserves $\TN$ matrices via the functional calculus
if and only if it is a non-negative integer power. A close look at the
proof reveals that the same conclusion can be deduced by using the
smaller test set of $\TN$ upper-triangular Toeplitz matrices. This
stands in contrast to the results of the next section.

\section{One-sided P\'olya frequency functions and sequences}%
\label{Stoeplitz3}

As a final variation on the P\'olya-frequency theme, we turn to the
class of one-sided P\'olya frequency sequences, where the terms vanish
for negative indices. As discussed at the start of the previous
section, P\'olya frequency sequences, including the one-sided variant,
are well studied, with a representation theorem~\cite{AESW} and
applications in analysis, combinatorics, and other areas.

We prove here that the only preservers of this class are homotheties
and Heaviside functions. In the spirit of the previous results, we
begin by showing the analogous result for P\'olya frequency functions
and $\TN$ functions, akin to Theorem~\ref{Ttoeplitz}.

We say that a function $f : \R \to \R$ is \emph{one sided} if there
exists $x_0 \in \R$ such that either $f( x ) = 0$ for all $x < x_0$,
or $f( x ) = 0$ for all $x > x_0$.

\begin{theorem}\label{T1sidedPFF}
Let $F : [ 0, \infty ) \to [ 0, \infty )$.
\begin{enumerate}
\item The function $F \circ \Lambda$ is a one-sided P\'olya frequency
function whenever $\Lambda$ is, if and only if $F( x ) = c x$ for some
$c > 0$.

\item The function $F \circ f$ is a one-sided $\TN$ function that is
non-zero at two or more points whenever $f$ is, if and only if
$F( x ) = c x$, $F( x ) = c \bJ_{x > 0}$, or $F( x ) = c \bJ_{x = 0}$,
for some $c > 0$.

\item Suppose $F$ is non-zero. Then $F \circ f$ is a one-sided $\TN$
function whenever $f$ is, if and only if $F( x ) = c x$ or
$F( x ) = c \bJ_{x > 0}$, for some $c > 0$.
\end{enumerate}
\end{theorem}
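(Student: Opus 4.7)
The plan is to establish the three parts of the theorem in order, essentially by adapting the proofs of Theorems~\ref{TPolya} and~\ref{Ttoeplitz} to the one-sided setting, with two key substitutions: replace Gaussian test kernels (used in Proposition~\ref{PGauss} to force continuity) by concentrated one-sided PF functions such as Gamma densities $\lambda_{0,n}(x) = x^{n-1}e^{-x}\bJ_{x \geq 0}/(n-1)!$, and replace the even function $M$ of Lemma~\ref{Lintpowers} by its one-sided analog, whose existence is promised in the introduction.

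For Part (1), I would first prove monotonicity and continuity of $F$ on $( 0, \infty )$ using Toeplitz kernels built from smooth concentrated one-sided PF functions in place of Gaussians. Next, integrability of $F \circ \lambda_0$ forces $F( 0 ) = 0$. Applying $F$ to the family $t \lambda_d$ for $t > 0$ and computing the bilateral Laplace transform $\cB\{ F \circ ( t \lambda_d ) \}( s ) = t^{-s} \int_0^t y^{s - 1} F( y ) \intd y$, the requirement of Theorem~\ref{Tschoenberg} that its reciprocal be (up to an exponential) entire of genus at most one, simultaneously for every $t > 0$, pins $F$ down to a power $F( x ) = c x^\alpha$ with $c$, $\alpha > 0$. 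Applying $F$ to $\phi( x ) = x \lambda( x )$ and again invoking Theorem~\ref{Tschoenberg} forces $\alpha \in \N$; composing $F$ with the one-sided analog of $M$ then rules out $\alpha \geq 2$, leaving $F( x ) = c x$.

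Parts (2) and (3) follow from Part~(1) together with refinements of the arguments in Theorem~\ref{Ttoeplitz}. For (2), I would invoke the Schoenberg lemma used there, that every $\TN$ function nonzero at two or more points is, up to multiplication by some $e^{\gamma x}$, either a PF function or of the form $e^{a x + b}$; this largely reduces the problem to Part~(1). Tracking through the possible discontinuities of $F \circ ( t \lambda_d )$ as in the proof of Theorem~\ref{Ttoeplitz} yields $F \in \{ c, \ c x, \ c \bJ_{x > 0}, \ c \bJ_{x = 0} \}$, and the constant case is excluded because applying such an $F$ to $\lambda$ produces a constant function, which is not one-sided. For Part (3), starting from the Part~(2) classification, one additionally tests $F$ on the one-sided $\TN$ function $\bJ_{x = 0}$: this rules out $F = c \bJ_{x = 0}$, since $F \circ \bJ_{x = 0} = c \bJ_{x \neq 0}$ fails to be one-sided, while the remaining candidates $c x$ and $c \bJ_{x > 0}$ are directly verified.

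The main obstacle throughout is the adaptation of the Gaussian-based continuity argument of Proposition~\ref{PGauss} to use only one-sided test kernels, since the original proof exploits the reflection symmetry of $G_\kappa$ about any point; this necessitates a careful recasting of the $3 \times 3$ determinant inequalities using shifted and rescaled Gamma densities, whose concentration properties (via the central limit theorem applied to convolution powers of $\lambda_0$) furnish a workable substitute for the Gaussian.
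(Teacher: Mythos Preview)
Your overall architecture matches the paper's (monotonicity and continuity, then power form via $t\lambda_d$, then integer exponent via $\phi(x)=x\lambda(x)$, then $\alpha=1$ via a one-sided analogue of $M$), but two of your intermediate steps are substantially harder than necessary, and one of them is not clearly viable as stated.

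\textbf{Continuity.} You propose to mimic Proposition~\ref{PGauss} using shifted Gamma densities and CLT concentration to approximate the Gaussian, then redo the $3\times 3$ determinant argument. The paper avoids this entirely: it extracts both monotonicity and \emph{multiplicative mid-concavity} of $F$ on $(t_1,\infty)$ directly from $2\times 2$ minors of $T_{t\lambda_d}$. Concretely, with $0<a<b$ and $t>0$,
\[
\det\bigl(F\circ t\,T_{\lambda_d}\bigr)\!\left[\Bigl(\tfrac{a+b}{2},\,b\Bigr);\Bigl(0,\,\tfrac{b-a}{2}\Bigr)\right]
= F(\sqrt{pq})^2 - F(p)F(q)\ \geq 0,
\]
where $p=te^{-b}$ and $q=te^{-a}$. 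Mid-concavity plus monotonicity gives continuity on $(t_1,\infty)$ by \cite{roberts-varberg}. This uses only $\lambda_d$ and no asymptotics.

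\textbf{Power form.} Your route via the bilateral Laplace transform $\cB\{F\circ t\lambda_d\}(s)=t^{-s}\!\int_0^t y^{s-1}F(y)\,dy$ and Theorem~\ref{Tschoenberg} ``simultaneously for every $t>0$'' is the weakest point: you assert this pins $F$ down to $cx^\alpha$, but give no mechanism for extracting that conclusion from an entireness condition on a one-parameter family of Mellin-type integrals. The paper's argument is direct: once $F(0)=0$ and $F$ is continuous and non-constant on $(0,\infty)$, the composite $F\circ t\lambda$ is a P\'olya frequency function with a genuine jump at $0$; by Schoenberg's classification of discontinuous P\'olya frequency functions it must be an affine transform of $\lambda_{d}$, which forces $F(te^{-x})=p_t e^{-b_t x}$ on $(0,\infty)$, i.e.\ $F(y)=c\,y^{\alpha}$ on $(0,t)$. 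Varying $t$ extends this to all of $(0,\infty)$. This step is where your sketch needs to be replaced.

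For Parts~(2) and~(3) your plan is essentially the paper's, including the elimination of $c\bJ_{x=0}$ in~(3) by testing on $\bJ_{x=0}$; the paper additionally handles the possibility $t_1\in(0,\infty)$ in~(3) by observing that $F\circ 2t_1\lambda$ would then be a $\TN$ function non-zero at two or more points but with compact support, which is impossible.
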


The proof uses the following one-sided variant of
Lemma~\ref{Lintpowers}.

\begin{lemma}\label{L1sided}
Let $a_1$, $a_2$, and $a_3$ be positive real numbers, with
$a_1 < a_2 < a_3$, such that the set $\{ a_1, a_2, a_3 \}$ is linearly
independent over the rational numbers, and let the non-zero real
numbers $c_1$, $c_2$, and~$c_3$ be such that
\begin{equation}\label{Econstraints}
c_1 > 0, \qquad c_1 + c_2 + c_3 = 0, \qquad \text{and} \qquad %
a_1 c_1 + a_2 c_2 + a_3 c_3 = 0.
\end{equation}
Then
\[
N : \R \to \R; \ x \mapsto \begin{cases}
 c_1 e^{-a_1 x} + c_2 e^{-a_2 x} + c_3 e^{-a_3 x} & %
 \text{if } x \geq 0, \\
 0 & \text{if } x < 0
\end{cases}
\]
is a continuous P\'olya frequency function such that
$N^n : x \mapsto N( x )^n$ is not a P\'olya frequency function for any
integer $n \geq 2$.
\end{lemma}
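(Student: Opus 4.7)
The plan has three phases. The first establishes that $N$ is continuous and is itself a P\'olya frequency function. The constraint $c_1 + c_2 + c_3 = 0$ gives $N(0^+) = 0 = N(0^-)$, so $N$ is continuous. A direct computation of the bilateral Laplace transform $\cB\{N\}(s) = \sum_{i=1}^{3} c_i/(s + a_i)$ over the common denominator $(s + a_1)(s + a_2)(s + a_3)$ yields a numerator polynomial in $s$ of degree at most~$2$; its coefficients of $s^2$ and $s^1$ vanish by the hypotheses $\sum c_i = 0$ and $\sum a_i c_i = 0$ respectively, and the constant term simplifies (via a Vandermonde-type identity) to $c_1(a_2 - a_1)(a_3 - a_1) > 0$. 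Thus
\[
\cB\{N\}(s) = \frac{c_1(a_2 - a_1)(a_3 - a_1)}{(s + a_1)(s + a_2)(s + a_3)}.
\]
This is a positive constant multiple of $\cB\{\lambda_{a_1} * \lambda_{a_2} * \lambda_{a_3}\}(s)$, where $\lambda_a(x) := e^{-a x} \bJ_{x \geq 0}$. Since each $\lambda_{a_i}$ is a P\'olya frequency function and these are closed under convolution by \cite[Lemma~5]{Schoenberg51}, uniqueness of Laplace transforms gives $N \geq 0$, and $N$ itself is a P\'olya frequency function.

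The second phase reduces the negative assertion for $n \geq 2$ to a degree statement. Fix $n \geq 2$ and expand
\[
N(x)^n = \sum_{|\bk| = n} \binom{n}{\bk} c^{\bk} e^{-\langle \bk, \ba\rangle x} \qquad (x \geq 0),
\]
where $\bk = (k_1, k_2, k_3)$ ranges over triples of non-negative integers with $|\bk| := k_1 + k_2 + k_3 = n$, $c^{\bk} := c_1^{k_1} c_2^{k_2} c_3^{k_3}$, and $\langle \bk, \ba\rangle := k_1 a_1 + k_2 a_2 + k_3 a_3$. The rational independence of $\{a_1, a_2, a_3\}$ over $\Q$ makes the values $\langle \bk, \ba\rangle$ pairwise distinct on $\{|\bk| = n\}$, yielding $\binom{n+2}{2}$ distinct exponents. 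Taking Laplace transforms termwise,
\[
\cB\{N^n\}(s) = \sum_{|\bk| = n} \frac{\binom{n}{\bk} c^{\bk}}{s + \langle \bk, \ba\rangle} = \frac{P_n(s)}{Q_n(s)} \quad \text{in lowest terms,}
\]
where $Q_n(s) := \prod_{|\bk| = n}(s + \langle \bk, \ba\rangle)$ has degree $\binom{n+2}{2}$, and $\gcd(P_n, Q_n) = 1$ because every partial-fraction residue $\binom{n}{\bk} c^{\bk}$ is non-zero. For $N^n$ to be a P\'olya frequency function, Theorem~\ref{Tschoenberg} requires the reciprocal $Q_n/P_n$ to equal, up to a factor $e^{\alpha s + \beta s^2}$, an entire function; but this exponential factor is entire and non-vanishing, and so cannot cancel any pole of the rational function $Q_n/P_n$. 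Hence $P_n$ must be constant.

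Finally, I compute $\deg P_n$ via small-$x$ asymptotics of $N$ and show it is non-zero. By hypothesis $N(0^+) = 0$ and $N'(0^+) = -\sum_i a_i c_i = 0$, while $N''(0^+) = c_1(a_2 - a_1)(a_3 - a_1) \neq 0$, the latter read off the $1/s^3$ leading coefficient of $\cB\{N\}$ found in the first phase. Hence $N(x)^n = (N''(0^+)/2)^n x^{2n} + O(x^{2n+1})$ as $x \to 0^+$, and standard Laplace asymptotics (Watson's lemma) give
\[
\cB\{N^n\}(s) \sim \frac{(2n)!\,(N''(0^+)/2)^n}{s^{2n+1}} \qquad \text{as } s \to +\infty.
\]
Matching against $P_n(s)/Q_n(s)$ forces $\deg P_n = \binom{n+2}{2} - (2n + 1) = \binom{n}{2}$ with non-zero leading coefficient. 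For $n \geq 2$ this is $\geq 1$, so $P_n$ is non-constant, contradicting the constancy established just above. The main obstacle is this simultaneous bookkeeping: tracking that the first two moments of $N$ at $0^+$ vanish (producing the $s^{-(2n+1)}$ decay) while $N''(0^+)$ does not (pinning the numerator degree exactly). Both ingredients rely on the rational independence of $a_1, a_2, a_3$ over $\Q$ and the non-vanishing of $c_1, c_2, c_3$.
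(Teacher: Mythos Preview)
Your proof is correct, and the route you take in the second half is genuinely different from the paper's.

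Both proofs begin the same way: compute $\cB\{N\}(s)$, show its reciprocal is a cubic with negative real roots, and conclude $N$ is a P\'olya frequency function (your recognition of $N$ as a positive multiple of $\lambda_{a_1} * \lambda_{a_2} * \lambda_{a_3}$ is a nice extra touch that makes $N \geq 0$ immediate). Both then write $\cB\{N^n\}(s) = P_n(s)/Q_n(s)$ with $Q_n$ of degree $\binom{n+2}{2}$, use rational independence to ensure the exponents are distinct, and use the non-vanishing of the $c_i$ to see that no pole of $Q_n$ is a zero of $P_n$.

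The divergence is in showing $P_n$ non-constant. The paper evaluates $p_n$ at the two extremal roots of $q_n$, namely $-n a_1$ and $-n a_3$, manipulates the resulting products to obtain a closed form for the ratio $p_n(-na_3)/p_n(-na_1)$, and argues via a telescoping estimate that this ratio is never $1$ for $n \geq 2$. You instead read off from the Taylor expansion $N(x) \sim \tfrac12 N''(0^+)\,x^2$ that $N^n(x) \sim c\,x^{2n}$ at the origin, apply Watson's lemma to get $\cB\{N^n\}(s) \sim c' s^{-(2n+1)}$, and match degrees to obtain $\deg P_n = \binom{n+2}{2} - (2n+1) = \binom{n}{2}$ exactly. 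Your argument is shorter, yields more information (the precise degree rather than mere non-constancy), and makes transparent \emph{why} the constraints $\sum c_i = 0$ and $\sum a_i c_i = 0$ are the right ones: they force $N$ to vanish to second order at $0$, which is what drives the degree drop. The paper's approach is more elementary in that it avoids asymptotic analysis, but at the cost of a longer and more delicate computation.

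One minor quibble: your closing sentence says both ingredients rely on rational independence, but in fact the vanishing of $N(0^+)$, $N'(0^+)$ and the non-vanishing of $N''(0^+)$ use only the linear constraints and the ordering $a_1 < a_2 < a_3$; rational independence enters solely to guarantee the exponents $\langle \bk, \ba\rangle$ are pairwise distinct.
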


The function $N$ of Lemma~\ref{L1sided} is, up to scaling, a member of
the class of one-sided P\'olya frequency functions which we call
Hirschman--Widder densities. These were studied by Hirschman and
Widder in their 1949 paper~\cite{HW49}, and their 1955
monograph~\cite{HW} contains a detailed analysis of such exponential
polynomials and their Laplace transforms.

The work of Hirschman and Widder is closely intertwined with that of
Schoenberg. In 1947, Schoenberg~\cite{Schoenberg47} announced the
notion of a P\'olya frequency function. In their 1949 work, Hirschman
and Widder studied these maps and their order of smoothness, via their
Laplace transforms. This was followed by Schoenberg's first full paper
on P\'olya frequency functions~\cite{Schoenberg51} in 1951.

In a forthcoming piece of work \cite{BGKP-HW}, we show that
Hirschman--Widder densities satisfy the conclusions of
Lemma~\ref{L1sided} generically, as long as they involve at least
three distinct exponential terms: their higher integer powers are not
$\TN$, and so are not P\'olya frequency functions. Indeed, a stronger
result is established, with powers replaced by non-homethetic
polynomial functions.

\begin{proof}[Proof of Lemma~\ref{L1sided}]
Throughout this proof, sums and products are taken over non-negative
integers satisfying the given conditions. For any $n \in \N$, let
\[
F_n( s ) := \cB\{ N^n \}( s ) = %
\sum_{i + j + k = n} \binom{n}{i, j, k} %
\frac{c_1^i c_2^j c_3^k}{s + i a_1 + j a_2 + k a_3} = %
\frac{p_n( s )}{q_n( s )}
\]
be the bilateral Laplace transform of $N$, where the monic polynomial
\[
q_n( s ) := \prod_{i + j + k = n} ( s + i a_1 + j a_2 + k a_3 )
\]
has degree $( n + 1 ) ( n + 2 ) / 2$ and
\[
p_n( s ) := %
\sum_{i + j + k = n} \binom{n}{i, j, k} c_1^i c_2^j c_3^k %
\prod_{\substack{i' + j' + k' = n \\ ( i', j', k' ) \neq ( i, j, k )}}
( s + i' a_1 + j' a_2 + k' a_3 )
\]
has degree no more than
$n ( n + 3 ) / 2$. In fact, the leading coefficient of $p_n( s )$ is
\[
\sum_{i + j + k = n} \binom{n}{i, j, k} c_1^i c_2^j c_3^k = %
( c_1 + c_2 + c_3 )^n = 0,
\]
whereas the next-highest-order coefficient of $p_n( s )$ is
\begin{align*}
\sum_{i + j + k = n} \binom{n}{i, j, k} & c_1^i c_2^j c_3^k \biggl( %
\bigl( \sum_{i' + j' + k' = n} i' a_1 + j' a_2 + k' a_3  \bigr) %
 - i a_1 - j a_2 - k a_3 \biggr) \\
 & = \frac{1}{6} n ( n + 1 ) ( n + 2 ) ( a_1 + a_2 + a_3 ) %
 ( c_1 + c_2 + c_3 )^n \\
 & \qquad %
 - n ( c_1 + c_2 + c_3 )^{n - 1} ( a_1 c_1 + a_2 c_2 + a_3 c_3 ) \\
 & = 0,
\end{align*}
since
\[
\sum_{i' + j' + k' = n} i' = %
\frac{1}{6} n ( n + 1 ) ( n + 2 )
\]
and
\[
\sum_{i + j + k = n} \binom{n}{i, j, k} i x^i y^j z^k = %
x \frac{\partial}{\partial x} ( x + y + z )^n = %
n x ( x + y + z )^{n - 1}.
\]
The constraints (\ref{Econstraints}), together with the ordering of
$a_1$, $a_2$, and $a_3$, imply that
\[
c_2 < 0, \qquad c_1 + c_2 < 0 \quad \text{and} \quad %
p_1 \equiv \frac{c_1 c_2 ( a_1 - a_2 )^2}{c_1 + c_2 } > 0,
\]
whence $1 / F_1( s ) = q_1( s ) / p_1( s )$ is a polynomial with real
roots $-a_1$, $-a_2$ and $-a_3$. Furthermore, as
\[
\frac{1}{F_1( 0 )} = \frac{q_1( 0 )}{p_1( 0 )} = %
\frac{a_1 a_2 a_3 ( c_1 + c_2 )}{c_1 c_2 ( a_1 - a_2 )^2} > 0,
\]
it follows that $N$ is a P\'olya frequency function, by
\cite[Theorem~1]{Schoenberg51}.

For the second part, we will show that
$1 / F_n( s ) = q_n( s ) / p_n( s )$ is the restriction of a rational
function with simple poles whenever $n \geq 2$, and so it cannot be
the restriction of an entire function; it then follows from
\cite[Theorem~1]{Schoenberg51} that $N^n$ is not a P\'olya frequency
function for such $n$.

Note first that none of the roots of $q_n( s )$ are roots of
$p_n( s )$, since if $i$, $j$ and $k$ are non-negative integers such
that $i + j + k = n$, then
\[
p_n( -i a_1 - j a_2 - k a_3 ) = %
\binom{n}{i, j, k} c_1^i c_2^j c_3^k %
\prod_{\substack{i' + j' + k' = n \\ ( i', j', k' ) \neq ( i, j, k )}}
\bigl( ( i' - i ) a_1 + ( j' - j ) a_2 + ( k' - k ) a_3 \bigr) \neq 0.
\]
We will now show that $p_n( s )$ is non constant, which establishes
that $1 / F_n( s )$ is as claimed. We have that
\begin{align*}
\gamma_n := \frac{p_n( -n a_3 )}{p_n( -n a_1 )} & = %
\Bigl( \frac{c_3}{c_1} \Bigr)^n \frac{n a_1 - n a_3}{n a_3 - n a_1} %
\prod_{\substack{i + j + k = n \\ i, k \neq n}} %
\frac{i a_1 + j a_2 + k a_3 - n a_3}{i a_1 + j a_2 + k a_3 - n a_1}
\\
& = -\Bigl( \frac{c_3}{c_1} \Bigr)^n %
\prod_{\substack{i + j + k = n \\ i, k \neq n}} %
\frac{( a_1 - a_3 ) i + ( a_2 - a_3 ) j}%
{( a_2 - a_1 ) j + ( a_3 - a_1 ) k} \\
& = ( -1 )^{n + 1} \Bigl( \frac{c_1 + c_2}{c_1} \Bigr)^n %
\prod_{\substack{i + j + k = n \\ i, k \neq n}} %
\frac{c_1 j - c_2 i}{(c_1 + c_2 ) j + c_2 i},
\end{align*}
where the last step uses the fact that the placeholder variables $i$
and $k$ are symmetric in the product for the denominator. Since
\[
\prod_{\substack{i + j + k = n \\ i, k \neq n}} %
\frac{c_1 j - c_2 i}{( c_1 + c_2 ) j + c_2 i} = %
( -1 )^{n - 1} \bigl( \frac{c_1}{c_1 + c_2} \bigr)^n %
\prod_{2 \leq i + j \leq n} %
\frac{c_1 j - c_2 i}{( c_1 + c_2 ) j + c_2 i},
\]
it follows that
\[
\frac{\gamma_{n + 1}}{\gamma_n} = %
\prod_{\substack{i + j = n + 1 \\ i, j \geq 1}} %
\frac{c_1 j - c_2 i}{( c_1 + c_2 ) j + c_2 i} = %
( -1 )^n \prod_{j = 1}^n %
\frac{n + 1 - ( \alpha + 1 ) j}{n + 1 + \alpha j},
\]
where
\[
\alpha = \frac{c_1}{c_2} = \frac{-( a_3 - a_2 )}{a_3 - a_1} \in %
( -1, 0 ).
\]
If $j = 1$, \ldots, $n$, then
\[
\frac{n + 1 - ( \alpha + 1 ) j}{n + 1 + \alpha j} < 1 \iff %
n + 1 - ( \alpha + 1 ) j < n + 1 + \alpha j \iff %
0 < 2 \alpha + 1,
\]
and
\[
0 < 2 \alpha + 1 = \frac{a_3 - a_1 - 2 a_3 + 2 a_2}{a_3 - a_1} \iff %
a_3 - a_2 < a_2 - a_1,
\]
whereas
\[
\frac{n + 1 - ( \alpha + 1 ) j}{n + 1 + \alpha j} > 1 \iff %
0 > 2 \alpha + 1 \iff a_3 - a_2 > a_2 - a_1.
\]
Since equality is impossible, by linear independence, it follows that
either $| \gamma_{n + 1} / \gamma_n | < 1$ for all $n \in \N$ or
$| \gamma_{n + 1} / \gamma_n | > 1$ for all $n \in \N$; in each
case, since $\gamma_1 = 1$, the polynomial~$p_n$ is non-constant for
all $n \geq 2$. This completes the proof.
\end{proof}

With the preliminary result established, we can now obtain the
promised charactizations.

\begin{proof}[Proof of Theorem~\ref{T1sidedPFF}]
Let $F : [ 0, \infty ) \to [ 0, \infty )$. We show first that, if
$G = F \circ t \lambda_d$ is a $\TN_2$ function for all $t > 0$ and
all $d \in ( 0, 1 )$, in that $\det T_G[ \bx; \by ] \geq 0$ for all
$\bx$, $\by \in \inc{\R}{2}$, then $F$ is non-decreasing on
$( 0, \infty )$. This follows because, if $t > 0$, $\eps > 0$,
and $d = e^{-2 \eps}$, then
\[
0 \leq %
\det( F \circ t e^\eps T_{\lambda_d} )%
[ ( \eps, 2 \eps); ( 0, \eps ) ] = %
\begin{vmatrix}
 F( t ) & F( t e^{-\eps} ) \\
 F( t e^{-\eps} ) & F( t )  
\end{vmatrix}.
\]
Next, letting
$t_1 := \sup \{ t > 0 : F( t ) = 0 \} \cup \{ 0 \} \in [ 0, \infty ]$,
we see that $F$ vanishes on $( 0, t_1 )$ and is positive on
$( t_1, \infty )$.

We now show that $F$ is continuous on $( t_1, \infty )$. If
$t_1 = \infty$, then this is immediate. Otherwise, we first show that
$F$ is multiplicatively mid-concave on $( t_1, \infty )$. Given
any $q > p > t_1$, choose $t > q$ and let $a$ and $b$ be such that
$q = t e^{-a}$ and $p = t e^{-b}$. Then, for any $d \in ( 0, 1 )$,
\[
0 \leq \det( F \circ t T_{\lambda_d} )%
[ ( ( b + a ) / 2, b) ; ( 0, ( b - a ) / 2 ) ] = %
F( \sqrt{p q} )^2 - F( p ) F( q ).
\]
The continuity of $F$ on $( t_1, \infty )$ now follows by
\cite[Theorem~71.C]{roberts-varberg}. We can now establish each of the
three assertations of the theorem.

(1). Every homothety preserves the class of one-sided P\'olya
frequency functions. Conversely, since $F \circ \lambda$ is a P\'olya
frequency function, it is integrable, so $F( 0 ) = 0$, and has
unbounded support, so $t_1 = 0$: if $t_1 > 0$ then $F( e^{-x} )$ will
vanish for all sufficiently large $x$. We now follow the relevant
parts of the proof of Theorem~\ref{TPolya}. By considering
$F \circ p \lambda$ for any $p > 0$, we obtain the existence of
positive constants $b_0$ and~$c_0$ such that $F( t ) = c_0 t^{b_0}$
for all $t \in ( 0, p )$; since $p$ is arbitrary, this identity must
hold everywhere on $( 0, \infty )$. Moreover, the exponent $b_0$ must
be an integer, by \cite[Theorem~1]{Schoenberg51} and the form of the
bilateral Laplace transform of $F \circ \phi$, where
$\phi( x ) = x \lambda( x )$. That this integer equals $1$ now follows
from Lemma~\ref{L1sided}.

(2). That the given functions are preservers follows from the
proof that $(3) \implies (1)$ in Theorem~\ref{Ttoeplitz}. Conversely,
let $t_1$ be as above and note that $F( t \bJ_{x > 0} ) \equiv 0$
whenever $t \in ( 0, t_1 )$. Hence $t_1 = 0$ and we may now follow the
proof that $(2) \implies (3)$ in Theorem~\ref{Ttoeplitz}, replacing
the use of Lemma~\ref{Lintpowers} with Lemma~\ref{L1sided}.

(3). As $\bJ_{x = 0}$ is now in our test set, the function
$F( x ) = c \bJ_{x = 0}$ is no longer a preserver of total
non-negativity; the other functions in the preceding part do preserve
$a \bJ_{x = b}$ for any $a \geq 0$ and $b \in \R$. For the converse,
if $t_1 = 0$, then the working for (2) and the previous observation
gives the result. Otherwise, if $t_1 > 0$ and $F( 0 ) \neq 0$, then
$F( t \bJ_{x = 0} ) = F( 0 ) ( 1 - \bJ_{x = 0} )$ is not $\TN$
whenever $t \in ( 0, t_1 )$. It remains to consider the case
$F( 0 ) = 0$ and $t_1 \in ( 0, \infty )$, but then
$F \circ 2 t_1 \lambda$ would be a $\TN$ function which is non-zero at
two or more points and has compact support, an impossibility. This
completes the proof.
\end{proof}

With Theorem~\ref{T1sidedPFF} to hand, we show a similar result for
the preservers of one-sided P\'olya frequency sequences, but in
slightly greater generality. See Definition~\ref{Dadmissible} for the
definition of an admissible pair; we say that a Toeplitz kernel
$T_f : X \times Y \to \R$ is \emph{one sided} if the associated
function $f : X - Y \to \R$ is, that is, there exists
$x - y \in X - Y$ such that $f$ vanishes on
$\{ z \in X - Y : z < x - y \}$ or on
$\{ z \in X - Y : z > x - y \}$.

\begin{theorem}\label{T1sidedPFseq}
Let $X$ and $Y$ be a pair of admissible sets such that $X - Y$ does
not have a maximum or minimum element. If
$F : [ 0, \infty ) \to [ 0, \infty)$ is non-zero and preserves the
one-sided $\TN$ Toeplitz kernels on $X \times Y$, then $F( x ) = c x$
or $F( x ) = c \bJ_{x > 0}$ for some $c > 0$.
\end{theorem}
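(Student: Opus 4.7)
The plan adapts the strategy used for Theorem~\ref{Tpfseq} to the one-sided setting, ending with an appeal to Theorem~\ref{T1sidedPFF}(3) in place of Theorem~\ref{Ttoeplitz}. The hypothesis that $X - Y$ has no maximum and no minimum enters throughout: it ensures that certain Toeplitz kernels built from one-sided $\TN$ functions on $\R$ restrict to genuinely one-sided kernels on $X \times Y$, and it makes the singleton test kernel below non-trivially one-sided. My first step would be to show $F(0) = 0$. For any $b \in X - Y$ and $a > 0$, the Toeplitz kernel $K(x, y) = a \bJ_{x - y = b}$ is $\TN$: every submatrix has at most one non-zero entry per row and column, and by the monotonicity of the orderings, the only non-vanishing configuration is a positive scalar multiple of a ``partial identity'', giving minors of value $a^m$ or $0$. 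It is one-sided since its associated function $a \bJ_{z = b}$ vanishes on both $\{z \in X - Y : z < b\}$ and $\{z \in X - Y : z > b\}$, both non-empty by hypothesis. If $F(0) > 0$, then $F \circ K$ equals $F(0)$ off the diagonal-shift $x - y = b$ and $F(a)$ on it, so its associated function does not vanish on any semi-axis of $X - Y$, contradicting preservation of one-sidedness.

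Next, I would establish continuity of $F$ on $(0, \infty)$. Using the continuous one-sided P\'olya frequency functions $\lambda$ and $\phi(x) = x \lambda(x)$, together with their affine rescalings, I construct one-sided $\TN$ Toeplitz kernels on $X \times Y$ exhibiting chosen patterns. In particular, the rank-one exponential kernel $K(x, y) = t \lambda(\alpha(x - y) + \beta)$, with $\beta$ large enough that all entries on the chosen equi-spaced progressions are positive, yields via $2 \times 2$ minors of $F \circ K$ the multiplicative log-mid-concavity of $F$ on $(0, \infty)$. Off-diagonal $2 \times 2$ minors of the $3 \times 3$ version of this kernel exclude any isolated zero of $F$ in $(0, \infty)$ (such a zero would, via the log-mid-concavity, force $F$ to vanish on a dense set of ratios and, after monotonicity, everywhere, contradicting that $F$ is non-zero), and the unimodal $\phi$-kernel supplies the monotonicity needed to upgrade log-mid-concavity to continuity.

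Continuously extend $F$ to $\widetilde F : [0, \infty) \to [0, \infty)$ and argue by contradiction, following Step~3 of the proof of Theorem~\ref{Tpfseq}: if $\widetilde F$ is not of the form $c x$ or $c \bJ_{x > 0}$, then by Theorem~\ref{T1sidedPFF}(3) some one-sided $\TN$ function $g$ on $\R$ has $\widetilde F \circ g$ not $\TN$. Using the Toeplitz analog of Lemma~\ref{Lhankel}, a failing minor of $T_{\widetilde F \circ g}$ can be located at rational arguments. Then, using admissibility and the lack of max and min in $X - Y$ to shift and rescale, this failing minor is realized as a submatrix of $F \circ K$ for a one-sided $\TN$ Toeplitz kernel $K$ on $X \times Y$, contradicting the preservation hypothesis. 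Theorem~\ref{T1sidedPFF}(3) then forces $F(x) = cx$ or $F(x) = c \bJ_{x > 0}$.

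The principal obstacle is the continuity step. Unlike in Theorem~\ref{Tpfseq}, no one-sided test kernel offers a clean symmetric $2 \times 2$ pattern analogous to the Gaussian's; the natural candidate $K = t \lambda(\alpha(x - y) + \beta)$ is rank one, so its ``diagonal'' $2 \times 2$ minors already vanish before $F$ is applied, and one must combine minors of several distinct kernels (for log-mid-concavity, for ruling out interior zeros of $F$, and for monotonicity) to pin down continuity. A secondary technical point is ensuring, in the final step, that the Toeplitz kernel used to realize the failing minor remains one-sided on $X - Y$, which enters through the direction of the affine rescaling of $g$ and relies on the no-max/no-min hypothesis.
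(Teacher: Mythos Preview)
Your outline shares the correct skeleton with the paper's proof (Step~1: $F(0)=0$ via the singleton kernel; Step~2: log-mid-concavity and continuity on the positive part of $F$ via rescaled $\lambda_d$; final step: transfer a failing minor from a one-sided $\TN$ function on $\R$ to $X\times Y$ using rational approximation and admissibility). However, there is a genuine gap in your continuity step, precisely where you try to rule out interior zeros of $F$.

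Your claim that an interior zero ``would, via the log-mid-concavity, force $F$ to vanish on a dense set of ratios and, after monotonicity, everywhere'' is not correct. Set $t_1 := \sup\{t>0 : F(t)=0\}\cup\{0\}$. Once $F$ is non-decreasing, the candidate configuration is that $F$ vanishes on $(0,t_1]$ and is positive on $(t_1,\infty)$. The mid-concavity inequality $F(\sqrt{pq})^2 \ge F(p)F(q)$ is then \emph{automatically} satisfied whenever $\min\{p,q\}\le t_1$ (the right-hand side is $0$), and on $(t_1,\infty)^2$ it gives continuity there but says nothing about whether $t_1>0$. So neither log-mid-concavity nor off-diagonal $2\times 2$ minors of a rank-one $\lambda$-kernel will eliminate $t_1>0$. (Incidentally, monotonicity in the paper comes from $\lambda_d$, not from the unimodal $\phi$; your use of $\phi$ here is unclear.)

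This is exactly the ``principal obstacle'' you sensed, and the paper devotes an entire step to it. First, a $3\times 3$ minor of a rescaled $\lambda_1$-kernel, taken in the limit $\eps\to 0^+$, yields $-F^+(t_1)^3 \ge 0$, so $F^+(t_1)=0$ and hence $F(t_1)=0$. Then, for fixed $t_2>t_1$, one chooses equi-spaced progressions of length $k+2$ and the kernel built from $\lambda_{e^{-\delta}}$ with $\delta = k^{-1}\log(t_2/t_1)$; a suitable $3\times 3$ minor of $F\circ K$ gives $2F(t_2)\le F(t_2 e^{\delta})$ for every $k$, and letting $k\to\infty$ (so $\delta\to 0$) and using continuity at $t_2$ gives $F(t_2)\le 0$, a contradiction. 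This argument genuinely uses one-sidedness (the zeros below the support produce the needed $0$ entries in the $3\times 3$ matrices) and arbitrarily long progressions, and is the missing idea in your proposal.
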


\begin{proof}
We suppose that $F$ is non-zero and preserves one-sided $\TN$ Toeplitz
kernels on $X \times Y$ and proceed in a number of steps.

Step 1: $F( 0 ) = 0$ and $F$ is non-decreasing. Since $F \not\equiv 0$
on $[ 0, \infty )$, there exists $t > 0$ such that
$F( t ) \neq F( 0 )$. For any $x - y \in X - Y$, the one-sided
function
\[
\Lambda_0( z ) = t \bJ_{z = x - y}
\]
is a P\'olya frequency function, so the kernel
\[
( F \circ T_{\Lambda_0} )( z ) = %
F( 0 ) + ( F( t ) - F( 0 ) ) \bJ_{z = x - y}
\]
is one-sided. As $F$ equals the constant $F( 0 )$ on $X - Y$, except
at $x - y$, and $X - Y$ has no extremal element, it follows that
$F( 0 ) = 0$. In particular, the operator $C_F$ preserves one-sided
kernels, and henceforth we focus on preserving $\TN$.

We next show that $F$ is non-decreasing, by following the proof of
Theorem~\ref{T1sidedPFF}. From the previous working, we have that
$F( 0 ) = 0 \leq F( t )$ for all $t > 0$. Next, given any $t > 0$ and
$\eps > 0$, we see that $F( t ) \geq F( t e^{-\eps} )$, by
considering $\det( F \circ T_{\Lambda_1} )[ \bx; \by ]$,
where the arithmetic progressions $\bx = ( x_1, x_2 ) \in \inc{X}{2}$
and $\by = ( y_1, y_2 ) \in \inc{Y}{2}$ are equi-spaced, and
$\Lambda_1$ is the one-sided $\TN$ function
\[
\Lambda_1( z ) := t e^\eps %
\lambda_{e^{-2 \eps}}%
\Bigl( \eps \frac{z - x_1 + y_2}{x_2 - x_1} \Bigr).
\]
This shows the monotonicity of $F$, and so the existence of
$t_1 \in [ 0, \infty ]$ such that $F$ vanishes on $( 0, t_1 )$ and is
positive on $( t_1, \infty )$.

Step 2: $F$ is continuous on $( t_1, \infty )$. It suffices to show
that $F$ is multiplicatively mid-concave on $( t_1, \infty )$, by
\cite[Theorem~71.C]{roberts-varberg}. Given $p$,
$q \in ( t_1, \infty )$ with $p < q$, choose any $t > q$ and note that
\[
0 < a := \log( t / q ) < b := \log( t / p ).
\]
With equi-spaced arithmetic progressions $\bx \in \inc{X}{2}$ and
$\by \in \inc{Y}{2}$ as in the previous step, and any
$d \in [ 0, 1 ]$, we consider the one-sided $\TN$ function
\[
\Lambda_2( z ) := t \lambda_d\Bigl( a + %
\frac{z - x_1 + y_2}{2 ( x_2 - x_1 )} ( b - a ) \Bigr).
\]
The determinant
\[
\det( F \circ T_{ \Lambda_2} )[ \bx; \by ] = %
\begin{vmatrix}
 F( t e^{-(a + b) / 2} ) & F( t e^{-a} ) \\
 F( t e^{-b} ) & F( t e^{-( a + b ) / 2} )
\end{vmatrix} = %
F( \sqrt{p q} )^2 - F( p ) F( q )
\]
is non-negative, as required. Hence $F$ is continuous on
$( t_1, \infty )$.

Step 3: $t_1 = 0$. We suppose otherwise, and show first that
$F( t_1 ) = 0$. For this, it suffices to prove
that $F^+( t_1 ) := \lim_{t \to t_1^+} F( t ) = 0$. We fix equi-spaced
arithmetic progressions $\bx = ( x_1, x_2, x_3 ) \in \inc{X}{3}$ and
$\by = ( y_1, y_2, y_3 ) \in \inc{Y}{3}$. For arbitrary $\eps > 0$,
consider the one-sided $\TN$ function
\[
\Lambda_{3, \eps}( z ) := t_1 e^{5 \eps} %
\lambda_1\Bigl( 2 \eps \frac{z - x_1 + y_2}{x_2 - x_1} \Bigr).
\]
A straightforward computation, using the fact that $F( t ) = 0$ for
all $t < t_1$, gives that
\[
( F \circ T_{\Lambda_{3, \eps}} )[ \bx; \by ] = \begin{pmatrix}
 F( t_1 e^{3 \eps} ) & F( t_1 e^{5 \eps} ) & 0 \\
 F( t_1 e^\eps ) & F( t_1 e^{3 \eps} ) & F( t_1 e^{5 \eps} ) \\
 0 & F( t_1 e^\eps ) & F( t_1 e^{3 \eps} ) \end{pmatrix}.
\]
As this matrix is $\TN$ by assumption, we have that
\[
0 \leq \lim_{\eps \to 0^+} %
\det( F \circ T_{\Lambda_{3, \eps}} )[ \bx; \by ] = -F^+( t_1 )^3.
\]
Thus $F^+( t_1 ) = 0$.

We now fix $t_2 > t_1$. Given any $k \in \N$, we choose equi-spaced
arithmetic progressions
$\bx = ( x_1, \ldots, x_{k + 2} ) \in \inc{X}{k + 2}$ and
$\by = ( y_1, \ldots, y_{k + 2} ) \in \inc{Y}{k + 2}$, let
\[
\delta = \delta( k ) := \frac{1}{k} \log( t_2 / t_1 ) > 0,
\]
and consider the one-sided $\TN$ function
\[
\Lambda_{4, k}( z ) :=  t_1 e^{( 2 k + 1 ) \delta} %
\lambda_{e^{-\delta}}\Bigl( \delta %
\frac{z - x_1 + y_{k + 1}}{x_2 - x_1} \Bigr).
\]
As $F( 0 ) = 0 = F( t_1 )$, it follows that
\[
( F \circ T_{\Lambda_{4, k}} )%
[ ( x_1, x_2, x_{k + 2} ); ( y_1, y_{k + 1}, y_{k + 2} ) ] = %
\begin{pmatrix}
 F( t_2 e^\delta ) & F( t_2^2 / t_1 ) & 0 \\
 F( t_2 ) & F( t_2^2 / t_1 ) & F( t_2^2 / t_1 ) \\
 0 & F( t_2 ) & F( t_2 e^\delta ) \end{pmatrix}.
\]
Since this matrix is $\TN$, and $F$ is positive on $( t_1, \infty )$,
taking determinants gives that
\[
2 F( t_2 ) \leq F( t_2 e^\delta ).
\]
Letting $k \to \infty$ yields $0 < 2 F( t_2 ) \leq F( t_2 )$, as $F$
is continuous at $t_2$. Thus $F( t_2 ) \leq 0$, a contradiction since
$t_2 > t_1$.

Step 4: $F$ has the form claimed. The previous steps give that
$F$ is continuous, positive, and non-decreasing on $( 0, \infty )$.
We first assume that $F$ does not have the form $c \bJ_{x > 0}$ for
any $c > 0$, so that $F$ is non-constant on $( 0, \infty )$. If
$F \circ f$ is a $\TN$ function whenever $f$ has the form
$t \lambda_d$ for $t > 0$ and $d \in [ 0, 1 ]$,
$\phi( x ) = x \lambda( x )$ or $N$ as in Lemma~\ref{L1sided}, then
working as in the proof of Theorem~\ref{Ttoeplitz} shows that
$F( x ) = c x$ for some $c > 0$. Hence we assume otherwise: suppose
$G := F \circ f$ is not $\TN$ for one of these functions. Then, by
Lemma~\ref{Lhankel}, there exists $\bz \in \inc{\R}{n}$, where
$n \geq 2$, such that $A := T_G[ \bz; \bz ]$ is not $\TN$. Since $G$
is continuous except possibly at the origin, we may assume that
$\bz \in \inc{\Q}{n}$. Taking $N \in \N$ such that $N z_i$ is an
integer for all $i \in [n]$, we set $m = N ( z_n - z_1 ) + 1$, choose
equi-spaced arthimetic progressions $\bx \in \inc{X}{m}$ and
$\by \in \inc{Y}{m}$, and let
\[
\Lambda_5 : X \times Y \to \R; \ ( x, y ) \mapsto %
G\Bigl( \frac{( x - x_1 ) - ( y - y_1 )}{( x_1 - y_1 ) N} \Bigr).
\]
Then $\Lambda_5 = F \circ T_f |_{X \times Y}$ is $\TN$, since $T_f$ is
a one-sided $\TN$ Toeplitz kernel on $X \times Y$, but~$\Lambda_5$
contains $A$ as a principal submatrix. This contradiction completes
the proof.
\end{proof}

We conclude with the case of P\'olya frequency sequences, where
$X = Y = \Z$. Note that a shift of origin
$( a_n )_{n \in \Z} \mapsto ( a_{n + k} )_{n \in \Z}$ preserves the
$\TN$ property for any $k \in \Z$, as does the reflection
$( a_n )_{n \in \Z} \mapsto ( a_{-n} )_{n \in \Z}$, so we may
consider only one-sided sequences that vanish for negative indices. 

\begin{corollary}\label{C1sidedPFS}
Let $F : [ 0, \infty ) \to [ 0, \infty )$. Then $F$ preserves the
set of one-sided P\'olya frequency sequences if and only if
$F( x ) = c x$ for some $c \geq 0$.
\end{corollary}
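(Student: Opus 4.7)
The plan is to deduce the corollary from Theorem~\ref{T1sidedPFseq} specialized to $X = Y = \Z$, and then eliminate the Heaviside case via an explicit counterexample based on finite-support sequences. The "if" direction is immediate: for $F(x) = cx$ with $c \geq 0$, composition with $F$ simply rescales the Toeplitz kernel associated to a sequence by the factor $c$, and this preserves total non-negativity.

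For the converse, first dispose of $F \equiv 0$, which corresponds to $c = 0$. Assume then that $F$ is non-zero. I would apply Theorem~\ref{T1sidedPFseq} with the admissible pair $X = Y = \Z$: admissibility is immediate since $\Z$ contains equi-spaced arithmetic progressions of every length, and the difference set $X - Y = \Z$ clearly has no maximum or minimum element. Moreover, a one-sided P\'olya frequency sequence, after an index shift, yields a one-sided $\TN$ Toeplitz kernel on $\Z \times \Z$ in the sense of Section~\ref{Stoeplitz3}, and conversely. The theorem then forces $F(x) = cx$ or $F(x) = c\mathbf{1}_{x > 0}$ for some $c > 0$.

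The only remaining task, and the sole place where the integer-domain case diverges from the general setting of Theorem~\ref{T1sidedPFseq}, is to rule out $F(x) = c\mathbf{1}_{x > 0}$. Here I would invoke the Aissen--Edrei--Schoenberg--Whitney characterization \cite{AESW} of finitely supported P\'olya frequency sequences as those whose generating polynomial has only real non-positive roots. Taking the one-sided sequence $\mathbf{a} = (1, 2, 1, 0, 0, \ldots)$ with generating polynomial $(1 + z)^2$, which is plainly a P\'olya frequency sequence, its image under $F(x) = c\mathbf{1}_{x > 0}$ is $c(1, 1, 1, 0, 0, \ldots)$ with generating polynomial $c(1 + z + z^2)$, whose roots are the non-real primitive cube roots of unity. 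Thus the image is not a P\'olya frequency sequence, and the Heaviside alternative is excluded.

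I do not anticipate any real obstacle: the corollary is essentially an immediate consequence of the preceding theorem together with the observation that the discreteness of $\Z$ produces enough finitely supported PF sequences (via AESW) to detect the failure of the Heaviside preserver, which is otherwise admissible in the continuum setting of Theorem~\ref{Ttoeplitz}.
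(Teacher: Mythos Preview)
Your proof is correct and follows essentially the same strategy as the paper: both invoke Theorem~\ref{T1sidedPFseq} with $X = Y = \Z$ to reduce to the two candidates $F(x) = cx$ and $F(x) = c\,\bJ_{x>0}$, and both use the one-sided P\'olya frequency sequence $(1,2,1,0,\ldots)$ to eliminate the Heaviside case. The only cosmetic difference is that you appeal directly to the Aissen--Edrei--Schoenberg--Whitney criterion to see that the image $(1,1,1,0,\ldots)$ is not a P\'olya frequency sequence, whereas the paper computes the offending $3\times 3$ minor via a limiting argument that shows any preserver must satisfy $F^+(0)=0$.
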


\begin{proof}
One implication is immediate. For the converse, by
Theorem~\ref{T1sidedPFseq}, we have that $F( x ) = c x$ or
$F( x ) = c \bJ_{x > 0}$; to rule out the latter possibility, we show
that $F$ is continuous at the origin. Given the one-sided P\'olya
frequency sequence $\ba$ such that $a_0 = a_2 = 1$, $a_1 = 2$, and
$a_n = 0$ otherwise, we note that
\[
C_\eps := \det( F \circ \eps T_\ba )[ ( 0, 1, 2 ); ( -1, 0, 1 ) ] = %
\begin{vmatrix}
 F( 2 \eps ) & F( \eps ) & 0 \\
 F( \eps ) & F( 2 \eps ) & F( \eps ) \\
 0 & F( \eps ) & F( 2 \eps )
\end{vmatrix} \geq 0
\]
for any $\eps > 0$. Hence
\[
0 \leq \lim_{\eps \to 0^+} C_\eps = \lim_{\eps \to 0^+} -F( \eps )^3,
\]
which gives continuity at the origin as claimed.
\end{proof}

We conclude with a corollary on lower-triangular matrices, in the
spirit of the final observation in the previous section.

\begin{corollary}
Let $F : [ 0, \infty ) \to [ 0, \infty )$. Then $F$ preserves the set
of $\TN$ lower-triangular Toeplitz matrices if and only if
$F( x ) = c x$ for some $c \geq 0$.
\end{corollary}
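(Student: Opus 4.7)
The backward implication is immediate: if $F(x) = cx$ with $c \geq 0$, then applying $F$ entrywise to any matrix multiplies it by $c$, which preserves both the lower-triangular Toeplitz shape and the $\TN$ property. The strategy for the forward implication is to reduce the problem to Corollary~\ref{C1sidedPFS} on preservers of one-sided P\'olya frequency sequences.

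The first step is to show $F(0) = 0$. Consider the $2 \times 2$ zero matrix $\bZ_{2 \times 2}$, which is trivially $\TN$ and lower-triangular Toeplitz. Its image $F[\bZ_{2 \times 2}]$ has $F(0)$ in the $(1,2)$ entry, and for this to remain a lower-triangular matrix (as required by the hypothesis that $F$ preserves the set of $\TN$ lower-triangular Toeplitz matrices), we must have $F(0) = 0$. Consequently, for any one-sided sequence $\ba = (a_n)_{n \geq 0}$ (extended by setting $a_n = 0$ for $n < 0$), applying $F$ entrywise yields another one-sided sequence $F \circ \ba$.

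The second step is the key reduction: if $\ba$ is a one-sided P\'olya frequency sequence, then so is $F \circ \ba$. To see this, let $T_{F \circ \ba}$ denote the bi-infinite Toeplitz kernel on $\Z \times \Z$ associated with $F \circ \ba$, and let $\bx, \by \in \inc{\Z}{k}$ be arbitrary. Setting $c := \min\{ \min \bx, \min \by \}$ and $N - 1 := \max\{ \max \bx, \max \by \} - c$, the minor $T_{F \circ \ba}[\bx; \by]$ coincides with $T_{F \circ \ba}[\bx - c; \by - c]$ by the Toeplitz property, which is a submatrix of the $N \times N$ lower-triangular Toeplitz matrix $L_N$ built from $(F(a_0), F(a_1), \ldots, F(a_{N-1}))$. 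Now $L_N$ is precisely the image under $F$ of the $N \times N$ lower-triangular Toeplitz matrix built from $(a_0, \ldots, a_{N-1})$, which is $\TN$ because it occurs as a submatrix of $T_\ba$. The hypothesis on $F$ then gives that $L_N$ is $\TN$, so $\det T_{F \circ \ba}[\bx; \by] \geq 0$. As $\bx, \by$ were arbitrary, $F \circ \ba$ is a (one-sided) P\'olya frequency sequence.

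With these reductions in place, Corollary~\ref{C1sidedPFS} immediately yields $F(x) = cx$ for some $c \geq 0$. The proof presents no significant obstacle: the only subtle point is the embedding argument showing that every minor of the bi-infinite Toeplitz kernel of a one-sided sequence appears as a submatrix of a sufficiently large finite lower-triangular Toeplitz matrix, but this follows directly from translation invariance of the Toeplitz structure combined with the vanishing of negatively indexed terms.
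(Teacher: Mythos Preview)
Your proof is correct and follows essentially the same approach as the paper: both reduce to Corollary~\ref{C1sidedPFS} via the observation that any finite minor of the Toeplitz kernel of a one-sided sequence embeds into a finite lower-triangular Toeplitz matrix. The only cosmetic difference is that you argue directly (showing $F$ preserves one-sided P\'olya frequency sequences) while the paper argues by contrapositive (exhibiting a bad lower-triangular Toeplitz matrix from a non-preserved sequence); you also make the step $F(0)=0$ explicit, which the paper leaves implicit.
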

\begin{proof}
One implication is immediate. For the converse, suppose $F( 0 ) = 0$
but $F$ is not of the form $F( x ) = c x$ for any $c \geq 0$. By
Corollary~\ref{C1sidedPFS}, there exists a one-sided P\'olya frequency
sequence $\ba = ( a_n )_{n \in \Z}$ such that $F \circ \ba$ is not a
one-sided P\'olya frequency sequence, so there exist
$\bx = ( x_1, \ldots, x_n ) \in \inc{\Z}{n}$ and
$\by = ( y_1, \ldots, y_n ) \in \inc{\Z}{n}$, where $n \in \N$, such
that $A := ( F \circ T_\ba )[ \bx; \by ]$ has negative determinant.
Let $m := \min\{ x_1, y_1 \}$, $M := \max\{ x_n, y_n \}$, and let
$\bz := ( m, \ldots, M )$. Then the Toeplitz matrices
$T_\ba[ \bz; \bz ]$ and $( F \circ T_\ba )[ \bz; \bz ]$ are lower
triangular, since $a_n = 0$ whenever $n < 0$, but the latter
contains~$A$ as a submatrix, so cannot be $\TN$. This contradiction
gives the result.
\end{proof}

The observation prior to Corollary~\ref{C1sidedPFS} means that the
previous result applies equally to upper-triangular matrices.


\section{Total-positivity preservers. II: General domains}\label{Sqns}

It is time to return to our original problem, of classifying the
preservers of $\TP$ kernels on $X \times Y$ for totally ordered sets
$X$ and $Y$. So far, we have resolved this when at least one of $X$
and $Y$ is finite. The picture is completed by the following result.

\begin{theorem}\label{Tmain1}
Suppose $X$ and $Y$ are infinite totally ordered sets such that there
exists a $\TP$ kernel on $X \times Y$. A function
$F : ( 0, \infty ) \to ( 0, \infty )$ preserves the set of $\TP$
kernels on $X \times Y$ if and only $F( x ) = c x$ for some $c > 0$.
\end{theorem}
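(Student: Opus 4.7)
The plan is to reduce Theorem~\ref{Tmain1} to Theorem~\ref{TtoeplitzTP}, by realizing $\TP$ P\'olya frequency functions on $\R$ as $\TP$ kernels on $X \times Y$ via independent monotone reparameterizations. The ``if'' direction being immediate, I assume that $F$ preserves $\TP$ kernels on $X \times Y$; by Proposition~\ref{Ppowers}, $F(x) = c x^\alpha$ for some $c, \alpha > 0$, so it remains only to show $\alpha = 1$. Using Lemma~\ref{Ltptosets}, I identify $X$ and $Y$ with subsets of $(0, \infty) \subseteq \R$.

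The key observation is that for any $\TP$ P\'olya frequency function $\ell : \R \to (0, \infty)$ and any strictly increasing functions $\phi : X \to \R$ and $\psi : Y \to \R$, the kernel
\[
K_\ell( x, y ) := \ell( \phi( x ) - \psi( y ) )
\]
is $\TP$ on $X \times Y$: for $\bx \in \inc{X}{n}$ and $\by \in \inc{Y}{n}$, the matrix $K_\ell[ \bx; \by ]$ is a minor of the $\TP$ Toeplitz kernel $T_\ell$ on $\R \times \R$ at the strictly increasing tuples $\phi( \bx )$ and $\psi( \by )$. By hypothesis, $F \circ K_\ell = c K_\ell^\alpha$ is also $\TP$ on $X \times Y$. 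The crucial point is that \emph{every} minor of $T_{\ell^\alpha}$ on $\R \times \R$ arises from some such $K_\ell$: given arbitrary $\bu, \bv \in \inc{\R}{n}$, I pick $\tilde\bx \in \inc{X}{n}$ and $\tilde\by \in \inc{Y}{n}$ (possible since $X, Y$ are infinite), prescribe $\phi( \tilde x_i ) = u_i$ and $\psi( \tilde y_j ) = v_j$, and extend $\phi$ and $\psi$ monotonically to $X$ and $Y$ (e.g., piecewise linearly). The $\TP$-ness of $F \circ K_\ell$ at $\tilde\bx, \tilde\by$ then yields $c^n \det( \ell^\alpha( u_i - v_j ) ) > 0$.

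Since $\bu$, $\bv$ and $n$ are arbitrary, the Toeplitz kernel $T_{c \ell^\alpha}$ is $\TP$ on $\R \times \R$; combined with the exponential decay of any P\'olya frequency function (which ensures $\ell^\alpha$ is integrable), we conclude that $c \ell^\alpha$ is itself a $\TP$ P\'olya frequency function. Hence $F$ preserves the class of $\TP$ P\'olya frequency functions, and the equivalence of (3) and (5) in Theorem~\ref{TtoeplitzTP} forces $\alpha = 1$. The main obstacle in the argument is identifying a family of test kernels on the abstract product $X \times Y$ which is rich enough to detect every obstruction in the Toeplitz Schoenberg--P\'olya theory on $\R \times \R$; this is supplied by the monotone-reparameterization trick $K_\ell( x, y ) = \ell( \phi( x ) - \psi( y ) )$, whose flexibility relies precisely on the infinitude of $X$ and $Y$.
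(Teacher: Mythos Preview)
Your argument is correct. Both proofs hinge on transporting Toeplitz test kernels from $\R$ onto $X \times Y$ via order-preserving maps, but they differ in how this is organised. The paper builds a \emph{single} fixed bijection $\varphi_X : X \to X'$ (and similarly for $Y$) so that $X'$ and $Y'$ contain arbitrarily long equi-spaced arithmetic progressions, making $(X',Y')$ an admissible pair; it then invokes Theorem~\ref{Tpfseq2} in one shot. Your route is more direct: after using Proposition~\ref{Ppowers} to reduce to $F(x)=cx^\alpha$, you allow $\phi,\psi$ to vary with the test minor $(\bu,\bv)$, thereby realising \emph{every} submatrix of $T_{\ell^\alpha}$ on $\R\times\R$ as a submatrix of some $\TP$ kernel on $X\times Y$, and conclude via Theorem~\ref{TtoeplitzTP} rather than its discrete counterpart. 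Your approach avoids the somewhat delicate construction of $\varphi_X$ (concatenated arithmetic progressions converging into a chain), and the extension of $\phi,\psi$ by piecewise-linear interpolation on $\R$ followed by restriction to $X$ is straightforward. The integrability of $\ell^\alpha$ is indeed guaranteed by the exponential decay of P\'olya frequency functions recorded in the paper (the bounds $\alpha<0<\beta$ for the strip of convergence of $\cB\{\Lambda\}$). The paper's route has the advantage of factoring through the independently useful Theorem~\ref{Tpfseq2} on admissible pairs, while yours is shorter and needs no new infrastructure beyond what is already available.
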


\begin{proof}
Without loss of generality, we assume $X$ and
$Y$ are infinite subsets of $\R$, by Lemma~\ref{Ltptosets}. Next, we
establish a stronger version of the chain property used to prove
Proposition~\ref{Pdim}, that is, the existence of an order-preserving
bijection $\varphi_X : X \to X'$, where $X' \subseteq \R$ contains an
arithmetic progression of length $2^n$ and increment $4^{-n}$ for each
integer $n \geq 2$. To show this, let $( a_n )_{n \geq 1}$ be an
increasing sequence of positive real numbers, containing an arithmetic
progression of length $2^n$ and increment $4^{-n}$ for each integer
$n \geq 2$, and converging to~$1$. Such a sequence can be constructed,
by taking each arithmetic progression of the form $j 4^{-n}$, where
$j \in [2^n]$, and concatenating these, at each stage adding the last
term of the existing sequence to each term of the next progression:
\[
\frac{1}{4}, \quad \frac{2}{4}, \quad %
\frac{2}{4} + \frac{1}{16}, \quad \ldots, \quad %
\frac{2}{4} + \frac{4}{16}, \quad %
\frac{2}{4} + \frac{4}{16} + \frac{1}{64}, \quad \ldots, \quad %
\frac{2}{4} + \frac{4}{16} + \frac{8}{64}, \ldots.
\]
As observed in Section~\ref{Snon-neg}, the set $X$ has either an
infinite ascending chain or an infinite descending chain. Without loss
of generality, we assume the former; the argument for a descending
chain is similar. If $X$ is unbounded above, let
$( x_n )_{n \geq 1} \subseteq X$ be an increasing sequence such that
$x_n \to \infty$. For all $n \in \N$, let $\varphi_X( x_n ) = a_n$ and
extend $\varphi_X$ piecewise linearly on
$\{ x \in X : x_n < x < x_{n+1} \}$. This provides an order-preserving
embedding of $\{x \in X : x \geq x_1\}$ into $( 0, 1 )$ containing the
desired arithmetic progressions. If, instead, $X$ is bounded above,
with $\sup X = m$, let $( x_n )_{n \geq 1} \subseteq X$ be any
increasing sequence in $X$, and let $x \in \R$ be its limit. Set
$\varphi_X( x_n ) = a_n$, $\varphi_X( x ) = 1$, and
$\varphi_X( m ) = 2$ if $m \neq x$, and extend $\varphi_X$ piecewise
linearly between these points. Then $\varphi_X$ is again an
order-preserving embedding of $\{x \in X : x \geq x_1\}$ into
$( 0, 2 ]$ and containing the desired arithmetic progressions. A
similar argument can be used to extend $\varphi_X$ to the whole of~$X$
by mapping $\{x \in X : x < x_1 \}$ into $[ -2, 0 )$. This gives
$\varphi_X$ as claimed.

To complete the proof, let $\varphi_X : X \to X'$ and
$\varphi_Y : Y \to Y'$ be order-preserving bijections as constructed
above, and note that $X'$ and $Y'$ are an admissible pair in the sense
of Definition~\ref{Dadmissible}. Thus, if $F$ preserves $\TP$ for
kernels on $X \times Y$ but is not a dilation, Theorem~\ref{Tpfseq2}
gives a $\TP$ Toeplitz kernel $K'$ on $X' \times Y' \to \R$ such that
$F \circ K'$ is not $\TP$. However, the kernel
\[
K : X \times Y \to \R; \ ( x, y ) \mapsto %
K'( \varphi_X( x ), \varphi_Y( y ) )
\]
is $\TP$, since $K'$ is, and therefore so is $F \circ K$, by
the assumption on $F$. As any submatrix of $F \circ K'$ occurs as a
submatrix of $F \circ K$, we have a contradiction. As dilations
clearly preserve $\TP$, the proof is complete.
\end{proof}

For our final result, we consider the symmetric version of the
previous theorem.

\begin{theorem}\label{Ttpsymm}
Suppose $X$ is an infinite totally ordered set such that there exists a
symmetric $\TP$ kernel on $X \times X$. A function
$F : ( 0, \infty ) \to ( 0, \infty )$ preserves $\TP$ for symmetric
kernels on $X \times X$ if and only if $F( x ) = c x$ for some
$c > 0$.
\end{theorem}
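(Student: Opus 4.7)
The plan is to first reduce $F$ to a power function via results already in hand, and then to exhibit specific symmetric $\TP$ test kernels whose image under $C_F$ ceases to be $\TP$ unless the exponent equals $1$. By Lemma~\ref{Ltptosets}, the hypothesis that $X \times X$ admits a symmetric $\TP$ kernel realizes $X$ as an infinite subset of $( 0, \infty ) \subseteq \R$ via an order-preserving injection, and the symmetric case of Proposition~\ref{Ppowers} forces $F$ to the form $F( x ) = c x^\alpha$ with $c, \alpha > 0$. It remains to show $\alpha = 1$, which I argue by contradiction, splitting on whether $\alpha$ is a non-integer or an integer at least $2$.

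For the non-integer case, pick an integer $n > \alpha + 2$ and a tuple $\bz_0 = ( \zeta_1, \ldots, \zeta_n ) \in \inc{\R}{n}$; writing $y_i := e^{\zeta_i}$ and $K( x, y ) := 1 + e^{x + y}$, the Hankel matrix $K[ \bz_0; \bz_0 ] = ( 1 + y_i y_j )_{i, j = 1}^n$ matches Jain's test matrix, and \cite[Theorem~1.1]{Jain} (also cited via Proposition~\ref{Pjain}) asserts that its entrywise $\alpha$-th power is not positive semidefinite whenever $\alpha \in ( 0, n - 2 )$ is a non-integer. Hence some subtuple $\bz \subseteq \bz_0$ gives $\det K^\alpha[ \bz; \bz ] < 0$. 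To turn this into an obstruction for $\TP$-preservation rather than $\TN$-preservation, I regularize $K$ by defining $K_\epsilon( x, y ) := K( x, y ) + \epsilon \int_0^1 e^{u ( x + y )} \intd u$, whose representing measure $\delta_0 + \delta_1 + \epsilon \bJ_{[ 0, 1 ]} \intd u$ has infinite support, so Proposition~\ref{Phankel}(2) ensures $K_\epsilon$ is $\TP$ on $\R \times \R$. Pointwise convergence $K_\epsilon \to K$ and continuity of the determinant then yield $\det K_\epsilon^\alpha[ \bz; \bz ] < 0$ for all sufficiently small $\epsilon > 0$. Replace the embedding of $X$ by any order-preserving injection $\varphi : X \to \R$ whose image $X'$ contains the finite tuple $\bz$ (possible since $X$ is infinite and $\bz$ is finite): the restriction $K_\epsilon|_{X' \times X'}$ is a symmetric $\TP$ kernel, so $F \circ K_\epsilon|_{X' \times X'} = c K_\epsilon^\alpha|_{X' \times X'}$ should be $\TP$, contradicting the negative $\bz$-minor.

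For the integer case $\alpha \geq 2$, I invoke the even P\'olya frequency function $M$ of Lemma~\ref{Lintpowers}: since $M^\alpha$ is continuous, positive, integrable, and not a P\'olya frequency function, the Toeplitz kernel $T_{M^\alpha}$ fails to be $\TN$, so there exists $\bz \in \inc{\R}{n}$ with $\det T_{M^\alpha}[ \bz; \bz ] < 0$. I smooth $M$ via the normalized Gaussian convolution $M_\gamma := g_\gamma \ast M$, where $g_\gamma$ is defined just before Proposition~\ref{Ptoeplitz}: this convolution is even (hence $T_{M_\gamma}$ is symmetric), is $\TP$ for every $\gamma > 0$ by the observation there, and converges uniformly on compact sets to $M$ as $\gamma \to 0^+$, so $\det T_{M_\gamma^\alpha}[ \bz; \bz ] < 0$ for small $\gamma$. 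Re-embedding $X$ as $X' \supseteq \bz$ as before, the restriction $T_{M_\gamma}|_{X' \times X'}$ is a symmetric $\TP$ kernel, while $F \circ T_{M_\gamma}|_{X' \times X'} = c T_{M_\gamma^\alpha}|_{X' \times X'}$ has a negative $\bz$-minor, yielding the final contradiction. The reverse implication, that every dilation $F( x ) = c x$ preserves symmetric $\TP$ kernels, is obvious.

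The main obstacle will be coordinating the bad minor---naturally parametrized by a tuple in $\R^n$---with the geometrically constrained domain $X \times X$; I sidestep this by selecting $\bz$ first and then reshaping the embedding a posteriori, which leans essentially on $X$ being infinite. The split between Hankel and Toeplitz test kernels also seems unavoidable: positive integer powers of $1 + e^{x + y}$ are still Hankel-$\TN$ (being sums of exponentials with positive coefficients), and so elude Jain's obstruction, while an extension of Lemma~\ref{Lintpowers} to arbitrary real powers of $M$ is not provided in this paper.
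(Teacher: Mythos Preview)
Your proof is correct and takes a genuinely different route from the paper's. Both arguments open identically, using Lemma~\ref{Ltptosets} and Proposition~\ref{Ppowers} to reduce $F$ to $c x^\alpha$, but they diverge on how $\alpha \neq 1$ is eliminated.

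The paper first constructs an elaborate re-embedding of $X$ whose image contains arbitrarily long arithmetic progressions (borrowed from the proof of Theorem~\ref{Tmain1}). It then uses $\TP$ Hankel kernels sampled along these progressions, together with the external dimension-free result \cite[Theorem~4.1]{BGKP-hankel}, to force $\alpha \in \N$, and finally disposes of $\alpha \geq 2$ by the discretization argument from Theorem~\ref{Tpfseq2}. You bypass arithmetic progressions entirely: you fix the bad finite tuple $\bz$ first and then re-embed $X$ so that $\bz$ lies in the image, which is elementary since $X$ is infinite. For non-integer $\alpha$ you apply Jain's theorem directly to $1 + e^{x+y}$, smoothed to a $\TP$ Hankel kernel via Proposition~\ref{Phankel}(2); for integer $\alpha \geq 2$ you convolve the even function $M$ of Lemma~\ref{Lintpowers} with a Gaussian to obtain a symmetric $\TP$ Toeplitz kernel, exactly as set up before Proposition~\ref{Ptoeplitz}. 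Your argument is shorter and more self-contained (no arithmetic-progression construction, no appeal to \cite{BGKP-hankel}); the paper's version has the virtue of tying the result into the Toeplitz framework already built.

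One imprecision worth fixing: in the integer case you write ``there exists $\bz$ with $\det T_{M^\alpha}[\bz;\bz] < 0$'', but $T_{M^\alpha}$ failing to be $\TN$ only yields some negative minor \emph{inside} $T_{M^\alpha}[\bz;\bz]$ (via Lemma~\ref{Lhankel}, taking $\bz = \bx \cup \by$), not necessarily the full determinant. This does not damage the argument, since your continuity step carries any fixed minor across and a single negative minor already violates $\TP$; just replace the phrase by ``$T_{M^\alpha}[\bz;\bz]$ has a negative minor''.
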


\begin{proof}
In the usual fashion, we first identity $X$ with a subset of $\R$
using Lemma~\ref{Ltptosets}. Proposition~\ref{Ppowers} now
implies that any such preserver must be of the form
$F( x ) = c x^\alpha$ for positive constants $c$ and $\alpha$.

The construction in the proof of Theorem~\ref{Tmain1} gives an
order-preserving bijection $\varphi : X \to X'$, where
$X' \subseteq \R$ contains arithmetic progressions of arbitrary
length. This bijection provides a correspondence between $\TP$
symmetric kernels on $X \times X$ and those on $X' \times X'$, so we
may assume, without loss of generality, that $X$ contains arithmetic
progressions of arbitrary length.

Given $u_0 \in [ 0, 1 )$, $p$, $q \geq 0$ such that $p + q > 0$ and
$\eps > 0$, note first that the Hankel kernel
\[
K : \R \times \R \to \R; \ ( x, y ) \mapsto %
p + q u_0^{x + y} + %
\eps \int_0^1 e^{( x + y ) u} \intd u,
\]
is $\TP$, by Proposition~\ref{Phankel}. Let
$\bx = ( x_1, \ldots, x_n ) \in \inc{X}{n}$ be an arithmetic
progression, where $n \geq 2$, and note that the kernel
\[
K' : X \times X \to \R; \ ( x, y ) \mapsto %
K\left( \frac{x - x_1}{x_2 - x_1}, %
\frac{y - x_1}{x_2 - x_1} \right)
\]
is symmetric and $\TP$, hence so is
\[
( F \circ K' )[ \bx; \bx ] = %
( F \circ K )[ ( 0, 1, \ldots, n - 1 ); ( 0, 1, \ldots, n - 1 ) ].
\]
By the continuity of $F$, the matrix
\[
\lim_{\eps \to 0^+} %
( F \circ K )[ ( 0, 1, \ldots, n - 1 ); ( 0, 1, \ldots, n - 1 ) ] = %
( F( p + q u_0^{i + j} ) )_{i, j = 0}^{n - 1}
\]
is positive semidefinite. As $n$ can be arbitrarily large, it
follows from \cite[Theorem~4.1]{BGKP-hankel} that $F$ is the
restriction of an entire function $\sum_{k = 0}^\infty c_k x^k$ with
$c_k \geq 0$ for all $k$. Since $F( x ) = c x^\alpha$, it must be that
$\alpha$ is a positive integer.

To conclude, we suppose for contradiction that $\alpha \geq 2$ and let
$M$ be a P\'olya frequency function as in Lemma~\ref{Lintpowers}. Then
$M$ is non-vanishing and, by Lemma~\ref{Lhankel}, there exists
$\bz \in \inc{\R}{n}$ such that $( F \circ T_M )[ \bz; \bz ]$ has
a negative minor; by continuity, we may assume that
$\bz \in \inc{\Q}{n}$. Working as in the final two paragraphs of the
proof of Theorem~\ref{Tpfseq2}, with $X = Y$ and $\bx = \by$, now
gives the result.
\end{proof}

\section{Concluding remarks: Minimal test families}

In the first part of this section, we identify a few directions for
future exploration. The second part contains an enumeration of minimal
test criteria to demonstrate the rigidity of preservers.

\subsection{Open questions}

By specializing the families of maps that the post-composition
transform leaves invariant, we may obtain a plethora of classification
questions. Some of these questions, which appear artificial at first
sight, might gain weight due to future applications. Here, we simply
touch the surface.

\begin{question}
Which functions preserve the class of one-sided P\'olya frequency
sequences with finitely many non-zero terms, or those generated by
evaluating a polynomial?
\end{question}

This question has more positive answers than just the homotheties. For
example, the power maps $x^n$ are preservers of both sub-classes of
one-sided P\'olya frequency sequences for all $n \in \N$, by results
of Mal\'o~\cite{Malo} and Wagner~\cite{Wag92}, respectively.

\begin{question}
Are the totally positive P\'olya frequency sequences dense in the set
of all P\'olya frequency sequences?
\end{question}

\begin{question}
Given a $\TN$ kernel $K : X \times Y \to \R$, where $X$ and $Y$ are
infinite subsets of $\R$, can $K$ be approximated by a sequence of
$\TP$ kernels, at least at points of continuity?
\end{question}

For the latter question, recall that Section~\ref{SWhitneyExt}
contains such an approximation by $\TP_p$ kernels, for every
$p \in \N$.

While positive solutions to the preceding questions could help provide
alternate proofs of the classifications of the classes of $\TP$
preservers in these settings, we have already achieved these
classifications via different methods.

\subsection{Minimal testing families}

Many of the proofs above isolate some minimal classes of kernels
against which putative $\TN$ or $\TP$ preservers must be tested. For
the reader interested solely in the dimension-free setting of
Theorem~\ref{T1}, we end with some toolkit observations.

If a function $F : [ 0, \infty ) \to \R$  preserves $\TN$ for (a)~all
$\TN$ $2 \times 2$ matrices, (b)~the $3 \times 3$ matrix $C$ from
(\ref{E3matrix}), and (c)~the two-parameter family of $4 \times 4$
matrices $N( \eps, x)$ defined above (\ref{E4matrix}), then $F$ is
either constant or linear. Specifically, as the proof of
Theorem~\ref{Tfixeddim} shows, preserving $\TN$ for the $2 \times 2$
test set implies that $F$ is either a non-negative constant or
$F( x ) = c x^\alpha$ for some $c > 0$ and $\alpha \geq 0$. Using the
matrix $C$, we see that $\alpha \geq 1$. Finally, using the test set
$\{ N( \eps, x ) : \eps \in ( 0, 1 ), \ x > 0 \}$, we obtain
$\alpha = 1$.

As noted in Remark~\ref{Rtoeplitz}, a non-zero function
$F : [ 0, \infty ) \to [ 0, \infty )$ preserves P\'olya frequency
functions if and only if the transforms by $F$ of $t \lambda( x )$ and
$x \lambda( x )$ are $\TN$ for all $t > 0$, as are the transforms of
Gaussians $c G_1$ for all $c > 0$, as well as that of a single
function $M$ as in Lemma~\ref{Lintpowers}.

We also note that Theorem~\ref{T1} for $\TN$ preservers was proved by
different means, in the context of Hankel positivity preservers, in
\cite[Section~5]{BGKP-hankel}. By comparison, the proof given here has
clear benefits, including completing the classification in every fixed
size and isolating a small set of matrices on which the preservation
of the $\TN$ property can be tested. Our present approach also leads
to the classification of preservers of total positivity for matrices
of a prescribed size, as well as classifications of the preservers
when restricted to symmetric matrices.

\subsection*{List of symbols}

For the convenience of the reader, we list some of the symbols used in
this paper.

\begin{itemize}
\item $\N$ and $\N_0 = \N \cup \{ 0 \}$ denote the sets of positive
integers and non-negative integers, respectively.

\item For any $n \in \N$, $[n]$ denotes the set $\{ 1, \ldots, n \}$.

\item Given a totally ordered set $X$ and $n \in \N$, the set
$\inc{X}{n}$ comprises all increasing $n$-tuples
$( x_1, \ldots, x_n )$ in $X$, so that $x_1 < \cdots < x_n$.

\item Let $X$ and $Y$ be totally ordered sets and
suppose $K : X \times Y \to \R$. Given tuples
$\bx \in \inc{X}{m}$ and $\by \in \inc{Y}{n}$, $K[ \bx; \by ]$ denotes
the $m \times n$ matrix with $( i, j )$ entry $K( x_i, y_j )$.

\item For totally ordered sets $X$ and $Y$,
\begin{align*}
\sFTN_{X, Y} & := \{ F : [ 0, \infty ) \to \R \mid %
\text{if $K : X \times Y \to \R$ is totally non-negative, so is } %
F \circ K \} \\
\text{and} \\
\sFTP_{X, Y} & := \{ F : ( 0, \infty ) \to \R \mid \text{if } %
K : X \times Y \to \R \text{ is totally positive, so is } %
F \circ K \}.
\end{align*}

\item Given $p \in \N$, $\TN_p$ and $\TP_p$ are the sets of matrices
or kernels whose submatrices of order $d \times d$ have non-negative
or positive determinants, respectively, for all $d \in [p]$.

\item Given a set $X$ and a domain $I \subseteq \R$,
\begin{align*}
\sF_X^\psd( I ) & := \{ f : I \to \R \mid %
\text{if } K : X \times X \to I %
\text{ is positive semidefinite, so is } f \circ K \} \\
\text{and} \\
\sF_X^\pd( I ) & := \{ f : I \to \R \mid %
\text{if } K : X \times X \to I %
\text{ is positive definite, so is } f \circ K \}.
\end{align*}
\end{itemize}



\end{document}